\numberwithin{equation}{section}
\newcommand{\1}{\mathbf{1}}
\newcommand{\R}{\mathcal{R}}
\newcommand{\sgn}{\text{sgn}}
\newcommand{\z}{\mathrm{z}}
\renewcommand{\Re}{\mathrm{Re}}
\newcommand{\LL}{\mathrm{L}}
\newcommand{\RR}{\mathrm{R}}
\newcommand{\Fic}{\mathbb{F}_{\mathrm{ic}}}
\newcommand{\ddbar}[2]{\frac{{\mathrm d}#1}{2\pi {\mathrm i}#2}}
\newcommand{\polylog}{\mathrm{Li}}
\newcommand{\dd}{{\mathrm d}}
\newcommand{\ii}{\mathrm i}
\newcommand{\cchi}{\mathrm{ch}}
\newcommand{\conf}{\mathcal{X}}
\newtheorem{proposition}{Proposition}[section]
\newtheorem{lemma}[proposition]{Lemma}
\newtheorem{corollary}[proposition]{Corollary}
\newtheorem{theorem}[proposition]{Theorem}
\theoremstyle{definition}
\newtheorem{definition}[proposition]{Definition}
\newtheorem{remark}[proposition]{Remark}
\newtheorem{assum}[proposition]{Assumption}
\newtheorem{notation}[proposition]{Notation}
\begin{document}

\title[Multi-point distribution of discrete time  periodic TASEP]
{Multi-point distribution of discrete time  periodic TASEP} 
\author{Yuchen Liao}

\date{}
\begin{abstract}
	We study the one-dimensional discrete time totally asymmetric simple exclusion process with parallel update rules on a spatially periodic domain. A multi-point space-time joint distribution formula is obtained for general initial conditions. The formula involves contour integrals of Fredholm determinants with kernels acting on certain discrete spaces. For a class of initial conditions satisfying certain technical assumptions, we are able to derive large-time, large-period limit of the joint distribution, under the relaxation time scale $t=O(L^{3/2})$ when the height fluctuations are critically affected by the finite geometry. The assumptions are verified for the step and flat initial conditions. As a corollary we obtain the multi-point distribution of discrete time TASEP on the whole integer lattice $\mathbb{Z}$ by taking the period $L$ large enough so that the finite-time distribution is not affected by the boundary. The large time limit for the multi-time distribution of discrete time TASEP on $\mathbb{Z}$ is then obtained for the step initial condition.
\end{abstract}
\maketitle

\section{Introduction}
\label{sec:intro}
Models in the one-dimensional Kardar-Parisi-Zhang (KPZ) universality class are expected to have the same limiting height fluctuations under the $t:t^{2/3}:t^{1/3}$ KPZ scaling for temporal correlations, spatial correlations and height fluctuations. Describing the universal limiting fluctuating field and proving the convergence of concrete models to the universal limiting field have been the main goals in the field and attracted active research over the last two decades. The convergence to the limiting fluctuating field (at one-point or multi-point level) have been obtained for a large class of models (mostly exactly solvable), either for the whole space models \cite{baik1999,johansson2000, borodin2007, borodin2008parallel, tw08, tw09,matetski2017, dauvergne2018,johansson2019multi,liu2019}, or for half space models \cite{baikbarraquand2018, barraquand2018, barraquand2020}.

 More recently similar limit theorems were obtained for models on periodic domains. Such models are defined on a finite ring of size $L$ instead of on the infinite or semi-infinite lattices. The crucial interest here is to understand the effect of this finite geometry on the height fluctuations.   For periodic models the one-point marginals of limiting height fluctuations were obtained for classical step, flat and stationary initial conditions in \cite{baik2018relax, liu2018}, and independently in \cite{prolhac2016} under the so-called relaxation time scale $t=O(L^{3/2})$ when the height fluctuations are critically affected by the finite geometry. Further extensions including formulas for multi-point space-time joint distributions were subsequently obtained in \cite{baik2019multi} for step initial condition and \cite{baik2019general} for more general initial conditions. These limiting formulas are expected to be universal for all related models on periodic domains. So far all limiting distribution formulas were obtained as scaling limits of finite-time joint distributions of a single model in the KPZ universality class, namely the continuous time periodic totally asymmetric simple exclusion process (TASEP on a ring). 
 \medskip

The goal of this paper is to study another classical model in the KPZ universality class, the discrete time totally asymmetric exclusion process with parallel updates, on periodic domains. On the infinite lattice $\mathbb{Z}$, this model has been well studied.  The one-point marginal distribution was obtained in \cite{johansson2000} for the equivalent geometric last passage percolation model and  joint distributions of several locations at equal time was obtained in \cite{borodin2008parallel}. Recently the joint distributions along the time direction have also been studied, in \cite{johansson2019two} for two-time case and \cite{johansson2019multi} for general multi-time joint distributions. However on the periodic domain there are fewer results concerning height fluctuations (see \cite{povolotsky2007} for some results on transition probability and stationary distributions), which are the main focuses of this paper.  The main results of this paper are summarized as follows: 
\begin{enumerate}
	\item For general initial conditions we obtain a finite-time multi-point (in both space and time) joint distribution formula for discrete time parallel periodic TASEP. The formula consists of an $m$-fold ($m$ is the number of space-time points being considered in the joint distribution) contour integral with integrand involving a Fredholm determinant, where the Fredholm determinant has kernels acting on certain discrete sets related to the roots of some polynomials. 
	\smallskip
	\item Under the relaxation time scale $t=O(L^{3/2})$ and the $1:2:3$ KPZ scaling, we obtain large-time, large-period limits for the multi-point joint distributions under certain assumptions on the initial condition, which are verified for the step and flat cases. These limiting formulas agree with those obtained in \cite{baik2019multi, baik2019general}, thus providing an evidence that the height fluctuations for periodic models in the KPZ class are in fact universal. 
	\smallskip
	\item We also obtain the same type of multi-point joint distribution formula for discrete time  TASEP on the infinite lattice $\mathbb{Z}$, by relating it with the corresponding periodic model with large enough period so that the finite-time distribution is not affected by the boundary. The large time limit for the multi-time distribution formula under KPZ scaling is then derived for step initial condition, which agrees with the one in \cite{liu2019}. 
\end{enumerate}

Comparing to the previous work \cite{baik2019multi,baik2019general} and \cite{liu2019} on the multi-point distributions of continuous time TASEP, on either periodic domain or $\mathbb{Z}$, our work consists of formulas with similar structures but involves an extra parameter $p$ describing the hopping probability, which makes the algebraic properties a bit more complicated. In particular the polynomial whose roots are related to the kernels in the periodic formulas now depends on the extra parameter $p$. We point out that all the formulas for continuous time TASEP can be obtained from our formulas by rescaling the time and taking $p\to 0$.

On the infinite lattice $\mathbb{Z}$,  the multi-time distribution of discrete time TASEP (more precisely the equivalent geometric last passage percolation model) has been considered recently in \cite{johansson2019multi}, where a formula involving contour integrals of Fredholm determinants was obtained. We expect our formulas to agree with the formulas in \cite{johansson2019multi}, both in finite time and for the large time limit. Currently we are not able to prove the equivalence of the two types of formulas though they share a lot of similarities.
\medskip
\subsection*{Outline of the paper} In Section~\ref{sec: models and results}  we describe the discrete time parallel TASEP models and state the main results involving several multi-point joint distribution formulas, for both periodic domain and infinite lattice $\mathbb{Z}$, finite time and large time limit. From Sections~\ref{sec: transition} to Section~\ref{sec: summation identity} we derive the main finite-time algebraic formulas for multi-point distribution of discrete time parallel periodic TASEP and we regard them as the main technical novelties in this paper. In particular in Section~\ref{sec: transition} we prove a novel transition probability formula for  discrete time parallel TASEP on the periodic domain. The formula involves integral of determinants and is proved using coordinate Bethe ansatz. In Section~\ref{sec: multitime} we derive the finite-time multi-point distribution formula by performing a multiple sum over the transition probabilities. The  key ingredients are certain Cauchy-type summation identities over the eigenfunctions of the generator, which might be of independent interests so we discuss the proof in Section~\ref{sec: summation identity}. In Section~\ref{sec: relaxation time limit} and Section~\ref{sec: proof_limittheorem} we discuss the large time, large period asymptotics for the multi-point distribution under the relaxation time scale $t=O(L^{3/2})$. Finally in Section~\ref{sec: infinite} we derive the multi-time distribution for infinite discrete time parallel TASEP and perform large time asymptotics under KPZ scaling for the step initial condition. 
\bigskip
\subsection*{Acknowledgements} I would like to thank Jinho Baik for suggesting this problem and for useful discussions. I would also like to thank Zhipeng Liu and Mustazee Rahman for helpful discussions. I am grateful to the two anonymous referees for catching up several typos and to their valuable suggestions which help improve the quality of the paper. This work is supported in part through Jinho Baik's NSF grants DMS-1664531.

\section{Models and main results}
\label{sec: models and results}
Let $N<L$ be positive integers. We consider discrete time TASEP with parallel updates with $N$ particles on a spatially periodic domain of size $L$. It is convenient to view the dynamics as particles moving to the right on the integer lattice $\mathbb{Z}$ while periodicity forces particle configurations to be identical copies of each other every $L$ sites. More precisely this means that the occupation functions $\eta_j(t)$ which equals $1$ if there is a particle at site $j\in \mathbb{Z}$ at time $t$ and equals $0$ otherwise should satisfy 
\begin{equation}
	\eta_{j}(t) = \eta_{j+kL}(t),\quad \text{for all } j, k\in \mathbb{Z}\text{ and } t\in \mathbb{N}.
\end{equation}
We fix a single period of size $L$ and denote the locations of totally $N$ particles in this period at time $t$ as 
\begin{equation*}
x_1(t)>x_2(t)>\cdots>x_N(t).
\end{equation*}
Here $x_i(t)\in \mathbb{Z}$ for $1\leq i\leq N$ and we index the particles from right to left. The locations of all the particles then satisfy $x_{i+kN}(t)= x_i(t)-kL$ for $1\leq i\leq N$ and $k\in \mathbb{Z}$ so that we have 
\begin{equation*}
\cdots>x_N(t)+L=x_0(t)>x_1(t)>x_2(t)>\cdots>x_N(t)>x_{N+1}(t)=x_1(t)-L>\cdots.
\end{equation*}
Thus the natural configuration space for the particles should be
	\begin{equation}\label{eq: conf L}
		\conf_{N}^{(L)}:= \{\vec x=(x_1,\cdots,x_N)\in \mathbb{Z}^N: x_N+L>x_1>x_2>\cdots>x_N\}.
	\end{equation}
The discrete time parallel periodic TASEP with $N$ particles, period $L$ and hopping probability $0<p<1$, which we denote by $\mathrm{dpTASEP}(L,N,p)$, is the following Markovian dynamics on particle configurations $\vec x(t)\in \conf_{N}^{(L)}$: at each time step, each particle in a single period hops to its right neighbor site independently with probability $p=1-q$ provided that the site is empty, otherwise it stays at its current position. As a special case, the discrete time parallel TASEP on $\mathbb{Z}$ which we will denote by $\mathrm{dTASEP}(p)$ corresponds to particles following the same evolution rules with the period $L\to \infty$. In this case the configuration space for the first $N$ particles (from right to left, we assume the existence of a right-most particle) becomes 
\begin{equation}\label{eq: conf infty}
	\conf_{N}^{(\infty)}:= \{\vec x=(x_1,\cdots,x_N)\in \mathbb{Z}^N: x_1>x_2>\cdots>x_N\}.
\end{equation}

\begin{notation}
	Throughout the paper there will be several very similar formulas and quantities corresponding to either discrete time parallel TASEP on a periodic domain with size $L$ or on the infinite lattice $\mathbb{Z}$, as well as their scaling limits. To avoid confusion we will always follow the convention used in \eqref{eq: conf L} and \eqref{eq: conf infty} by putting superscripts $(L)$ to any quantities  related to periodic model in finite-time and $(\infty)$ to any quantities related to model on the infinite lattice $\mathbb{Z}$ in finite-time. On the other hand quantities describing the large-time limits will have superscripts $(\mathrm{per})$ and $(\mathrm{kpz})$ for periodic and infinite models, respectively.
\end{notation}

The main results of the paper consist of several formulas for multi-point joint distributions of discrete time parallel TASEP on periodic domain and on $\mathbb{Z}$ as well as their large time scaling limits. All of them are expressed as contour integrals of Fredholm determinants. The main difference is that the formulas for periodic models are expressed in terms of discrete sets related to roots of certain algebraic equations, while the Fredholm determinant appearing in formulas for discrete time TASEP on $\mathbb{Z}$ has kernels acting on continuous contours.
\subsection{Multi-point distribution formulas for \texorpdfstring{$\mathrm{dpTASEP}(L,N,p)$}{Lg}} The first theorem is a finite-time multi-point joint distribution formula for discrete time TASEP on a periodic domain. This is the starting point of all other results in this paper.
\begin{theorem}[Finite-time multi-point joint distribution for $\mathrm{dpTASEP}(L,N,p,\vec y)$]\label{thm: finite-time}
	Let $N<L$ be integers. Consider discrete time periodic parallel TASEP with hopping probability $0<p<1$, $N$ particles and $L$ sites per period (denoted by $\mathrm{dpTASEP}(L,N,p,\vec y)$). Here $\vec y\in \conf_{N}^{(L)}$ is the initial condition, i.e., $x_i(0)=y_i$ for $1\leq i\leq N$. Set $\rho=N/L$.
	Fix a positive integer $m$ and let $(k_i,t_i)$, $1\leq i\leq m$ be $m$ distinct points in $\mathbb{Z}\times \mathbb{Z}_{\geq 0}$ satisfying $0\leq t_1\leq\cdots \leq t_m $. Then for any integers $a_1,\cdots,a_m$,
	\begin{equation}\label{eq: multipoint general}
	\mathbb{P}_{\vec y}^{(L)}\left(\bigcap_{i=1}^m \{x_{k_i}(t_i)\geq a_i\}\right)= \oint\cdots\oint \mathscr{C}_{\vec y}^{(L)}(\vec z)\mathscr{D}_{\vec y}^{(L)}(\vec z)\frac{\dd z_1}{2\pi \ii z_1}\cdots\frac{\dd z_m}{2\pi \ii z_m}.
	\end{equation}
	Hre we denote $\vec z:=(z_1,\cdots,z_m)$. The contours are over nested circles centered at the origin: $0<|z_m|<\cdots<|z_1|<\mathbbm{r}_c$, where $\mathbbm{r}_c$ is a parameter depending on $p$ and $\rho$ that is defined in \eqref{eq: critical radius}. Here and in all the remaining results we suppress the dependence of the integrand on the parameters $a_i,k_i,t_i$ as well as the hopping probability $0<p<1$. The function $\mathscr{C}_{\vec y}^{(L)}(\vec z)$ is defined in Definition \ref{def: C general}. The function $\mathscr{D}_{\vec y}^{(L)}(\vec z)$ is a Fredholm determinant 
	\begin{equation}
	\mathscr{D}_{\vec y}^{(L)}(\vec z) = \det(I-\mathscr{K}_{1}^{(L)}\mathscr{K}_{\vec y}^{(L)}),
	\end{equation}
	where the operators $\mathscr{K}_{1}^{(L)}$ and $\mathscr{K}_{\vec y}^{(L)}$ are defined in Definition \ref{def:D general}.
\end{theorem}

\begin{remark}
	Theorem~\ref{thm: finite-time} generalizes Theorem~3.1 of \cite{baik2019general} (and also Theorem 4.6 of \cite{baik2019multi} for the special step initial condition) on the finite-time multi-point distribution of continuous time periodic TASEP. In fact their formulas can be obtained from our formula \eqref{eq: multipoint general} by taking $p=\epsilon$, $\hat{t}= \epsilon t$ and letting $\epsilon\to 0$. 
\end{remark}
Next we state the theorem on the large-time, large-period scaling limit of \eqref{eq: multipoint general} under the relaxation time scale $t=O(L^{3/2})$. To emphasize the dependence on the initial condition, we add the subscript ``$\mathrm{ic}$'' for the terms in the limit which depend on the initial conditions. For convenience we make the following choice of labelling: we assume that $x_1(0)\le 0 < x_{0}(0)$. This is equivalent with assuming that the initial condition satisfies $y_1\le 0< y_N+L$.

\begin{theorem}[Relaxation time limit] \label{thm:main}
	Consider a sequence of Markovian dynamics $\mathrm{dpTASEP}(L, N, \vec y(L))$ depending on the period $L$,  where $\rho=\rho_L=N/L$ stays in a compact subset of $(0,1)$ and $y_1\leq 0<y_N+L$.
	Suppose that the sequence of initial conditions $\vec y(L)$ satisfies certain assumptions (see Assumption \ref{def:asympstab}).  
	Fix a positive integer $m$ and let $\mathrm{p}_j = (\gamma_j,\tau_j)$ be $m$ points in the region
	\begin{equation*}
		\mathrm{R}:= [0,1] \times \mathbb{R}_{> 0}
	\end{equation*}
	satisfying
	\begin{equation*}
		0< \tau_1<\tau_2<\cdots<\tau_m.
	\end{equation*}
	Then for every fixed $\mathrm{x}_1,\cdots,\mathrm{x}_m\in\mathbb{R}$ and  parameters $a_i, k_i, t_i$, $1\leq i\leq m$ given by 
	\begin{equation} \label{eq:parameters}
		t_i = c_1\tau_i L^{3/2}+O(1),\quad a_i= c_2 t_i+\gamma_i L+O(1),\quad k_i=c_3 t_i+c_4\gamma_i L+c_5\mathrm{x}_i L^{1/2}+O(1),
	\end{equation}
	we have
	\begin{equation} \label{eq:main_theorem}
		\begin{aligned}
			\lim_{L\to\infty} \mathbb{P}^{(L)}_{\vec y} \left( \bigcap_{j=1}^m \left\{ x_{k_i}(t_i)\geq a_i	\right\}\right) = \Fic^{\mathrm{per}} (\mathrm{x}_1,\cdots, \mathrm{x}_m; \mathrm{p}_1,\cdots, \mathrm{p}_m).
		\end{aligned}
	\end{equation}
	Here the constants $c_i$ depend explicitly on particle density $0<\rho<1$ and hopping probability $0<p<1$ and are given by 
	\begin{equation}\label{eq: scaling constants}
		\begin{aligned}
			&c_1 = \frac{1}{p(1-p)}\frac{\nu^{5/2}}{\rho^{1/2}(1-\rho)^{1/2}}, \quad c_2= \frac{p(1-2\rho)}{\nu},\\
			&c_3 = \frac{2\rho^2\cdot p(1-p)}{\nu(1+\nu-2p\rho)},\quad c_4=-\rho,\quad c_5= -\rho^{1/2}(1-\rho)^{1/2}\nu^{1/2},
		\end{aligned}
	\end{equation}
	where we set $\nu:=\sqrt{1-4p\cdot\rho(1-\rho)}$ for convenience. We recall that in \eqref{eq:main_theorem} $\mathbb{P}^{(L)}$ denotes the probability associated to $\mathrm{dpTASEP}(L, N, \vec y(L))$. The function $\Fic^{\mathrm{per}}(\cdot)$ agrees with the one defined in Section 6.4 of \cite{baik2019general} as the relaxation time limit of multi-point distribution of continuous time periodic TASEP. We recall the definition of $\Fic^{\mathrm{per}}(\cdot)$ in Section \ref{sec: limitingdistribution} for completeness. The convergence is locally uniform in $\mathrm{x}_j, \tau_j$, and $\gamma_j$. 
\end{theorem}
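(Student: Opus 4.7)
The plan is to follow the general strategy developed in \cite{baik2019general} for continuous-time periodic TASEP, adapting each step to accommodate the extra hopping-probability parameter $p$ which enters through the modified critical point \eqref{eq: critical point} and the $p$-dependent polynomial whose roots define the Bethe-type nodes. The starting point is the finite-time identity in Theorem~\ref{thm: finite-time}, and the goal is to show that the $m$-fold contour integral of the Fredholm determinant converges, after the rescaling \eqref{eq:parameters}, to the contour integral defining $\Fic^{\mathrm{per}}$.

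First I would change variables in the outer contours by setting $z_i = \mathbbm{r}_c\, e^{-\zeta_i/(c_1 L^{3/2})}$ (or an analogous $O(L^{-3/2})$ zoom-in around the critical radius dictated by the scaling constants in \eqref{eq: scaling constants}), so that $\zeta_i$ varies in a bounded region and the nested contour condition $|z_m|<\cdots<|z_1|<\mathbbm{r}_c$ becomes $\Re\zeta_1<\cdots<\Re\zeta_m$ with $\Re\zeta_i>0$. Next I would analyze the discrete node sets on which $\mathscr{K}_1^{(L)}$ and $\mathscr{K}_{\vec y}^{(L)}$ act. These are roots of a polynomial equation of the type $(1+pw)^{L-N}(1+w)^{-N}(-w)^{N}\cdot(\text{rational factor in }z) = \text{const}$. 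Taking logarithms and using the saddle-point structure at $w_c$, the relevant roots concentrate in an $O(L^{-1/2})$ neighborhood of $w_c$ along a pair of Stokes directions, and after the rescaling $w = w_c + c_5 L^{-1/2}\xi$ the density of nodes tends to a $1$-periodic lattice on $\mathbb{R}$, reproducing the discrete Bessel-like sets $\mathrm{S}_{z,\mathrm{L}}^{\mathrm{per}}, \mathrm{S}_{z,\mathrm{R}}^{\mathrm{per}}$ appearing in the definition of $\Fic^{\mathrm{per}}$ recalled in Section~\ref{sec: limitingdistribution}.

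Third, I would perform the steepest descent for the exponential factors $e^{t_i F(w)}$ with $F(w)=\log\!\bigl(\tfrac{1+w}{1+pw}\bigr)-\rho\log\!\bigl(\tfrac{-w(1+w)}{1+pw}\bigr)\cdot(\text{appropriate combination})$ appearing inside the kernels and in $\mathscr{C}^{(L)}_{\vec y}$. Under \eqref{eq:parameters} the Taylor expansion of $F$ at $w_c$ produces a cubic-plus-linear-plus-quadratic exponent in the rescaled variable $\xi$, which is exactly the Airy-type kernel $\mathrm{K}_1^{\mathrm{per}}$ in the limiting formula; the constants $c_1,\dots,c_5$ in \eqref{eq: scaling constants} are precisely those that make the three derivatives $F(w_c), F'(w_c), F''(w_c), F'''(w_c)$ pair up with $\tau_i,\gamma_i,\mathrm{x}_i$ in the correct way. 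The prefactor $\mathscr{C}^{(L)}_{\vec y}$ reduces to the corresponding prefactor in $\Fic^{\mathrm{per}}$ via the same saddle analysis together with the stability hypothesis in Assumption~\ref{def:asympstab}, which is exactly the input needed to control the initial-condition-dependent kernel $\mathscr{K}_{\vec y}^{(L)}$ and show it converges to $\mathrm{K}_{\mathrm{ic}}^{\mathrm{per}}$.

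Finally I would upgrade pointwise convergence of the kernels to convergence of Fredholm determinants. This needs trace-norm or Hilbert--Schmidt-type estimates uniform in $L$: Gaussian/cubic decay along the steepest descent directions gives exponential suppression of the tails, and a dominated-convergence argument at the level of Fredholm series yields the full convergence, together with a crude bound that lets us interchange the outer $\zeta$-integrals with the limit. The main obstacle I expect is the last step of the discrete-to-continuous transition of the nodes: because the nodes are roots of a $p$-deformed polynomial, one must show both (i) that the correct number of roots lies in each microscopic window around $w_c$ and (ii) that no stray roots contribute outside this window. In \cite{baik2019general} the corresponding argument relies on explicit monotonicity/convexity properties of $\log\bigl(\tfrac{-w}{1+w}\bigr)$; here the $p$-deformation modifies this to $\log\bigl(\tfrac{-w}{(1+w)^{1-\rho}(1+pw)^{\rho}/(1+w)^{?}}\bigr)$, so a careful check of monotonicity on the appropriate level set of $|w|$ is required. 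Once that geometric lemma is in place the rest of the asymptotic analysis is parallel to \cite{baik2019general}, with every $p\to 0$ check recovering their formulas.
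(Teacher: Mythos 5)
Your high-level strategy (steepest descent, kernel convergence, trace bounds, dominated convergence, split the work between the prefactor $\mathscr{C}^{(L)}_{\vec y}$ and the Fredholm determinant $\mathscr{D}^{(L)}_{\vec y}$) matches the paper's structure. However, your proposed change of variables for the outer contours is incorrect and would derail the entire asymptotic analysis. You write $z_i = \mathbbm{r}_c\,e^{-\zeta_i/(c_1 L^{3/2})}$, an $O(L^{-3/2})$ zoom toward the critical radius, with $\zeta_i$ on vertical lines. This gives $z_i^L = \mathbbm{r}_c^L e^{-\zeta_i/(c_1 L^{1/2})} \to \mathbbm{r}_c^L$ as $L\to\infty$, so the rescaled variable collapses to the boundary of the disk. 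The correct rescaling, which the paper uses, is $z_i^L = (-1)^N \mathbbm{r}_c^L\,\z_i$ with $\z_i$ held fixed on nested circles $0<|\z_m|<\cdots<|\z_1|<1$ inside the open unit disk; this is an $O(L^{-1})$ zoom of $|z|$ toward $\mathbbm{r}_c$, not $O(L^{-3/2})$. Only under that scaling does the Bethe equation $w^N(1+w)^{L-N} = z^L(1+pw)^N$ reduce, after the microscopic change of variables $w = w_c + c_0 \zeta L^{-1/2}$, to the limiting transcendental equation $e^{-\zeta^2/2}=\z$, which defines the limiting node sets $\mathrm{L}_{\z}$ and $\mathrm{R}_{\z}$.

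Two further misstatements worth correcting: the microscopic zoom constant is $c_0 = \tfrac{1+\nu-2\rho}{1+\nu}\sqrt{\tfrac{\rho}{(1-\rho)\nu}}$, not the constant $c_5$ from \eqref{eq: scaling constants} (the $c_i$ rescale the parameters $a_i,k_i,t_i$, not the spectral variable $w$); and the limiting node sets are not a $1$-periodic lattice on $\mathbb{R}$ but the discrete zero set of $e^{-\zeta^2/2}=\z$ in the complex plane, split into $\Re(\zeta)<0$ and $\Re(\zeta)>0$ pieces. Finally, your worry about the discrete-to-continuous transition (correct root count in the microscopic window, no stray contributions) is legitimate, but the paper resolves it by steepest-descent tail estimates (Lemma~\ref{lemma: asymptoticsofproducts}(ii)) and the root-localization Lemma~\ref{lemma: convergenceofroots}, not by a monotonicity argument for $\log\bigl(\tfrac{-w}{1+w}\bigr)$ as you suggest.
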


\medskip
	\subsection{Multi-point distribution formulas for $\mathrm{dTASEP}(p)$} The finite-time multi-point joint distribution for discrete time parallel TASEP on $\mathbb{Z}$ can be obtained from Theorem~\ref{thm: finite-time} as a corollary. The rough idea is to take the period $L$ sufficiently large so that the finite-time formula is unaffected by the boundary. The precise procedure is more delicate and will be explained in Section~\ref{sec: infinite}. For simplicity we will only state result on step initial condition but we point out that in \cite{liu2019} similar multi-time formulas for continuous time TASEP was obtained for general initial conditions and we believe their arguments can also be adapted to our model. The main result is the following:
\begin{theorem} [Finite-time multi-point joint distribution for $\mathrm{dTASEP}(p)$]
	\label{thm:main1}
	Assume $\vec y=(-1,-2,\cdots,-N)$. Consider discrete time parallel TASEP on $\mathbb{Z}$ with initial particle locations $x_i(0)=y_i$ for $1\le i\le N$. Let $m\ge 1$ be a positive integer and $(k_1,t_1),\cdots, (k_m,t_m)$ be $m$ distinct points in $\{1,\cdots,N\}\times \mathbb{Z}_{\geq 0}$. Assume that $0\le t_1\le\cdots\le t_m$. Then, for any integers $a_1,\cdots,a_m$,
	\begin{equation}
		\label{eq:thm1}
		\mathbb{P}_{\mathrm{step}}^{(\infty)} \left( 
		\bigcap_{\ell=1}^m \left\{ x_{k_\ell} (t_\ell) \ge a_\ell 
		\right\}
		\right) = \oint\cdots\oint \left[\prod_{\ell=1}^{m-1}\frac{1}{1-\theta_\ell}\right] \mathcal{D}_{\mathrm{step}}^{(\infty)}(\theta_1,\cdots,\theta_{m-1}) \frac{\dd \theta_1}{2\pi \ii \theta_1}\cdots \frac{\dd \theta_{m-1}}{2\pi \ii \theta_{m-1}}
	\end{equation}
	where the integral contours are circles centered at the origin of radii less than $1$. The function $\mathcal{D}_{\mathrm{step}}^{(\infty)}(\theta_1,\cdots,\theta_{m-1})$ is defined in terms of a Fredholm determinant in Definition~\ref{def:operators_K1Y}.
\end{theorem}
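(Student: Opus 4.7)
The plan is to deduce Theorem~\ref{thm:main1} from Theorem~\ref{thm: finite-time} via a finite–speed–of–propagation coupling combined with a large–$L$ limit of the periodic formula. First, I would embed the step initial condition $y_i=-i$, $1\le i\le N$, on $\mathbb{Z}$ into a periodic configuration by using exactly the same coordinates inside a single period of size $L$ in $\mathrm{dpTASEP}(L,N,p)$. In discrete parallel TASEP each particle advances by at most one site per time step, so over the time horizon $[0,t_m]$ the positions of the first $N$ labeled particles in one period can be coupled identically with their positions under $\mathrm{dTASEP}(p)$, provided $L$ is larger than some threshold $L_0=L_0(N,t_m)$ preventing the particle $x_N$ in one period from ever interacting with the image of $x_1$ from the adjacent period. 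For all such $L$ the left–hand sides agree:
\begin{equation*}
  \mathbb{P}^{(L)}_{\vec y(L)}\!\left(\bigcap_{i=1}^m\{x_{k_i}(t_i)\ge a_i\}\right)
  =\mathbb{P}^{(\infty)}_{\mathrm{step}}\!\left(\bigcap_{i=1}^m\{x_{k_i}(t_i)\ge a_i\}\right),
\end{equation*}
so the infinite-lattice probability is independent of $L$ for $L\ge L_0$, and we may take $L\to\infty$ on the right–hand side of \eqref{eq: multipoint general}.

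Next I would analyze the $L\to\infty$ limit of $\mathscr{C}_{\vec y}^{(L)}(\vec z)\,\mathscr{D}_{\vec y}^{(L)}(\vec z)$. The operators $\mathscr{K}_1^{(L)}$ and $\mathscr{K}_{\vec y}^{(L)}$ act on finite discrete sets which are the roots of the Bethe polynomial $z^N(1+pw)^{L-N}=\text{const}\cdot(-w)^N$ (the natural $p$–deformation of the polynomial used in \cite{baik2019multi}). These roots lie on a pair of smooth closed curves (the Bethe roots on the ``left'' and ``right'' branches) with spacing of order $1/L$, and standard Riemann–sum estimates convert sums over Bethe roots into contour integrals over these limit curves, uniformly for $\vec z$ in any compact subset of $\{|z_m|<\cdots<|z_1|<\mathbbm{r}_c\}$. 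Because the step initial data make the $\vec y$–dependent factors in $\mathscr{C}_{\vec y}^{(L)}$ and the kernel $\mathscr{K}_{\vec y}^{(L)}$ fully explicit (geometric in the Bethe variables), one can check trace–norm convergence of the discrete Fredholm determinant to a continuous–contour Fredholm determinant.

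Finally I would perform the change of variables $\theta_\ell = z_{\ell+1}/z_\ell$ for $\ell=1,\ldots,m-1$. In the $L\to\infty$ limit the overall dependence on the outermost variable $z_1$ collapses to an explicit monomial, and the $z_1$–contour integral can be evaluated by residue at $0$; this produces the prefactor $\prod_{\ell=1}^{m-1}(1-\theta_\ell)^{-1}$ and reduces the $m$–fold contour integral of \eqref{eq: multipoint general} to the $(m-1)$–fold integral in \eqref{eq:thm1}. Matching the transformed kernels with the operators of Definition~\ref{def:operators_K1Y} identifies the integrand as $\mathcal{D}_{\mathrm{step}}^{(\infty)}(\theta_1,\ldots,\theta_{m-1})$, completing the derivation.

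The main obstacle is the second step: establishing the discrete–to–continuous transition for the Fredholm determinant. One must locate the limit curves on which the Bethe roots accumulate for the $p$–deformed polynomial, prove a uniform error bound for the Riemann–sum approximation of the sums defining $\mathscr{K}_1^{(L)}\mathscr{K}_{\vec y}^{(L)}$ (this is where the extra parameter $p$ complicates the steepest-descent bounds compared to the continuous-time case), and verify convergence in the trace norm so that Fredholm determinants actually converge. The step initial condition is crucial here because the explicit form of the eigenfunctions yields Cauchy–type products that decay uniformly away from the Bethe curves, and this is what eventually makes the route via $L\to\infty$ viable.
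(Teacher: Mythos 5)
Your first step (the finite-speed coupling showing $\mathbb{P}^{(L)}=\mathbb{P}^{(\infty)}$ for all sufficiently large $L$) is exactly how the paper starts. But your steps 2 and 3 contain a genuine gap. The discrete-to-continuous passage for the Fredholm determinant is \emph{not} a Riemann-sum limit: here $N$ is held fixed while $L\to\infty$, so the right Bethe roots never become dense on a curve --- there are only $N$ of them, and for fixed $z$ with $|z|<1$ they all collapse to $0$ (while the $L-N$ left roots collapse to $-1$) because $z^L\to 0$. There is no ``limit curve'' on which sums over $\mathcal{R}_z$ could be approximated by integrals with $O(1/L)$ errors, and the contours in Definition~\ref{def:operators_K1Y} are not accumulation sets of Bethe roots at all. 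Relatedly, your change of variables $\theta_\ell=z_{\ell+1}/z_\ell$ is the wrong one: the integrand depends on the $z_i$ only through $z_i^L$, and the quantities that must be held fixed are $\theta_i=z_{i+1}^L/z_i^L$ (together with $\theta_0=z_1^L$); if you freeze the plain ratios and let $L\to\infty$, every factor $(z_{\ell+1}/z_\ell)^L$ in $Q_1,Q_2$ and in $\mathscr{C}^{(L)}_{\mathrm{step}}$ tends to $0$ and the formula degenerates, so the route as written does not produce \eqref{eq:thm1}.

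What the paper actually does is purely algebraic at \emph{fixed} large $L$: after the coupling, it substitutes $\theta_i=z_{i+1}^L/z_i^L$ ($z_0:=1$), observes that $\hat{\mathscr{C}}^{(L)}_{\mathrm{step}}$ and $\hat{\mathscr{D}}^{(L)}_{\mathrm{step}}$ are well-defined functions of $(\theta_0,\dots,\theta_{m-1})$, and proves (Lemmas~\ref{lemma: Chat0} and~\ref{lemma: Dhat0}) that both extend analytically to $\theta_0=0$ under the constraint \eqref{eq: constraint large}; the $\theta_0$-integral is then the evaluation at $\theta_0=0$. At $\theta_0=0$ all $z_i\to 0$ simultaneously, $\hat{\mathscr{C}}^{(L)}_{\mathrm{step}}$ reduces to $\prod_{\ell=1}^{m-1}(1-\theta_\ell)^{-1}$, and the degenerate discrete Fredholm determinant is identified with the continuous-contour determinant $\mathcal{D}^{(\infty)}_{\mathrm{step}}$ not by any trace-norm limit but by an exact identity between sums over roots of $q_z$ and multiple contour integrals (Proposition 4.4 of \cite{liu2019}, applied with $q(w)=w^N(w+1)^{L-N}(1+pw)^{-N}$). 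If you want to salvage your outline, you would need to replace the Riemann-sum argument by this exact residue-type identity and carry out the analytic continuation in $\theta_0$; the analyticity at $\theta_0=0$ of $\hat{\mathscr{D}}^{(L)}$ is precisely where the hypothesis $L>\max_\ell(a_\ell+k_\ell)-y_N$ enters, and it is the step your proposal currently has no substitute for.
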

\begin{remark}
	Under the standard coupling between discrete time parallel TASEP and geometric last passage percolation,  we have the following equality in distributions:
	\begin{equation}\label{eq: taseplpp}
		\mathbb{P}_{\mathrm{step}}^{(\infty)} \left( 
		\bigcap_{\ell=1}^m \left\{ x_{k_\ell} (t_\ell) \ge a_\ell 
		\right\}\right) = \mathbb{P}_{\mathrm{GLPP}}\left( 
		\bigcap_{\ell=1}^m \left\{ G(k_\ell,a_\ell+k_\ell)\leq t_{\ell} \right\}\right),
	\end{equation}
where $G(m,n)$ is the point-to-point last passage time from $(0,0)$ to $(m,n)$ in the usual geometric last passage percolation model with parameter $q=1-p$. For the right-hand side of \eqref{eq: taseplpp} a formula of similar form as \eqref{eq:thm1} was obtained in Theorem 2 of \cite{johansson2019multi}. We expect the two formulas to be equivalent but currently we are not able to prove it for $m\geq 2$.  
\end{remark}
\smallskip
Starting from the finite-time formula, we can then take large-time limit under the $1:2:3$ KPZ scaling. 
\begin{theorem}\label{thm:main2} Consider discrete time parallel TASEP on $\mathbb{Z}$ with step initial condition $x_i(0)= -i$. We rescale the parameters such that 
	\begin{equation}\label{eq: rescale infinite}
		t_{i}=\frac{2q^{1/4}}{1-q}\tau_i T+O(1),\quad  a=\gamma_iT^{2/3}+O(1),\quad k_i= \frac{q^{1/4}}{1+\sqrt{q}}\tau_iT-\frac{1}{2}\gamma_i T^{2/3}-\frac{q^{1/4}}{2} \mathrm{x}_i T^{1/3}+O(1),
	\end{equation}
	where $q=1-p$. Then 
	\begin{equation}\label{eq: limit thm infinite}
		\begin{aligned}
		\lim_{T\to \infty}\mathbb{P}_{\mathrm{step}}^{(\infty)} \left( 
		\bigcap_{\ell=1}^m \left\{ x_{k_\ell} (t_\ell) \ge a_\ell 
		\right\}\right)&= \mathbb{F}_{\mathrm{step}}^{\mathrm{kpz}}(\mathrm{x}_1,\cdots,\mathrm{x}_m; (\tau_1,\gamma_1),\cdots,(\tau_{m},\gamma_{m}))\\
		 &= \oint\cdots\oint \left[\prod_{\ell=1}^{m-1}\frac{1}{1-\theta_\ell}\right] \mathrm{D}_{\mathrm{step}}^{\mathrm{kpz}}(\theta_1,\cdots,\theta_{m-1}) \frac{\dd \theta_1}{2\pi \ii \theta_1}\cdots \frac{\dd \theta_{m-1}}{2\pi \ii \theta_{m-1}}.
		\end{aligned}
	\end{equation}
Here $\mathrm{D}_{\mathrm{step}}^{\mathrm{kpz}}(\theta_1,\cdots,\theta_{m-1})$ is a Fredholm determinant defined in Definition~\ref{def: limiting infinite}. We remark that the right-hand side of \eqref{eq: limit thm infinite}  agrees with the right-hand side of equation (2.19) in \cite{liu2019}, with the re-scaled parameters changed from $(\tau_i,\gamma_i, \mathrm{x}_i)$ to  $(\tau_i,2x_i, h_i)$. Our parameters are chosen to be consistent with the periodic cases. 
\end{theorem}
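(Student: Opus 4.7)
The plan is to take the large-$T$ limit of the finite-time formula \eqref{eq:thm1} directly, using steepest descent analysis on the Fredholm determinant $\mathcal{D}_{\mathrm{step}}^{(\infty)}(\theta_1,\cdots,\theta_{m-1})$. The $\theta$-contour integrals in \eqref{eq:thm1} are on circles of radius less than $1$ independent of $T$, so these outer integrals are essentially inert under the limit; the work goes into showing that, with the scaling \eqref{eq: rescale infinite}, the kernels appearing inside $\mathcal{D}_{\mathrm{step}}^{(\infty)}$ converge (after a suitable change of variables) to the kernels building $\mathrm{D}_{\mathrm{step}}^{\mathrm{kpz}}$ of Definition~\ref{def: limiting infinite}, uniformly and with sufficient decay to justify exchanging the limit with the Fredholm series.

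First I would unpack $\mathcal{D}_{\mathrm{step}}^{(\infty)}$ from Definition~\ref{def:operators_K1Y} and isolate the exponential factors of the form $\mathrm{e}^{t_\ell \log f(z)+a_\ell \log g(z)+k_\ell \log h(z)}$ inside each kernel entry. Substituting the scalings $t_\ell \sim \frac{2q^{1/4}}{1-q}\tau_\ell T$, $a_\ell \sim \gamma_\ell T^{2/3}$, $k_\ell \sim \frac{q^{1/4}}{1+\sqrt q}\tau_\ell T - \tfrac12\gamma_\ell T^{2/3}-\tfrac{q^{1/4}}{2}\mathrm{x}_\ell T^{1/3}$, one obtains a phase of the form $T\Phi(z)+T^{2/3}\Psi_\ell(z)+T^{1/3}\Xi_\ell(z)$. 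The constants in \eqref{eq: rescale infinite} are chosen precisely so that the leading phase $\Phi$ has a double critical point $z_c$ (which one checks is the standard $q$-deformed critical point $z_c=-\sqrt{q}$ or its analogue, corresponding to $\Phi'(z_c)=\Phi''(z_c)=0$ with $\Phi'''(z_c)\neq 0$); this matches the constants $c_2,c_3,c_4,c_5$ of Theorem~\ref{thm:main} in the $\rho\to 1$ regime appropriate for step data on $\mathbb{Z}$.

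Next I would deform the two kinds of contours (the $z$- and $w$-contours in the kernels of $\mathcal{K}_1^{(\infty)}$ and $\mathcal{K}_{\mathrm{step}}^{(\infty)}$) onto the canonical steepest-descent/ascent contours through $z_c$, namely the rays on which $\Re \Phi$ decreases/increases most steeply. On these contours one introduces the local coordinates $z=z_c+\zeta T^{-1/3}$, etc., so that $T\Phi(z)\to \tfrac{1}{3}\zeta^3$ (up to a universal constant absorbed into the KPZ scaling factors $c_i$) while $T^{2/3}\Psi_\ell$ and $T^{1/3}\Xi_\ell$ become the linear and quadratic terms $\gamma_\ell\zeta^2$ and $\mathrm{x}_\ell\zeta$ that appear in the limiting Airy-type kernels. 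The pointwise limits of the kernel entries then match the definitions feeding $\mathrm{D}_{\mathrm{step}}^{\mathrm{kpz}}$. Because the scalings here agree with those used in \cite{liu2019} after the stated variable change, the limiting kernel coincides with the one in (2.19) there.

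The main obstacle is the usual one in this pipeline: promoting pointwise convergence of the kernels to convergence of the Fredholm determinants uniformly in $(\theta_1,\dots,\theta_{m-1})$ on their contours. This requires (i) Gaussian-type tail bounds $|\mathcal{K}(\zeta,\zeta')|\le C\mathrm{e}^{-c(|\zeta|+|\zeta'|)}$ on the deformed contours outside a neighborhood of $z_c$, uniform in $T$, obtained by verifying $\Re \Phi$ is bounded away from $0$ there; (ii) controlling the contribution of the ``discrete'' structure left over from the periodic formula (the sets related to roots of polynomials mentioned in Theorem~\ref{thm: finite-time}) as these are pushed to infinity in the $L\to\infty$ limit that produced Theorem~\ref{thm:main1}, so that in the present step only truly continuous kernels remain; and (iii) handling the prefactor $\prod_\ell(1-\theta_\ell)^{-1}$ and any $\theta$-dependent combinatorial factors, which pose no analytic difficulty but must be tracked to verify the match with (2.19) of \cite{liu2019}. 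Once the Hadamard bound together with the tail estimates gives a dominating integrable majorant, dominated convergence in the Fredholm series yields the limit \eqref{eq: limit thm infinite}, locally uniformly in $\mathrm{x}_\ell, \gamma_\ell, \tau_\ell$, completing the proof.
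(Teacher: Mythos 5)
Your proposal follows essentially the same route as the paper: start from the finite-time formula \eqref{eq:thm1}, treat the $\theta$-integrals as inert, and run a steepest-descent analysis of the kernels inside $\mathcal{D}_{\mathrm{step}}^{(\infty)}$, establishing pointwise convergence to the kernels of $\mathrm{D}_{\mathrm{step}}^{\mathrm{kpz}}$ in a $T^{-1/3}$-neighborhood of the critical point together with exponential tail bounds, and then passing to the limit of the Fredholm series by a Hadamard bound and dominated convergence. Two corrections are worth recording. First, the double critical point of the phase coming from $F_\ell(w)=w^{k_\ell}(1+w)^{-a_\ell-k_\ell}(1+pw)^{t_\ell-k_\ell}$ under the scaling \eqref{eq: rescale infinite} is $w_c=-\frac{1}{1+\sqrt q}$ (equivalently $1+pw_c=\sqrt q$), which is the periodic critical point \eqref{eq: critical point} evaluated at $\rho=1/2$; in fact the constants in \eqref{eq: rescale infinite} are exactly \eqref{eq: scaling constants} at $\rho=1/2$, so the relevant regime is density $1/2$, not a ``$\rho\to1$ regime'', and the local variable is $w=w_c+\frac{q^{1/4}}{1+\sqrt q}\,\zeta\, T^{-1/3}$, which produces $\frac{F_i(w)}{F_i(w_c)}=\exp\bigl(-\frac13\tau_i\zeta^3+\frac12\gamma_i\zeta^2+\mathrm{x}_i\zeta+O(T^{\epsilon-1/3})\bigr)$. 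Second, your obstacle (ii) is moot: Theorem~\ref{thm:main1} already delivers kernels acting on the continuous contours $\mathcal{S}_1,\mathcal{S}_2$ of Section~\ref{sec:Fredholm_representation}, so no discrete Bethe-root structure survives to this stage. Concerning your steps (i) and (iii), the paper implements the needed uniform decay by normalizing $F_i$ by $F_i(w_c)$ and conjugating the kernels so that each entry carries $\sqrt{\tilde f_i(w)}\sqrt{\tilde f_j(w')}$, then deforming the contours $\Sigma_j$ so that near $w_c$ they coincide with $T^{-1/3}$-scaled copies of the limiting contours $\Gamma_j$; with $\tilde f_j(w)=\mathrm{f}_j(\zeta)\bigl(1+O(T^{\epsilon-1/3})\bigr)$ for $|\zeta|\le T^{\epsilon/4}$ and $O(e^{-cT^{\epsilon/2}})$ otherwise, trace convergence for each $n$ plus a $C^n$ bound on the series terms yields the claimed locally uniform convergence, exactly as you outline.
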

\medskip
\subsection{Bethe equations and Bethe roots}\label{sec: bethe roots}
For our analysis on the discrete time periodic TASEP, the following polynomial and its roots play essential role: 
\begin{definition}[Bethe roots] Given $z\in \mathbb{C}$ and $0<p<1$. Define the degree $L$ polynomial $q_z(w)$ by
	\begin{equation}\label{eq: Bethepoly}
	q_z(w):= w^N(1+w)^{L-N}-z^L(1+pw)^N.
	\end{equation}
	We call this polynomial the Bethe polynomial associated to $z$ and its roots the Bethe roots. We denote the set of all roots of the Bethe polynomial $q_z(w)$ by $\mathcal{S}_z$:
	\begin{equation}
	\mathcal{S}_z :=\{w\in \mathbb{C}: q_z(w)=0\}.
	\end{equation}
\end{definition} 
The Bethe root set $\mathcal{S}_z$ is contained in the level set $\{w\in \mathbb{C}: |w|^\rho|1+w|^{1-\rho}=|z|\cdot |1+pw|^{\rho}\}$, which is sometimes called a deformed Cassini oval. For given jumping rate $p$ and density $\rho$, there exists a critical radius $|z|=\mathbbm{r}_c$ for the level sets. For such critical $z$, the level set contains a self-intersecting point at $w=w_c$. Here 
\begin{equation} \label{eq: critical point}
	w_c := -\frac{2\rho}{1+\sqrt{1-4p\cdot\rho(1-\rho)}},
\end{equation}
and
\begin{equation}\label{eq: critical radius}
	\mathbbm{r}_c:= \left(\frac{-w_c}{1+pw_c}\right)^{\rho}(1+w_c)^{1-\rho}.
\end{equation}

It is not hard to check that for $|z|<\mathbbm{r}_c$, the level set consists of two disjoint contours while for $|z|>\mathbbm{r}_c$ the two contours merge to a single contour. For $z=\mathbbm{r}_c$ there is a self-intersecting point for the contour at $w=w_c$. See Figure \ref{fig: Bethe roots} for an illustration. We remark that the Bethe polynomials (and their roots) we are considering here are one-parameter generalizations of those considered in \cite{baik2018relax,baik2019multi,baik2019general} which is related to continuous time periodic TASEP and corresponds to $p= 0$ degenerations of \eqref{eq: Bethepoly}. 

\begin{figure}[h]
	\centering
	\includegraphics[width = 0.5\textwidth]{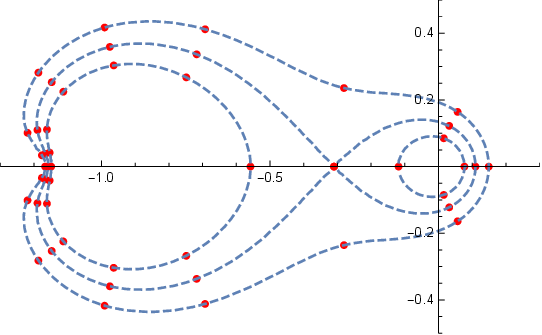}
	\caption{The solid dots are roots of $q_z(w)$ with $L=16$, $N=4$,  $p=5/6$ and $|z|= \frac{11}{10}\mathbbm{r}_c,\mathbbm{r}_c\text{ and }\frac{9}{10}\mathbbm{r}_c $ from outside to inside. The dashed lines are the corresponding level sets.}
	\label{fig: Bethe roots}
\end{figure}
\begin{definition}[Left and right Bethe roots]\label{def: left and right bethe}
	For $|z|<\mathbbm{r}_c$, we define the sets
	\begin{equation}
	\mathcal{L}_z:=\{w\in \mathcal{S}_z: \text{Re}(w)<w_c\},\quad \mathcal{R}_z:=\{w\in \mathcal{S}_z: \text{Re}(w)>w_c\},
	\end{equation}
where $w_c$ and $\mathbbm{r}_c$ are defined in \eqref{eq: critical point} and \eqref{eq: critical radius}.
	Then it is straightforward to check that $|\mathcal{L}_z|=L-N$ and $|\mathcal{R}_z|=N$. Roots in $\mathcal{L}_z$ and $\mathcal{R}_z$ are called left and right Bethe roots, respectively. We also define the left and right Bethe polynomials $q_{z,\LL}(w)$ and $q_{z,\RR}(w)$ as the monic polynomials with roots in $\mathcal{L}_z$ and $\mathcal{R}_z$:
	\begin{equation}
	q_{z,\LL}(w):= \prod_{u\in \mathcal{L}_z}(w-u),\quad q_{z,\RR}(w):= \prod_{v\in \mathcal{R}_z}(w-v).
	\end{equation}
	Then by definition we have 
	\begin{equation*}
	\mathcal{S}_z = \mathcal{L}_z\cup \mathcal{R}_z,\quad q_z(w)=q_{z,\LL}(w)q_{z,\RR}(w).
	\end{equation*}
\end{definition}

\subsection{A symmetric function related to initial conditions}
In our finite-time multi-point distribution formula \eqref{eq: multipoint general}, the quantities encoding information in the initial condition are all related to the following symmetric function:
\begin{definition}[Symmetric function]\label{def: symmetricfunction}
	Given $p\in \mathbb{C}$ and $\lambda = (\lambda_1,\cdots,\lambda_N)\in \mathbb{Z}^N$ with $\lambda_{1}\geq \cdots\geq \lambda_N$. We define 
	\begin{equation}
	\mathcal{F}_{\lambda}(w_1,\cdots,w_N;p) := \frac{\det\left[w_j^{N-i}(pw_j+1)^{i-1}(w_j+1)^{\lambda_{i}}\right]_{i,j=1}^N}{\det[w_j^{N-i}]_{i,j=1}^N}.
	\end{equation}
\end{definition}

\begin{remark}
	The symmetric function $\mathcal{F}_\lambda$ defined here is a one-parameter generalization of the Grothendieck-like symmetric function defined in equation (3.6) of \cite{baik2019general} which corresponds to the $p=0$ degeneration in our situation.
\end{remark}
The following two quantities related to $\mathcal{F}_{\lambda}$ encode the initial condition:
\begin{definition}[Global energy and characteristic function]\label{def: globalenergy}
	 For $\vec y\in \conf_{N}^{(L)}$, we set
	\begin{equation}\label{eq: ICtopartition}
	\lambda(\vec y) = (y_1+1,y_2+2,\cdots,y_N+N).
	\end{equation}
	For $|z|<\mathbbm{r}_c$, we define the global energy $\mathcal{E}_{\vec y}(z)$ by
	\begin{equation}\label{eq: global energy}
	\mathcal{E}_{\vec y}(z):= \mathcal{F}_{\lambda(\vec y)}(\mathcal{R}_z;p).
	\end{equation} 
	When  $\mathcal{E}_{\vec y}(z)\neq 0$, we define the characteristic function $\chi_{\vec y}(v,u;z)$ for a left Bethe root $u$ and a right Bethe root $v$ by
	\begin{equation}\label{eq: characteristic function}
	\chi_{\vec y}(v,u;z):= \frac{\mathcal{F}_{\lambda(\vec y)}(\mathcal{R}_z\cup\{u\}\backslash\{v\};p)}{\mathcal{F}_{\lambda(\vec y)}(\mathcal{R}_z;p)}, \quad \text{for } u\in \mathcal{L}_z \text{ and } v\in \mathcal{R}_z.
	\end{equation}
\end{definition}

\begin{remark}\label{rmk: step}
	A straightforward calculation shows that for step initial condition $\vec y=(-1,\cdots,-N)$, we have $\mathcal{F}_{\lambda(\vec y)}\equiv 1$. Hence the global energy function and characteristic function are both constant $1$ for the step initial condition. In general since the roots of the Bethe polynomial $q_z(w)$ depend analytically on $z$, the function $\mathcal{E}_{\vec y}(z)$ is analytic for $|z|<\mathbbm{r}_c$. Furthermore $\mathcal{E}_{\vec y}(z)$ can not be constant zero. In fact $\mathcal{E}_{\vec y}(0)=1$ since when $|z|\to 0$ all the right Bethe roots converge to $0$. As a consequence $\mathcal{F}_{\lambda(\vec y)}(\mathcal{R}_z;p)$ is nonzero for all but finitely many $z$ in any compact subset of $\{|z|<\mathbbm{r}_c\}$, which means $\chi_{\vec y}(v,u;z)$ is a well-defined meromorphic function in $z$ on $\{|z|< \mathbbm{r}_c\}$ for fixed $u,v$.
\end{remark}
\medskip
\subsection{Definition of \texorpdfstring{$\mathscr{C}_{\vec y}^{(L)}(\vec z)$}{Lg} and \texorpdfstring{$\mathscr{D}_{\vec y}^{(L)}(\vec z)$}{Lg}}\label{sec: def of C and D}
\begin{definition}[Definition of $\mathscr{C}_{\vec y}^{(L)}(\vec z)$]\label{def: C general}
	With the global energy function $\mathcal{E}_{\vec y}(z)$ defined in \eqref{eq: global energy}, we define 
	\begin{equation}\label{eq: C(z) general}
	\mathscr{C}_{\vec y}^{(L)}(\vec z) := \mathcal{E}_{\vec y}(z_1)\mathscr{C}_{\text{step}}^{(L)}(\vec z),
	\end{equation}
	where 
	\begin{equation}
	\begin{aligned}
	\mathscr{C}_{\text{step}}^{(L)}(\vec z) &:= \left[\prod_{\ell=1}^{m}\frac{E_{\ell}(z_{\ell})}{E_{\ell-1}(z_{\ell})}\right]\left[\prod_{\ell=1}^{m}\frac{\prod_{u\in \mathcal{L}_{z_\ell}}(-u)^N\prod_{v\in \mathcal{R}_{z_{\ell}}}(v+1)^{L-N}}{\Delta(\mathcal{R}_{z_{\ell}};\mathcal{L}_{z_{\ell}})}\right]\\
	&\cdot \left[\prod_{\ell=2}^{m}\frac{z_{\ell-1}^L}{z_{\ell-1}^L-z_{\ell}^L}\right]\left[\prod_{\ell=2}^{m}\frac{\Delta(\mathcal{R}_{z_{\ell}};\mathcal{L}_{z_{\ell-1}})}{\prod_{u\in \mathcal{L}_{z_{\ell-1}}}(-u)^N\prod_{v\in \mathcal{R}_{z_{\ell}}}(v+1)^{L-N}}\right].
	\end{aligned}
	\end{equation}
	Here 
	\begin{equation}
	E_{\ell}(z):= \prod_{u\in \mathcal{L}_z} (-u)^{-k_\ell}\prod_{v\in \mathcal{R}_z}(v+1)^{-a_\ell-k_\ell}(pv+1)^{t_\ell-k_\ell},
	\end{equation}
	for $1\leq i\leq m$ and $E_0(z):=1$.
\end{definition}

$\mathscr{D}_{\vec y}^{(L)}(\vec z)$ is a Fredholm determinant with kernel acting on certain $\ell^2$ space over discrete sets related to the Bethe roots. More precisely for $m$ distinct complex numbers $z_i$ satisfying $|z_i|<\mathbbm{r}_c$, we define the discrete sets 
\begin{equation} \label{eq:def_scrS1}
\mathscr{S}_1:= \mathcal{L}_{z_1} \cup \mathcal{R}_{z_2}\cup\mathcal{L}_{z_3}\cup\cdots\cup 
\begin{cases} 
\mathcal{L}_{z_m}, &\text{if $m$ is odd},\\
\mathcal{R}_{z_m}, &\text{if $m$ is even}, 
\end{cases}
\end{equation}

and
\begin{equation} \label{eq:def_scrS2}
\mathscr{S}_2:= \mathcal{R}_{z_1} \cup \mathcal{L}_{z_2}\cup\mathcal{R}_{z_3}\cup\cdots\cup 
\begin{cases} 
\mathcal{R}_{z_m}, &\text{if $m$ is odd},\\
\mathcal{L}_{z_m}, &\text{if $m$ is even}.
\end{cases}
\end{equation}

\begin{definition}[Definition of $\mathscr{D}_{\vec y}^{(L)}(\vec z)$]\label{def:D general}
	Let $0<|z_m|<\cdots< |z_1|<\mathbbm{r}_c$. 
	Assume $\mathcal{E}_{\vec y}(z_1)\ne 0$ so that $\chi_{\vec y}(v,u;z_1)$ is well defined.
	Define
	\begin{equation} \label{eq:def_D0}
	\mathscr{D}_{\vec y}^{(L)}(\vec z) = \det(I -  \mathscr{K}_{1}^{(L)}\mathscr{K}_{\vec y}^{(L)}) , 
	\end{equation}
	where $\mathscr{K}_{1}^{(L)} : \ell^2(\mathscr{S}_2) \to \ell^2(\mathscr{S}_1)$ and $\mathscr{K}_{\vec y}^{(L)}:\ell^2(\mathscr{S}_1) \to \ell^2(\mathscr{S}_2)$ have kernels given by
	\begin{equation}
	\mathscr{K}_{1}^{(L)}(w, w') := \left( \delta_i(j) +\delta_i( j + (-1)^i )\right) 
	\frac{ J(w) f_i(w) (H_{z_i}(w))^2 }
	{ H_{z_{i-(-1)^i}}(w) H_{z_{j-(-1)^j}}(w') (w-w')} Q_1(j) 
	\end{equation}
	and
	\begin{equation}
	\mathscr{K}_{\vec y}^{(L)}(w', w):= 
		\left( \delta_j(i) +\delta_j( i - (-1)^j )\right)  \frac{ J(w') f_j(w') (H_{z_j}(w'))^2 }
		{ H_{z_{j + (-1)^j}}(w') H_{z_{i + (-1)^i}}(w) (w'-w)} Q_2(i)\Lambda(i,w,w') ,
	\end{equation}
	for 
	\begin{equation*}
	w \in (\mathcal{L}_{z_i} \cup \mathcal{R}_{z_i}) \cap \mathscr{S}_1 \quad \text{and} \quad 
	w'\in (\mathcal{L}_{z_j} \cup \mathcal{R}_{z_j}) \cap \mathscr{S}_2
	\end{equation*}
	with $1\le i,j\le m$. Here 
	\begin{equation}
		\Lambda(i,w,w') = \begin{cases}
		 	\chi_{\vec y}(w,w';z_1)\quad &\text{for }i=1,\\
		 	1\quad & \text{for }2\leq i\leq m.
		 \end{cases}
	\end{equation}
	The functions $J(w)$ and $Q_i(j)$ are defined as follows:
	\begin{equation}
	J(w):= \frac{w(w+1)(pw+1)}{N+Lw+p(L-N)w^2},
	\end{equation}
	and 
	\begin{equation}
	Q_1(j):= 1-\left(\frac{z_{j-(-1)^j}}{z_j}\right)^L,\quad Q_2(j):= 1-\left(\frac{z_{j+(-1)^j}}{z_j}\right)^L,
	\end{equation}
	for $j=1,\cdots,m$. Here we set $z_{m+1}:=0$ for convenience.
	
	To define $H_z(w)$ and $f_i(w)$ we recall the definition of left and right Bethe polynomials and Bethe roots discussed in Section \ref{sec: bethe roots}. With the notation there we set 
	\begin{equation}
	H_z(w):=\begin{cases}
	\frac{q_{z,\RR}(w)}{w^N}\quad &\text{for }\text{Re}(w)<w_c,\\
	\frac{q_{z,\LL}(w)}{(w+1)^{L-N}}\quad &\text{for }\text{Re}(w)>w_c.
	\end{cases}
	\end{equation}
	Finally the functions $f_i(w)$ encodes the information of the parameters $a_i,k_i,t_i$'s:
	\begin{equation}\label{eq: f_i}
	f_\ell(w):= \begin{cases}
	\frac{F_{\ell}(w)}{F_{\ell-1}(w)}\quad \text{for } \text{Re}(w)<w_c,\\
	\frac{F_{\ell-1}(w)}{F_{\ell}(w)}\quad \text{for } \text{Re}(w)>w_c,
	\end{cases}
	\end{equation}
	where 
	\begin{equation}
	F_{\ell}(w):= w^{k_\ell}(1+w)^{-a_{\ell}-k_{\ell}}(1+pw)^{t_{\ell}-k_{\ell}},
	\end{equation}
	for $1\leq \ell\leq m$ and $F_0(w):=1$.
\end{definition}
\medskip
\subsection{Definition of \texorpdfstring{$\mathcal{D}_{\mathrm{step}}^{(\infty)}(\theta_1,\cdots,\theta_{m-1})$}{Lg}}
\label{sec:Fredholm_representation}

The function  $\mathcal{D}_{\mathrm{step}}^{(\infty)}(\theta_1,\cdots,\theta_{m-1})$ has a similar structure as $\mathscr{D}_{\vec y}^{(L)}(\vec z)$ defined in Definition~\ref{def:D general}. It is also a Fredholm determinant of the form $\det(I-\mathcal{K}_1^{(\infty)}\mathcal{K}_{\mathrm{step}}^{(\infty)})$.  The kernels also share certain similarities and we will illustrate the relationship between the two in Section~\ref{sec: infinite}. 
\subsubsection{Spaces of the operators}
\label{sec:spaces_operators}

Instead of discrete sets, the operators appearing in  $\mathcal{D}_{\mathrm{step}}^{(\infty)}(\theta_1,\cdots,\theta_{m-1})$ are defined on specific spaces of nested contours defined as follows: Let $\Omega_\LL$ and $\Omega_\RR$ be two simply connected regions on the complex plane such that 
\begin{enumerate}[(1)]
\item $\Omega_\LL\subset \{w\in \mathbb{C}: \mathrm{Re}(w)<-\frac{1}{1+\sqrt{1-p}}\}$ and  $\Omega_\LL$ contains the point $-1$ but not $-1/p$.
\item $\Omega_\RR\subset \{w\in \mathbb{C}: \mathrm{Re}(w)>-\frac{1}{1+\sqrt{1-p}}\}$
 and  $\Omega_\RR$ contains the point $0$.
\end{enumerate}
Let $\Sigma_{m,\LL}^{+},\cdots,\Sigma_{2,\LL}^{+}$, $\Sigma_{1,\LL}$, $\Sigma_{2,\LL}^{-},\cdots,\Sigma_{m,\LL}^-$ be $2m-1$ nested simple closed contours, from outside to inside, in $\Omega_\LL$ enclosing the point $-1$ but not $-1/p$. Similarly,
$\Sigma_{m,\RR}^{+},\cdots,\Sigma_{2,\RR}^{+}$, $\Sigma_{1,\RR}$, $\Sigma_{2,\RR}^{-},\cdots,\Sigma_{m,\RR}^-$ be $2m-1$ nested simple closed contours, from outside to inside, in $\Omega_\RR$ enclosing the point $0$.  We define
\begin{equation*}
	\Sigma_{\ell,\LL}:=\Sigma_{\ell,\LL}^+\cup \Sigma_{\ell,\LL}^-, \qquad \Sigma_{\ell,\RR}:=\Sigma_{\ell,\RR}^+\cup \Sigma_{\ell,\RR}^-,\qquad \Sigma_{\ell}= \Sigma_{\ell,\LL}\cup \Sigma_{\ell,\RR},\qquad\ell=2,\cdots,m,
\end{equation*}
and
\begin{equation*}
	\mathcal{S}_1:= \Sigma_{1,\LL} \cup \Sigma_{2,\RR} \cup \cdots \cup \begin{dcases}
		\Sigma_{m,\LL}, & \text{ if $m$ is odd},\\
		\Sigma_{m,\RR}, & \text{ if $m$ is even},
	\end{dcases}
\end{equation*}
and 
\begin{equation*}
	\mathcal{S}_2:= \Sigma_{1,\RR} \cup \Sigma_{2,\LL} \cup \cdots \cup \begin{dcases}
		\Sigma_{m,\RR}, & \text{ if $m$ is odd},\\
		\Sigma_{m,\LL}, & \text{ if $m$ is even}.
	\end{dcases}
\end{equation*}

\subsubsection{Operators $\mathcal{K}_1^{(\infty)}$ and $\mathcal{K}_{\mathrm{step}}^{(\infty)}$}
\label{sec:def_operators}

Now we introduce the operators $\mathcal{K}_1^{(\infty)}$ and $\mathcal{K}_{\mathrm{step}}^{(\infty)}$ to define $\mathcal{D}_{\mathrm{step}}^{(\infty)}(\theta_1,\cdots,\theta_{m-1})$ in Theorem~\ref{thm:main1}.

\begin{definition}
	\label{def:operators_K1Y}
	We define
	\begin{equation*}
		\mathcal{D}_{\mathrm{step}}^{(\infty)}(\theta_1,\cdots,\theta_{m-1})=\det\left( I - \mathcal{K}_1^{(\infty)} \mathcal{K}_{\mathrm{step}}^{(\infty)} \right),
	\end{equation*}
	where two operators
	\begin{equation*}
		\mathcal{K}_1^{(\infty)}: L^2(\mathcal{S}_2) \to L^2(\mathcal{S}_1),\qquad \mathcal{K}_{\mathrm{step}}^{(\infty)}: L^2(\mathcal{S}_1)\to L^2(\mathcal{S}_2)
	\end{equation*}
	are defined by the kernels
	\begin{equation}
		\label{eq:K1}
		\mathcal{K}_1^{(\infty)}(w,w'):= \left(\delta_i(j) + \delta_i( j+ (-1)^i)\right) \frac{ f_i(w) }{w-w'} Q_1^{(\infty)}(j)P_j(w')\quad \text{for }w'\in \Sigma_j\cap\mathcal{S}_2,\ w\in \Sigma_i\cap\mathcal{S}_1,
	\end{equation}
	and
	\begin{equation}
		\label{eq:KY}
		\mathcal{K}_{\mathrm{step}}^{(\infty)}(w',w):= 
			\left(\delta_j (i) + \delta_j(i - (-1)^j)\right) \frac{ f_j(w') }{w'-w} Q_2^{(\infty)}(i)P_i(w)\quad \text{for }w\in \Sigma_i\cap\mathcal{S}_1,\ w'\in \Sigma_j\cap\mathcal{S}_2,
	\end{equation}
    where $1\leq i,j\leq m$. Here the function $f_i(w)$ is the same as the one appearing in the kernels for periodic case defined in \eqref{eq: f_i} and we define 
	\begin{equation}\label{eq: Q1}
		Q_1^{(\infty)}(j):= \begin{cases}
			1-\theta_j,\quad &\text{if $j$ is odd and $j<m$},\\
			1-\theta_{j-1}^{-1},\quad &\text{if $j$ is even},\\
			1,&\text{if $j=m$ is even or $j=1$}.
		\end{cases}
	\end{equation}
and 
	\begin{equation}\label{eq: Q2}
	Q_2^{(\infty)}(j):= \begin{cases}
		1-\theta_i,\quad &\text{if $j$ is even and $j<m$},\\
		1-\theta_{j-1}^{-1},\quad &\text{if $j$ is odd and $j>1$},\\
		1,&\text{if $j=m$ is odd}.
	\end{cases}
\end{equation}
	We also define 
	\begin{equation}
		P_j(w):= \begin{cases}
			\frac{1}{1-\theta_{j-1}},\quad & w\in \Sigma_{j,\LL}^{+}\cup\Sigma_{j,\RR}^{+}, \quad j=2,\cdots,m\\
			\frac{-\theta_{j-1}}{1-\theta_{j-1}},\quad & w\in \Sigma_{j,\LL}^{-}\cup\Sigma_{j,\RR}^{-}, \quad j=2,\cdots,m\\
			1,\quad & w\in \Sigma_{1,\LL}\cup\Sigma_{1,\RR}. 
		\end{cases}
	\end{equation}
\end{definition}

\bigskip
\section{Transition probability}
\label{sec: transition}
In this section we give an explicit integral formula for the transition probability of discrete time parallel TASEP in the configuration space $\conf_{N}^{(L)}$. This is the starting point for deriving the finite-time joint distribution formulas.
\begin{proposition}\label{prop:transition probability}
	Given two particle configurations $\vec x=(x_1,\cdots,x_N),\  \vec y=(y_1,\cdots,y_N) \in \conf_{N}^{(L)}$. Let $P_t(\vec y\to \vec x)$ be the transition probability of observing configuration $\vec x$ at time $t$ under the discrete time periodic TASEP dynamics with initial configuration $\vec y$. With the convention $x_0:= x_N+L$ we have
	\begin{equation}\label{eq:transition probability}
	     P_t(\vec y\to \vec x)= \prod_{i=1}^N (1-p\1_{x_{i-1}-x_i=1})\oint_{\Gamma} \frac{\dd z}{2\pi \ii z} \det\left[\sum_{w\in \mathcal{S}_z} F_{i,j}(w;\vec x,\vec y,t)J(w)\right]_{i,j=1}^N.
	\end{equation}
	Here $\Gamma$ is any simple closed contour with $0$ inside and $\mathcal{S}_z$ consists of all the roots of the degree $L$ polynomial $q_z(w):= w^N(w+1)^{L-N}-z^L(1+pw)^N$, i.e.,
	\begin{equation}
		\mathcal{S}_z:=\{w\in \mathbb{C}: w^N(w+1)^{L-N}-z^L(1+pw)^N=0\}.
	\end{equation}
	The functions $F_{i,j}(w;t)$ and $J(w)$ are given by
	\begin{equation}
		F_{i,j}(w;\vec x,\vec y,t) = w^{j-i}(w+1)^{-x_{N-i+1}+y_{N-j+1}+i-j-1}(1+pw)^{t+i-j},\quad 1\leq i,j\leq N,
	\end{equation}
	and 
	\begin{equation}
		J(w):= \frac{w(w+1)(1+pw)}{N+Lw+p(L-N)w^2}.
	\end{equation}

\end{proposition}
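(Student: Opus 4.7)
The plan is coordinate Bethe ansatz. The Bethe ansatz for discrete-time parallel TASEP produces eigenfunctions of the one-step generator which, after the usual TASEP antisymmetrization miracle, reduce to antisymmetrized products of plane-wave factors in variables $w_1,\dots,w_N$; imposing the periodic boundary condition $x_0=x_N+L$ then forces each $w_i$ to satisfy $w^N(w+1)^{L-N}=z^L(1+pw)^N$, i.e.\ $w\in\mathcal{S}_z$, with $z$ a free spectral parameter. This motivates the form of \eqref{eq:transition probability}: the contour integral in $z$ packages the spectral parameter, the sum over $w\in\mathcal{S}_z$ in each matrix entry implements the Bethe quantization, the factors $(1+pw)^t$ in $F_{i,j}$ play the role of the time-evolution eigenvalue factors, and $J(w)$ is the Jacobian relating $z$ and $w$ on the curve $q_z(w)=0$: a direct logarithmic differentiation of $z^L=w^N(w+1)^{L-N}/(1+pw)^N$ gives
\begin{equation*}
L\, J(w)\, \frac{\dd z}{z}=\dd w\quad\text{along }q_z(w)=0,
\end{equation*}
which is the key structural identity for all subsequent manipulations.

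Denote the RHS of \eqref{eq:transition probability} by $\widetilde P_t(\vec y\to\vec x)$. It suffices to verify (a) the discrete-time forward equation in $t$, (b) the periodic boundary $x_0=x_N+L$, and (c) the initial condition $\widetilde P_0(\vec y\to\vec x)=\delta_{\vec x,\vec y}$. Condition (b) is automatic, being equivalent to the Bethe quantization that defines the summation set $\mathcal{S}_z$. For (a) the strategy is standard: away from the diagonal of $\conf_N^{(L)}$ (no two adjacent particles) the master equation factorizes into $N$ independent single-particle equations, each satisfied entry by entry because $F_{i,j}$ carries the correct one-step factor; the cases where $x_{i-1}-x_i=1$ for some $i$ are handled by a two-body Bethe identity that produces a cancellation between neighbouring rows/columns of the determinant. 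The prefactor $\prod_{i=1}^N(1-p\1_{x_{i-1}-x_i=1})$ is precisely the discrepancy between the natural Bethe ``free'' amplitude and the genuine parallel-update transition kernel once blocking of adjacent particles at time $t$ is enforced; for $i=1$ this takes care of blocking across the seam via $x_0=x_N+L$.

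The main obstacle is (c), a Cauchy-type summation identity over Bethe roots. At $t=0$ the Jacobian identity above converts expressions of the form $\oint\frac{\dd z}{2\pi\ii z}\sum_{w\in\mathcal{S}_z}J(w)g(w)$ into single-variable $w$-integrals whose contours enclose the roots of $q_0(w)=w^N(w+1)^{L-N}$, namely $w=0$ (multiplicity $N$) and $w=-1$ (multiplicity $L-N$). Expanding the determinant by multilinearity reduces the $t=0$ integrand to a determinant of elementary single-variable residues of the form $\oint\frac{\dd w}{2\pi\ii}w^{j-i-1}(w+1)^{-x_{N-i+1}+y_{N-j+1}+i-j-1}(1+pw)^{i-j}$, which by standard residue calculus vanishes off the diagonal $\vec x=\vec y$ and equals $1$ on it (and the adjacency prefactor equals $1$ when $\vec x=\vec y$). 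The delicate point is the simultaneous contour deformation: the $N$ columns share the same spectral parameter $z$, so one must transport the joint $(z,w_1,\dots,w_N)$-contour through the Riemann surface $\{q_z(w)=0\}$ without picking up spurious residues from the non-trivial Bethe roots, and then align the index shift $i\leftrightarrow N-i+1$ so that the off-diagonal vanishing of the determinant exactly matches the support of the Kronecker delta.
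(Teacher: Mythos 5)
Your overall strategy --- verify that the right-hand side of \eqref{eq:transition probability} satisfies the time evolution, the boundary/periodicity conditions, and the initial condition --- is the same as the paper's, but at each step your argument either asserts the key point or rests on a false intermediate claim. First, the reduction of the parallel-update master equation to ``free evolution plus two-body boundary conditions'' is precisely what must be proven, and it is not the standard continuous-time reduction: because the stationary measure is non-uniform, the paper passes to the ratio $G(\vec x,t;\vec y,0)=P_t(\vec y\to\vec x)/\prod_{i=1}^N(1-p\1_{x_{i-1}-x_i=1})$ and proves (Lemma~\ref{free evolution equation}) that $G$ solves the free equation \eqref{eq:free evolution equation} with the boundary conditions \eqref{eq:boundary conditions}; the proof uses the fact that under parallel updates only the leftmost particle of each cluster can have just hopped, plus an induction on cluster size. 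Saying the prefactor is ``precisely the discrepancy between the Bethe free amplitude and the genuine kernel'' is the statement of that lemma, not a proof of it. Similarly, your claim that the periodicity condition is ``automatic'' from the definition of $\mathcal{S}_z$ skips the actual check: the seam case is verified by a determinantal cancellation, showing rows $1$ and $N$ of the perturbed matrix are proportional via $w^{-N}(w+1)^{N-L}(1+pw)^{N}=z^{-L}$ for $w\in\mathcal{S}_z$ --- short, but not a tautology.

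The more serious gap is the initial condition. Your two intermediate claims are both false: the reduced $t=0$ determinant $\det\bigl[\oint_{\Gamma_{0,-1}}F_{i,j}(w;\vec x,\vec y,0)\,\tfrac{\dd w}{2\pi\ii}\bigr]$ does not equal $\1_{\vec x=\vec y}$, and the adjacency prefactor does not equal $1$ when $\vec x=\vec y$. For instance with $N=2$ and $\vec x=\vec y=(1,0)$ the determinant is $1/(1-p)$ while the prefactor is $1-p$; in general the determinant equals $\1_{\vec x=\vec y}/\prod_i(1-p\1_{x_{i-1}-x_i=1})$, which is the nontrivial identity \eqref{eq:ic further reduction} (equation (3.30) of the Borodin--Ferrari--Pr\"ahofer--Sasamoto paper cited in the text): neither the off-diagonal vanishing nor the diagonal value follows from entrywise residue calculus, since it is a determinant identity requiring column manipulations (as an aside, your displayed entry carries a spurious extra factor $w^{-1}$, which would already make the diagonal entries wrong). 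Moreover, the step you yourself flag as ``the delicate point'' --- collapsing the $z$-integral to an evaluation at $z=0$ even though all $N$ entries share the same $z$ --- is left unresolved; in the paper it requires rewriting each entry as the three-contour integral \eqref{eq: integral rep for entry}, splitting into the cases $x_1<y_1$ and $x_1\ge y_1$, showing the corresponding correction term vanishes, and, in the second case, invoking the cyclic-shift invariance of Proposition~\ref{properties of the transition probability}(iv) (itself a separate algebraic lemma you never establish) to assume $x_1<x_N+L-1$ so the integrand decays like $O(R^{-2})$. Without these ingredients the proposal does not close.
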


\begin{remark}
	We remark that a different formula for the transition probability of discrete time parallel TASEP on a ring was obtained in \cite{povolotsky2007}. The key difference is that the formula in \cite{povolotsky2007} is expressed as an infinite sum of determinants while our formula is a single contour integral of determinants. The main reason for this is that in \cite{povolotsky2007} the authors do not distinguish particle configurations which differ by a translation of an integer multiple of the period, so in our language their transition probability really is 
	\begin{equation*}
		P_t([\vec y]\to [\vec x])=\sum_{k\in \mathbb{Z}}P_t(\vec y\to \vec x+(kL,kL,\cdots,kL)).
	\end{equation*}
	We believe our formula is simpler and more suitable for deriving finite-time joint distributions.
\end{remark}
\begin{remark}
	If we take the continuous time limit by setting $p=\epsilon$, $t= T/\epsilon$ and send $\epsilon \to 0$, the dynamics then becomes continuous time periodic TASEP considered in \cite{baik2018relax} and our formula \eqref{eq:transition probability} reduces to equation (5.4) in \cite{baik2018relax} for the transition probability of continuous time periodic TASEP up to an index reversing.
\end{remark}

We first list a few elementary properties of the transition probability formula \eqref{eq:transition probability} before discussing the proof of Proposition \ref{prop:transition probability}.
\begin{proposition}[Properties of the transition probability formula] \label{properties of the transition probability}
	The right hand side of \eqref{eq:transition probability} satisfies the following properties:
	\begin{enumerate}[(i)]
		\item The right hand side of \eqref{eq:transition probability} can also be written as:
	    \begin{equation}\label{eq:transition probability2}
		\begin{aligned}
		 \prod_{i=1}^N (1-p\1_{x_{i-1}-x_i=1})\oint_{\Gamma} \frac{\dd z}{2\pi \ii z} \det\left[\oint_{\Gamma_{\mathcal{S}_z}}\frac{\dd w}{2\pi \ii} F_{i,j}(w;\vec x,\vec y,t)\left(1+\frac{z^L}{\hat{q}_z(w)}\right) \right]_{i,j=1}^N,
		\end{aligned}
		\end{equation}
		where $\hat{q}_{z}(w)=q_z(w)(1+pw)^{-N}=w^N(1+pw)^{-N}(1+w)^{L-N}-z^L$ has the same roots as $q_z(w)$ for any $|z|>0$ and $\Gamma_{\mathcal{S}_z}$ is any simple closed contour with all the roots in $\mathcal{S}_z$ inside and $-1$ and $-1/p$ outside.
		\item 
		The outer integral with respect to $z$ in \eqref{eq:transition probability} (and also \eqref{eq:transition probability2}) does not depend on the contour $\Gamma$.
		\item Assume further that $L\geq x_1-y_N+2$. Then the right-hand side of \eqref{eq:transition probability} can be further written as 
		\begin{equation}\label{eq: transition probability large}
			\prod_{i=1}^N (1-p\1_{x_{i-1}-x_i=1}) \det\left[\oint_{\Gamma_{0,-1}}\frac{\dd w}{2\pi \ii} F_{i,j}(w;\vec x,\vec y,t) \right]_{i,j=1}^N,
		\end{equation}
		where $\Gamma_{0,-1}$ is any simple closed contour enclosing $0$ and $-1$ as the only possible poles for the integrand. Note that \eqref{eq: transition probability large} agrees with the transition probability for discrete time parallel TASEP on $\mathbb{Z}$, see for example equation (3.21) of \cite{borodin2008parallel}.
	    \item \label{prop:cyclic-shift invariance}
	    The right-hand side of \eqref{eq:transition probability} is invariant under cyclic translation and re-indexing. Namely, for any fixed $1\leq k\leq N$, set $\vec x':= (x_k,x_{k+1},\cdots,x_N,x_1-L,x_2-L,\cdots,x_{k-1}-L)$ and 
	    $\vec y':= (y_k,y_{k+1},\cdots,y_N,y_1-L,y_2-L,\cdots,y_{k-1}-L)$. Then the right-hand side of \eqref{eq:transition probability} is invariant if we replace $\vec x$ and $\vec y$ by $\vec x'$ and $\vec y'$.
	\end{enumerate} 
\end{proposition}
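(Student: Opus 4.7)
The plan is to prove each of the four properties by algebraic manipulations built around one key identity. Computing the logarithmic derivative of $g(w):=w^N(1+pw)^{-N}(1+w)^{L-N}$ one finds
$$\frac{g'(w)}{g(w)}=\frac{N+Lw+p(L-N)w^2}{w(1+w)(1+pw)},$$
so since $\hat{q}_z(w)=g(w)-z^L$ and $g(w)=z^L$ at every Bethe root, one obtains $J(w)=z^L/\hat{q}_z'(w)$ for $w\in\mathcal{S}_z$. This identity underlies all four parts.

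For (i), the residue theorem converts the sum over Bethe roots into $\oint_{\Gamma_{\mathcal{S}_z}}\frac{\dd w}{2\pi\ii}\frac{F_{i,j}(w)z^L}{\hat{q}_z(w)}$, using that the Bethe roots are simple for generic $z$ and that the other candidate singularities $-1$ and $-1/p$ of the integrand lie outside $\Gamma_{\mathcal{S}_z}$ by hypothesis. The equivalent form \eqref{eq:transition probability2} is obtained by adding the integral $\oint_{\Gamma_{\mathcal{S}_z}}F_{i,j}(w)\,\dd w/(2\pi\ii)$, which vanishes: using the rewriting $F_{i,j}(1+z^L/\hat{q}_z)=F_{i,j}\cdot g/\hat{q}_z$, the factor $w^N(1+w)^{L-N}$ coming from $g$ kills any remaining pole of $F_{i,j}$ at $0$ or $-1$. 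For (ii), the sum $\sum_{w\in\mathcal{S}_z}F_{i,j}(w)J(w)$ is symmetric in the Bethe roots and hence, by Vieta's formulas applied to the coefficients of $q_z$, a rational function of $z^L$; the integrand in \eqref{eq:transition probability} is therefore meromorphic on $\mathbb{C}^*$ with its only singularity at $z=0$ (discriminant singularities being removable since the sum extends continuously through coalescences), so the value is independent of $\Gamma$ as long as $\Gamma$ encloses $0$.

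For (iii), I would start from \eqref{eq:transition probability2} and argue that under $L\geq x_1-y_N+2$ every exponent appearing in $F_{i,j}(w)$ becomes large enough that the integrand has no residues at $-1/p$ or at $\infty$, so $\Gamma_{\mathcal{S}_z}$ may be deformed to the contour $\Gamma_{0,-1}$ enclosing only $0$ and $-1$. The Bethe-root residues swept up during this deformation combine with the geometric expansion $z^L/\hat{q}_z(w)=\sum_{n\geq1}z^{nL}/g(w)^n$ into an $O(z^L)$ contribution that is annihilated by the outer $\oint\dd z/(2\pi\ii z)$; the surviving piece $\oint_{\Gamma_{0,-1}}F_{i,j}(w)\,\dd w/(2\pi\ii)$ is $z$-independent, so the outer integral becomes trivial and produces \eqref{eq: transition probability large}.

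For (iv), writing $\vec x'=(x_k,\ldots,x_N,x_1-L,\ldots,x_{k-1}-L)$ and similarly for $\vec y'$, each entry $F_{i,j}(w;\vec x',\vec y',t)$ differs from the entry $F_{i',j'}(w;\vec x,\vec y,t)$ for the cyclically permuted indices $i',j'$ by a factor $(1+w)^{\pm L}$ whose sign depends on whether $i$ or $j$ has wrapped past the boundary. At any Bethe root the substitution $(1+w)^L=z^L(1+pw)^N(1+w)^N/w^N$ converts this extra factor into a rational function of $w$ times a power of $z$; the $w$-dependent part factors out of a single row or column, and after cyclically permuting rows and columns to restore the original labeling the two sign factors from the permutations cancel. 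The prefactor $\prod_i(1-p\mathbf{1}_{x_{i-1}-x_i=1})$ is manifestly invariant under cyclic relabeling. The main obstacle is the bookkeeping in (iii): one must verify simultaneously for all $N^2$ entries $F_{i,j}$ that under $L\geq x_1-y_N+2$ no residue appears at $-1/p$ or $\infty$ and that the contour deformation through $\infty$ is indeed legitimate within the $N\times N$ determinantal expansion.
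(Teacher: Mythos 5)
The central gap is in how you track the poles at $w=-1$ and $w=-1/p$, and it causes a step in (iii) to fail. In (i), the combined integrand $F_{i,j}\bigl(1+z^L/\hat q_z\bigr)=F_{i,j}\,w^N(1+w)^{L-N}(1+pw)^{-N}/\hat q_z$ is analytic at $w=0$, but it is \emph{not} in general analytic at $w=-1$: the $(w+1)$-exponent is $-x_{N-i+1}+y_{N-j+1}+i-j-1+L-N$, which can be negative without further hypotheses (this is exactly what the assumption $L\ge x_1-y_N+2$ in (iii) is for). So your claim that ``$w^N(1+w)^{L-N}$ kills any remaining pole of $F_{i,j}$ at $0$ or $-1$'' is wrong at $-1$; moreover it cannot justify $\oint_{\Gamma_{\mathcal S_z}}F_{i,j}\,\dd w/(2\pi\ii)=0$, since that integral does not contain the factor $g(w)=w^N(1+w)^{L-N}(1+pw)^{-N}$ and has a generically nonzero residue at $w=0$ whenever $0$ lies inside the contour; for the same reason the Bethe-root sum is not $\oint F_{i,j}z^L/\hat q_z$ alone (its residue at $0$ is $-\operatorname*{Res}_{w=0}F_{i,j}$). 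Your two inaccuracies cancel, but the clean argument, which is the paper's, is to work directly with $F_{i,j}\,g/\hat q_z$, note $J(w)=\bigl(\hat q_z(w)+z^L\bigr)/\hat q_z'(w)$, observe analyticity at $0$, and record that the only remaining possible poles are $-1$ and $-1/p$, which the contour excludes.

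This misreading then derails (iii): the hypothesis $L\ge x_1-y_N+2$ has nothing to do with $-1/p$ or $\infty$ (the $(1+pw)$-exponents $t+i-j$ and $t+i-j-N$ and the degree at infinity do not involve $L$, and the residue at $-1/p$ is generically nonzero); its role is precisely to make the integrand analytic at $w=-1$, which is what licenses replacing a contour that a priori must exclude $-1$ by $\Gamma_{0,-1}$, while the $-1/p$ pole is handled by never enclosing it. The paper writes each entry as $\bigl(\oint_{|w+1|=R}-\oint_{|pw+1|=\epsilon}\bigr)F_{i,j}\,g/\hat q_z\,\dd w/(2\pi\ii)$, drops the $\oint_{|w+1|=\epsilon}$ piece using the hypothesis, notes analyticity in $z$ near $0$ for fixed $R,\epsilon$, and evaluates the outer integral at $z=0$; your constant-term/geometric-series mechanism is compatible with this, but the statement you flag as the ``main obstacle'' (no residues at $-1/p$ or $\infty$) is false and is the wrong thing to try to prove. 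Two smaller points: in (ii), ``the sum extends continuously through coalescences'' is not automatic, because roots coalesce exactly at $w_c,w_c^*$, the poles of $J$, where individual terms blow up; one needs the $J$-free contour representation from (i) (the paper's three-contour formula) to get analyticity in $z$ on $|z|>0$. In (iv), the wrap-around discrepancy is $w^{\pm N}(1+w)^{\pm(L-N)}(1+pw)^{\mp N}$, which at Bethe roots is the constant $z^{\pm L}$, not $(1+w)^{\pm L}$; this matters, since a genuinely $w$-dependent leftover could not be pulled out of a row of the matrix whose entries are sums over the Bethe roots, whereas the constant $z^{\pm L}$ factors do cancel exactly as in the paper's permutation-by-permutation computation.
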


\begin{proof}
	\begin{enumerate}[(i)]
		\item It is easy to check that $J(w)=\frac{\hat{q}_z(w)+z^L}{\frac{\dd}{\dd w}\hat{q}_z(w)}$. Hence by the residue theorem we have 
		\begin{equation*}
			\sum_{w\in \mathcal{S}_z} F_{i,j}(w;\vec x,\vec y,t)J(w) =\oint_{\Gamma_{\R_z}} \frac{\dd w}{2\pi \ii} F_{i,j}(w;\vec x,\vec y,t)\frac{\hat{q}_z(w)+z^L}{\hat{q}_z(w)}.
		\end{equation*}
		Note that $F_{i,j}(w,\vec x,\vec y,t)\frac{\hat{q}_z(w)+z^L}{\hat{q}_z(w)}$ is analytic at $w=0$ for any $1\leq i,j\leq N$ and the only possible poles of the integrand besides $\mathcal{S}_z$ are $w=-1$ and $w=-1/p$.
		\smallskip
		\item Choose $R>0$ large enough and $\epsilon>0$ small enough so that all the roots in $\mathcal{S}_z$ are inside the region  $\{\epsilon <|w+1|<R\}\backslash \{|1+pw|\leq \epsilon\}$. Then 
		\begin{equation}\label{eq: integral rep for entry}
		\sum_{w\in \mathcal{S}_z} F_{i,j}(w;\vec x,\vec y,t)J(w) =\left(\oint_{|w+1|=R}-\oint_{|w+1|=\epsilon}-\oint_{|pw+1|=\epsilon}\right)\frac{\dd w}{2\pi \ii} F_{i,j}(w;\vec x,\vec y,t)\frac{\hat{q}_z(w)+z^L}{\hat{q}_z(w)}.
		\end{equation}
		Since $R$ and $\epsilon$ can be arbitrarily large or small, the right hand side of \eqref{eq: integral rep for entry} as a function in $z$ is analytic for any $|z|>0$. Hence the integral with respect to $z$ in \eqref{eq:transition probability} is independent of $\Gamma$ since the integrand has an analytic continuation to $\{|z|>0\}$.
		\smallskip
		\item For $L>x_1-y_N+2$, we have $-x_{N-i+1}+y_{N-j+1}+i-j+L-N-1\geq 0$ for all $1\leq i,j\leq N$. Hence for any $|z|>0$, the integrand $F_{i,j}(w;\vec x,\vec y,t)\frac{\hat{q}_z(w)+z^L}{\hat{q}_z(w)}$ in \eqref{eq: integral rep for entry} is analytic at $w=-1$. This implies that
		\begin{equation}\label{eq: integral rep for entry 2}
			\sum_{w\in \mathcal{S}_z} F_{i,j}(w;\vec x,\vec y,t)J(w) =\left(\oint_{|w+1|=R}-\oint_{|pw+1|=\epsilon}\right)\frac{\dd w}{2\pi \ii} F_{i,j}(w;\vec x,\vec y,t)\frac{\hat{q}_z(w)+z^L}{\hat{q}_z(w)}.
		\end{equation}
		Now for fixed $R$ large enough and $\epsilon$ small enough, the right hand side of \eqref{eq: integral rep for entry 2} is an analytic function in $z$ for $|z|$ sufficiently small such that all the roots of $\hat{q}_z$ are in the region $\{|w+1|\leq R\}\backslash\{|pw+1|\leq \epsilon\}$. Now by the residue theorem the outer contour integral with respect to $z$ in \eqref{eq:transition probability2} equals the integrand evaluated at $z=0$, which equals
		\begin{equation*}
			\det\left[\left(\oint_{|w+1|=R}-\oint_{|pw+1|=\epsilon}\right)\frac{\dd w}{2\pi \ii} F_{i,j}(w;\vec x,\vec y,t)\right]_{i,j=1}^N.
		\end{equation*}
		\smallskip
		
		\item We remark that this property can be easily understood if we use the probabilistic interpretation since $\vec x$ and $\vec x'$ (and also $\vec y$ and $\vec y'$) actually represent the same particle configuration on $\mathbb{Z}$ (we just use particles in different period as representatives), hence the transition probability between $\vec y$ and $\vec x$ and $\vec y'$ and $\vec x'$ should be the same. Here however we can not directly use this since we have not proven \eqref{eq:transition probability}. In fact, we will need this fact in our proof of \eqref{eq:transition probability} so we give an independent algebraic proof here.
		It suffices to assume $k=2$. Then $\vec x'=(x_2,\cdots,x_N,x_1-L)$, $\vec y'=(y_2,\cdots,y_N,y_1-L)$. Clearly we have 
			$\prod_{i=1}^{N}(1-p\1_{x_{i-1}-x_i}=1) = \prod_{i=1}^{N}(1-p\1_{x'_{i-1}-x'_i}=1)$.
		So it suffices to show 
		\begin{equation*}
			\det\left[\sum_{w\in \mathcal{S}_z} F_{i,j}(w;\vec x,\vec y,t)J(w)\right]_{i,j=1}^N = \det\left[\sum_{w\in \mathcal{S}_z} \tilde{F}_{i,j}(w;\vec x,\vec y,t)J(w)\right]_{i,j=1}^N,
		\end{equation*}
		where $\tilde{F}_{i,j}(w;\vec x,\vec y,t)= w^{j-i}(w+1)^{-x'_{N-i+1}+y'_{N-j+1}+i-j-1}(1+pw)^{t+i-j}$. By multi-linearity we have 
		\begin{align*}
			&\det\left[\sum_{w\in \mathcal{S}_z} F_{i,j}(w;\vec x,\vec y,t)J(w)\right]_{i,j=1}^{N} = \sum_{w_1,\cdots,w_N\in \mathcal{S}_z} \det\left[ F_{i,j}(w_i;\vec x,\vec y,t)J(w_i)\right]_{i,j=1}^{N}\\
			&= \sum_{w_1,\cdots,w_N\in \mathcal{S}_z} \sum_{\sigma\in S_N} \sgn(\sigma)\prod_{i=1}^N w_i^{\sigma(i)-i}(w_i+1)^{-x_{N-i+1}+y_{N-\sigma(i)+1}+i-\sigma(i)-1}(1+pw_i)^{t+i-\sigma(i)}\\
			&= \sum_{w_1,\cdots,w_N\in \mathcal{S}_z} \sum_{\sigma\in S_N} \sgn(\sigma)\prod_{i=1}^N w_i^{\sigma(i)-i}(w_i+1)^{-x'_{N-i}+y'_{N-\sigma(i)}+i-\sigma(i)-1}(1+pw_i)^{t+i-\sigma(i)}.
 		\end{align*}
 		Here again $x'_0=x'_N+L=x_1-L+L=x_1$. Now we fix a permutation $\tau:=(N\cdots 21)\in S_N$ and set $\tilde{w}_i:=w_{\tau(i)}$ for $1\leq i\leq N$ and $\tilde{\sigma}:=\tau^{-1}\sigma\tau$. Then the last line in the above equation equals
 		\begin{equation}\label{eq:expansion of the determinant}
 			\sum_{\tilde{w}_1,\cdots,\tilde{w}_N\in \mathcal{S}_z} \sum_{\tilde{\sigma}\in S_N} \sgn(\tilde{\sigma})\prod_{i=1}^N \tilde{w}_i^{\tau\tilde{\sigma}(i)-\tau(i)}(\tilde{w}_i+1)^{-x'_{N-\tau(i)}+y'_{N-\tau\tilde{\sigma}(i)}+\tau(i)-\tau\tilde{\sigma}(i)-1}(1+p\tilde{w}_i)^{t+\tau(i)-\tau\tilde{\sigma}(i)}.
 		\end{equation}
 		We claim that for any fixed $\tilde{\sigma}\in S_N$,
 		 \begin{align*}
 		&\prod_{i=1}^N \tilde{w}_i^{\tau\tilde{\sigma}(i)-\tau(i)}(\tilde{w}_i+1)^{-x'_{N-\tau(i)}+y'_{N-\tau\tilde{\sigma}(i)}+\tau(i)-\tau\tilde{\sigma}(i)-1}(1+p\tilde{w}_i)^{t+\tau(i)-\tau\tilde{\sigma}(i)}\\
 		&=\prod_{i=1}^N \tilde{w}_i^{\tilde{\sigma}(i)-i}(\tilde{w}_i+1)^{-x'_{N-i+1}+y'_{N-\tilde{\sigma}(i)+1}+i-\tilde{\sigma}(i)-1}(1+p\tilde{w}_i)^{t+i-\tilde{\sigma}(i)}.
 		\end{align*}
 		This then implies 
 		\begin{align*}
 			&\det\left[\sum_{w\in \mathcal{S}_z} F_{i,j}(w;\vec x,\vec y,t)J(w)\right]_{i,j=1}^{N} \\
 			&= \sum_{\tilde{w}_1,\cdots,\tilde{w}_N\in \mathcal{S}_z} \sum_{\tilde{\sigma}\in S_N} \sgn(\tilde{\sigma})\prod_{i=1}^N \tilde{w}_i^{\tilde{\sigma}(i)-i}(\tilde{w}_i+1)^{-x'_{N-i+1}+y'_{N-\tilde{\sigma}(i)+1}+i-\tilde{\sigma}(i)-1}(1+p\tilde{w}_i)^{t+i-\tilde{\sigma}(i)}\\
 			&= \det\left[\sum_{w\in \mathcal{S}_z} \tilde{F}_{i,j}(w;\vec x,\vec y,t)J(w)\right]_{i,j=1}^{N}.
 		\end{align*}
 		To see the claim note that for $i\neq 1$ we have $\tau(i)=i-1$ and for $\tilde\sigma(i)\neq 1$ we have $\tau\tilde{\sigma}(i)=\tilde\sigma(i)-1$. Hence 
 		\begin{align*}
 		& \tilde{w}_i^{\tau\tilde{\sigma}(i)-\tau(i)}(\tilde{w}_i+1)^{-x'_{N-\tau(i)}+y'_{N-\tau\tilde{\sigma}(i)}+\tau(i)-\tau\tilde{\sigma}(i)-1}(1+p\tilde{w}_i)^{t+\tau(i)-\tau\tilde{\sigma}(i)}\\
 		&= \tilde{w}_i^{\tilde{\sigma}(i)-i}(\tilde{w}_i+1)^{-x'_{N-i+1}+y'_{N-\tilde{\sigma}(i)+1}+i-\tilde{\sigma}(i)-1}(1+p\tilde{w}_i)^{t+i-\tilde{\sigma}(i)},
 	    \end{align*}
 	    for $i\neq 1$ and $\tilde{\sigma}(i)\neq 1$. For the other situations we split into two cases:
 	    
 	    \noindent\textbf{Case 1}: $\tilde{\sigma}(1)\neq 1$. Then we have 
 	    \begin{align*}
 	    	& \tilde{w}_1^{\tau\tilde{\sigma}(1)-\tau(1)}(\tilde{w}_1+1)^{-x'_{N-\tau(1)}+y'_{N-\tau\tilde{\sigma}(1)}+\tau(1)-\tau\tilde{\sigma}(1)-1}(1+p\tilde{w}_1)^{t+\tau(1)-\tau\tilde{\sigma}(1)}\\
 	    	&= \tilde{w}_1^{\tilde{\sigma}(1)-1}(\tilde{w}_1+1)^{-x'_{N}+y'_{N-\tilde{\sigma}(1)+1}-\tilde{\sigma}(1)}(1+p\tilde{w}_i)^{t+1-\tilde{\sigma}(1)}\cdot \tilde{w}_1^{-N}(\tilde{w}_1+1)^{N-L}(1+p\tilde{w}_1)^{N}\\
 	    	&=\tilde{w}_1^{\tilde{\sigma}(1)-1}(\tilde{w}_1+1)^{-x'_{N}+y'_{N-\tilde{\sigma}(1)+1}-\tilde{\sigma}(1)}(1+p\tilde{w}_i)^{t+1-\tilde{\sigma}(1)}\cdot z^{-L}.
 	    \end{align*}
 	    Similarly if we denote $\tilde{\sigma}^{-1}(1)=j$, then the term involving $\tilde{w}_j$ among the sum in \eqref{eq:expansion of the determinant} equals
 	    \begin{align*}
 	    & \tilde{w}_{j}^{\tau(1)-\tau(j)}(\tilde{w}_{j}+1)^{-x'_{N-\tau(j)}+y'_{N-\tau(1)}+\tau(j)-\tau(1)-1}(1+p\tilde{w}_{j})^{t+\tau(j)-\tau(1)}\\
 	    &= \tilde{w}_j^{\tilde{\sigma}(j)-j}(\tilde{w}_1+1)^{-x'_{N-j+1}+y'_{N-\tilde{\sigma}(j)+1}-\tilde{\sigma}(j)+j-1}(1+p\tilde{w}_j)^{t+j-\tilde{\sigma}(j)}\cdot \tilde{w}_j^{N}(\tilde{w}_j+1)^{L-N}(1+p\tilde{w}_j)^{-N}\\
 	    &=
 	    \tilde{w}_j^{\tilde{\sigma}(j)-j}(\tilde{w}_1+1)^{-x'_{N-j+1}+y'_{N-\tilde{\sigma}(j)+1}-\tilde{\sigma}(j)+j-1}(1+p\tilde{w}_j)^{t+j-\tilde{\sigma}(j)}\cdot z^{L}.
 	    \end{align*}
 	    Hence
 	    \begin{align*}
 	    &\prod_{i=1}^N \tilde{w}_i^{\tau\tilde{\sigma}(i)-\tau(i)}(\tilde{w}_i+1)^{-x'_{N-\tau(i)}+y'_{N-\tau\tilde{\sigma}(i)}+\tau(i)-\tau\tilde{\sigma}(i)-1}(1+p\tilde{w}_i)^{t+\tau(i)-\tau\tilde{\sigma}(i)}\\
 	    &= z^L\cdot z^{-L}\cdot\prod_{i=1}^N
 	    \tilde{w}_i^{\tilde{\sigma}(i)-i}(\tilde{w}_i+1)^{-x'_{N-i+1}+y'_{N-\tilde{\sigma}(i)+1}+i-\tilde{\sigma}(i)-1}(1+p\tilde{w}_i)^{t+i-\tilde{\sigma}(i)}\\
 	    &= \prod_{i=1}^N \tilde{w}_i^{\tilde{\sigma}(i)-i}(\tilde{w}_i+1)^{-x'_{N-i+1}+y'_{N-\tilde{\sigma}(i)+1}+i-\tilde{\sigma}(i)-1}(1+p\tilde{w}_i)^{t+i-\tilde{\sigma}(i)}.
 	    \end{align*}
 	    \noindent \textbf{Case 2}: $\tilde{\sigma}(1)=1$. Then we have 
 	    \begin{align*}
 	    	&\tilde{w}_1^{\tau\tilde{\sigma}(1)-\tau(1)}(\tilde{w}_1+1)^{-x'_{N-\tau(1)}+y'_{N-\tau\tilde{\sigma}(1)}+\tau(1)-\tau\tilde{\sigma}(1)-1}(1+p\tilde{w}_1)^{t+\tau(1)-\tau\tilde{\sigma}(1)}\\
 	    	&= \tilde{w}_1^{\tilde{\sigma}(1)-1}(\tilde{w}_1+1)^{-x'_{N}+y'_{N-\tilde{\sigma}(1)+1}-\tilde{\sigma}(1)}(1+p\tilde{w}_i)^{t+1-\tilde{\sigma}(1)},
 	    \end{align*}
 	    so the claim follows and this completes the proof of the cyclic invariance.
	\end{enumerate}
\end{proof}
\smallskip
\subsection{Proof of the transition probability formula}
Now we turn to the proof of formula \eqref{eq:transition probability}. The proof basically follows the idea of \cite{borodin2008parallel, baik2018relax} where we replace the Kolmogorov forward equation by a free evolution equation with additional boundary conditions. 
The main extra difficulty here is that due to parallel update rule, the stationary distribution for the dynamics is non-uniform. In fact one can check $\mu(\vec x)\propto \prod_{i=1}^{N}(1-p\1_{x_{i-1}-x_i=1})$ is the stationary distribution. As a result it turns out that $\frac{P_t(\vec y\to \vec x)}{\prod_{i=1}^{N}(1-p\1_{x_{i-1}-x_i=1})}$ satisfies a relatively simpler dynamics than $P_t(\vec y\to \vec x)$. More precisely we have: 
\begin{lemma}\label{free evolution equation}
	Given $\vec x,\vec y\in \mathbb{Z}^N$. Let $G(\vec x,t;\vec y,0)$ be the (unique) solution of the following free evolution equation 
	\begin{equation}\label{eq:free evolution equation}
		G(\vec x,t+1;\vec y,0)=\sum_{b_1,\cdots,b_N\in \{0,1\}}\prod_{i=1}^{N} p^{b_i}(1-p)^{1-b_i}  G(\vec x-\vec b,t;\vec y,0),
	\end{equation}
	together with the boundary conditions: 
	\begin{equation}\label{eq:boundary conditions}
	\begin{aligned}
       &(1-p)\left[G(\cdots,x_{i-1}=x_{i}+1,x_{i},x_{i+1},\cdots,t;\vec y,0)-G(\cdots,x_{i-1}=x_{i},x_{i},x_{i+1},\cdots,t;\vec y,0)\right]\\
       &= p\left[G(\cdots,x_i,x_i-1,x_{i+1},\cdots,t;\vec y,0)-G(\cdots,x_i+1,x_i-1,x_{i+1},\cdots,t;\vec y,0)\right], \text{ for }1\leq i\leq N,
	\end{aligned}
	\end{equation}
	and the initial condition:
	\begin{equation}\label{eq:initial conditions}
		G(\vec x,0;\vec y,0) = \frac{\1_{\vec x=\vec y}}{\prod_{i=1}^{N}(1-p\1_{x_{i-1}-x_1=1})}.
	\end{equation}
	Then 
	\begin{equation}
		G(\vec x,t;\vec y,0) = \frac{P_t(\vec y\to \vec x)}{\prod_{i=1}^{N}(1-p\1_{x_{i-1}-x_1=1})},\quad \text{for all }\vec x,\vec y\in \conf_{N}^{(L)}.
	\end{equation}
	Note that we have used the convention $x_0=x_N+L$ so when $i=1$, \eqref{eq:boundary conditions} should be interpreted as 
	\begin{equation}\label{eq:boundary conditions2}
		\begin{aligned}
		&(1-p)\left[G(x_{1},\cdots,x_1-L+1,t;\vec y,0)-G(x_{1},\cdots,x_1-L,t;\vec y,0)\right]\\
		&= p\left[G(x_{1}-1,\cdots,x_1-L,t;\vec y,0)-G(x_{1}-1,\cdots,x_1-L+1,t;\vec y,0)\right].
		\end{aligned}
	\end{equation}
\end{lemma}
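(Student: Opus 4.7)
My plan is to follow the standard Bethe-ansatz route: establish uniqueness of the system \eqref{eq:free evolution equation}--\eqref{eq:initial conditions}, and then verify by direct computation that the candidate function
\[
H(\vec x, t) := \frac{P_t(\vec y\to \vec x)}{\prod_{i=1}^N (1-p\1_{x_{i-1}-x_i=1})},\qquad \vec x\in \conf_N^{(L)},
\]
extended off $\conf_N^{(L)}$ using the boundary conditions \eqref{eq:boundary conditions}, satisfies the same system. Combining the two gives the lemma.

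For uniqueness, I would observe that when $\vec x\in\conf_N^{(L)}$ has some adjacent pairs (indices $i\in A:=\{i: x_{i-1}-x_i=1\}$), the shifts $\vec x-\vec b$ appearing on the right-hand side of \eqref{eq:free evolution equation} with $b_{i-1}=1$, $b_i=0$ produce ``virtual'' configurations having a collision $x'_{i-1}=x'_i$. The boundary conditions \eqref{eq:boundary conditions} and \eqref{eq:boundary conditions2} are linear relations expressing the value of $G$ at such virtual configurations in terms of its values at admissible configurations; solving them iteratively (to accommodate multiple simultaneous collisions when $|A|>1$, and the cyclic case $i=1$) determines $G(\cdot,t+1)\vert_{\conf_N^{(L)}}$ uniquely from $G(\cdot,t)\vert_{\conf_N^{(L)}}$, so induction on $t$ together with the initial condition \eqref{eq:initial conditions} yields uniqueness.

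For existence, the initial condition is immediate from $P_0(\vec y\to\vec x)=\1_{\vec x=\vec y}$. To verify the free evolution \eqref{eq:free evolution equation} for $H$, I would start from the Kolmogorov forward equation for $\mathrm{dpTASEP}(L,N,p)$,
\[
P_{t+1}(\vec y\to \vec x)=\sum_{\vec x'\in \conf_N^{(L)}} P(\vec x'\to\vec x)\, P_t(\vec y\to\vec x'),
\]
where the one-step transition $P(\vec x'\to\vec x)$ is non-zero only when $\vec b:=\vec x-\vec x'\in\{0,1\}^N$, and any particle $i$ with $x'_{i-1}-x'_i=1$ satisfies $b_i=0$. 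Expanding $P(\vec x'\to\vec x)$ as a product of independent move factors $p^{b_i}(1-p)^{1-b_i}$ times blocking indicators, then dividing by $\prod_i(1-p\1_{x_{i-1}-x_i=1})$, I expect the blocking indicators to be absorbed exactly by the virtual-configuration values of $H$ prescribed by \eqref{eq:boundary conditions}, so that the resulting sum becomes the unrestricted free sum over $\vec b\in\{0,1\}^N$ appearing in \eqref{eq:free evolution equation}.

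The hard part will be verifying this absorption. For each adjacent pair $i\in A$, one must check that the $(b_{i-1},b_i)=(1,0)$ contribution — which corresponds to the virtual configuration where two particles meet — combines with the $(0,0)$, $(0,1)$ and $(1,1)$ contributions to reproduce the correct one-step TASEP transition with the $b_i=0$ blocking constraint imposed. This is a two-body identity per adjacent pair, and I expect \eqref{eq:boundary conditions} to be engineered precisely so that it holds. The real subtlety is showing that the argument is consistent when $|A|>1$: multiple simultaneous collisions lead to multi-collision virtual configurations, and one must check that applying the two-body boundary relations in different orders is compatible, and that the periodic wraparound (index $i=1$, handled in \eqref{eq:boundary conditions2}) fits seamlessly into the same framework. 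The cyclic-shift invariance recorded in Proposition \ref{properties of the transition probability}\eqref{prop:cyclic-shift invariance} will likely be useful in reducing the periodic case to the bulk one.
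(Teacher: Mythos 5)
Your overall strategy (force a closed recursion on $\conf_{N}^{(L)}$ out of the free evolution plus boundary conditions, and match it against the candidate $H=P_t/\prod_i(1-p\1_{x_{i-1}-x_i=1})$ with the same initial data) is the right one and is, at that level, the paper's route; but as written the proposal has genuine gaps, because the two steps that constitute the actual content of the lemma are only "expected" rather than proved. First, the collapse identity: for a maximal block $x_i=x_{i-1}-1=\cdots=x_{i-m+1}-m+1$ one must show that the free sum over the $2^m$ Bernoulli assignments, with the boundary relations imposed, reduces to a single Bernoulli variable attached to the leftmost particle of the block. The paper proves exactly this, namely \eqref{eq:reduction using boundary conditions1}, by induction on $m$, and crucially proves it for \emph{arbitrary} $\vec x\in\mathbb{Z}^N$ carrying the adjacency; that stronger form is what lets the identity be applied one cluster at a time inside the product structure of \eqref{eq:free evolution equation}, so the multi-cluster "order of application" consistency you worry about, and the periodic case \eqref{eq:boundary conditions2}, come for free. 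Your two-body base case is also left unchecked. Second, you never identify the closed recursion that $H$ itself satisfies, which is the other half of the matching: it is not the raw Kolmogorov forward equation with blocking indicators, but the lumped cluster evolution \eqref{eq: evolution equation clusters}, in which only the leftmost particle of each cluster carries a Bernoulli weight (for an isolated adjacent pair with left particle $i$ this reads $P_{t+1}(\vec x)=(1-p)P_t(\vec x)+p(1-p)\,P_t(\vec x-\vec e_i)$); verifying this probabilistic lumping is short but necessary, and it is precisely the target that the boundary-condition collapse must reproduce.

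Separately, your plan to "extend $H$ off $\conf_{N}^{(L)}$ using the boundary conditions" and then invoke uniqueness does not fit together as stated. Uniqueness of $G$ is immediate and needs no iterative solving of the boundary relations: \eqref{eq:free evolution equation} determines $G(\cdot,t+1)$ on all of $\mathbb{Z}^N$ from $G(\cdot,t)$, so the initial condition \eqref{eq:initial conditions} fixes $G$ outright. But if you use uniqueness in that form, you must verify that your extension of $H$ satisfies the free equation at unphysical configurations as well, which your sketch does not address and which is essentially equivalent to reconstructing the explicit Bethe-ansatz solution. The paper avoids extending $H$ altogether: it shows that any $G$ obeying \eqref{eq:free evolution equation} and \eqref{eq:boundary conditions}--\eqref{eq:boundary conditions2} satisfies, when restricted to $\conf_{N}^{(L)}$, the same closed cluster recursion as $H$ (whose right-hand side stays inside $\conf_{N}^{(L)}$), and concludes by induction on $t$. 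I recommend reorganizing your argument along those lines and supplying the induction on block size; without it the proof is incomplete.
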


\begin{proof}
	 Set 
	 \begin{equation}
	 	H(\vec x,t;\vec y,0):= \frac{P_t(\vec y\to \vec x)}{\prod_{i=1}^{N}(1-p\1_{x_{i-1}-x_i=1})}.
	 \end{equation}
	 It suffices to show that for $\vec x,\vec y\in \conf_{N}^{(L)}$, $H(\vec x,t;\vec y,0)$ and $G(\vec x,t;\vec y,0)$ satisfy the same evolution equation.  This then implies that $H(\vec x,t;\vec y,0)=G(\vec x,t;\vec y,0)$ for all $\vec x,\vec y\in \conf_{N}^{(L)}$ since they satisfy the same initial condition. To better describe the evolution equation for $H(\vec x,t;\vec y,0)$, it is convenient to introduce the notion of clusters of a particle configuration $\vec x$. Given $\vec x=(x_1,\cdots,x_N)\in \conf_{N}^{(L)}$, for $1\leq i\leq N$ and $1\leq k\leq N$ we say $[x_i, x_{i-1},\cdots,x_{i-k+1}]$ is a cluster of size $k$ of $\vec x$ if 
	 \begin{equation*}
	 	x_{i+1}+1<x_i=x_{i-1}-1=\cdots=x_{i-k+1}-k+1<x_{i-k}-k.
	 \end{equation*}
	 Namely particle $i$ through $i-k+1$ are right next to each other while there are at least one empty site to the left of $x_i$ and right of $x_{i-k+1}$. Here we abuse notation by allowing the index to exceed $\{1,\cdots,N\}$ and this should be understood with the convention $x_{i+kN}=x_{i}-kL$ for $1\leq i\leq N$ and $k\in \mathbb{Z}$. For convenience when $[x_i,x_{i-1},\cdots,x_1,x_0,\cdots x_{-j}]$ is a cluster for some $0\leq j<N$, we will also say $[x_i,\cdots,x_1,x_N,\cdots,x_{N-j}]$ forms a cluster so that all the indices appearing will be between $1$ and $N$.
	 
	  Let $\mathcal{N}_c(\vec x)$ be the number of clusters in configuration $\vec x$ and let $x_{c_j}$, $1\leq j\leq \mathcal{N}_c(\vec x)$ be the locations of the left-most particles in each cluster. Then it is straightforward to check that $H(\vec x,t;\vec y,0)$ satisfies 
	 \begin{equation}\label{eq: evolution equation clusters}
	 	H(\vec x,t+1;\vec y,0) = \sum_{\substack{b_{c_j}\in \{0,1\},\\ 1\leq j\leq \mathcal{N}_c(\vec x) }}\prod_{j=1}^{\mathcal{N}_c(\vec x)} p^{b_{c_j}}(1-p)^{1-b_{c_j}} H(\vec x-\sum_{j=1}^{\mathcal{N}_c(\vec x)}b_{c_j}\vec e_{c_j},t;\vec y,0),
	 \end{equation}
	 where $\vec e_{c_j}\in \mathbb{Z}^N$ has $1$ in the $c_j$-th coordinate and $0$ in the other coordinates. 
	 
	 We claim that for $\vec x,\vec y\in \conf_{N}^{(L)}$, \eqref{eq: evolution equation clusters} and \eqref{eq:free evolution equation} take the same form (with $H$ replaced by $G$) provided that $G(\vec x,t;\vec y,0)$ satisfies boundary conditions \eqref{eq:boundary conditions}. Due to the sum of products form of \eqref{eq: evolution equation clusters} and \eqref{eq:free evolution equation} it suffices to check
	 \begin{equation}\label{eq:reduction using boundary conditions1}
	 	\sum_{b_{i}\in \{0,1\}} p^{b_i}(1-p)^{1-b_i} G(\vec x-b_i\vec e_i,t;\vec y,0) = \sum_{\substack{b_j\in \{0,1\}\\j=i-m+1,\cdots,i}}\prod_{j} p^{b_j}(1-p)^{1-b_j} G(\vec x-\sum_{j}b_j\vec e_j,t;\vec y,0),
	 \end{equation}
	 for $\vec x,\vec y\in \conf_{N}^{(L)}$ and a single cluster $[x_i,x_{i-1},\cdots,x_{i-m+1}]$ of size $m$. We will show the stronger statement: \eqref{eq:reduction using boundary conditions1} actually holds for any $\vec x=(x_1,\cdots,x_N)\in \mathbb{Z}^N$ with $x_i,x_{i-1},\cdots,x_{i-m+1}$  merely satisfying $x_i=x_{i-1}-1=\cdots=x_{i-m+1}-m+1$. We do not require empty sites at the left and right ends so they may not form a cluster.

	 We prove this by induction on $m$. For $m=1$ this is trivial. Assume the claim is true for any clusters of size $\leq m$. Now let $\vec x\in \mathbb{Z}^N$ with $x_i=x_{i-1}-1=\cdots=x_{i-m}-m$ for some $1\leq i\leq N$. Here without loss of generality we can assume $i-m\geq 1$, otherwise replace $j$ by $j+N$ for indices $j\leq 0$.  Then
	 \begin{equation}\label{eq:reduction using boundary conditions2}
	 	\begin{aligned}
	 	&\sum_{\substack{b_j\in \{0,1\}\\j=i-m,\cdots,i}}\prod_{j} p^{b_j}(1-p)^{1-b_j} G(\vec x-\sum_{j}b_j\vec e_j,t;\vec y,0)\\
	 	&=\sum_{b_i\in \{0,1\}}p^{b_i}(1-p)^{1-b_i}\cdot \Big(\sum_{\substack{b_j\in \{0,1\}\\j=i-m,\cdots,i-1}}\prod_{j=i-m}^{i-1} p^{b_j}(1-p)^{1-b_j} G(\vec x-\sum_{j=i-m}^{i}b_j\vec e_j,t;\vec y,0)\Big)\\
	 	&=\sum_{b_i\in \{0,1\}}p^{b_i}(1-p)^{1-b_i}\cdot\Big(\sum_{b_{i-1}\in \{0,1\}}p^{b_{i-1}}(1-p)^{1-b_{i-1}}G(\vec x-b_{i}\vec e_{i}-b_{i-1}\vec e_{i-1},t;\vec y,0)\Big)\\
	 	&= (1-p)^2 G(\vec x,t)+p(1-p)G(\vec x-\vec e_{i},t)+p(1-p)G(\vec x-\vec e_{i-1},t)+p^{2}G(\vec x-\vec e_{i}-\vec e_{i-1},t),
	 	\end{aligned}
	 \end{equation}
	 where we used the induction hypothesis in the second equality of \eqref{eq:reduction using boundary conditions2} for the sum inside the brackets. For notational conveniece we have suppressed the dependence on $\vec y$ in the last line to save space. Now by the boundary conditions \eqref{eq:boundary conditions} (possibly \eqref{eq:boundary conditions2}) we have 
	 \begin{equation}\label{eq:reduction using boundary conditions3}
	  (1-p)[G(\vec x,t;\vec y,0)-G(\vec x-\vec e_{i-1},t;\vec y,0)] = p[G(\vec x-\vec e_i-\vec e_{i-1},t;\vec y,0)-G(\vec x-\vec e_{i},t;\vec y,0)].
	 \end{equation}
	 Inserting \eqref{eq:reduction using boundary conditions3} into \eqref{eq:reduction using boundary conditions2} we see the last line of \eqref{eq:reduction using boundary conditions2} simplifies to 
	 \begin{equation}
	 	(1-p)G(\vec x,t;\vec y,0)+pG(\vec x-\vec e_i,t;\vec y,0),
	 \end{equation}
	 which is precisely the left hand side of \eqref{eq:reduction using boundary conditions1} and this completes the proof of Lemma \ref{free evolution equation}. 
\end{proof}
\smallskip
Now we discuss the proof of Proposition \ref{prop:transition probability}.
\begin{proof}[Proof of Proposition \ref{prop:transition probability}]
By Lemma \ref{free evolution equation} it suffices to prove 
\begin{equation}\label{eq:determinant formula free evolution solution}
	G(\vec x,t;\vec y,0) = \oint_{\Gamma} \frac{\dd z}{2\pi \ii z} \det\left[\sum_{w\in \mathcal{S}_z} F_{i,j}(w;\vec x,\vec y,t)J(w)\right]_{i,j=1}^N.
\end{equation}
To see this, we check that the right-hand side of \eqref{eq:determinant formula free evolution solution} satisfies the free evolution equation \eqref{eq:free evolution equation}, the boundary conditions \eqref{eq:boundary conditions} and the initial condition \eqref{eq:initial conditions}. 
\smallskip

For the free evolution equation \eqref{eq:free evolution equation}, note first that it is straightforward to check
\begin{equation}
	\sum_{b_{N-i+1}\in \{0,1\}} p^{b_{N-i+1}}(1-p)^{1-b_{N-i+1}} F_{i,j}(w;\vec x-\vec b,\vec y;t) = F_{i,j}(w;\vec x-\vec b+b_{N-i+1}\vec e_{N-i+1},\vec y;t+1).
\end{equation}
Here $\vec e_i\in \mathbb{Z}^N$ has $1$ in the $i$-th entry and $0$ for the others. By multi-linearity of determinants we have
\begin{equation*}
	\begin{aligned}
	     &\sum_{\substack{b_k\in \{0,1\}\\k=1,\cdots,N}}\prod_{k=1}^{N}p^{b_k}(1-p)^{1-b_k}\det\left[\sum_{w\in \mathcal{S}_z} F_{i,j}(w;\vec x-\vec b,\vec y,t)J(w)\right]_{i,j=1}^N
	     \\&= \det\left[\sum_{w\in \mathcal{S}_z} \sum_{b_{N-i+1}\in \{0,1\}}p^{b_{N-i+1}}(1-p)^{1-b_{N-i+1}} F_{i,j}(w;\vec x-\vec b,\vec y,t)J(w)\right]_{i,j=1}^N\\
	     &= \det\left[\sum_{w\in \mathcal{S}_z}  F_{i,j}(w;\vec x-\vec b+b_{N-i+1}\vec e_{N-i+1},\vec y,t+1)J(w)\right]_{i,j=1}^N\\
	     &= \det\left[\sum_{w\in \mathcal{S}_z}  F_{i,j}(w;\vec x,\vec y,t+1)J(w)\right]_{i,j=1}^N.
	\end{aligned}
\end{equation*}
 Here in the last equality above we used the fact that 
$F_{i,j}(w;\vec x,\vec y,t)$ only depends on the 
$(N-i+1)$-th entry of $\vec x$. Now \eqref{eq:free evolution equation} follows from linearity of the integral.
\smallskip

Next we check the boundary conditions \eqref{eq:boundary conditions}. Given $1\leq k\leq N$, assume that $\vec x=(x_1,\cdots,x_N)\in \mathbb{Z}^N$ satisfies  $x_{k-1}=x_k+1$. Note that when $k=1$ this means $x_1=x_N+L-1$. Then the boundary conditions \eqref{eq:boundary conditions} can be expressed as
\begin{equation}\label{eq: bc1 modified}
	(1-p)[G(\vec x,t;\vec y,0)-G(\vec x-\vec e_{k-1},t;\vec y,0)] = p[G(\vec x-\vec e_{k-1}-\vec e_{k},t;\vec y,0)-G(\vec x-\vec e_{i},t;\vec y,0)],
\end{equation}
for $2\leq k\leq N$, and 
\begin{equation}\label{eq: bc2 modified}
	(1-p)[G(\vec x,t;\vec y,0)-G(\vec x-\vec e_{N},t;\vec y,0)] = p[G(\vec x-\vec e_{N}-\vec e_{1},t;\vec y,0)-G(\vec x-\vec e_{1},t;\vec y,0)],
\end{equation}
for $k=1$. We prove \eqref{eq: bc1 modified} first. Note that for $2\leq k \leq N$ and $x_{k-1}=x_k+1$ we have 
\begin{equation*}
	F_{i,j}(w;\vec x,\vec y,t)-F_{i,j}(w;\vec x-\vec e_{k-1},\vec y,t) 
	= \begin{cases}
	0,\quad &\text{if } i\neq N-k+2,\\
	-w\cdot F_{i,j}(w;\vec x,\vec y,t),\quad &\text{if } i=N-k+2.
	\end{cases}
\end{equation*}
Hence by multi-linearity we have 
\begin{equation*}
 \begin{aligned}
  &\det\left[\sum_{w\in \mathcal{S}_z} F_{i,j}(w;\vec x,\vec y,t)J(w)\right]_{i,j=1}^N-\det\left[\sum_{w\in \mathcal{S}_z} F_{i,j}(w;\vec x-\vec e_{k-1},\vec y,t)J(w)\right]_{i,j=1}^N\\
  &= \det\left[\sum_{w\in \mathcal{S}_z} F_{i,j}(w;\vec x,\vec y,t)J(w)\cdot (1-(1+w)\1_{i=N-k+2})\right]_{i,j=1}^N.
 \end{aligned}
\end{equation*}
Similarly 
\begin{equation*}
\begin{aligned}
&\det\left[\sum_{w\in \mathcal{S}_z} F_{i,j}(w;\vec x-\vec e_{k},\vec y,t)J(w)\right]_{i,j=1}^N-\det\left[\sum_{w\in \mathcal{S}_z} F_{i,j}(w;\vec x-\vec e_k-\vec e_{k-1},\vec y,t)J(w)\right]_{i,j=1}^N\\
&= \det\left[\sum_{w\in \mathcal{S}_z} F_{i,j}(w;\vec x-\vec e_k,\vec y,t)J(w)\cdot (1-(1+w)\1_{i=N-k+2})\right]_{i,j=1}^N.
\end{aligned}
\end{equation*}
Now since $ F_{i,j}(w;\vec x-\vec e_k,\vec y,t) = F_{i,j}(w;\vec x,\vec y,t)$ for $i\neq N-k+1$ and 
\begin{equation*}
	(1-p)F_{N-k+1,j}(w;\vec x,\vec y,t)+pF_{N-k+1,j} (w;\vec x-\vec e_k,\vec y,t) = w\cdot F_{N-k+2,j}(w;\vec x,\vec y,t),
\end{equation*}
we have 
\begin{equation*}
	\begin{aligned}
	&(1-p)\cdot\det\left[\sum_{w\in \mathcal{S}_z} F_{i,j}(w;\vec x,\vec y,t)J(w)\cdot (1-(1+w)\1_{i=N-k+2})\right]_{i,j=1}^N\\
	&+p\cdot\det\left[\sum_{w\in \mathcal{S}_z} F_{i,j}(w;\vec x-\vec e_k,\vec y,t)J(w)\cdot (1-(1+w)\1_{i=N-k+2})\right]_{i,j=1}^N\\
    &=  \det\left[M_{i,j}^{(k)}(\vec x,\vec y,t)\right]_{i,j=1}^N = 0.
	\end{aligned}
\end{equation*}
Here $M_{i,j}^{(k)}(\vec x,\vec y,t) =  \sum_{w\in \mathcal{S}_z} F_{i,j}(w;\vec x,\vec y,t)J(w)$ for $i\neq N-k+1, N-k+2$ and $M_{N-k+1,j}^{(k)}(\vec x,\vec y,t) = -M_{N-k+2,j}^{(k)}(\vec x,\vec y,t) =\sum_{w\in \mathcal{S}_z} F_{N-k+2,j}(w;\vec x,\vec y,t)J(w)\cdot w$. The determinant is $0$ since there are two proportional rows. Now \eqref{eq: bc1 modified} follows from linearity of the integral.

The proof of \eqref{eq: bc2 modified} is similar. The only thing changes is that when $k=1$, we have $M_{i,j}^{(1)}(\vec x,\vec y,t) = \sum_{w\in \mathcal{S}_z} F_{i,j}(w;\vec x,\vec y,t)J(w)$ for $i\neq 1,N$ while 
\begin{equation}\label{eq:bc k=1}
	M_{N,j}^{(1)}(\vec x,\vec y,t) = \sum_{w\in \mathcal{S}_z} w^{-N}(w+1)^{N-L}(1+pw)^{N} F_{1,j}(w;\vec x,\vec y,t)J(w)\cdot w=-z^{-L}M_{1,j}^{(1)}(\vec x,\vec y,t).
\end{equation}
 Hence $\det[M^{(1)}_{i,j}(\vec x,\vec y,t)]=0$ since row $1$ and $N$ are proportional. Note that in the last equality of \eqref{eq:bc k=1} we used the fact that $w\in \mathcal{S}_z$.
 \smallskip
 
 Finally we check the initial condition \eqref{eq:initial conditions}. We need to show
 \begin{equation}\label{eq:ic reduction}
 	\oint_{\Gamma} \frac{\dd z}{2\pi \ii z} \det\left[\sum_{w\in \mathcal{S}_z} F_{i,j}(w;\vec x,\vec y,0)J(w)\right]_{i,j=1}^N = \frac{\1_{\vec x=\vec y}}{\prod_{i=1}^{N}(1-p\1_{x_{i-1}-x_1=1})}.
 \end{equation}Thanks to the cyclic-shift invariance of both sides of \eqref{eq:ic reduction} (see (iv) of Proposition~\ref{properties of the transition probability}) we can assume without loss of generality that $\vec x=(x_1,\cdots,x_N)\in \conf_{N}^{(L)}$ satisfies $x_1<x_{N}+L-1$. In fact since $N<L$ there is at least one $1\leq i\leq N$ such that $x_{i}<x_{i-1}-1$ and we can replace $\vec x$ and $\vec y$ by $\vec x':=(x_{i},x_{i+1},\cdots,x_{N},x_{1}-L,\cdots, x_{i-1}-L)$ and $\vec y':=(y_{i},y_{i+1},\cdots,y_{N},y_{1}-L,\cdots, y_{i-1}-L)$. Then the two sides of \eqref{eq:ic reduction} remain the same and $x_1'< x_N'+L-1$. By \eqref{eq: integral rep for entry} we have 
 \begin{equation*}
 	\begin{aligned}
         &\sum_{w\in \mathcal{S}_z} F_{i,j}(w;\vec x,\vec y,0)J(w) =\left(\oint_{|w+1|=R}-\oint_{|w+1|=\epsilon}-\oint_{|pw+1|=\epsilon}\right)\frac{\dd w}{2\pi \ii} F_{i,j}(w;\vec x,\vec y,0)\frac{\hat{q}_z(w)+z^L}{\hat{q}_z(w)}\\
         & = \left(\oint_{|w+1|=R}-\oint_{|w+1|=\epsilon}-\oint_{|pw+1|=\epsilon}\right)\frac{\dd w}{2\pi \ii}  \frac{w^{j-i+N}(w+1)^{-x_{N-i+1}+y_{N-j+1}+i-j-1+L-N}(1+pw)^{i-j}}{w^{N}(w+1)^{L-N}-z^L(1+pw)^{N}}\\
         &:= I_1(i,j) -I_2(i,j)-I_3(i,j),
 	\end{aligned}
 \end{equation*}
 where $I_1(i,j),I_2(i,j),I_3(i,j)$ are the integrals over the three contours, respectively. Here we recall that $R$ and $\epsilon$ are chosen to be large (small) enough so that $\mathcal{S}_z$ is contained in the region $\{\epsilon <|w+1|<R\}\backslash\{|1+pw|\leq \epsilon\}$. For $I_3(i,j)$ we note that
 \begin{equation*}
 	\begin{aligned}
 	I_3(i,j) &= \oint_{|pw+1|=\epsilon} \frac{\dd w}{2\pi \ii }F_{i,j}(w;\vec x,\vec y,0) + z^L\oint_{|pw+1|=\epsilon} \frac{\dd w}{2\pi \ii }\frac{F_{i,j}(w;\vec x,\vec y,0)(1+pw)^N}{q_z(w)}\\
 	&= \oint_{|pw+1|=\epsilon} \frac{\dd w}{2\pi \ii }F_{i,j}(w;\vec x,\vec y,0),
 	\end{aligned}
 \end{equation*} 
 where the second term vanishes since $F_{i,j}(w;\vec x,\vec y,0)(1+pw)^N$ is analytic at $w=-1/p$ for all $1\leq i,j\leq N$ and $q_z(w)$ is nonzero at $w=-1/p$.  For the other parts we write
\begin{equation}
\begin{aligned}
I_1(i,j) &= \oint_{|w+1|=R} \frac{\dd w}{2\pi \ii }F_{i,j}(w;\vec x,\vec y,0) + z^L\oint_{|w+1|= R} \frac{\dd w}{2\pi \ii }\frac{F_{i,j}(w;\vec x,\vec y,0)(1+pw)^N}{q_z(w)}\\
&:= \oint_{|w+1|=R} \frac{\dd w}{2\pi \ii }F_{i,j}(w;\vec x,\vec y,0) + z^L I_{1}'(i,j),
\end{aligned}
\end{equation} 
and 
\begin{equation}
\begin{aligned}
I_2(i,j) &= z^{-L}\oint_{|w+1|=\epsilon}\frac{\dd w}{2\pi \ii}\frac{w^{j-i+N}(w+1)^{-x_{N-i+1}+y_{N-j+1}+i-j-1+L-N}(1+pw)^{i-j-N}}{z^{-L}w^{N}(w+1)^{L-N}(1+pw)^{-N}-1}\\
&:= z^{-L} I_2'(i,j).
\end{aligned}
\end{equation}
Depending on the analytical properties of the integrands in $I_1'$ and $I_2'$ we split into two cases:

\noindent\textbf{Case 1:} $x_1<y_1$. First note the basic fact that for any  $\vec x,\vec y\in \conf_{N}^{(L)}$  we have 
\begin{equation}\label{eq: relation due to gap}
	x_1-L+i\leq x_{N-i+1}\leq x_1-N+i,\quad  \text{and}\quad y_1-L+j\leq y_{N-j+1}\leq y_1-N+j,\quad 1\leq i,j\leq N.
\end{equation}
Now if $x_1<y_1$, then for any $1\leq i,j\leq N$ we have 
\begin{equation*}
	-x_{N-i+1}+y_{N-j+1}+i-j-1+L-N\geq y_1-x_1-1\geq 0.
\end{equation*}
Hence the integrand of $I_2'(i,j)$ is analytic at $w=-1$, so $I_2'(i,j)=0$. Therefore in this case we have 
\begin{equation}\label{eq: residue small circle}
	\begin{aligned}
	 &\oint_{\Gamma} \frac{\dd z}{2\pi \ii z} \det\left[\sum_{w\in \mathcal{S}_z} F_{i,j}(w;\vec x,\vec y,0)J(w)\right]_{i,j=1}^N \\
	 &= \oint_{\Gamma} \frac{\dd z}{2\pi \ii z}\det\left[\oint_{\Gamma_{0,-1}}\frac{\dd w}{2\pi \ii} F_{i,j}(w;\vec x,\vec y,0) +z^L I_1'(i,j)\right]_{i,j=1}^N\\
	 &= \det\left[\oint_{\Gamma_{0,-1}}\frac{\dd w}{2\pi \ii} F_{i,j}(w;\vec x,\vec y,0)\right]_{i,j=1}^N. 
	\end{aligned}
\end{equation}
Here in the last equality of \eqref{eq: residue small circle} we take the outer integral contour $\Gamma$ to be $|z|=r$ and let $r\to 0$. $\Gamma_{0,-1}$ is any simple closed contour with  $0$ and $-1$ inside and $-1/p$ outside.

\noindent\textbf{Case 2:} $x_1\geq y_1$. Write
\begin{equation}\label{eq:remainder term in I1}
	I_1'(i,j) = \oint_{|w+1|= R} \frac{\dd w}{2\pi \ii}\frac{w^{j-i-N}(w+1)^{-x_{N-i+1}+y_{N-j+1}+i-j-1-L+N}(1+pw)^{i-j+N}}{1-z^{L}w^{-N}(w+1)^{-L+N}(1+pw)^{N}}.
\end{equation} Again by \eqref{eq: relation due to gap} we have 
\begin{equation}\label{eq: upper bound of the exponent}
	-x_{N-i+1}+y_{N-j+1}+i-j-1-L+N\leq -x_1+L-i+y_1-N+j+i-j-1-L+N\leq -1.
\end{equation}
We claim that the first inequality in \eqref{eq: upper bound of the exponent} is strict and  hence $-x_{N-i+1}+y_{N-j+1}+i-j-1-L+N\leq -2$. This is due to our original assumption that $x_N+L-1>x_1$ and hence $x_{N-i+1}>x_1-L+i$ for all $1\leq i\leq N$. Owing to this fact, the integrand in \eqref{eq:remainder term in I1} is $O(R^{-2})$ since $w^{j-i-N}(1+pw)^{i-j+N}$ remains bounded. Hence $I_1'(i,j)\to 0$ as $R\to \infty$. But since it is independent of large enough $R$ , we have $I_1'(i,j)=0$ for all $R$ large enough.  Now a similar argument as in \eqref{eq: residue small circle} with $\Gamma$ instead be a large circle $|z|=r$ with $r\to \infty$ implies:
\begin{equation}
\begin{aligned}
&\oint_{\Gamma} \frac{\dd z}{2\pi \ii z} \det\left[\sum_{w\in \mathcal{S}_z} F_{i,j}(w;\vec x,\vec y,0)J(w)\right]_{i,j=1}^N \\
&= \oint_{\Gamma} \frac{\dd z}{2\pi \ii z}\det\left[\oint_{\Gamma_{0,-1}}\frac{\dd w}{2\pi \ii} F_{i,j}(w;\vec x,\vec y,0) +z^{-L} I_2'(i,j)\right]_{i,j=1}^N\\
&= \det\left[\oint_{\Gamma_{0,-1}}\frac{\dd w}{2\pi \ii} F_{i,j}(w;\vec x,\vec y,0)\right]_{i,j=1}^N. 
\end{aligned}
\end{equation}
In conclusion we have reduced checking \eqref{eq:ic reduction} to checking the following:
\begin{equation}\label{eq:ic further reduction}
	\det\left[\oint_{\Gamma_{0,-1}}\frac{\dd w}{2\pi \ii} F_{i,j}(w;\vec x,\vec y,0)\right]_{i,j=1}^N=\frac{\1_{\vec x=\vec y}}{\prod_{i=1}^{N}(1-p\1_{x_{i-1}-x_1=1})},
\end{equation}
for any $\vec x,\vec y\in \conf_{N}^{(L)}$ with $x_1<x_{N}+L-1$. But this is precisely equation $(3.30)$ of \cite{borodin2008parallel} which appears in checking that the determinantal formula for the transition probability of discrete parallel TASEP on $\mathbb{Z}$ satisfies the initial condition. We will not repeat the proof here but point out that due to the assumption $x_1<x_{N}+L-1$, we have $1-p\1_{x_0-x_1=1}=1$ so \eqref{eq:ic further reduction} is really identical to equation $(3.30)$ of \cite{borodin2008parallel}.
 \end{proof}
\bigskip

\section{Finite-time Multi-point joint distribution under general initial conditions}\label{sec: multitime}
\subsection{A Toeplitz-like determinant formula}
In this section we derive a formula for the finite-time multi-point joint distributions of discrete time parallel periodic TASEP under arbitrary initial condition. The proof basically follows the strategy of \cite{baik2019multi} by performing a multiple sum of transition probabilities over suitable particle configurations. The main technical part is a Cauchy-type identity for summation of left and right eigenfunctions (see Proposition \ref{prop: Cauchy identity}) which generalizes Proposition 3.4 of \cite{baik2019multi} and some new difficulties appear.

\begin{theorem}[Multi-point joint distribution for discrete time parallel TASEP in $\conf_{N}^{(L)}$] \label{thm: mutlitime distribution general}
	Given $\vec y \in \conf_{N}^{(L)}$. Let $\vec x(t)=(x_1(t),\cdots, x_N(t))\in \conf_{N}^{(L)}$ be particle configurations evolving according to the discrete time parallel TASEP in $\conf_{N}^{(L)}$ at time $t$ with initial configuration $\vec x(0)=\vec y$. Fix a positive integer $m$. Let $(k_1,t_1),\cdots,(k_m,t_m)\in \{1,\cdots,N\}\times \mathbb{N}$ be distinct with $0\leq t_1\leq \cdots\leq t_m$. Let $a_{i}\in \mathbb{Z}$ for $1\leq i\leq m$. Then 
	\begin{equation}\label{eq: multipoint}
		\mathbb{P}_{\vec y}^{(L)}\left(\bigcap_{i=1}^m \{x_{k_i}(t_i)\geq a_i\}\right) = \oint\cdots\oint \frac{\dd z_m}{2\pi \ii z_m}\cdots \frac{\dd z_1}{2\pi \ii z_1} \mathcal{C}^{(L)}(\vec z)\mathcal{D}_{\vec y}^{(L)}(\vec z),
	\end{equation}
where the contours for the integrals are nested circles $0<|z_m|<\cdots<|z_1|$. Here $\vec z=(z_1,\cdots,z_m)$.  The functions $\mathcal{C}^{(L)}(\vec z)$ and $\mathcal{D}^{(L)}_{\vec y}(\vec z)$ are defined by
\begin{equation}\label{eq: C(z)}
	\mathcal{C}^{(L)}(\vec z) := (-1)^{(N-k_m)(N-1)}z_1^{(N-k_1)L}\prod_{\ell=2}^{m}\left[z_{\ell}^{(k_{\ell-1}-k_{\ell})L}\left(\left(\frac{z_{\ell}}{z_{\ell-1}}\right)^L-1\right)^{N-1}\right],
\end{equation}
and 
\begin{equation}\label{eq: D(z)}
	\mathcal{D}_{\vec y}^{(L)}(\vec z) := \det\left[\sum_{\substack{w_{\ell}\in \mathcal{S}_{z_\ell}\\ \ell=1,\cdots,m}}\frac{w_1^{i-1-N}(1+pw_1)^{N-i+1}(1+w_1)^{y_{N-i+1}+N-i+1}\cdot w_m^{-j}}{\prod_{\ell=2}^{m}(w_{\ell}-w_{\ell-1})}\prod_{\ell=1}^{N}G_{\ell}(w_{\ell})\right]_{i,j=1}^N,
\end{equation}
where for $1\leq \ell\leq m$,
\begin{equation}
	G_{\ell}(w):= \frac{w(w+1)(1+pw)}{N+Lw+p(L-N)w^2}\cdot \frac{w^{k_{\ell}}(1+w)^{-a_{\ell}-k_{\ell}}(1+pw)^{t_{\ell}-k_{\ell}}}{w^{k_{\ell-1}}(1+w)^{-a_{\ell-1}-k_{\ell-1}}(1+pw)^{t_{\ell-1}-k_{\ell-1}}}.
\end{equation}
Here $k_0=t_0=a_0:=0$ and we suppress the dependence on $a_i$, $k_i$ and $t_i$'s in $\mathcal{C}^{(L)}(\vec z)$ and $\mathcal{D}_{\vec y}^{(L)}(\vec z)$.
\end{theorem}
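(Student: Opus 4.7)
The plan is to reduce the multi-point joint distribution to a nested sum of transition probabilities via the Markov property and then collapse the intermediate sums using a Cauchy-type summation identity. By the Markov property,
\begin{equation*}
	\mathbb{P}_{\vec y}^{(L)}\left(\bigcap_{i=1}^m \{x_{k_i}(t_i)\geq a_i\}\right) = \sum_{\vec x^{(1)},\ldots,\vec x^{(m)}\in \conf_N^{(L)}} \prod_{\ell=1}^m P_{t_\ell - t_{\ell-1}}\!\bigl(\vec x^{(\ell-1)}\to \vec x^{(\ell)}\bigr) \prod_{\ell=1}^m \mathbf{1}_{x^{(\ell)}_{k_\ell}\ge a_\ell},
\end{equation*}
with the convention $\vec x^{(0)}=\vec y$ and $t_0=0$. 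The first step is to substitute the integral-of-determinant formula from Proposition~\ref{prop:transition probability} for each transition probability. This introduces $m$ contour variables $z_1,\ldots,z_m$ and, crucially, an invariant measure prefactor $\prod_{i=1}^N (1-p\mathbf{1}_{x_{i-1}^{(\ell)}-x_i^{(\ell)}=1})$ for each $\ell$. These prefactors are the new feature compared to continuous time TASEP, and absorbing them correctly is the main technical point: the idea is to view them as part of a modified summation kernel, equivalently working with the ratios $G$ from Lemma~\ref{free evolution equation} whose free evolution does not carry the measure factor.

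Next, I would expand each determinant by multilinearity so that every entry becomes a single term indexed by a tuple $(w_1^{(\ell)},\ldots,w_N^{(\ell)})\in \mathcal{S}_{z_\ell}^N$, and would interchange the $z$-integrals with the sums over intermediate configurations. The key step is to perform the sums over the free coordinates of $\vec x^{(1)},\ldots,\vec x^{(m-1)}$, together with the telescoping geometric sums coming from the indicator functions $\mathbf{1}_{x^{(\ell)}_{k_\ell}\ge a_\ell}$ (converting them into geometric series in $(1+w)$-variables that produce the factors $w^{k_\ell}(1+w)^{-a_\ell-k_\ell}$ appearing inside $G_\ell(w)$). At the level of a single transition, the weight $\prod_{i}(w_i+1)^{-x^{(\ell)}_{N-i+1}+y^{(\ell)}_{N-j+1}+\cdots}$ factorizes over particle coordinates, so after choosing the right order of summation each internal sum reduces to a Cauchy-type identity for the Bethe eigenfunctions of the dpTASEP generator.

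The main obstacle, and the heart of the argument, is that Cauchy identity. For continuous time TASEP the corresponding identity is Proposition~3.4 of \cite{baik2019multi}; here I would need a $p$-deformed analogue stated as the forthcoming Proposition~\ref{prop: Cauchy identity}, which sums products of left and right eigenfunctions over $\conf_N^{(L)}$ to produce the rational factor $1/\prod_{\ell}(w_\ell-w_{\ell-1})$ together with the ratios $(z_\ell/z_{\ell-1})^L - 1$ seen in \eqref{eq: C(z)}. The extra $(1+pw)$-factors coming from the parallel update rule and the non-uniform stationary measure must be tracked through this identity; this is exactly where the $(1+pw)^{N-i+1}$ factor in the first column of the determinant in \eqref{eq: D(z)} and the denominator $N+Lw+p(L-N)w^2$ of $G_\ell(w)=J(w)\,F_\ell(w)/F_{\ell-1}(w)$ will originate (via $J(w)=\mathrm{Res}_{q_z}$ at the Bethe roots). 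The nesting $|z_m|<\cdots<|z_1|$ is forced by the order in which we apply the geometric series, which require $|z_{\ell-1}/z_\ell|<1$ for absolute convergence.

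Finally, once the sums are collapsed, I would reassemble the product over $\ell$ into a single determinant whose $(i,j)$-entry carries the boundary data $y_{N-i+1}$ on the left end and a free $w_m^{-j}$ on the right end, absorb the remaining prefactors into $\mathcal{C}^{(L)}(\vec z)$, and verify that the signs $(-1)^{(N-k_m)(N-1)}$ and the powers $z_\ell^{(k_{\ell-1}-k_\ell)L}$ match by tracking the indexing shifts induced by $k_\ell$ in the indicator sums. This yields \eqref{eq: multipoint} with the stated $\mathcal{C}^{(L)}$ and $\mathcal{D}_{\vec y}^{(L)}$. The proof of the Cauchy summation identity itself is deferred to Section~\ref{sec: summation identity} and is where the $p$-dependence genuinely enters; all other steps are conceptually the same as in the continuous-time case.
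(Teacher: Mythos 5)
Your proposal follows essentially the same route as the paper's proof: the Markov property combined with the transition probability formula of Proposition~\ref{prop:transition probability}, interchange of the configuration sums with the $z$-integrals on large nested contours, collapse of the intermediate sums via the single-eigenfunction summation identity (Corollary~\ref{cor: single sum arbitrary index}) and the $p$-deformed Cauchy-type identity (Proposition~\ref{prop: Cauchy identity}, Corollary~\ref{cor: sum over two arbitrary index}), and a backwards Cauchy--Binet reassembly into $\mathcal{C}^{(L)}(\vec z)\mathcal{D}^{(L)}_{\vec y}(\vec z)$. One small slip worth fixing: absolute convergence of the intermediate sums requires $\prod_j|w^{(\ell-1)}_j+1|>\prod_j|w^{(\ell)}_j+1|$, i.e.\ $|z_\ell/z_{\ell-1}|<1$ rather than $|z_{\ell-1}/z_\ell|<1$, which is what actually matches the nesting $0<|z_m|<\cdots<|z_1|$ you state.
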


\begin{proof}
	We start with the case $m=1$ for a warm-up. First by Cauchy-Binet formula we can rewrite the transition probability \eqref{eq:transition probability} as 
	\begin{equation*}
		P_t(\vec x\to\vec x') = \oint_\Gamma \frac{\dd z}{2\pi \ii z} \sum_{\vec w\in (\mathcal{S}_z)^N}\Psi_{\vec x}^{\ell}(\vec w)\Psi_{\vec x'}^{r}(\vec w) \mathcal{Q}(\vec w;t),
	\end{equation*}
	where 
	\begin{align}
		&\Psi_{\vec x}^{\ell}(\vec w) = \det\left[\left(\frac{w_i}{1+pw_i}\right)^{j-1}(1+w_i)^{x_{N-j+1}-j+1}\right]_{i,j=1}^N, \label{eq: left eigenfunction}\\
		&\Psi_{\vec x}^{r}(\vec w) = \prod_{i=1}^{N}(1-p\1_{x_{i-1}-x_i=1})\cdot\det\left[\left(\frac{w_i}{1+pw_i}\right)^{1-j}(1+w_i)^{-x_{N-j+1}+j-1}\right]_{i,j=1}^N, \label{eq: right eigenfunction}
	\end{align}
    and
	\begin{equation}
		\mathcal{Q}(\vec w;t) = \frac{1}{N!}\prod_{j=1}^N \frac{(1+pw_j)^tJ(w_j)}{w_j+1}=\frac{1}{N!}\prod_{j=1}^N \frac{w_j(1+pw_j)^{t+1}}{N+Lw_j+p(L-N)w_j^2}.
	\end{equation}
	Now to get the one-point distribution $\mathbb{P}_{\vec y}^{(L)}(x_{k}(t)\geq a)$ we perform a summation over all configurations $\vec x\in \conf_{N}^{(L)}$ with $x_k\geq a$ of the transition probability and interchange the order of integration and summation:
	\begin{equation}\label{eq: one point}
		\begin{aligned}
	     \mathbb{P}_{\vec y}^{(L)}(x_{k}(t)\geq a)& = \sum_{\vec x\in \conf_{N}^{(L)}\cap \{x_k\geq a\}} P_t(\vec y\to \vec x)\\
	     &= \oint_\Gamma \frac{\dd z}{2\pi \ii } \sum_{\vec w\in (\mathcal{S}_z)^N} \Psi_{\vec y}^{\ell}(\vec w)Q(\vec w;t)\left(\sum_{\vec x\in \conf_{N}^{(L)}\cap \{x_k\geq a\}}\Psi_{\vec x}^{r}(\vec w)\right).
		\end{aligned}
	\end{equation}
	Now by Corollary~\ref{cor: single sum arbitrary index} in Section~\ref{sec: summation identity} we have $\sum_{\vec x\in \conf_{N}^{(L)}\cap \{x_k\geq a\}}\Psi_{\vec x}^{r}(\vec w)$ equals
	\begin{equation}\label{eq: single sum for x_k}
		 (-1)^{(N-k)(N-1)}z^{(N-k)L}\prod_{i=1}^{N}\left[\left(\frac{1+pw_i}{w_i}\right)^{N-k}\cdot(1+w_i)^{-a+N-k+1}\right] \cdot \det[w_{i}^{-j}]_{i,j=1}^N.
	\end{equation}
	Inserting \eqref{eq: single sum for x_k} back to \eqref{eq: one point} and use Cauchy-Binet formula backwards we conclude that 
	\begin{equation}\label{eq: one point final}
	\begin{aligned}
			&\mathbb{P}_{\vec y}^{(L)}(x_k(t)\geq a)= \frac{(-1)^{(N-k)(N-1)}}{2\pi \ii}\\
			\cdot&\oint_\Gamma \frac{\dd z}{z^{1-(N-k)L}} \det\left[\sum_{w\in \mathcal{S}_z}\frac{w^{j-i+k-N}(1+pw)^{t-j+N-k+2}(1+w)^{y_{N-j+1}-j-a+N-k+2}}{N+Lw+p(L-N)w^2}\right]_{i,j=1}^N.
	\end{aligned}
	\end{equation}
	Here in order to interchange the order of summation and integration as in \eqref{eq: one point} we need to make sure that the summation over $\vec x\in \conf_{N}^{(L)}\cap \{x_k\geq a\}$ converges absolutely. This is guaranteed if we assume $|w+1|>1$ for all $w\in \mathcal{S}_z$ (see the discussion in Proposition \ref{prop: summation single}).  By choosing the contour $\Gamma$ to be a circle with large radius $R$ we can make sure that $|w+1|>1$ for all $w\in \mathcal{S}_z$ since for $|z|=R$ and all $w$ satisfying $w^N(1+pw)^{-N}(w+1)^{L-N}=z^L$ we have $|w+1|=O(|z|^{\frac{L}{L-N}})>1$. Finally a similar argument as in (ii) of Proposition \ref{properties of the transition probability} shows that the right-hand side of \eqref{eq: one point final} does not depend on the choice of $\Gamma$, so we can deform $\Gamma$ to be any simple closed contour containing $0$.
	
	\smallskip
	Now assume $m\geq 2$. Then
	\begin{equation}
		\mathbb{P}_{\vec y}^{(L)}\left(\cap_{i=1}^m \{x_{k_i}(t_i)\geq a_i\}\right) = \sum_{\substack{\vec x^{(\ell)}\in \conf_{N}^{(L)}\cap\{x^{(\ell)}_{k_\ell}\geq a_\ell\}\\ \ell=1,\cdots,m}} P_{t_1-t_0}(\vec y\to \vec x^{(1)})\cdots P_{t_m-t_{m-1}}(\vec x^{(m-1)}\to \vec x^{(m)}),
	\end{equation}
	where $t_0:= 0$.  As in the $m=1$ case we rewrite the transition probability using Cauchy-Binet formula and interchange the order of summation and integration (will be justified later) so that $\mathbb{P}_{\vec y}(\cap_{i=1}^m \{x_{k_i}(t_i)\geq a_i\})$ equals
	\begin{equation}
	\begin{aligned}
	\oint \frac{\dd z_1}{2\pi \ii z_1} \cdots \oint \frac{\dd z_m}{2\pi \ii z_m} \sum_{\substack{\vec w^{(\ell)}\in (\mathcal{S}_{z_{\ell}})^N\\ \ell=1,\cdots,m}}\mathcal{P}(\vec w^{(1)},\cdots,\vec w^{(m)})\prod_{\ell=1}^{m} \mathcal{Q}(\vec w^{(\ell)};t_{\ell}-t_{\ell-1}).
	\end{aligned}
	\end{equation}
	Here $\vec w^{(\ell)}=(w^{(\ell)}_1,\cdots,w^{(\ell)}_m)$ and 
	\begin{equation}\label{eq: multi-point 1}
		\mathcal{P}(\vec w^{(1)},\cdots,\vec w^{(m)}) = \Psi_{\vec y}^{(\ell)}(\vec w^{(1)})\cdot\left[\prod_{\ell=1}^{m-1}\mathcal{H}_{k_\ell,a_\ell}(\vec w^{(\ell)};\vec w^{(\ell+1)})\right]\cdot\left[\sum_{\vec x\in \conf_{N}^{(L)}\cap \{x_{k_m}\geq a_m\}}\Psi_{\vec x}^{r}(\vec w^{(m)})\right].
	\end{equation}
	Where 
	\begin{equation}\label{eq: multi-point 2}
		\mathcal{H}_{k,a}(\vec w;\vec w'):= \sum_{\vec x\in \conf_{N}^{(L)}\cap \{x_{k}\geq a\}} \Psi_{\vec x}^{r}(\vec w)\Psi_{\vec x}^{\ell}(\vec w').
	\end{equation}
	Now similar as in the $m=1$ case we evaluate the sums appearing in \eqref{eq: multi-point 1} and \eqref{eq: multi-point 2} using Corollary \ref{cor: single sum arbitrary index} and Corollary \ref{cor: sum over two arbitrary index} in Section \ref{sec: summation identity} and apply Cauchy-Binet formula backwards to conclude that 
	\begin{equation}
		\mathbb{P}_{\vec y}^{(L)}\left(\bigcap_{i=1}^m \{x_{k_i}(t_i)\geq a_i\}\right) = \oint\cdots\oint \frac{\dd z_m}{2\pi \ii z_m}\cdots \frac{\dd z_1}{2\pi \ii z_1} \mathcal{C}^{(L)}(\vec z)\mathcal{D}_{\vec y}^{(L)}(\vec z),
	\end{equation}
	for $\mathcal{C}^{(L)}(\vec z)$ and $\mathcal{D}_{\vec y}^{(L)}(\vec z)$ defined in \eqref{eq: C(z)} and \eqref{eq: D(z)}. Similar as the discussion before, in order to interchange summation and integration we need the absolute convergence of all the infinite sums which is guaranteed if we assume
	\begin{equation}
		\prod_{j=1}^{N}|w_j^{(1)}+1|>\prod_{j=1}^{N}|w_j^{(2)}+1|>\cdots >\prod_{j=1}^{N}|w_j^{(m)}+1|>1.
	\end{equation}
	By the same reasoning as in $m=1$ case this can be achieved assuming the integral contours for $z_i$'s are large nested contours $|z_\ell|=r_\ell$ with $r_{\ell}-r_{\ell+1}$ also large enough for all $1\leq \ell\leq m$. Finally we can deform the integral contours in \eqref{eq: multipoint} into  arbitrary nested contours with $0$ inside, not necessarily with large radius. This is due to the analyticity of $\mathcal{C}^{(L)}(\vec z)$ and $\mathcal{D}_{\vec y}^{(L)}(\vec z)$ in $\vec z$ for any $z_i$'s nonzero and distinct, which can be shown in a similar way as (ii) of Proposition \ref{properties of the transition probability}.
\end{proof}

\subsection{Toeplitz-like determinant to Fredholm determinant}

The finite time formula obtained in Theorem \ref{thm: mutlitime distribution general} contains a factor $\mathcal{D}_{\vec y}^{(L)}(\vec z)$ inside the integrals which is a Toeplitz-like determinant and is hard to take large-time limits. In this section we rewrite the formula 
based on a remarkable finite determinantal identity obtained in \cite{baik2019general}. It is an identity between a Toeplitz-like determinant with symbol supported on a finite set and a Fredholm determinant with kernel acting on the $\ell^2$ space supported on the same finite set.

\begin{proof}[Proof of Theorem~\ref{thm: finite-time}]
	The proof is almost verbatim to the proof of Theorem 3.1 in \cite{baik2019general} so we omit most of the details. We apply Proposition 4.1 of \cite{baik2019general} with the functions $p_i(w)$, $q_i(w)$ and $h_i(w)$ given by 
	\begin{equation}
		p_i(w) = w^{i-1}(1+pw)^{N-i}(1+w)^{y_{N-i+1}+N-i+1},\quad \text{and }\ q_i(w)= w^{N-i},
	\end{equation}
for $1\leq i\leq N$ and 
\begin{equation}
	h_i(w):=\begin{cases}
		G_i(w)w^{-N}(1+pw)\quad &i=1,\\
		G_i(w)\quad &2\leq i\leq m-1,\\
		G_i(w)w^{-N}\quad &i=m.
	\end{cases}
\end{equation}
Then the Toepliz-like determinant $\mathcal{D}_{\vec y}^{(L)}(\vec z)$ can be rewritten as the product of a function and a Fredholm determinant. Combining with the definition of the function $\mathcal{C}_{\vec y}^{(L)}(\vec z)$ and using the algebraic relation satisfied by the Bethe roots we obtain the identity 
\begin{equation}
	\mathcal{C}_{\vec y}^{(L)}(\vec z)\mathcal{D}_{\vec y}^{(L)}(\vec z)= \mathscr{C}_{\vec y}^{(L)}(\vec z)\mathscr{D}_{\vec y}^{(L)}(\vec z).
\end{equation}
\end{proof}

\section{Summation identities of eigenfunctions}\label{sec: summation identity}
In this section we state and prove the summation identities used in computing the multi-point joint distribution in Section \ref{sec: multitime}. First we recall the left and right eigenfunctions defined in \eqref{eq: left eigenfunction} and \eqref{eq: right eigenfunction}. They are certain (anti)-symmetric functions appearing naturally in the transition probability formula \eqref{eq:transition probability}. Our summation identities should be viewed as Littlewood or Cauchy type identities for these symmetric functions, but over configuration spaces $x_N+L>x_1>\cdots>x_N$ (partitions wrapped on a cylinder).
\begin{definition}[Left and right eigenfunctions]\label{def: eigenfunctions}
	Given $\vec x= (x_1,\cdots,x_N)\in \conf_N^{(\infty)}$ and $p\in \mathbb{C}$, we define the functions $\Psi_{\vec x}^{\ell}(\vec w)$ and $\Psi_{\vec x}^r(\vec w)$ for $\vec w\in \mathbb{C}^N$ as follows:
	\begin{align}
		&\Psi_{\vec x}^{\ell}(\vec w) = \det\left[\left(\frac{w_i}{1+pw_i}\right)^{j-1}(1+w_i)^{x_{N-j+1}-j+1}\right]_{i,j=1}^N,\\
		&\Psi_{\vec x}^{r}(\vec w) = \prod_{i=1}^{N}(1-p\1_{x_{i-1}-x_i=1})\cdot\det\left[\left(\frac{w_i}{1+pw_i}\right)^{1-j}(1+w_i)^{-x_{N-j+1}+j-1}\right]_{i,j=1}^N. 
	\end{align}
\end{definition} 
The rest of this section is organized as follows: we list all the summation identities we need in Section \ref{sec: summation identities} and postpone the proofs to the next few subsections. The proof of the summation identity over a single eigenfunction (Proposition \ref{prop: summation single}) consists of straightforward manipulations of determinants and will be discussed in Section \ref{sec: proof single}. The summation identity over left and right eigenfunctions (Proposition \ref{prop: Cauchy identity}) is more involved and the proof will be splitted into three steps discussed in Section \ref{sec: cauchy strategy} to Section \ref{sec: cauchy finish}. Throughout the proof several rank-one perturbation formulas for Cauchy determinants are frequently used so we collect all these elementary formulas in a separate section for convenience, see Section \ref{sec: perturbation formulas} for details. Finally in Section \ref{sec: proof cor} we discuss the proof of Corollary \ref{cor: single sum arbitrary index} and \ref{cor: sum over two arbitrary index} which are simple consequences of the previous propositions and the periodic nature of the identities. 
\smallskip

\subsection{Summation identities over eigenfunctions}\label{sec: summation identities}
We start with a summation identity for $\Psi_{\vec x}^{r}(\vec w)$:
\begin{proposition}[Summation over a single eigenfunction]\label{prop: summation single}
	Let $z\in \mathbb{C}$ be nonzero. Let $\Psi_{\vec x}^{r}(\vec w)$ be as in \eqref{eq: right eigenfunction} where $\vec w=(w_1,\cdots,w_N)\in (\mathcal{S}_z)^N$ satisfies $\prod_{j=1}^{N}|w_j+1|>1$. Then 
	\begin{equation}
	\sum_{\vec x\in \conf_{N}^{(L)}\cap \{x_N\geq 0\}} \Psi_{\vec x}^{r}(\vec w) = \prod_{i=1}^{N}(1+w_i) \cdot \det[w_{i}^{-j}]_{i,j=1}^N.
	\end{equation}
\end{proposition}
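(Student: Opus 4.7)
The plan is to isolate the translation freedom of the last particle $x_N$ and then reduce the identity to a finite sum over the gaps, which is tamed by the Bethe equation.

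First I would reparametrize $\conf_N^{(L)}\cap\{x_N\geq 0\}$ by $y:=x_N\geq 0$ together with the gaps $g_i:=x_{i-1}-x_i-1\geq 0$ for $1\leq i\leq N$ (with $x_0:=x_N+L$), which satisfy $\sum_i g_i=L-N$. Writing $x_{N-j+1}=y+(j-1)+G_j$ where $G_j:=\sum_{k=N-j+2}^{N}g_k$, the $(i,j)$ entry of the determinant defining $\Psi^r_{\vec x}$ becomes $r_i^{j-1}(1+w_i)^{-y-G_j}$, with $r_i:=(1+pw_i)/w_i$. Pulling $(1+w_i)^{-y}$ out of row $i$ by multilinearity and summing the geometric series $\sum_{y\geq 0}\prod_i(1+w_i)^{-y}=\bigl(1-\prod_i(1+w_i)^{-1}\bigr)^{-1}$ (convergent by the hypothesis $\prod_i|1+w_i|>1$) reduces the proposition to the equivalent claim
\begin{equation*}
S:=\sum_{\substack{g_i\geq 0\\ \sum_i g_i=L-N}}\prod_{i=1}^{N}(1-p\,\1_{g_i=0})\,\det\!\left[r_i^{j-1}(1+w_i)^{-G_j}\right]_{i,j=1}^{N}=\Bigl(\prod_{i=1}^{N}(1+w_i)-1\Bigr)\det\!\left[w_i^{-j}\right]_{i,j=1}^{N}.
\end{equation*}

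To evaluate $S$ I would expand the indicator product by inclusion--exclusion as $\sum_{I\subseteq\{1,\ldots,N\}}(-p)^{|I|}\prod_{i\in I}\1_{g_i=0}$ and, for each fixed subset $I$ of pinned gaps, perform the remaining unconstrained sum over the free gaps using the complete-homogeneous identity $\sum_{g\geq 0,\sum g=M}\prod_k A_k^{g_k}=h_M(A_1,\ldots)$ applied column-by-column (since $(1+w_i)^{-G_j}=\prod_{k=N-j+2}^{N}(1+w_i)^{-g_k}$ factors cleanly across columns). This produces an expansion of $S$ as a signed sum of $2^N$ symmetric-function evaluations.

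The main obstacle is the final collapse of this expansion to the target $\bigl(\prod_i(1+w_i)-1\bigr)\det[w_i^{-j}]$, which crucially invokes the Bethe equation $w_i^N(1+w_i)^{L-N}=z^L(1+pw_i)^N$, equivalently $(1+w_i)^{L-N}=z^L r_i^N$, for each $w_i\in\mathcal{S}_z$. A direct check in the toy case $N=2$, $L=3$, $p=0$ confirms that $S$ does \emph{not} equal the RHS generically, and that the discrepancy vanishes precisely modulo the Bethe polynomial $w_1+w_2+w_1^2+w_1w_2+w_2^2=0$; so the use of the Bethe equation is essential and nontrivial. For the general case I expect the cleanest organization is either an induction on $N$ with Laplace expansion along the last column --- where the relation $G_N=L-N-g_1$ combined with the Bethe identity rewrites $(1+w_i)^{-G_N}$ as $(1+w_i)^{g_1}/(z^L r_i^N)$, effectively swapping a divergent-looking geometric series in $(1+w_i)^{-1}$ for a tame one in $(1+w_i)$ --- or a reformulation in which $\Psi^r_{\vec x}(\vec w)$ is represented as a contour-integral residue sum over $\mathcal{S}_z$, making the Bethe relation manifest and the cancellations automatic.
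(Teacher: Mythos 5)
Your reduction is sound: isolating the translate $y=x_N$, summing the geometric series, and passing to gap variables $g_i$ with $\sum_i g_i=L-N$ correctly reduces the proposition to the finite identity $S=\bigl(\prod_i(1+w_i)-1\bigr)\det[w_i^{-j}]$, which is also the skeleton of the paper's argument, and your toy computation correctly shows that this finite identity holds only modulo the Bethe relation $(1+w_i)^{L-N}=z^L\bigl((1+pw_i)/w_i\bigr)^N$. But the proof stops exactly at the hard part: the collapse of your inclusion--exclusion expansion to the target determinant is never carried out, and the two suggested remedies (induction on $N$ with Laplace expansion along the last column, or a contour-integral rewriting of $\Psi^{r}_{\vec x}$) are only gestured at, with no argument that either one closes. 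There is also a looseness in the intermediate step as stated: since $g_1$ enters no $G_j$, the sum over the remaining gaps is not an unconstrained product of geometric series but is truncated by $\sum_{k\geq 2}g_k\leq L-N$, and because each $g_k$ appears in every column $j\geq N-k+2$, the fixed-total sum does not factor ``column-by-column''; it is precisely this truncation (the wrap-around of the period) that produces the boundary terms the Bethe equation must cancel, so the claimed $h_{L-N}$ evaluation cannot be performed before confronting the very cancellation you postpone.

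For comparison, the paper supplies exactly the missing mechanism. It stratifies the $x_N=0$ slice by the size $k$ of the cluster containing particle $N$, performs the (almost geometric) sums over $x_{N-k-1},\dots,x_1$ one at a time, absorbing each boundary term into the determinant by a column operation so the matrix keeps a clean form; the Bethe equation is used once, at the final sum over $x_1\leq L-1$, where the wrap-around factor is rewritten as $z^{-L}$ times the first column and eliminated by another column operation; finally the resulting determinants $S_k$ are summed over $k$ by a telescoping sequence of column manipulations, yielding $\prod_i(1+w_i)\det[w_i^{-j}]-\det[w_i^{-j}]$ before the geometric sum in $y$. Your framing is compatible with this, and either of your proposed routes might be made to work, but as written the proposal establishes the setup of the identity rather than the identity itself.
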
 

The following corollary which allows slightly more general constraints on the summation is a simple consequence of Proposition \ref{prop: summation single} and the periodic nature of $\conf_{N}^{(L)}$.
\begin{corollary}\label{cor: single sum arbitrary index}
	Under the same assumption as in Proposition \ref{prop: summation single} we have 
	\begin{equation}
		\begin{aligned}
			\sum_{\vec x\in \conf_{N}^{(L)}\cap \{x_{N-k}\geq a\}} \Psi_{\vec x}^{r}(\vec w) = (-1)^{k(N-1)}z^{kL}\prod_{i=1}^{N}\left[\left(\frac{1+pw_i}{w_i}\right)^k\cdot(1+w_i)^{-a+k+1}\right] \cdot \det[w_{i}^{-j}]_{i,j=1}^N,
		\end{aligned}
	\end{equation}
	for all $0\leq k\leq N-1$ and $a\in \mathbb{Z}$.
\end{corollary}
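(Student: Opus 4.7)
The strategy is to reduce the general $(k,a)$ case to the base case $(k,a)=(0,0)$ already established in Proposition~\ref{prop: summation single}, by combining two elementary symmetries of the right eigenfunction: translation and cyclic shift.

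For the first reduction, the map $\vec x\mapsto \vec x-(a,\ldots,a)$ is a bijection of $\conf_{N}^{(L)}$ onto itself that sends the event $\{x_{N-k}\ge a\}$ to $\{x_{N-k}\ge 0\}$. From Definition~\ref{def: eigenfunctions}, the cluster prefactor $\prod_{i=1}^N(1-p\1_{x_{i-1}-x_i=1})$ is invariant under uniform translation, while every entry in row $i$ of the matrix inside the determinant acquires a common factor $(1+w_i)^a$. Hence
\begin{equation*}
\Psi_{\vec x-(a,\ldots,a)}^{r}(\vec w) = \prod_{i=1}^N (1+w_i)^a \cdot \Psi_{\vec x}^{r}(\vec w),
\end{equation*}
which reduces the corollary to the case $a=0$ at the cost of an overall factor $\prod_i (1+w_i)^{-a}$ on the right-hand side.

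For the second reduction, consider the cyclic shift $\vec x\mapsto\vec x''$ defined by
\begin{equation*}
\vec x''=(x_{N-k+1}+L,\ldots,x_N+L,x_1,\ldots,x_{N-k}),
\end{equation*}
which is a bijection of $\conf_N^{(L)}$ onto itself mapping $\{x_{N-k}\ge 0\}$ to $\{x''_N\ge 0\}$. Since $\vec x''$ represents the same periodic configuration on $\mathbb{Z}$ as $\vec x$, the cluster prefactor is invariant. A direct computation shows that columns $1,\ldots,N-k$ and $N-k+1,\ldots,N$ of the matrix defining $\Psi_{\vec x''}^r(\vec w)$ are, respectively, columns $k+1,\ldots,N$ and $1,\ldots,k$ of the matrix defining $\Psi_{\vec x}^r(\vec w)$, multiplied by row-dependent factors
\begin{equation*}
A_i=\left(\frac{w_i}{1+pw_i}\right)^k(1+w_i)^{-k},\qquad B_i=\left(\frac{w_i}{1+pw_i}\right)^{k-N}(1+w_i)^{N-k-L}.
\end{equation*}
The critical ingredient is the Bethe condition $w_i\in\mathcal{S}_{z}$, equivalent to $(w_i/(1+pw_i))^N(1+w_i)^{L-N}=z^L$, which forces $B_i/A_i=z^{-L}$, a constant independent of $i$. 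The transformation then factors as a diagonal row scaling by $A_i$, a diagonal column scaling of the last $k$ columns by $z^{-L}$, and a cyclic column permutation $j\mapsto j+k\pmod N$ of sign $(-1)^{k(N-1)}$. Inverting yields
\begin{equation*}
\Psi_{\vec x}^r(\vec w)=(-1)^{k(N-1)}z^{kL}\prod_{i=1}^N\left(\frac{1+pw_i}{w_i}\right)^k(1+w_i)^k\cdot\Psi_{\vec x''}^r(\vec w).
\end{equation*}

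Combining the two reductions and applying Proposition~\ref{prop: summation single} to the inner sum $\sum_{\vec x''\in\conf_N^{(L)}\cap\{x''_N\ge 0\}}\Psi_{\vec x''}^r(\vec w)=\prod_i(1+w_i)\cdot\det[w_i^{-j}]_{i,j=1}^N$ immediately produces the claimed identity. I expect the main obstacle to be the bookkeeping in the cyclic-shift step: one must pin down the correct row factors $A_i,B_i$, invoke the Bethe relation at exactly the right moment to collapse $B_i/A_i$ to the single constant $z^{-L}$, and correctly tabulate the sign $(-1)^{k(N-1)}$ arising from the cyclic column permutation. Once these pieces are aligned the remaining algebra is routine.
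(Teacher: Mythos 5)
Your proposal is correct and follows essentially the same route as the paper: the paper likewise uses the cyclic shift $\vec x\mapsto(x_{N-k+1}+L,\ldots,x_N+L,x_1,\ldots,x_{N-k})$, moves the first $k$ columns of the determinant to the end (sign $(-1)^{k(N-1)}$), and invokes the Bethe relation $\left(\tfrac{w_i}{1+pw_i}\right)^N(1+w_i)^{L-N}=z^L$ to extract the factor $z^{kL}\prod_i\left(\tfrac{1+pw_i}{w_i}\right)^k(1+w_i)^k$ before applying Proposition~\ref{prop: summation single}. Your explicit translation step handling general $a$ is only a cosmetic reorganization of what the paper absorbs into the final application of that proposition.
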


The above summation identities over a single eigenfunction are sufficient for computing the one-point distribution. To get the multi-point distribution we also need the following Cauchy-type summation identities over products of left and right eigenfunctions.

\begin{proposition}[Cauchy-type summation identity over left and right eigenfunctions] \label{prop: Cauchy identity}
	Let $z,z'\in \mathbb{C}$ be nonzero such that $(z')^L\neq z^L$. Let $\vec x=(x_1,\cdots,x_N)\in \conf_{N}^{(L)}$ and $\Psi_{\vec x}^{r}(\vec w)$ and $\Psi_{\vec x}^{\ell}(\vec w')$ be as in \eqref{eq: left eigenfunction} and \eqref{eq: right eigenfunction}, where $\vec w=(w_1,\cdots,w_N)\in (\mathcal{S}_z)^N$ and $\vec w'=(w_1',\cdots,w_N')\in (\mathcal{S}_{z'})^N$ satisfy  $\prod_{j=1}^{N}|w_j+1|>\prod_{j=1}^{N}|w_j'+1|$. Then 
	\begin{equation}\label{eq: Cauchy identity}
		\sum_{\vec x\in \conf_{N}^{(L)}\cap \{x_N\geq 0\}} \Psi_{\vec x}^{r}(\vec w)\Psi_{\vec x}^{\ell}(\vec w') = \left(1-\left(\frac{z'}{z}\right)^L\right)^{N-1}\prod_{j=1}^N(w_j+1)\cdot \det\left[\frac{1}{w_{i}-w_{i'}'}\right]_{i,i'=1}^N.
	\end{equation}
\end{proposition}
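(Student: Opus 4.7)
Plan: My approach would adapt the strategy for the analogous continuous-time identity, Proposition~3.4 of \cite{baik2019multi}, with new work required to handle the hopping parameter $p$, the indicator factors $(1-p\1_{x_{i-1}-x_i=1})$ in $\Psi_{\vec x}^r$, and the modified Bethe relation $w^N(1+w)^{L-N}=z^L(1+pw)^N$.

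Step 1 (Translation reduction). The indicator factors are invariant under the global shift $\vec x\mapsto \vec x+a(1,\ldots,1)$, while pulling a factor from each row of the determinants yields
\begin{equation*}
\Psi_{\vec x+a\vec 1}^r(\vec w)=\prod_i(1+w_i)^{-a}\Psi_{\vec x}^r(\vec w),\qquad \Psi_{\vec x+a\vec 1}^\ell(\vec w')=\prod_i(1+w'_i)^{a}\Psi_{\vec x}^\ell(\vec w').
\end{equation*}
Setting $a=x_N$ and $\vec y=\vec x-a\vec 1$ splits the sum as
\begin{equation*}
\sum_{a\ge 0}\prod_i\left(\frac{1+w'_i}{1+w_i}\right)^{\!a}\cdot\sum_{\vec y\in\conf_N^{(L)},\,y_N=0}\Psi_{\vec y}^r(\vec w)\Psi_{\vec y}^\ell(\vec w').
\end{equation*}
The outer geometric series converges by the hypothesis $\prod|1+w_i|>\prod|1+w'_i|$ and evaluates in closed form, reducing the claim to an identity for a \emph{finite} sum indexed by $y_N=0$.

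Step 2 (Cluster summation). For the inner finite sum, I would expand $\Psi_{\vec y}^\ell(\vec w')$ by the Leibniz formula, absorb the factors $(1+w'_i)^{y_{N-\tau(i)+1}}$ into the corresponding columns of the determinant representation of $\Psi_{\vec y}^r(\vec w)$, and perform the sum iteratively in the order $y_{N-1},y_{N-2},\ldots,y_1$, mimicking the clustering and column-addition argument used in the proof of Proposition~\ref{prop: summation single}. The $p$-indicator factors are handled exactly as in that proof, where they combined with adjacent column operations to produce telescoping identities; this is consistent with the free-evolution reformulation of Lemma~\ref{free evolution equation}. Each interior telescoping step produces a boundary contribution in which the Bethe equations $w^N(1+w)^{L-N}=z^L(1+pw)^N$ and $(w')^N(1+w')^{L-N}=(z')^L(1+pw')^N$ enter symmetrically at the cyclic wrap-around $y_1\to y_N+L$, so that a factor $(1-(z'/z)^L)$ emerges from each of the $N-1$ interior telescopings.

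Step 3 (Cauchy determinant) and main obstacle. After the cluster summation, the remaining double sum over the two permutations from the Leibniz expansions of $\Psi^r$ and $\Psi^\ell$ reorganizes, after cancellation of the Vandermonde-type factors $\det[w_i^{-j}]$ and $\det[(w'_i/(1+pw'_i))^{j-1}]$ from the single-sum step, into the classical Cauchy determinant $\det[(w_i-w'_{i'})^{-1}]$. The main obstacle is Step~2: carrying the indicator factors $(1-p\1_{x_{i-1}-x_i=1})$ and both Bethe equations through the iterative telescoping without losing algebraic control. In the $p=0$ case the indicators are trivial and the Bethe equation reduces to $w^N(1+w)^{L-N}=z^L$, making the column-addition identity essentially immediate; here the mixing of $(1+pw)^N$ with $(1+w)^{L-N}$ at the cyclic boundary requires a delicate algebraic identity to guarantee that the boundary residues from the two Bethe equations combine cleanly into $(1-(z'/z)^L)$ at each step, rather than into a less symmetric expression that would obscure the Cauchy structure.
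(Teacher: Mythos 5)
Your Step 1 is exactly the paper's first reduction, but the heart of your argument (Steps 2--3) has a genuine gap. The column-addition telescoping of Proposition \ref{prop: summation single} does not survive the presence of the second eigenfunction: after you Leibniz-expand $\Psi^{\ell}_{\vec x}(\vec w')$ over a permutation $\sigma'$ and absorb its factors into column $j$ of the determinant representing $\Psi^{r}_{\vec x}(\vec w)$, the $(i,j)$ entry carries $\left(\frac{1+w'_{\sigma'(j)}}{1+w_i}\right)^{x_{N-j+1}}$, so the geometric sums over $x_{N-1},\dots,x_1$ produce denominators $1-\frac{1+w'_{\sigma'(j)}}{1+w_i}$ that depend on the row index $i$ as well as the column. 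Consequently the clean column additions (and the single wrap-around use of the Bethe relation) that drive Proposition \ref{prop: summation single} no longer telescope, and your key claim that a factor $1-(z'/z)^L$ ``emerges from each of the $N-1$ interior telescopings'' is unsupported; in the actual argument this factor does not appear stepwise at all, but only at the very end, as the determinant of the perturbed Cauchy matrix $\hat C(i,i')=\frac{1}{w_i-w'_{i'}}-\left(\frac{z'}{z}\right)^L\frac{w_i(1+pw'_{i'})}{w'_{i'}(1+pw_i)(w_i-w'_{i'})}$, after both Bethe equations have been used pointwise to trade $(1+w)^{L-N}$ for $z^Lw^{-N}(1+pw)^N$.

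What the paper does at this stage is expand \emph{both} determinants over permutations, evaluate the resulting scalar configuration sum exactly (Lemma \ref{lemma: summation over all configurations}, which is where the indicators $(1-p\1_{x_{i-1}-x_i=1})$ are handled, via Lemma \ref{lemma: multi geometric sum} and Lemma \ref{lemma: simplification of T_2(k)}), and then resum the permutations: the sum is split over index sets $J,J'$ as in \eqref{eq: reduction of S_N}, each block is collapsed by the new Cauchy-type identity of Lemma \ref{lemma: Cauchy identity for infinite sum} (proved by induction using the rank-one perturbation formulas for Cauchy determinants of Section \ref{sec: perturbation formulas}), the blocks are recombined by Lemma \ref{lemma: sum of product of determinants}, and a further round of rank-one/residue computations produces the prefactor $(1-(z'/z)^L)^{N-1}\prod_j(1+w_j)$. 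Your Step 3 simply asserts that the double permutation sum ``reorganizes into the classical Cauchy determinant''; that assertion is precisely the hard content of these lemmas and of Step 3 of the paper's proof, and you yourself flag that a ``delicate algebraic identity'' is needed without supplying it. As it stands, the proposal identifies the correct starting reduction but is missing the mechanism that makes the identity true.
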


\begin{remark}
	It is interesting to note that the right-hand side of \eqref{eq: Cauchy identity} does not depend on $p$ explicitly (of course the $w_i$' should satisfy certain algebraic equations which depend on $p$). Taking $p\to 0$, Proposition \ref{prop: Cauchy identity} degenerates to Proposition $3.4$ of \cite{baik2019multi}, which can be understood as the Cauchy identity (a periodic version) for the Grothendieck polynomial (and its dual), and can be derived from the finite-sum Cauchy identity for Grothendieck polynomials obtained in Theorem 5.3 of \cite{motegi2013grothendieck}. For $0<p<1$, to the best of our knowledge the corresponding Cauchy-type identity \eqref{eq: Cauchy identity} has not been discussed in the existing literature, at least for the periodic case. The key point here is instead of summing over all configuration $\vec x$ with $0\leq x_N<x_{N-1}<\cdots<x_1$ as in the usual Cauchy identity, we are only summing over those configurations satisfying the extra constraint $x_1<x_{N}+L$. For general spectral parameters $\vec w$ and $\vec w'$ this sum only gives a deformed or generalized Cauchy determinant. It further reduces to a genuine Cauchy determinant when we impose the conditions as in Proposition \ref{prop: Cauchy identity} that the spectral parameters satisfy suitable Bethe equations.
\end{remark}

Similar as in Corollary \ref{cor: single sum arbitrary index}, we can easily extend Proposition \ref{prop: Cauchy identity} using periodicity to the summations over $\vec x\in \conf_{N}^{(L)}\cap \{x_{N-k}\geq a\}$ for any $0\leq k\leq N-1$ and $a\in \mathbb{Z}$:  
\begin{corollary}\label{cor: sum over two arbitrary index}
	Under the same assumption as in Proposition \ref{prop: Cauchy identity} we have 
	\begin{equation}
		\begin{aligned}
			\sum_{\vec x\in \conf_{N}^{(L)}\cap \{x_{N-k}\geq a\}} \Psi_{\vec x}^{r}(\vec w)\Psi_{\vec x}^{\ell}(\vec w') &= \left(\frac{z}{z'}\right)^{kL}\left(1-\left(\frac{z'}{z}\right)^L\right)^{N-1}\cdot \det\left[\frac{1}{w_{i}-w_{i'}'}\right]_{i,i'=1}^N\\
			&\cdot\prod_{j=1}^N\frac{w_j^{-k}(1+pw_j)^{k}(w_j+1)^{-a+k+1}}{(w_j')^{-k}(1+pw_j')^{k}(w_j'+1)^{-a+k}},		
		\end{aligned}
	\end{equation}
	for all $0\leq k\leq N-1$ and $a\in \mathbb{Z}$.
\end{corollary}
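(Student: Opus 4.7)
The plan is to reduce Corollary~\ref{cor: sum over two arbitrary index} to Proposition~\ref{prop: Cauchy identity} via two successive bijections on $\conf_{N}^{(L)}$: a global translation by $a$ and a cyclic relabeling of indices by $k$ positions, mirroring the strategy already used in Corollary~\ref{cor: single sum arbitrary index}.

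First I would apply the translation $\vec x\mapsto \vec x-(a,\ldots,a)$, which carries $\conf_{N}^{(L)}\cap\{x_{N-k}\ge a\}$ bijectively onto $\conf_{N}^{(L)}\cap\{x_{N-k}\ge 0\}$. The cluster-product factor appearing in $\Psi^{r}$ is invariant under a common shift of all coordinates (for $i=1$ this uses $x_{0}=x_{N}+L$), so a direct inspection of Definition~\ref{def: eigenfunctions} gives
\begin{equation*}
\Psi_{\vec x}^{r}(\vec w) = \prod_{j=1}^{N}(1+w_{j})^{-a}\,\Psi_{\vec x-(a,\ldots,a)}^{r}(\vec w),\qquad \Psi_{\vec x}^{\ell}(\vec w') = \prod_{j=1}^{N}(1+w'_{j})^{a}\,\Psi_{\vec x-(a,\ldots,a)}^{\ell}(\vec w'),
\end{equation*}
which reduces the problem to $a=0$ and contributes an overall prefactor $\prod_{j}(1+w'_{j})^{a}/(1+w_{j})^{a}$.

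Next, for $a=0$ I would apply the cyclic relabeling $\vec x'\mapsto \tilde{\vec x} := (x'_{N-k+1}+L,\ldots,x'_{N}+L,x'_{1},\ldots,x'_{N-k})$, which is a bijection from $\conf_{N}^{(L)}\cap\{x'_{N-k}\ge 0\}$ onto $\conf_{N}^{(L)}\cap\{\tilde{x}_{N}\ge 0\}$. The effect on $\Psi^{r}$ was already computed in the proof of Corollary~\ref{cor: single sum arbitrary index}. Repeating the same column manipulation for $\Psi^{\ell}$---moving the first $k$ columns of the defining matrix to the end, and then using the Bethe relation $(w'_{j}/(1+pw'_{j}))^{N}(1+w'_{j})^{L-N}=(z')^{L}$ to absorb the extra factor $(1+w'_{j})^{-L}$ coming from the $+L$ shifts in the first $k$ coordinates of $\tilde{\vec x}$---one obtains the analogue
\begin{equation*}
\Psi_{\vec x'}^{\ell}(\vec w') = (-1)^{k(N-1)}(z')^{-kL}\prod_{j=1}^{N}\left(\frac{w'_{j}}{1+pw'_{j}}\right)^{k}(1+w'_{j})^{-k}\,\Psi_{\tilde{\vec x}}^{\ell}(\vec w').
\end{equation*}
Multiplying with the formula for $\Psi_{\vec x'}^{r}$ the two signs $(-1)^{k(N-1)}$ combine to $+1$.

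Finally, summing over $\tilde{\vec x}\in\conf_{N}^{(L)}\cap\{\tilde{x}_{N}\ge 0\}$ and invoking Proposition~\ref{prop: Cauchy identity} produces the factor $(1-(z'/z)^{L})^{N-1}\det[1/(w_{i}-w'_{i'})]\prod_{j}(1+w_{j})$, and combining with the two prefactors above gives exactly the right-hand side of the stated corollary. The only delicate piece of bookkeeping is the cyclic-shift identity for $\Psi^{\ell}$: one has to verify that the $k$-fold column cycling contributes the sign $(-1)^{k(N-1)}$ (equivalently $(-1)^{k(N-k)}$, since the two exponents agree modulo $2$) and that the Bethe equation cleanly turns the extra factor $(w'_{j}/(1+pw'_{j}))^{-(N-k)}(1+w'_{j})^{N-k-L}$ on the shifted columns into $(z')^{-L}(w'_{j}/(1+pw'_{j}))^{k}(1+w'_{j})^{-k}$; this is the natural place to watch for a stray sign or misplaced exponent in the computation.
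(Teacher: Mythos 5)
Your proposal is correct and follows essentially the same route as the paper, whose proof of this corollary is simply "similar to Corollary~\ref{cor: single sum arbitrary index}": translation invariance in $a$ plus the cyclic relabeling by $k$, using the Bethe relation $(w'_j/(1+pw'_j))^N(1+w'_j)^{L-N}=(z')^L$ to handle the shifted columns of $\Psi^{\ell}$, and then Proposition~\ref{prop: Cauchy identity}. Your bookkeeping checks out: the two signs $(-1)^{k(N-1)}$ cancel, and the prefactors combine to exactly $\left(\tfrac{z}{z'}\right)^{kL}\prod_j \frac{w_j^{-k}(1+pw_j)^k(w_j+1)^{-a+k+1}}{(w'_j)^{-k}(1+pw'_j)^k(w'_j+1)^{-a+k}}$ as stated.
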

\smallskip

\subsection{Proof of Proposition \ref{prop: summation single}}\label{sec: proof single}
	First we write 
	\begin{equation}\label{eq: split the sum}
	\begin{aligned}
		\sum_{\vec x\in \conf_{N}^{(L)}\cap \{x_N\geq 0\}} \Psi_{\vec x}^{r}(\vec w) &=\sum_{a=0}^{\infty} \sum_{\vec x\in \conf_{N}^{(L)}\cap \{x_N= a\}}\Psi_{\vec x}^{r}(\vec w)\\
		&= \sum_{a=0}^{\infty}\prod_{j=1}^{N}(1+w_j)^{-a}\sum_{\vec x'\in \conf_{N}^{(L)}\cap \{x_N'= 0\}}\Psi_{\vec x'}^{r}(\vec w),
	\end{aligned}
	\end{equation}
	where $\vec x' = \vec x-(a,\cdots,a)$. The summation over $a$ converges absolutely provided that $\prod_{j=1}^{N}|1+w_j|>1$. Thus we have reduced the computation to a summation with constraint $x_N=0$ which is a finite sum so there is no convergence issue. We will see the telescoping nature of the summation over $a$.

	 The main difficulty for computing the sum comes from the $\prod_{i=1}^{N}(1-p\mathbf{1}_{x_{i-1}-x_i=1})$ factor in $\Psi_{\vec x}^{r}(\vec w)$ which gives different weights to different terms in the sum. To handle this we first split the sum according to the position of the first empty site to the right of $x_N$ in the configuration $\vec x$. More precisely we set
	 \begin{equation*}
	 	\conf_{N,k}^{(L)}:= \{\vec x\in \conf_{N}^{(L)}: x_{N}=x_{N-1}-1=\cdots x_{N-k}-k=0,  x_{N-k-1}-k-1>0\}.
	 \end{equation*}
    Intuitively $\conf_{N,k}^{(L)}$ consists of all configurations in $\conf_{N}^{(L)}\cap\{x_N=0\}$ with the first empty site to the right of $x_N$ (which equals $0$) be at $k+1$. Then clearly we have $\conf_{N}^{(L)}\cap \{x_N=0\}=\cup_{k=0}^{N-1}\conf_{N,k}^{(L)}$. Hence
	\begin{equation*}
	\begin{aligned}
	 \sum_{\vec x\in \conf_{N}^{(L)}\cap \{x_N= 0\}}\Psi_{\vec x}^{r}(\vec w) = \sum_{k=0}^{N-1}\sum_{\vec x\in \conf_{N,k}^{(L)}}\Psi_{\vec x}^{r}(\vec w):= \sum_{k=0}^{N-1}\Sigma_k.
	\end{aligned}		
	\end{equation*} 
    We will first compute the summations over $\conf_{N,k}^{(L)}$ for each $k$ and then sum over $k$. The results for each of these steps are summarized in the following two lemmas, whose proofs will be at the end of this subsection.
    
    \begin{lemma}\label{lemma: single gap sum} We have
    	\begin{equation*}
    		\Sigma_k:=\sum_{\vec x\in \conf_{N,k}^{(L)}}\Psi_{\vec x}^{r}(\vec w) = (1-p)^k\det[R_{i,j}^{(k)}(\vec w)]_{i,j=1}^{N},
    	\end{equation*}
    	where 
    	\begin{equation*}
    		R_{i,j}^{(k)}(\vec w)=
    		\begin{cases}
    			w_i^{1-j}(1+pw_i)^{j-1},\quad & 1\leq j\leq k+1,\\
    			w_{i}^{-j}(1+pw_i)^{j-1},\quad & k+2\leq j\leq N.\\
    		\end{cases}
    	\end{equation*}
    \end{lemma}
    
    \begin{lemma}\label{lemma: sum of gaps} Furthermore,
    	\begin{equation*}
    	\sum_{k=0}^{N-1}\Sigma_k = \sum_{k=0}^{N-1} (1-p)^k\det[R_{i,j}^{(k)}(\vec w)]_{i,j=1}^{N} = \prod_{i=1}^{N}(1+w_i)\cdot \det[w_i^{-j}]_{i,j=1}^{N}-\det[w_i^{-j}]_{i,j=1}^{N}.
    	\end{equation*}
    \end{lemma}
\smallskip

    Now by Lemma \ref{lemma: sum of gaps} and \eqref{eq: split the sum} we see that  
    \begin{align*}
    	\sum_{\vec x\in \conf_{N}^{(L)}\cap \{x_N\geq 0\}} \Psi_{\vec x}^{r}(\vec w)&= \sum_{a=0}^{\infty}\prod_{i=1}^{N}(1+w_i)^{-a}\left(\prod_{i=1}^{N}(1+w_i)\cdot \det[w_i^{-j}]_{i,j=1}^{N}-\det[w_i^{-j}]_{i,j=1}^{N}\right)\\
    	&= \prod_{i=1}^{N}(1+w_i)\cdot \det[w_i^{-j}]_{i,j=1}^{N}.
    \end{align*}
    This completes the proof of Proposition \ref{prop: summation single}.
\smallskip    
  
\begin{proof}[Proof of Lemma \ref{lemma: single gap sum}]
	 Note that configurations $\vec x\in \conf_{N,k}^{(L)}$ take the form $(x_1,\cdots,x_{N-k-1}, k,\cdots,0)$ where $x_{N-k-1}>k+1$. Hence for $\vec x\in \conf_{N,k}^{(L)}$ we have
	\begin{equation*}
		\Psi_{\vec x}^{r}(\vec w) =  (1-p)^k\prod_{i=1}^{N-k-1}(1-p\1_{x_{i-1}-x_{i}=1})\det[R_{i,j}^{(k,k+1)}(\vec w)]_{i,j=1}^N,
	\end{equation*}
	where 
	\begin{equation*}
		R_{i,j}^{(k,k+1)}(\vec w):=
		\begin{cases}
			w_i^{1-j}(1+pw_i)^{j-1},\quad &1\leq j\leq k+1,\\
			w_i^{1-j}(1+pw_i)^{j-1}(1+w_i)^{-x_{N-j+1}+j-1},\quad &k+2\leq j\leq N.
		\end{cases}
	\end{equation*}
   We sum over configurations in $\conf_{N,k}^{(L)}$ in the following order:
   \begin{equation*}
   	\sum_{\vec x \in \conf_{N,k}^{(L)}}\Psi_{\vec x}^{r}(\vec w) = \sum_{x_1=N}^{L-1}\sum_{x_2=N-1}^{x_1-1}\cdots\sum_{x_{N-k-1=k+2}}^{x_{N-k-2}-1}\Psi_{\vec x}^{r}(\vec w).
   \end{equation*}
	Note that
	\begin{equation*}
		\begin{aligned}
			&\sum_{x_{N-k-1}=k+2}^{x_{N-k-2}-1} (1-p\1_{x_{N-k-2}-x_{N-k-1}=1}) R_{i,k+2}^{(k+1)}(\vec w) \\
			&= w_i^{-2-k}(1+pw_i)^{1+k}- w_i^{-2-k}(1+pw_i)^{2+k}(1+w_i)^{-x_{N-k-2}+k+2}\\
			&= w_i^{-2-k}(1+pw_i)^{1+k}- R_{i,k+3}^{(k,k+1)}(\vec w).
		\end{aligned}
	\end{equation*}
	Adding the $(k+3)$-th column to the $(k+2)$-th column we get
	\begin{equation*}
		\sum_{x_{N-k-1}= k+2}^{x_{N-k-2}-1}\Psi_{\vec x}^{r}(\vec w) = (1-p)^k\prod_{i=1}^{N-k-2}(1-p\1_{x_{i-1}-x_{i}=1})\det[R_{i,j}^{(k,k+2)}(\vec w)]_{i,j=1}^N,
	\end{equation*}
	where 
	\begin{equation*}
		R_{i,j}^{(k,k+2)}(\vec w):=
		\begin{cases}
			w_i^{1-j}(1+pw_i)^{j-1},\quad & 1\leq j\leq k+1,\\
			w_{i}^{-j}(1+pw_i)^{j-1},\quad & j=k+2,\\
			w_i^{1-j}(1+pw_i)^{j-1}(1+w_i)^{-x_{N-j+1}+j-1},\quad & k+3\leq j\leq N.
		\end{cases}
	\end{equation*}
	Now we repeat this procedure and perform the sum over $x_{N-k-2},\cdots, x_2$ to get 
	\begin{equation*}
		\sum_{x_2=N-1}^{x_1-1}\cdots\sum_{x_{N-k-1}= k+2}^{x_{N-k-2}-1}\Psi_{\vec x}^{r}(\vec w) = (1-p)^k(1-p\1_{x_0-x_1=1})\det[R_{i,j}^{(k,N-1)}(\vec w)]_{i,j=1}^{N},
	\end{equation*}
	where 
	\begin{equation*}
		R_{i,j}^{(k,N-1)}(\vec w):=
		\begin{cases}
			w_i^{1-j}(1+pw_i)^{j-1},\quad & 1\leq j\leq k+1,\\
			w_{i}^{-j}(1+pw_i)^{j-1},\quad & k+2\leq j\leq N-1,\\
			w_i^{1-j}(1+pw_i)^{j-1}(1+w_i)^{-x_{N-j+1}+j-1},\quad & j= N.
		\end{cases}
	\end{equation*}
	Finally note that
	\begin{equation*}
		\begin{aligned}
			\sum_{x_{1}=N}^{L-1} (1-p\1_{x_{0}-x_{1}=1}) R_{i,N}^{(k,N-1)}(\vec w) &= w_i^{-N}(1+pw_i)^{N-1}- w_i^{-N}(1+pw_i)^{N}(1+w_i)^{-L-N}\\
			&= w_i^{-N}(1+pw_i)^{N-1}- z^{-L}R_{i,1}^{(k,N-1)}(\vec w),
		\end{aligned}
	\end{equation*}
	where we used the fact that $w_i\in \mathcal{S}_z$. Thus we conclude that
	\begin{equation*}
		\Sigma_k:=\sum_{x_1=N}^{L-1}\cdots\sum_{x_{N-k-1}= k+2}^{x_{N-k-2}-1} \Psi_{\vec x}^{r}(\vec w) = (1-p)^k\det[R_{i,j}^{(k)}(\vec w)]_{i,j=1}^{N},
	\end{equation*}
	where 
	\begin{equation*}
		R_{i,j}^{(k)}(\vec w)=
		\begin{cases}
			w_i^{1-j}(1+pw_i)^{j-1},\quad & 1\leq j\leq k+1,\\
			w_{i}^{-j}(1+pw_i)^{j-1},\quad & k+2\leq j\leq N.\\
		\end{cases}
	\end{equation*}
\end{proof}

\begin{proof}[Proof of Lemma \ref{lemma: sum of gaps}]
	For the purpose of summing over $k$ we further rewrite $R_{i,j}^{(k)}(\vec w)$ slightly. Given $0\leq k\leq N-3$, we add the $j$-th column of $R^{(k)}(\vec w)$ to its $(j+1)$-th column, $j=N,\cdots, k+2$ so that 
	\begin{equation*}
		\det[R_{i,j}^{(k)}(\vec w)]_{i,j=1}^{N} = \det[\hat{R}_{i,j}^{(k)}(\vec w)]_{i,j=1}^{N},
	\end{equation*}
	where 
	\begin{equation*}
		\hat{R}_{i,j}^{(k)}(\vec w):=
		\begin{cases}
			w_i^{1-j}(1+pw_i)^{j-1},\quad & 1\leq j\leq k+1,\\
			w_{i}^{-j}(1+pw_i)^{j-1},\quad & j=k+2,\\
			w_{i}^{-j}(1+pw_i)^{j-1}(1+w_i),\quad & k+3\leq j\leq N.\\
		\end{cases}
	\end{equation*}
	For $k=N-2,N-1$ we just set $\hat{R}_{i,j}^{(k)}(\vec w) =R_{i,j}^{(k)}(\vec w)$. Now we perform the sum over $k$ in the order $k=N-1,N-2,\cdots,0$. Note that for each $0\leq k\leq N-1$, $\hat{R}_{i,j}^{(k)}(\vec w) = \hat{R}^{(k+1)}_{i,j}(\vec w)$ except for $j=k+1$.  Hence by multi-linearity of the determinants we have
	\begin{equation*}
		\begin{aligned}
			\Sigma_{N-1}+\Sigma_{N-2} &= (1-p)^{N-2} \left((1-p)\det[\hat{R}_{i,j}^{(N-1,N)}(\vec w)]+\det[\hat{R}_{i,j}^{(N-2)}(\vec w)]\right)\\
			&= (1-p)^{N-2}\det[T_{i,j}^{(N-2)}(\vec w)]_{i,j=1}^{N},
		\end{aligned}
	\end{equation*}
	where 
	\begin{equation*}
		T_{i,j}^{(N-2)}(\vec w):=\begin{cases}
			w_i^{1-j}(1+pw_i)^{j-1},\quad & 1\leq j\leq N-1,\\
			w_{i}^{-j}(1+pw_i)^{j-2}(1+w_i),&j= N.\\
		\end{cases}
	\end{equation*}
	Here to simplify the expression we have multiplied the $(N-1)$-th column by $-p(1-p)$ and added it to the sum of the $N$-th column of $\hat{R}^{(N-1)}(\vec w)$ and $\hat{R}^{(N-2)}(\vec w)$, using the simple identity that 
	\begin{equation*}
		\left[(1-p)w_i+1\right]\cdot w_i^{-N}(1+pw_i)^{N-1} = w_i^{-N}(1+pw_i)^{N-2}(1+w_i)+p(1-p)w_i^{2-N}(1+pw_i)^{N-2}.
	\end{equation*}
	Repeating this procedure we get 
	\begin{equation*}
		\sum_{k=0}^{N-1} \Sigma_k = \det[T^{(0)}_{i,j}(\vec w)]_{i,j=1}^N,
	\end{equation*}
	where 
	\begin{equation*}
		T_{i,j}^{(0)}(\vec w):=\begin{cases}
			w_i^{1-j}(1+pw_i)^{j-1},\quad & j=1,\\
			w_{i}^{-j}(1+pw_i)^{j-2}(1+w_i),&2\leq j\leq N.\\
		\end{cases}
	\end{equation*}
	Multiplying the $j$-th column of $T^{(0)}(\vec w)$ by $-p$ and adding to the $(j+1)$-th column for $j=N-1,\cdots,2$,  we get 
	\begin{equation*}
		\sum_{k=0}^{N-1} \Sigma_k = \det[\hat{T}^{(0)}_{i,j}(\vec w)]_{i,j=1}^N =\prod_{i=1}^{N}(1+w_i)\cdot \det[w_i^{-j}]_{i,j=1}^{N}-\det[w_i^{-j}]_{i,j=1}^{N},
	\end{equation*}
	where 
	\begin{equation*}
		\hat{T}_{i,j}^{(0)}(\vec w):=\begin{cases}
			w_i^{1-j},\quad & j=1,\\
			w_{i}^{-j}(1+w_i),&2\leq j\leq N.\\
		\end{cases}
	\end{equation*}
 This completes the proof of Lemma \ref{lemma: sum of gaps}.
\end{proof}

\subsection{Proof of Proposition \ref{prop: Cauchy identity}: Strategy}\label{sec: cauchy strategy}

 The proof of Proposition \ref{prop: Cauchy identity} is rather lengthy so we divide it into three steps and discuss them one by one in the next few subsections. The proof mainly follows the proof strategy of Proposition 3.4 of \cite{baik2019multi} but there are several new technicalities. The non-uniform term $\prod_{i=1}^{N}(1-p\1_{x_{i-1}-x_i=1})$ appearing in $\Psi_{\vec x}^r(\vec w)$ leads to extra difficulty for the sum. In Step 1 we overcome this by introducing a different way (and slightly more convenient way in our opinion) of expressing the sum in \eqref{eq: Cauchy identity} comparing to the proof in \cite{baik2019multi}, see Lemma \ref{lemma: summation over all configurations} for details. In Step 2 we establish a key summation identity (see Lemma \ref{lemma: Cauchy identity for infinite sum}) which generalizes Lemma 5.4 in \cite{baik2019multi} while the computation is more delicate. Finally in Step 3 we combine the formula obtained in Step 1 and the summation identity obtained in Step 2 to conclude the final result. 

\subsection{Proof of Proposition \ref{prop: Cauchy identity}: Step 1}
Similar as in the proof of Proposition \ref{prop: summation single} we first write
\begin{equation*}
\begin{aligned}
	\sum_{\vec x\in \conf_{N}^{(L)}\cap \{x_N\geq 0\}} \Psi_{\vec x}^{r}(\vec w)\Psi_{\vec x}^{\ell}(\vec w') &= \sum_{a= 0}^{\infty} \prod_{j=1}^{N}\left(\frac{1+w_j'}{1+w_j}\right)^a \sum_{\vec x\in \conf_{N}^{(L)}\cap \{x_N=0\}} \Psi_{\vec x}^{r}(\vec w)\Psi_{\vec x}^{\ell}(\vec w')\\
	&:= \sum_{a= 0}^{\infty} \prod_{j=1}^{N}\left(\frac{1+w_j'}{1+w_j}\right)^a\cdot \mathcal{H}_N(\vec w,\vec w'),
\end{aligned}
\end{equation*}
 so that it suffices to compute the sum over $\vec x\in \conf_{N}^{(L)}\cap \{x_N=0\}$, which is a finite sum so there is no convergence issue. The summation over $a$ converges absolutely by our assumption that $\prod_{j=1}^{N}|w_j+1|>\prod_{j=1}^{N}|w_j'+1|$. Expanding the determinants in $\Psi_{\vec x}^{\ell}(\vec w)$ and $\Psi_{\vec x}^{r}(\vec w')$  we get 
\begin{equation*}
\begin{aligned}
     \mathcal{H}_N(\vec w,\vec w')= \sum_{\sigma,\sigma'\in S_N} \sgn(\sigma\sigma')\prod_{j=1}^{N}\left(\frac{w'_{\sigma'(j)}(1+pw_{\sigma(j)})}{w_{\sigma(j)}(1+pw'_{\sigma'(j)})}\right)^{j-1} \mathcal{H}^{(N)}_{\sigma,\sigma'}(\vec w,\vec w'),
\end{aligned}
\end{equation*}
where 
\begin{equation}
	\mathcal{H}^{(N)}_{\sigma,\sigma'}(\vec w,\vec w') := \sum_{L=x_0>x_1>\cdots>x_N=0} \prod_{j= 1}^{N} (1-p\1_{x_{j-1}-x_{j}=1})\left(\frac{w'_{\sigma'(j)}+1}{w_{\sigma(j)}+1}\right)^{x_{N-j+1}-j+1}.
\end{equation}
   By Lemma \ref{lemma: summation over all configurations} below we have $\mathcal{H}^{(N)}_{\sigma,\sigma'}(\vec w,\vec w')$ equals
 \begin{equation*}
 \begin{aligned}
\sum_{k=1}^{N}\left[\prod_{i=2}^{k}\left(-p+\frac{1}{1-\prod_{j=i}^{k}\frac{w_{\sigma'(j)}'+1}{w_{\sigma(j)}+1}}\right)\cdot \prod_{i=k+1}^{N}\left(\frac{w_{\sigma'(i)}'+1}{w_{\sigma(i)}+1}\right)^{L-N}\cdot \left(-p+\frac{1}{1-\prod_{j=k+1}^{i}\frac{w_{\sigma(j)}+1}{w_{\sigma'(j)}'+1}}\right)\right].
 \end{aligned}
 \end{equation*}
 Hence 
 \begin{equation}\label{eq: reduction of the sum}
 \begin{aligned}
 &\mathcal{H}_N(\vec w,\vec w')= \sum_{k=1}^{N}\sum_{\sigma,\sigma'\in S_N} \sgn(\sigma\sigma')\prod_{j=1}^{N}\left(\frac{w'_{\sigma'(j)}(1+pw_{\sigma(j)})}{w_{\sigma(j)}(1+pw'_{\sigma'(j)})}\right)^{j-1}\\
 &\cdot\left[\prod_{i=2}^{k}\left(-p+\frac{1}{1-\prod_{j=i}^{k}\frac{w_{\sigma'(j)}'+1}{w_{\sigma(j)}+1}}\right)\cdot \prod_{i=k+1}^{N}\left(\frac{w_{\sigma'(i)}'+1}{w_{\sigma(i)}+1}\right)^{L-N}\cdot \left(-p+\frac{1}{1-\prod_{j=k+1}^{i}\frac{w_{\sigma(j)}+1}{w_{\sigma'(j)}'+1}}\right)\right].
 \end{aligned}
 \end{equation}

\begin{lemma}\label{lemma: summation over all configurations}
	Let $N\geq 1$ be an integer and  $w_1,\cdots,w_N$ and $w_1',\cdots,w_N'$ be distinct complex numbers. Then for any integer $L>N$ we have 
	\begin{equation}\label{eq: summation of all configurations}
    \begin{aligned}
    &\sum_{L=x_0>x_1>\cdots>x_N=0} \prod_{i=1}^{N}(1-p\1_{x_{i-1}-x_i=1})\prod_{i=1}^{N}\left(\frac{w_i'+1}{w_i+1}\right)^{x_{N-i+1}-i+1}\\
    &= \sum_{k=1}^{N}\left[\prod_{i=2}^{k}\left(-p+\frac{1}{1-\prod_{j=i}^{k}\frac{w_j'+1}{w_j+1}}\right)\cdot \prod_{i=k+1}^{N}\left(\frac{w_i'+1}{w_i+1}\right)^{L-N}\cdot \left(-p+\frac{1}{1-\prod_{j=k+1}^{i}\frac{w_j+1}{w_j'+1}}\right)\right].
    \end{aligned}
	\end{equation}
	Here any empty product is set to be $1$.   
\end{lemma}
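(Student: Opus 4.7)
The strategy is to reduce the finite sum to a coefficient extraction from a simple rational generating function, compute it by partial fractions, and match the resulting formula with the right-hand side by a routine reindexing.

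First, I would change variables to gap sizes: set $e_j := x_{j-1} - x_j - 1 \geq 0$ for $j = 1,\ldots,N$. The constraint $L = x_0 > x_1 > \cdots > x_N = 0$ becomes the single linear constraint $e_1 + \cdots + e_N = L - N$ with $e_j \geq 0$; the indicator $\1_{x_{i-1}-x_i=1}$ becomes $\1_{e_i=0}$; and, after interchanging the order of summation in the exponents, the product $\prod_{i=1}^N\left(\frac{w_i'+1}{w_i+1}\right)^{x_{N-i+1}-i+1}$ rewrites as $\prod_{j=1}^N s_j^{e_j}$, where
\begin{equation*}
s_j := \prod_{i=N-j+2}^{N}\frac{w_i'+1}{w_i+1} \quad (\text{so } s_1 = 1).
\end{equation*}
Thus the left-hand side of \eqref{eq: summation of all configurations} equals
\begin{equation*}
F_N(L) := \sum_{\substack{e_1,\ldots,e_N\geq 0 \\ e_1+\cdots+e_N = L-N}} \prod_{j=1}^N \bigl(1-p\,\1_{e_j=0}\bigr)\, s_j^{e_j}.
\end{equation*}

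Next I would introduce a formal variable $z$ and identify $F_N(L)$ as the coefficient of $z^{L-N}$ in the factorized generating function
\begin{equation*}
G(z) = \prod_{j=1}^N \sum_{e\geq 0}\bigl(1-p\,\1_{e=0}\bigr)(s_j z)^e = \prod_{j=1}^N \frac{1-p+ps_j z}{1-s_j z},
\end{equation*}
where the inner sums are elementary geometric series. For generic parameters the $s_j$ are pairwise distinct, so partial fractions apply; since numerator and denominator have the same $z$-degree $N$, the decomposition is a constant plus simple poles at $z = 1/s_k$. The constant equals the $z\to\infty$ limit $(-p)^N$, and the residue at $z=1/s_k$, rewritten using the elementary identity $\frac{1-p+pu}{1-u} = -p + \frac{1}{1-u}$, is $\prod_{j\neq k}\left(-p + \frac{1}{1-s_j/s_k}\right)$. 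Extracting the coefficient of $z^{L-N}$ for $L>N$ (so the constant contributes nothing) gives
\begin{equation*}
F_N(L) = \sum_{k=1}^N s_k^{L-N}\prod_{j\neq k}\left(-p + \frac{1}{1-s_j/s_k}\right).
\end{equation*}

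The final step is bookkeeping: replace $k$ by $N-k+1$ in the sum above and split $\prod_{j\neq k}$ into the pieces $j<k$ and $j>k$. Using $s_j/s_k = \prod_{i=N-k+2}^{N-j+1}\frac{w_i+1}{w_i'+1}$ when $j<k$ and $s_j/s_k = \prod_{i=N-j+2}^{N-k+1}\frac{w_i'+1}{w_i+1}$ when $j>k$, a direct relabeling shows that $s_{N-k+1}^{L-N}$ matches $\prod_{i=k+1}^N\bigl(\frac{w_i'+1}{w_i+1}\bigr)^{L-N}$, the $j>N-k+1$ piece matches $\prod_{i=2}^k\bigl(-p+\frac{1}{1-\prod_{j=i}^k\frac{w_j'+1}{w_j+1}}\bigr)$, and the $j<N-k+1$ piece matches $\prod_{i=k+1}^N\bigl(-p+\frac{1}{1-\prod_{j=k+1}^i\frac{w_j+1}{w_j'+1}}\bigr)$. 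The identity then extends from the generic distinct-$s_j$ case to all parameters satisfying the lemma's hypotheses by analytic continuation, since both sides are rational in the $w_i, w_i'$.

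The bulk of the proof is the short generating-function/partial-fractions computation; the only slightly fiddly step is the combinatorial relabeling at the end, which is mechanical once the correct index change $k\mapsto N-k+1$ is made.
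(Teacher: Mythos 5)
Your proof is correct, and it takes a genuinely different route from the paper. You pass to the gap variables $e_j=x_{j-1}-x_j-1$, observe that the weight factorizes over the gaps (with $s_j=\prod_{i=N-j+2}^{N}\frac{w_i'+1}{w_i+1}$, $s_1=1$), and read off the sum as the coefficient of $z^{L-N}$ in $\prod_j\frac{1-p+ps_jz}{1-s_jz}$, which partial fractions and the identity $\frac{1-p+pu}{1-u}=-p+\frac{1}{1-u}$ turn directly into $\sum_k s_k^{L-N}\prod_{j\ne k}\bigl(-p+\frac{1}{1-s_j/s_k}\bigr)$; I checked that your reindexing $k\mapsto N-k+1$ does reproduce the right-hand side of \eqref{eq: summation of all configurations} exactly, and the extension from generic $s_j$ by rationality/analytic continuation is legitimate. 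The paper instead argues by induction on $N$: it splits the sum according to whether $x_1=L-1$, treats the boundary piece by the induction hypothesis, evaluates the bulk piece by iterated geometric summation (Lemma~\ref{lemma: multi geometric sum}), and then needs a separate combinatorial identity (Lemma~\ref{lemma: simplification of T_2(k)}, proved by computing one sum in two orders and matching coefficients) to recombine the two pieces. Your generating-function argument is shorter and more conceptual: the factorization over independent gaps makes the structure of the answer (one simple pole per $s_k$ plus an irrelevant constant) transparent, and it dispenses with both auxiliary lemmas; the price is only the mechanical relabeling at the end and a genericity/continuation step, whereas the paper's route stays within the "sum the geometric series one particle at a time" framework it reuses elsewhere (e.g.\ in Proposition~\ref{prop: summation single} and in the arguments inherited from the continuous-time case).
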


\begin{proof}
    We use an induction on $N$. For $N=1$ the identity is obvious. Assume now $N\geq 2$ and the identity holds for all indices less than $N$.  We split the sum into two sums depending on whether $x_1=L-1$ or not:
    \begin{equation*}
    	\sum_{L=x_0>x_1>\cdots>x_N=0} = \sum_{L-1=x_1>\cdots>x_N=0} + \sum_{L-1>x_1>\cdots>x_N=0}:= T_1 + T_2.
    \end{equation*}
    For $T_1$ we first relabel the indices so that $(x_0',x_1',\cdots,x_{N-1}'):=(x_1,x_2,\cdots,x_N)$ and $L':=L-1$. Then by induction hypothesis we have 
    \begin{equation}\label{eq: summation of all configurations 1}
    \begin{aligned}
    T_1 &= (1-p)\left(\frac{w_N'+1}{w_N+1}\right)^{L-N}\sum_{L'=x_0'>x_1'>\cdots>x_{N-1}'=0} \prod_{i=1}^{N-1}(1-p\1_{x'_{i-1}-x'_i=1})\prod_{i=1}^{N-1}\left(\frac{w_i'+1}{w_i+1}\right)^{x_{N-i}'-i+1}\\
    &= \ \sum_{k=1}^{N-1}\left[\prod_{i=2}^{k}\left(-p+\frac{1}{1-\prod_{j=i}^{k}\frac{w_j'+1}{w_j+1}}\right)\cdot \prod_{i=k+1}^{N}\left(\frac{w_i'+1}{w_i+1}\right)^{L-N}\cdot T_1^{(k)}(\vec w,\vec w')\right].
    \end{aligned}
    \end{equation} 
    Where 
    \begin{equation*}
    	T_1^{(k)}(\vec w,\vec w'):= (1-p)\prod_{i=k+1}^{N-1}\left(-p+\frac{1}{1-\prod_{j=k+1}^{i}\frac{w_j+1}{w_j'+1}}\right),
    \end{equation*}
    for $1\leq k\leq N-1$ and we set $T_1^{(N)}(\vec w,\vec w'):=0$ for convenience. 
    
    For $T_2$ we calculate the sum directly in the order $L-1>x_1>\cdots>x_N=0$ using Lemma \ref{lemma: multi geometric sum} below which gives:
    \begin{equation}\label{eq: summation of all configurations 2}
    \begin{aligned}
     T_2 &= \sum_{x_{N-1}=1}^{L-N-1}\cdots \sum_{x_1=x_2+1}^{L-2} \prod_{i=2}^{N}(1-p\1_{x_{i-1}-x_i=1})\prod_{i=1}^{N}\left(\frac{w_i'+1}{w_i+1}\right)^{x_{N-i+1}-i+1}\\
     &= \sum_{k=1}^{N}\left[\prod_{i=2}^{k}\left(-p+\frac{1}{1-\prod_{j=i}^{k}\frac{w_j'+1}{w_j+1}}\right)\cdot\prod_{i=k+1}^{N}\left(\frac{w_i'+1}{w_i+1}\right)^{L-N}\cdot T_{2}^{(k)}(\vec w,\vec w')\right].
    \end{aligned}
    \end{equation}
    Where
    \begin{equation*}
    \begin{aligned}
    &\quad\  T_{2}^{(k)}(\vec w,\vec w') \\
    &:= \sum_{\ell=1}^{N-k}\sum_{k+1=s_1<\cdots<s_{\ell}\leq N} \prod_{i=1}^{\ell}\left(\frac{1}{\left(\prod_{j= s_{i}}^{s_{i+1}-1}\frac{w_j'+1}{w_j+1}\right)-1}\cdot \prod_{j=s_i+1}^{s_{i+1}-1}\left(-p+\frac{1}{1-\prod_{m=j}^{s_{i+1}-1}\frac{w_m'+1}{w_m+1}}\right)\right),
    \end{aligned}
    \end{equation*}
    for $1\leq k\leq N-1$ and $T_2^{(N)}(\vec w,\vec w'):= 1$.  Here we are summing over all possible partitions of $\{k+1,\cdots, N\}$ and  $s_{i+1}:= N+1$. Now comparing \eqref{eq: summation of all configurations 1} and \eqref{eq: summation of all configurations 2} with \eqref{eq: summation of all configurations} we see that it suffices to prove 
    \begin{equation}\label{eq: summation of all configurations reduction 1}
    	T_1^{(k)}(\vec w,\vec w')+T_2^{(k)}(\vec w,\vec w')=\prod_{i=k+1}^{N}\left(-p+\frac{1}{1-\prod_{j=k+1}^{i}\frac{w_j+1}{w_j'+1}}\right),
    \end{equation}
    for all $1\leq k\leq N$. Using the simple identity
    \begin{equation}
    	\frac{1}{1-\prod_{j=m}^{n}\frac{w_j+1}{w_j'+1}}+\frac{1}{1-\prod_{j=m}^{n}\frac{w_j'+1}{w_j+1}}=1,
    \end{equation} 
    \eqref{eq: summation of all configurations reduction 1} is further reduced to showing 
    \begin{equation}
    	T_k^{(2)}(\vec w,\vec w')= \prod_{i=k+1}^{N-1}\left(-p+\frac{1}{1-\prod_{j=k+1}^{i}\frac{w_j+1}{w_j'+1}}\right)\cdot \frac{1}{\prod_{j=k+1}^{N}\frac{w_j'+1}{w_j+1}-1},
    \end{equation}
    which follows from Lemma \ref{lemma: simplification of T_2(k)} below by taking $z_j= \frac{w_j'+1}{w_j+1}$ and properly shifting the indices.
\end{proof}

\begin{lemma}\label{lemma: multi geometric sum}
	For complex numbers $f_j$, set 
	\begin{equation}
	F_{m,n} = \prod_{j=m}^{n} f_j\quad \text{for }1\leq m\leq n.
	\end{equation}
	Then 
	\begin{equation}
	\begin{aligned}
	&\sum_{x_{N-1}=1}^{L-N-1}\cdots\sum_{x_{1}= x_{2}+1}^{L-2} \prod_{j= 2}^{N} (1-p\1_{x_{N-j+1}-x_{N-j+2}=1})(f_j)^{x_{N-j+1}-j+1}\\
	&= \sum_{k=1}^{N-1}\left[\prod_{i=2}^{k}\left(-p+\frac{1}{1-F_{i,k}}\right)\right]\cdot (F_{k+1,N})^{L-N}\\
	&\cdot\left[\sum_{\ell=1}^{N-k}\sum_{k<s_1<\cdots<s_{\ell}\leq N}\prod_{i=1}^{\ell} \left(\prod_{j=s_i+1}^{s_{i+1}-1}\left(-p+\frac{1}{1-F_{j,s_{i+1}-1}}\right)\cdot \frac{1}{(F_{s_i,s_{i+1}-1})-1}\right)\right].
	\end{aligned}
	\end{equation}
	Where we set $s_{\ell+1}=N+1$. 
\end{lemma}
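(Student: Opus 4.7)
The plan is to evaluate the sum by induction on $N$, performing the innermost summation over $x_1$ explicitly and propagating the resulting two-term decomposition through the remaining $N-2$ sums. The base case is the single geometric sum for $N=2$, which yields $(-p+\tfrac{1}{1-f_2})-\tfrac{f_2^{L-2}}{1-f_2}$ and directly matches the right-hand side under the given conventions.

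For the inductive step, the key single-variable identity is
\[
\sum_{x_1=x_2+1}^{L-2}(1-p\1_{x_1-x_2=1})f_2^{x_1-1}
= f_2^{x_2}\Big(-p+\tfrac{1}{1-f_2}\Big)-\tfrac{f_2^{L-2}}{1-f_2},
\]
which splits the $x_1$-sum into a \emph{continuation} piece proportional to $f_2^{x_2}$ and a \emph{boundary} piece independent of $x_2$. In the continuation branch the factor $f_2^{x_2}$ merges with the next weight $f_3^{x_2-2}$ to give $(f_2f_3)^{x_2}f_3^{-2}$, so the remaining $(N-1)$-variable sum has the same shape as the original but with the first effective weight promoted from $f_3$ to $F_{2,3}=f_2f_3$; applying the induction hypothesis here produces the terms in the statement with $k\geq 2$, together with the factor $(-p+\tfrac{1}{1-f_2})$ that builds up the product $\prod_{i=2}^k(-p+\tfrac{1}{1-F_{i,k}})$. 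In the boundary branch the constant prefactor $-f_2^{L-2}/(1-f_2)$ is pulled out and the remaining $(N-1)$-variable sum retains the original weights $(f_3,\ldots,f_N)$; applying induction here contributes the terms with $k=1$ and first boundary index $s_1=2$.

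Iterating this two-branch split over all $N-1$ inner variables, the parameter $k$ records the number of consecutive continuation branches before the first boundary step, while $(s_1<\cdots<s_\ell)$ marks the positions of all subsequent boundary steps. The denominators $1/(F_{s_i,s_{i+1}-1}-1)$ arise because each boundary branch resets the effective weight, so the $F$-products in the inner sum always run from the most recent boundary index to one less than the next; the global prefactor $(F_{k+1,N})^{L-N}$ is the cumulative boundary contribution once the successive weight-merges are collapsed via the identity $f_j^{L-2}\cdot\ldots$ at each reset point.

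The main obstacle will be the combinatorial bookkeeping of these effective-weight resets, and in particular verifying that the recursion unwinds cleanly into the precise index range $k<s_1<\cdots<s_\ell\leq N$ with $s_{\ell+1}=N+1$. As a cross-check I would have in mind a generating-function approach: after changing to gap variables $d_i=x_i-x_{i+1}-1$, the left-hand side rewrites as $\sum_{\sum d_i\leq L-N-1}\prod_i(1-p\1_{d_i=0})G_i^{d_i}$ with $G_i=F_{N-i+1,N}$, which equals $[z^{L-N-1}]$ of $(1-z)^{-1}\prod_i(1-p+pG_iz)/(1-G_iz)$; performing a partial-fraction decomposition in $z$ yields a symbolically different but algebraically equivalent closed form that I would match against the statement to confirm the induction has been assembled correctly.
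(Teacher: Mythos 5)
Your strategy---evaluating the nested sums one variable at a time, splitting each almost-geometric sum into a ``continuation'' piece and a ``boundary'' piece, and organizing the bookkeeping as an induction on $N$---is exactly the elementary iterated-summation argument the paper has in mind (the paper omits the proof and points to Lemma 5.3 of \cite{baik2019multi}, which is this computation). However, as written your execution attaches the weights to the wrong variables. In the statement the exponent of $f_j$ is $x_{N-j+1}-j+1$, so the innermost variable $x_1$ carries $f_N^{x_1-N+1}$ and the outermost variable $x_{N-1}$ carries $f_2^{x_{N-1}-1}$, whereas your key identity and the merge ``$f_2^{x_2}$ with $f_3^{x_2-2}$'' assume $x_1$ carries $f_2$ and $x_2$ carries $f_3$. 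This is not cosmetic: the left-hand side is not invariant under reversing $(f_2,\dots,f_N)$ (already for $N=3$ the two orderings give different closed forms), and with your pairing the continuation branches build \emph{prefix} products $F_{2,3},F_{2,4},\dots$ anchored at the left end of each block, so your induction assembles factors of the form $-p+\frac{1}{1-F_{s_i,j}}$ and $\prod_{i=2}^{k}\bigl(-p+\frac{1}{1-F_{2,i}}\bigr)$, i.e.\ the mirror image of the stated right-hand side, not the suffix-anchored products $F_{j,s_{i+1}-1}$ and $F_{i,k}$ that actually appear. To land on the statement you must run the recursion with $f_N$ attached to the innermost variable $x_1$: then each continuation merge extends the product to the left, producing intervals ending at $N$ (or, after a boundary reset, at $s_{i+1}-1$), the leftover initial run $\{2,\dots,k\}$ yields $\prod_{i=2}^{k}\bigl(-p+\frac{1}{1-F_{i,k}}\bigr)$, and the boundary exponentials telescope to $(F_{k+1,N})^{L-N}$.

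A second, smaller point: your base-case claim is too quick. Reading the right-hand side literally (sum over $k\le N-1$, $\ell\ge 1$, $k<s_1$), the $N=2$ case gives only $f_2^{L-2}/(f_2-1)$, while the iterated summation (and your own computation) also produces the boundary-free term $-p+\frac{1}{1-f_2}$; the identity is applied in the proof of Lemma 4.3 with the $k$-sum extended to $k=N$ (inner bracket set to $1$ there) and with $s_1=k+1$, and your induction will generate exactly such boundary-free and $s_1=k+1$ terms. So you need to state explicitly which conventions (ranges of the outer sum, the $k=N$ term, the constraint on $s_1$) you are proving and match your branches to them, rather than asserting that $N=2$ ``directly matches.'' The generating-function cross-check you propose is a sensible safeguard, but it does not substitute for getting the pairing and these conventions right in the induction itself.
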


\begin{proof}
This lemma is a slightly modified version of Lemma 5.3 of \cite{baik2019multi}. The proof is an elementary computation of geometric sums  and almost identical to the proof in \cite{baik2019multi} so we omit it.
\end{proof}

\begin{lemma}\label{lemma: simplification of T_2(k)}
	Let $n\geq 1$ be an integer and $z_1,\cdots,z_n$ be complex numbers such that $\prod_{j=\ell}^{k}z_j\neq 1$ for all $1\leq \ell\leq k\leq n$. Then 
\begin{equation}\label{eq: reduction of the sum over partitions}
\begin{aligned}
     &\sum_{\ell=1}^{n}\sum_{1= s_1<\cdots<s_{\ell}\leq n} \prod_{i=1}^{\ell}\left(\frac{1}{\left(\prod_{j= s_{i}}^{s_{i+1}-1}z_j\right)-1}\cdot \prod_{j=s_i+1}^{s_{i+1}-1}\left(-p+\frac{1}{1-\prod_{m=j}^{s_{i+1}-1}z_m}\right)\right)\\
     &= \prod_{i=1}^{n-1}\left(-p+\frac{1}{1-\prod_{j=1}^{i}z_j^{-1}}\right)\cdot \frac{1}{\left(\prod_{j=1}^{n}z_j\right)-1}.
\end{aligned}
\end{equation}
\end{lemma}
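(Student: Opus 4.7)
The plan is to prove the identity by induction on $n$, with the inductive step handled by a partial-fraction argument in the composite variable $\zeta_n:=\prod_{j=1}^n z_j$. The base case $n=1$ is immediate since both sides reduce to $1/(z_1-1)$.

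For the inductive step, denote the left-hand side by $L_n$. Conditioning on the position of $s_2$, where by convention $\ell=1$ means no cut is made and the whole interval $\{1,\ldots,n\}$ is a single block, one obtains the recursion
\begin{equation*}
L_n = A_n(z_1,\ldots,z_n) + \sum_{k=1}^{n-1} A_k(z_1,\ldots,z_k)\, L_{n-k}(z_{k+1},\ldots,z_n),
\end{equation*}
where the single-block weight $A_k$ is defined by
\begin{equation*}
A_k(z_1,\ldots,z_k) := \frac{1}{\prod_{j=1}^k z_j - 1}\prod_{j=2}^k\left(-p+\frac{1}{1-\prod_{m=j}^k z_m}\right).
\end{equation*}
By the inductive hypothesis, $L_{n-k}=R_{n-k}$ for $1\le k\le n-1$, where $R_n$ is the right-hand side, so the lemma reduces to proving $A_n + \sum_{k=1}^{n-1} A_k R_{n-k} = R_n$.

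To attack this reduced identity, I use the algebraic rewriting $-p+\tfrac{1}{1-u}=\tfrac{(1-p)+pu}{1-u}$ together with the change of variables $\zeta_i:=\prod_{j=1}^i z_j$ (so $\zeta_0=1$). After straightforward manipulation one finds, for $1\le k\le n-1$,
\begin{equation*}
A_k R_{n-k} = \frac{\zeta_k}{(\zeta_k-1)(\zeta_n-\zeta_k)}\prod_{\substack{j=1\\j\ne k}}^{n-1}\frac{(1-p)\zeta_j+p\zeta_k}{\zeta_j-\zeta_k},\qquad A_n = \frac{1}{\zeta_n-1}\prod_{j=1}^{n-1}\frac{(1-p)\zeta_j+p\zeta_n}{\zeta_j-\zeta_n},
\end{equation*}
while $R_n = \tfrac{1}{\zeta_n-1}\prod_{i=1}^{n-1}\tfrac{(1-p)\zeta_i+p}{\zeta_i-1}$. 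Treating both sides as rational functions of $\zeta_n$ only, with $\zeta_1,\ldots,\zeta_{n-1}$ and $p$ as parameters (which are distinct and different from $1$ by hypothesis), the potential poles are simple and lie in $\{1,\zeta_1,\ldots,\zeta_{n-1}\}$. At each $\zeta_n=\zeta_k$ with $1\le k\le n-1$, only $A_k R_{n-k}$ and $A_n$ contribute poles; their residues cancel because $(1-p)\zeta_k+p\zeta_k=\zeta_k$ turns the residue of $A_n$ into the negative of the residue of $A_k R_{n-k}$. At $\zeta_n=1$, only $A_n$ on the left and $1/(\zeta_n-1)$ on the right contribute, and a direct computation shows both residues equal $\prod_{j=1}^{n-1}[(1-p)\zeta_j+p]/(\zeta_j-1)$. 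As $\zeta_n\to\infty$, every term on both sides decays to zero. Hence the difference is a polynomial in $\zeta_n$ that vanishes at infinity, and is therefore identically zero.

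The main obstacle is the residue cancellation at $\zeta_n=\zeta_k$: one must track carefully the signs coming from $(\zeta_j-\zeta_k)$ versus $(\zeta_j-\zeta_n)$ at the evaluation point, and recognize that the algebraic identity $(1-p)\zeta_k+p\zeta_k=\zeta_k$ is precisely what restores the missing factor in the residue of $A_n$ and produces the sign cancellation with $A_k R_{n-k}$. Once this is spotted, the remainder is routine partial-fraction bookkeeping together with the behavior at infinity.
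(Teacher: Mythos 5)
Your argument is correct, but it follows a genuinely different route from the paper. The paper proves the identity indirectly: it introduces the auxiliary finite sum $\sum_{M>x_1>\cdots>x_n\ge 0}\prod_{i=2}^n(1-p\1_{x_{i-1}-x_i=1})\prod_j z_j^{x_{n-j+1}-j+1}$, evaluates it once from right to left and once from left to right (each evaluation producing $2^n$ terms), groups the terms according to the factor $\prod_{j=n-k+1}^n z_j^{M-n+1}$, and identifies the two sides of \eqref{eq: reduction of the sum over partitions} as the $k=n$ coefficients of the two expansions; equality of the coefficients is then extracted by letting $M\to\infty$ for $|z_j|<1$ and invoking analyticity in the $z_j$. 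You instead work directly with the combinatorial structure of the left-hand side: the sum over cut points is a sum over compositions of $\{1,\dots,n\}$ with multiplicative block weights $A_k$, which gives the renewal-type recursion $L_n=A_n+\sum_{k=1}^{n-1}A_k L_{n-k}(z_{k+1},\dots,z_n)$, and the induction reduces the lemma to the convolution identity $A_n+\sum_k A_k R_{n-k}=R_n$. Your verification of that identity is clean: in the variables $\zeta_i=\prod_{j\le i}z_j$ (which the lemma's hypothesis makes pairwise distinct and distinct from $1$, since $\prod_{j=\ell}^k z_j=\zeta_k/\zeta_{\ell-1}$) all three displayed formulas are correct, both sides are rational in $\zeta_n$ with only simple poles at $1,\zeta_1,\dots,\zeta_{n-1}$, the residues at $\zeta_n=\zeta_k$ cancel between $A_n$ and $A_kR_{n-k}$ exactly as you say (via $(1-p)\zeta_k+p\zeta_k=\zeta_k$), the residues at $\zeta_n=1$ match, and decay at infinity kills the remaining polynomial. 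What your approach buys is a purely algebraic, self-contained proof valid verbatim under the stated genericity hypothesis, with no limiting argument or analytic continuation; what the paper's approach buys is economy of means, since it recycles the same "sum the geometric series in two orders" machinery already set up in Lemma 5.4 and avoids introducing the partial-fraction bookkeeping.
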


\begin{proof}
	Fixing an integer $M>n$, consider the following sum: 
	\begin{equation*}
		\sum_{M>x_1>\cdots>x_n\geq 0} \prod_{i=2}^{n}(1-p\1_{x_{i-1}-x_i}=1) \prod_{j=1}^{n} z_j^{x_{n-j+1}-j+1}.
	\end{equation*}
	We calculate the sum in two different ways: from right to left or from left to right. Namely we set 
	\begin{equation*}
		S^{r\to \ell}_{M} := \sum_{x_n=0}^{M-n}\cdots \sum_{x_1=x_2+1}^{M-1}\prod_{i=2}^{n}(1-p\1_{x_{i-1}-x_i}=1) \prod_{j=1}^{n} z_j^{x_{n-j+1}-j+1},
	\end{equation*}
	and 
	\begin{equation*}
	S^{\ell\to r}_M := \sum_{x_1=n-1}^{M-1}\cdots \sum_{x_n=0}^{x_{n-1}-1}\prod_{i=2}^{n}(1-p\1_{x_{i-1}-x_i}=1) \prod_{j=1}^{n} z_j^{x_{n-j+1}-j+1}.
	\end{equation*}
	Then clearly $S^{r\to \ell}_M=S^{\ell\to r}_M $. Now the two sums are calculated by calculating the (almost) geometric sums one by one either from left to right or vice versa. There are $2^n$ terms in total for both sums since every single term produces two terms after performing the geometric sum once. Each of the terms contains of a factor of the form $\prod_{j= n-k+1}^{n} z_j^{M-n+1}$ for some $0\leq k\leq n$ where $k=0$ corresponds to the terms containing no such factor. For each $k$ we combine all the terms with the same factor $\prod_{j= n-k+1}^{n} z_j^{M-n+1}$ and write 
	\begin{equation*}
		S^{r\to \ell}_M = \sum_{k=0}^{n} \left[C^{r\to \ell}_{k}(z_1,\cdots,z_n)\cdot\prod_{j= n-k+1}^{n} z_j^{M-n+1}\right],
	\end{equation*}
	and 
	\begin{equation*}
	S^{\ell\to r}_M = \sum_{k=0}^{n} \left[C^{\ell\to r}_{k}(z_1,\cdots,z_n) \cdot\prod_{j= n-k+1}^{n} z_j^{M-n+1}\right].
	\end{equation*}
	Where the coefficients $C^{r\to \ell}_{k}(z_1,\cdots,z_n)$'s and $C^{\ell\to r}_{k}(z_1,\cdots,z_n)$'s are some very explicit functions in $z_1,\cdots,z_n$ independent of $M$ which are analytic for all $z_i$'s satisfying the assumption that $\prod_{j=\ell}^{k} z_j\neq 1$ for all $1\leq k\leq \ell\leq n$. In particular it is straightforward to check (see Lemma \ref{lemma: multi geometric sum} for example) that 
	\begin{equation*}
		\text{LHS of }\eqref{eq: reduction of the sum over partitions} = C^{r\to \ell}_{n}(z_1,\cdots,z_n),\quad \text{RHS of }\eqref{eq: reduction of the sum over partitions} = C^{\ell\to r}_{n}(z_1,\cdots,z_n).
	\end{equation*} 
	We claim that $C^{r\to \ell}_{k}(z_1,\cdots,z_n)= C^{\ell\to r}_{k}(z_1,\cdots,z_n)$ for all $0\leq k\leq n$. This in particular implies \eqref{eq: reduction of the sum over partitions}. Due to analyticity it suffices to check this for the $z_j$'s satisfying $|z_j|<1$ for all $1\leq j\leq n$.  In this case by letting $M\to \infty$ in the equality $S^{r\to \ell}_M=S^{\ell\to r}_M$ we see 
	$C_0^{r\to \ell}= C_0^{\ell\to r}$. Similarly
	\begin{equation*}
		C_1^{r\to \ell} = \lim_{M\to \infty} z_n^{-M+n-1}\cdot (S_M^{r\to \ell}-C_0^{r\to \ell}) = \lim_{M\to \infty} z_n^{-M+n-1}\cdot (S_M^{\ell\to r}-C_0^{\ell\to r}) = C_1^{\ell\to r}.
	\end{equation*}
	Repeat this procedure we see $C_k^{r\to \ell}= C_k^{\ell\to r}$ for all $0\leq k\leq n$.
\end{proof}

\smallskip

\subsection{Proof of Proposition \ref{prop: Cauchy identity}: Step 2}

In this section we simplify the sum \eqref{eq: reduction of the sum}.
 We rewrite the sum further by first choosing two index sets $J$ and $J'$ with $|J|=|J'|=k$ and then expressing the sum in terms of summation over index sets $J,J'$:

\begin{equation}\label{eq: reduction of S_N}
\mathcal{H}_N(\vec w,\vec w') = \sum_{k=1}^N\sum_{\substack{J,J'\subset \{1,\cdots,N\}\\ |J|=|J'|=k}} (-1)^{\#(J,J^c)+\#(J',(J')^c)} \mathscr{H}_1(J,J') \mathscr{H}_2(J^c,(J')^c),
\end{equation}
where $\#(J,J^c):= |\{(m,n)\in (J,J^c): m>n\}|$ and similar for $\#(J',(J')^c)$. The functions $\mathscr{H}_1(J,J')$ and $\mathscr{H}_2(J^c,(J')^c)$ are defined as follows:
\begin{equation}
\mathscr{H}_1(J,J') := \sum_{\substack{\sigma:\{1,\cdots,k\}\to J\\ \sigma':\{1,\cdots,k\}\to J'}}\sgn(\sigma)\sgn(\sigma')\prod_{i=2}^{k} \left(\frac{w'_{\sigma'(i)}(1+pw_{\sigma(i)})}{w_{\sigma(i)}(1+pw'_{\sigma'(i)})}\right)^{i-1} \left(-p+\frac{1}{1-\prod_{j=i}^{k}\frac{w_{\sigma'(j)}'+1}{w_{\sigma(j)}+1}}\right),
\end{equation}
and 
\begin{equation}\label{eq: H_2}
\begin{aligned}
\mathscr{H}_2(J^c,(J')^c) &:= \sum_{\substack{\pi:\{k+1,\cdots,N\}\to J^c\\ \pi':\{k+1,\cdots,N\}\to (J')^c}}\sgn(\pi)\sgn(\pi')\frac{\prod_{j'\in (J')^c}(w_{j'}'+1)^{L-N}}{\prod_{j\in J^c}(w_j+1)^{L-N}}\\
&\cdot \prod_{i=k+1}^{N} \left(\frac{w'_{\pi'(i)}(1+pw_{\pi(i)})}{w_{\pi(i)}(1+pw'_{\pi'(i)})}\right)^{i-1}\cdot \left(-p+\frac{1}{1-\prod_{j=k+1}^{i}\frac{w_{\pi(j)}+1}{w_{\pi'(j)}'+1}}\right).
\end{aligned}
\end{equation}
 By Lemma \ref{lemma: Cauchy identity for infinite sum} below (which is of interest on its own) we have 
\begin{equation}\label{eq: reduction of H_1}
	\mathscr{H}_1(J,J') = \left[\prod_{j\in J}(w_j+1)-\prod_{j'\in J'}(w_{j'}'+1)\right]\cdot \det\left[\frac{1}{w_j-w_{j'}'}\right]_{j\in J, j'\in J' }.
\end{equation}
To simplify $\mathscr{H}_2(J^c,(J')^c)$ we use the assumption that $\vec w\in (\mathcal{S}_z)^N$ and $\vec w'\in (\mathcal{S}_{z'})^N$. Namely for all $1\leq i,i'\leq N$ we have 
\begin{equation}\label{eq: Bethe equations}
	w_i^N(1+pw_i)^{-N}(1+w_i)^{L-N}=z^{L},\quad (w_{i'}')^N(1+pw_{i'}')^{-N}(1+w_{i'}')^{L-N}=(z')^{L}.
\end{equation} 
Inserting \eqref{eq: Bethe equations} into \eqref{eq: H_2} we get
\begin{equation*}
\begin{aligned}
H_2(J^c,(J')^c) &= \sum_{\substack{\pi:\{k+1,\cdots,N\}\to J^c\\ \pi':\{k+1,\cdots,N\}\to (J')^c}}\sgn(\pi)\sgn(\pi')\frac{\prod_{j'\in (J')^c}(z')^{L}}{\prod_{j\in J^c}z^{L }}\\
&\cdot \prod_{i=k+1}^{N} \left(\frac{w_{\pi(i)}(1+pw_{\pi'(i)}')}{w_{\pi'(i)}'(1+pw_{\pi(i)})}\right)^{N-i+1}\cdot \left(-p+\frac{1}{1-\prod_{j=k+1}^{i}\frac{w_{\pi(j)}+1}{w_{\pi'(j)}'+1}}\right).
\end{aligned}
\end{equation*}
Now in order to apply Lemma \ref{lemma: Cauchy identity for infinite sum} we reflect the permutations by defining $\hat{\pi}:\{1,\cdots,N-k\}\to J^c$ as $\hat{\pi}(j):=\pi(N-j+1)$ for $1\leq j\leq N$ and similarly for $\hat{\pi}':\{1,\cdots,N-k\} \to (J')^c$.  Then
\begin{equation}\label{eq: reduction of H_2}
\begin{aligned}
\mathscr{H}_2(J^c,(J')^c) &= \sum_{\substack{\hat{\pi}:\{1,\cdots,N-k\}\to J^c\\ \hat{\pi}':\{1,\cdots,N-k\}\to (J')^c}}\sgn(\hat{\pi})\sgn(\hat{\pi}')\frac{\prod_{j'\in (J')^c}(z')^{L}}{\prod_{j\in J^c}z^{L }}\\
&\cdot \prod_{i=1}^{N-k} \left(\frac{w_{\hat{\pi}(i)}(1+pw_{\hat{\pi}'(i)}')}{w_{\hat{\pi}'(i)}'(1+pw_{\hat{\pi}(i)})}\right)^{i}\cdot \left(-p+\frac{1}{1-\prod_{j=i}^{N-k}\frac{w_{\hat{\pi}(j)}+1}{w_{\hat{\pi}'(j)}'+1}}\right).
\end{aligned}
\end{equation}
Now apply Lemma \ref{lemma: Cauchy identity for infinite sum} again with the role of $\vec w$ and $\vec w'$ exchanged we have 
\begin{equation}
\begin{aligned}\label{eq: further reduction of H_2}
\mathscr{H}_2(J^c,(J')^c) &= \frac{\prod_{j'\in (J')^c}(z')^{L}}{\prod_{j\in J^c}z^{L }}\cdot \frac{\prod_{j\in J^c} w_j/(1+pw_j)}{\prod_{j'\in (J')^c}w_{j'}'/(1+pw_{j'}')}\cdot \left(-p+\frac{1}{1-\frac{\prod_{j\in J^c}(w_j+1)}{\prod_{j'\in (J')^c}(w_{j'}'+1)}}\right)\\
&\cdot \left[\prod_{j'\in (J')^c}(w_{j'}'+1)-\prod_{j\in J^c}(w_{j}+1)\right]\cdot \det\left[\frac{1}{-w_j+w_{j'}'}\right]_{j\in J^c, j'\in (J')^c }.
\end{aligned}
\end{equation}
Here the extra factor in \eqref{eq: further reduction of H_2} comes from the fact that in \eqref{eq: reduction of H_2} the product starts from $i=1$ instead of $i=2$ and the exponent is $i$ instead of $i-1$ comparing to \eqref{eq: Cauchy identity for infinite sum}. This completes Step 2 of the proof.

\begin{lemma}\label{lemma: Cauchy identity for infinite sum}
	Let $n\in \mathbb{N}$. Given any complex numbers $w_i$ and $w_{i'}'$, $i=1,\cdots,n$ such that $w_i\neq w_{i'}$,  for all $1\leq i,i'\leq n$. Then for any $0< p<1$ we have 
	\begin{equation}\label{eq: Cauchy identity for infinite sum}
	\begin{aligned}
	&\sum_{\sigma,\sigma'\in S_n} \sgn(\sigma\sigma')\prod_{j=1}^{n}\left(\frac{w'_{\sigma'(j)}(1+pw_{\sigma(j)})}{w_{\sigma(j)}(1+pw'_{\sigma'(j)})}\right)^{j-1}
	\prod_{j=2}^{n}\left[-p+\frac{1}{1-\prod_{i=j}^{n}\frac{w'_{\sigma'(i)}+1}{w_{\sigma(i)}+1}}\right]\\
	&= \left[\prod_{j=1}^{n}(w_j+1)-\prod_{j=1}^{n}(w_j'+1)\right] \det\left[\frac{1}{w_i-w_{i'}'}\right]_{i,i'=1}^n.
	\end{aligned}
	\end{equation}
\end{lemma}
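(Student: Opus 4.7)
The strategy I would take is induction on $n$. For the base case $n=1$, the product $\prod_{j=1}^{n}(\cdots)^{j-1}$ has only the $j=1$ factor which equals $1$, and the empty product $\prod_{j=2}^{1}[\cdots]$ also equals $1$, so the LHS reduces to $\sum_{\sigma,\sigma' \in S_1}\sgn(\sigma\sigma') = 1$. The RHS equals $[(w_1+1)-(w'_1+1)] \cdot \frac{1}{w_1 - w'_1} = 1$, matching.

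For the inductive step, the key reformulation I would use is the algebraic identity
\begin{equation*}
-p + \frac{1}{1-\prod_{i=j}^{n}\frac{w'_{\sigma'(i)}+1}{w_{\sigma(i)}+1}} = \frac{(1-p)\prod_{i=j}^{n}(w_{\sigma(i)}+1) + p\prod_{i=j}^{n}(w'_{\sigma'(i)}+1)}{\prod_{i=j}^{n}(w_{\sigma(i)}+1) - \prod_{i=j}^{n}(w'_{\sigma'(i)}+1)}.
\end{equation*}
Distributing the product over $j=2,\ldots,n$ decomposes the LHS into a sum of $2^{n-1}$ terms indexed by subsets $T \subseteq \{2,\ldots,n\}$, where each position $j \in T$ contributes $(1-p)\prod_{i \geq j}(w_{\sigma(i)}+1)$ to the numerator and each $j \notin T$ contributes $p\prod_{i \geq j}(w'_{\sigma'(i)}+1)$. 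The denominators $\prod_{j=2}^{n}[\prod_{i \geq j}(w_{\sigma(i)}+1) - \prod_{i \geq j}(w'_{\sigma'(i)}+1)]$ are what will produce the Cauchy kernel $\det[1/(w_i - w'_{i'})]$ on the RHS after applying Cauchy--Binet to the double sum over $(\sigma, \sigma')$.

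After this decomposition, the inner sum over $(\sigma, \sigma') \in S_n \times S_n$ for each fixed $T$ factors into a product of two antisymmetric sums in $\sigma$ and $\sigma'$ separately, each evaluable as a determinant via the Lindstr\"om--Gessel--Viennot or Cauchy--Binet identity. The RHS admits a parallel expansion: the difference $\prod_j(w_j+1) - \prod_j(w'_j+1)$ can be split into telescoping pieces of the form $\prod_{i<k}(w_i+1)\prod_{i \geq k}(w'_i+1) - \prod_{i \leq k}(w_i+1)\prod_{i > k}(w'_i+1)$, matching the structure produced by the subset decomposition on the LHS. Completing the proof then amounts to matching the two expansions term by term.

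The main obstacle is the intricate bookkeeping of the $p$ and $(1-p)$ weights: the RHS is manifestly independent of $p$, yet the LHS depends on $p$ both through the $(1+pw)$ factors in the first product and through the $-p$ term in each bracket. The miraculous cancellation of $p$-dependence is the new technical difficulty beyond the $p=0$ case (Lemma~5.4 of \cite{baik2019multi}). A natural alternative I would also consider is to show directly that $\partial_p(\text{LHS}) \equiv 0$, which would immediately reduce the problem to the $p=0$ case and bypass the combinatorial bookkeeping; this requires establishing that a specific antisymmetric sum of rational functions over $S_n \times S_n$ vanishes, which I would attempt via pairwise cancellations induced by appropriate transpositions acting on $\sigma$ and $\sigma'$. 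Either approach is computationally involved, which is consistent with the author's remark that the proof follows \cite{baik2019multi} but with several new technicalities.
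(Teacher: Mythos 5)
Your base case and the general plan (induction on $n$, combined with the observation that the $p$-dependence must cancel) are fine, but the heart of your inductive step contains a claim that does not hold as stated. After you clear denominators via
\begin{equation*}
-p+\frac{1}{1-\prod_{i=j}^{n}\frac{w'_{\sigma'(i)}+1}{w_{\sigma(i)}+1}}
=\frac{(1-p)\prod_{i=j}^{n}(w_{\sigma(i)}+1)+p\prod_{i=j}^{n}(w'_{\sigma'(i)}+1)}{\prod_{i=j}^{n}(w_{\sigma(i)}+1)-\prod_{i=j}^{n}(w'_{\sigma'(i)}+1)}
\end{equation*}
and expand over subsets $T\subseteq\{2,\dots,n\}$, every one of the $2^{n-1}$ terms still carries the denominator $\prod_{j=2}^{n}\bigl[\prod_{i\ge j}(w_{\sigma(i)}+1)-\prod_{i\ge j}(w'_{\sigma'(i)}+1)\bigr]$, which depends \emph{jointly} on $\sigma$ and $\sigma'$. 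Hence the double sum over $(\sigma,\sigma')$ does not factor into a product of two separate antisymmetric sums, and neither Cauchy--Binet nor Lindstr\"om--Gessel--Viennot applies in the form you invoke; there is also no argument for why these coupled denominators should reassemble into the Cauchy kernel $\det[1/(w_i-w'_{i'})]$, whose entries couple one $w$ with one $w'$ rather than tail products of them. Your fallback, proving $\partial_p(\mathrm{LHS})\equiv 0$ to reduce to the $p=0$ case of \cite{baik2019multi}, is an attractive idea but is only announced: the required vanishing of an antisymmetrized rational sum over $S_n\times S_n$ is essentially equivalent in difficulty to the identity itself, and the proposed "pairwise cancellation by transpositions" is not exhibited. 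So as written the proposal has a genuine gap at the decisive step.

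For comparison, the paper's induction works differently: it sums over the choice of $\ell=\sigma(1)$, $\ell'=\sigma'(1)$, applies the induction hypothesis to the remaining $(n-1)$-fold sum (which is already of the desired Cauchy-determinant form), and is then left with double sums of the type $\sum_{\ell,\ell'}(-1)^{\ell+\ell'}f(w_\ell)g(w'_{\ell'})\det[C^{\ell,\ell'}]$ for explicit rational $f,g$. These are evaluated in closed form using the rank-one perturbation formulas for Cauchy determinants (Lemmas \ref{lemma: rank one} and \ref{Lemma: rank-one cauchy}) together with the residue computations of Lemma \ref{lemma: rank-one cauchy residue}, expressing everything through $A(0),B(0),A(-1),B(-1),A(-1/p),B(-1/p)$ with $A(z)=\prod_i(z-w_i)$, $B(z)=\prod_i(z-w'_i)$; the $p$-dependent terms then cancel identically, leaving $\bigl[\prod_j(w_j+1)-\prod_j(w'_j+1)\bigr]\det[1/(w_i-w'_{i'})]$. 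If you want to salvage your approach, the missing ingredient is precisely such a mechanism for evaluating the $\sigma,\sigma'$-coupled sums; the rank-one/residue machinery is the tool the paper uses for exactly that purpose.
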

\begin{remark}
	Equation \eqref{eq: Cauchy identity for infinite sum} should  be understood also as a Cauchy summation identity (simpler version than Proposition \ref{prop: Cauchy identity}, for summation over all partitions $\lambda=(x_1-n+1, x_2-n+2,\cdots,x_n)$ with at most $n-1$ rows) of the symmetric funtions $\Psi_{\vec x}^{r}(\vec w)/\Delta(\vec w)$ and $\Psi_{\vec x}^{\ell}(\vec w')/\Delta(\vec w')$, where $\Delta(\vec w)$ is the usual Vandermonde determinant. In fact formally we have 
	\begin{equation}
		\sum_{0=x_n<x_{n-1}<\cdots<x_1} \Psi_{\vec x}^{r}(\vec w)\Psi_{\vec x}^{\ell}(\vec w') = \sum_{x_{1}=x_2+1}^{\infty}\cdots \sum_{x_{n-1}=1}^{\infty} \Psi_{\vec x}^{r}(\vec w)\Psi_{\vec x}^{\ell}(\vec w') = \text{LHS of }\eqref{eq: Cauchy identity for infinite sum},
	\end{equation}
	assuming all the infinite geometric series converge absolutely. 
\end{remark}

\begin{proof}[Proof of Lemma \ref{lemma: Cauchy identity for infinite sum}]
	The proof is based on induction on $n$. The main tools are several rank-one perturbation formulas for the Cauchy determinants. Since we will use them several times we will collect them in a separate section, see Section \ref{sec: perturbation formulas}. For $n=1$, \eqref{eq: Cauchy identity for infinite sum} is trivial. Let $n\geq 2$ and assume \eqref{eq: Cauchy identity for infinite sum} is true for all indices $\leq n-1$. Given $\sigma,\sigma'\in S_n$ we first choose two indices $\ell=\sigma(1)$ and $\ell'=\sigma'(1)$ and shift the restriction of $\sigma$ and $\sigma'$ on $\{2,\cdots,n\}$ by $1$ but still denote them by $\sigma$ and $\sigma'$. Then 
	\begin{equation*}
		\begin{aligned}
		&\text{LHS of }\eqref{eq: Cauchy identity for infinite sum} = \sum_{\ell,\ell'=1}^{n} (-1)^{\ell+\ell'}\frac{w_{\ell}(1+pw_{\ell'}')}{w_{\ell'}'(1+pw_{\ell})}\prod_{k=1}^{n} \frac{w_k'(1+pw_{k})}{w_{k}(1+pw_k')}\cdot \left(-p+\frac{1}{1-\frac{w_{\ell}+1}{w_{\ell'}'+1}\prod_{k=1}^{n}\frac{w_{k}'+1}{w_{k}+1}}\right)\\
		&\cdot\left[\sum_{\substack{\sigma:\{1,\cdots,n-1\}\to \{1,\cdots,n\}\backslash\{\ell\}\\ \sigma':\{1,\cdots,n-1\}\to \{1,\cdots,n\}\backslash\{\ell'\}}} \sgn(\sigma)\sgn(\sigma')\prod_{i=2}^{n-1}\left(\frac{w'_{\sigma'(i)}(1+pw_{\sigma(i)})}{w_{\sigma(i)}(1+pw'_{\sigma'(i)})}\right)^{i-1}\cdot\prod_{j=2}^{n-1}\left(-p+\frac{1}{1-\prod_{i=j}^{n-1}\frac{w'_{\sigma'(i)}+1}{w_{\sigma(i)}+1}}\right)\right].
		\end{aligned}
	\end{equation*}
	By the induction hypothesis the term inside the big bracket above equals 
	\begin{equation*}
		\left[\prod_{1\leq j\leq n, j\neq\ell}(w_j+1)-\prod_{1\leq j\leq n, j\neq \ell'}(w_j'+1)\right] \det\left[\frac{1}{w_i-w_{i'}'}\right]_{\substack{1\leq i,i'\leq n\\ i\neq \ell, i'\neq \ell'}}.
	\end{equation*}
	Hence 
	\begin{equation}\label{eq: cauchy identity infinite induction hypothesis}
		\begin{aligned}
		&\text{LHS of }\eqref{eq: Cauchy identity for infinite sum}\\
		& = (1-p)\cdot\prod_{k=1}^{n}\frac{w_k'(1+w_k)(1+pw_k)}{w_k(1+pw_k')}\sum_{\ell,\ell'=1}^{n}(-1)^{\ell+\ell'} \frac{w_\ell(1+pw_{\ell'}')}{ w_{\ell'}'(1+w_\ell)(1+pw_\ell)}\det\left[\frac{1}{w_i-w_{i'}'}\right]_{\substack{1\leq i,i'\leq n\\ i\neq \ell, i'\neq \ell'}}\\
		&+ p\cdot \prod_{k=1}^{n}\frac{w_k'(1+w_k')(1+pw_k)}{w_k(1+pw_k')}\sum_{\ell,\ell'=1}^{n} (-1)^{\ell+\ell'}\frac{w_\ell(1+pw_{\ell'}')}{ w_{\ell'}'(1+w_{\ell'}')(1+pw_\ell)}\det\left[\frac{1}{w_i-w_{i'}'}\right]_{\substack{1\leq i,i'\leq n\\ i\neq \ell, i'\neq \ell'}}\\
		&:= (1-p)\cdot (-1)^n A(-1)\cdot \frac{B(0)A(-1/p)}{A(0)B(-1/p)}\cdot D_1+p\cdot (-1)^n B(-1)\cdot \frac{B(0)A(-1/p)}{A(0)B(-1/p)}\cdot D_2.
		\end{aligned}
	\end{equation}
	Here we set $A(z):=\prod_{i=1}^{n}(z-w_i)$ and $B(z):= \prod_{i=1}^{n}(z-w_i')$ and $D_1$, $D_2$ represent the first and second sum over $\ell, \ell'$ respectively.  Now by \eqref{eq: rank-one cauchy 2} and Lemma \ref{lemma: rank-one cauchy residue} we have 
	\begin{equation}\label{eq: rank1cauchy1}
		D_1 = \frac{1}{1-p}\cdot\left[\frac{A(0)B(-1/p)}{B(0)A(-1/p)}-\frac{B(-1/p)}{A(-1/p)}+p\cdot\frac{B(-1)}{A(-1)}-\frac{A(0)B(-1)}{B(0)A(-1)}+1-p\right]\cdot \det\left[\frac{1}{w_i-w_{i'}'}\right]_{i,i'=1}^n,
	\end{equation}
	and 
	\begin{equation}\label{eq: rank1cauchy2}
        D_2 = \frac{1}{p}\cdot\left[\frac{A(0)}{B(0)}-\frac{B(-1/p)A(0)}{A(-1/p)B(0)}+(p-1)\cdot\frac{A(-1)}{B(-1)}+\frac{A(-1)B(-1/p)}{B(-1)A(-1/p)}-p\right]\cdot \det\left[\frac{1}{w_i-w_{i'}'}\right]_{i,i'=1}^n.
\end{equation}
    Inserting \eqref{eq: rank1cauchy1} and \eqref{eq: rank1cauchy2} into \eqref{eq: cauchy identity infinite induction hypothesis}, after necessary cancellation  we obtain
    \begin{equation}
    \begin{aligned}
          \text{LHS of }\eqref{eq: Cauchy identity for infinite sum}&= \left[(-1)^n A(-1)-(-1)^{n} B(-1)\right]\cdot \det\left[\frac{1}{w_i-w_{i'}'}\right]_{i,i'=1}^n\\
          &= \left[\prod_{j=1}^{n}(1+w_j)-\prod_{j=1}^{n}(1+w_j')\right]\det\left[\frac{1}{w_i-w_{i'}'}\right]_{i,i'=1}^n.
    \end{aligned}
    \end{equation}
    This completes the proof of Lemma \ref{lemma: Cauchy identity for infinite sum}.
\end{proof}

Finally inserting \eqref{eq: reduction of H_1} and \eqref{eq: further reduction of H_2} into \eqref{eq: reduction of S_N} and apply Lemma \ref{lemma: sum of product of determinants} below we obtain
\begin{equation}\label{eq: final simplification of the sum}
\begin{aligned}
\hat{S}_N &= \sum_{\substack{J,J'\subset \{1,\cdots,N\}\\ |J|=|J'|}} (-1)^{\#(J,J^c)+\#(J',(J')^c)} H_1(J,J')H_2(J^c,(J')^c)\\
&= p\cdot\prod_{j=1}^{N}(w_j+1)\left(\det\left[\hat{C}(i,i')\right]_{i,i'=1}^N-\det\left[\hat{C}(i,i')-\frac{1}{w_i+1}\right]_{ i,i'=1}^N\right)\\
&-(1-p)\cdot\prod_{j=1}^{N}(w_j'+1)\left(\det\left[\hat{C}(i,i')\right]_{i,i'=1}^N-\det\left[\hat{C}(i,i')+\frac{1}{w_{i'}'+1}\right]_{ i,i'=1}^N\right),
\end{aligned}
\end{equation}
where 
\begin{equation}
\hat{C}(i,i') = \frac{1}{w_i-w_{i'}'}-\left(\frac{z'}{z}\right)^L\frac{w_i(1+pw_{i'}')}{w_{i'}'(1+pw_i)}\frac{1}{w_i-w_{i'}'}.
\end{equation}
Note that in the first equality of \eqref{eq: final simplification of the sum} we have added an extra term corresponding to $|J|=|J'|=0$ comparing to \eqref{eq: reduction of S_N} which is harmless since the summand is $0$ in this case.
\begin{lemma}[Lemma 5.9 of \cite{baik2019multi}]\label{lemma: sum of product of determinants} For two $n\times n$ matrices $A$ and $B$,
	\begin{equation}
	\sum_{\substack{J,J'\subset\{1,\cdots,n\}\\|J|=|J'|}} (-1)^{\#(J,J^c)+\#(J',(J')^c)}\det[A(i,i')]_{i\in J,i'\in J'}\det[B(i,i')]_{i\in J^c,i'\in (J')^c} = \det[A+B]_{1\leq i,i'\leq n}.
	\end{equation}
\end{lemma}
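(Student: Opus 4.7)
The plan is to derive this identity as a generalized Laplace expansion of the right-hand side. Starting from the Leibniz formula
\[
\det[A+B] = \sum_{\sigma\in S_n} \sgn(\sigma)\prod_{i=1}^n\bigl(A(i,\sigma(i))+B(i,\sigma(i))\bigr),
\]
I would expand the $n$-fold product, obtaining $2^n$ terms per permutation indexed by the subset $J\subset\{1,\ldots,n\}$ of indices for which the $A$-factor is selected. For each pair $(\sigma,J)$, set $J':=\sigma(J)$, so that $|J|=|J'|$ and $\sigma$ restricts to bijections $\alpha:J\to J'$ and $\beta:J^c\to(J')^c$. The double sum over $\sigma$ and $J$ can then be reorganized as a sum over pairs $(J,J')$ of equal-size subsets followed by independent sums over arbitrary bijections $\alpha,\beta$.

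The key step is the sign identity
\[
\sgn(\sigma) \;=\; (-1)^{\#(J,J^c)+\#(J',(J')^c)}\,\sgn(\alpha)\,\sgn(\beta),
\]
where $\sgn(\alpha)$ and $\sgn(\beta)$ are taken after identifying $J,J',J^c,(J')^c$ with initial segments of positive integers via their increasing orderings. I would prove this by writing $\sigma=\tau_{J'}\circ(\pi_\alpha\oplus\pi_\beta)\circ\tau_J^{-1}$, where $\tau_J\in S_n$ is the ``unshuffle'' that lists first the elements of $J$ in increasing order and then those of $J^c$, and $\pi_\alpha\oplus\pi_\beta$ acts block-diagonally on $\{1,\ldots,|J|\}\sqcup\{|J|+1,\ldots,n\}$. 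Multiplicativity of the sign under composition, together with the standard inversion count $\sgn(\tau_J)=(-1)^{\#(J,J^c)}$ (and similarly for $\tau_{J'}$), yields the identity.

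Combining these ingredients, the Leibniz sum rearranges as
\[
\sum_{\substack{J,J'\subset\{1,\ldots,n\}\\|J|=|J'|}} (-1)^{\#(J,J^c)+\#(J',(J')^c)} \Bigl(\sum_{\alpha:J\to J'}\sgn(\alpha)\prod_{i\in J}A(i,\alpha(i))\Bigr) \Bigl(\sum_{\beta:J^c\to(J')^c}\sgn(\beta)\prod_{i\in J^c}B(i,\beta(i))\Bigr),
\]
and the two inner sums are precisely the minor determinants appearing in the statement of the lemma. The only substantive obstacle in this plan is verifying the sign identity; the remainder is routine reorganization. An alternative route, if one prefers to avoid the inversion bookkeeping, is induction on $n$ via cofactor expansion along the last row of $\det[A+B]$, splitting $(A+B)(n,j)=A(n,j)+B(n,j)$ and invoking the inductive hypothesis on the two resulting $(n-1)\times(n-1)$ determinants, but the Laplace approach is cleaner and more symmetric.
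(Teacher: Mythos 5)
The paper does not prove this lemma; it is quoted as Lemma~5.9 of \cite{baik2019multi}, so there is no in-paper argument to compare against. Your proof is correct and complete. The decomposition $\sigma=\tau_{J'}\circ(\pi_\alpha\oplus\pi_\beta)\circ\tau_J^{-1}$ is exactly right, and the sign identity follows because the inversions of the unshuffle $\tau_J$ (which maps $\{1,\dots,|J|\}$ order-preservingly onto $J$ and $\{|J|+1,\dots,n\}$ order-preservingly onto $J^c$) occur only at pairs of positions straddling the break, and these biject with $\{(a,b)\in J\times J^c: a>b\}$; thus $\sgn(\tau_J)=(-1)^{\#(J,J^c)}$ in the notation of the lemma, and similarly for $\tau_{J'}$. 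The reorganization of the sum over $(\sigma,J)$ into a sum over $(J,J',\alpha,\beta)$ is a genuine bijection, since once $J$ and $J'=\sigma(J)$ are fixed, $\sigma$ is uniquely recovered from its restrictions $\alpha$ and $\beta$, so there is no overcounting. This is the standard permutation-decomposition proof of the generalized Laplace expansion; the cofactor induction you mention as an alternative also works but is less transparent about the global sign factor.
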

\smallskip

\subsection{Proof of Proposition \ref{prop: Cauchy identity}: Step 3} \label{sec: cauchy finish}
In this section we further simplify equation \eqref{eq: final simplification of the sum} to conclude the proof of Proposition \ref{prop: Cauchy identity}. We compute $\det[\hat{C}(i,i')]$ first. For notational convenience we set $(z'/z)^L:=\mu$. Note that 
\begin{equation*}
\begin{aligned}
	\hat{C}(i,i') &= \frac{1}{w_i-w_{i'}'}-\mu\cdot \frac{w_i(1+pw_{i'}')}{w_{i'}'(1+pw_i)}\frac{1}{w_i-w_{i'}'}\\
	              &= \left(1-\mu\right)\frac{1}{w_i-w_{i'}'}-\mu\cdot\frac{1}{w_{i'}'(1+pw_i)}.
\end{aligned} 
\end{equation*}

Hence by Lemma \ref{Lemma: rank-one cauchy} and Lemma \ref{lemma: rank-one cauchy residue} we have 
\begin{equation}\label{eq: deformed cauchy determinant}
\begin{aligned}
\det[\hat{C}(i,i')]_{i,i'=1}^N &= (1-\mu)^N \det\left[\frac{1}{w_i-w_{i'}'}-\frac{\mu}{1-\mu}\frac{1}{w_{i'}'(1+pw_i)}\right]_{i,i'=1}^N\\
&= (1-\mu)^N\det[C(i,i')]_{i,i'=1}^N\cdot\left(1-\frac{\mu}{1-\mu}\left(\frac{B(-1/p)A(0)}{A(-1/p)B(0)}-1\right)\right)\\
&= (1-\mu)^{N-1}\det[C(i,i')]_{i,i'=1}^N\cdot\left(1-\mu\cdot \frac{B(-1/p)A(0)}{A(-1/p)B(0)}\right).
\end{aligned}
\end{equation}
Here $C(i,i') = \frac{1}{w_i-w_{i'}'}$ and $A(0)$ is the evaluation at $z=0$ of the polynomial $A(z):=\prod_{j=1}^{N}(z-w_j)$. The other terms involving $A(\cdot)$ and $B(\cdot)$ are defined in a similar way with $B(z):= \prod_{j=1}^{N}(z-w_{j}')$. Now by Lemma \ref{lemma: rank one} we have 
\begin{equation}\label{eq: perturbation of hat C 1}
	\det\left[\hat{C}(i,i')\right]-\det\left[\hat{C}(i,i')-\frac{1}{w_i+1}\right] = \sum_{k,\ell=1}^{N}(-1)^{\ell+k} \det[\hat{C}^{\ell,k}]\cdot \frac{1}{w_{\ell}+1},
\end{equation}
where $\hat{C}^{\ell,k}$ is the matrix obtained by removing row $\ell$ and column $k$ from $\hat{C}$. Similarly 
\begin{equation}\label{eq: perturbation of hat C 2}
\det\left[\hat{C}(i,i')\right]-\det\left[\hat{C}(i,i')+\frac{1}{w_{i'}'+1}\right] = -\sum_{k,\ell=1}^{N}(-1)^{\ell+k} \det[\hat{C}^{\ell,k}]\cdot \frac{1}{w_{k}'+1}.
\end{equation}
Now since $\hat{C}^{\ell,k}$ has the same entries as $\hat{C}$ only omitting row $\ell$ and column $k$, by \eqref{eq: deformed cauchy determinant} we have 
\begin{equation}\label{eq: determinant hat C_lk}
	\det[\hat{C}^{\ell,k}] = (1-\mu)^{N-2}\det[C^{\ell,k}]\cdot \left(1-\mu\cdot \frac{B(-1/p)A(0)}{A(-1/p)B(0)}\cdot \frac{w_{k}'(1+pw_{\ell})}{w_\ell(1+pw_{k}')}\right).
\end{equation}
Where $C^{\ell,k}$ is obtained from removing row $\ell$ and column $k$ from the $N\times N$ Cauchy matrix $C$ with $C(i,i')=\frac{1}{w_i-w_{i'}'}$ and $A(z):=\prod_{j=1}^{N}(z-w_j)$ and $B(z):=\prod_{j=1}^{N}(z-w_j')$. Inserting \eqref{eq: determinant hat C_lk} into \eqref{eq: perturbation of hat C 1} we see 
\begin{equation}\label{eq: difference of hat C 1}
\begin{aligned}
	&\det\left[\hat{C}(i,i')\right]-\det\left[\hat{C}(i,i')-\frac{1}{w_i+1}\right]= (1-\mu)^{N-2}\sum_{k,\ell=1}^{N}(-1)^{\ell+k} \det[C^{\ell,k}]\cdot \frac{1}{w_{\ell}+1}\\
	&-\mu(1-\mu)^{N-2}\cdot \frac{B(-1/p)A(0)}{A(-1/p)B(0)} \sum_{k,\ell=1}^{N}(-1)^{\ell+k} \det[C^{\ell,k}]\cdot \frac{w_{k}'(1+pw_{\ell})}{w_\ell(1+w_\ell)(1+pw_{k}')}.
\end{aligned}
\end{equation}
Similarly
\begin{equation}\label{eq: difference of hat C 2}
\begin{aligned}
&\det\left[\hat{C}(i,i')\right]-\det\left[\hat{C}(i,i')+\frac{1}{w_{i'}'+1}\right]= -(1-\mu)^{N-2}\sum_{k,\ell=1}^{N}(-1)^{\ell+k} \det[C^{\ell,k}]\cdot \frac{1}{w_{k}'+1}\\
&+\mu(1-\mu)^{N-2}\cdot \frac{B(-1/p)A(0)}{A(-1/p)B(0)} \sum_{k,\ell=1}^{N}(-1)^{\ell+k} \det[C^{\ell,k}]\cdot \frac{w_{k}'(1+pw_{\ell})}{w_\ell(1+w_{k'}')(1+pw_{k}')}.
\end{aligned}
\end{equation}
Now by \eqref{eq: minor of cauchy} and  Lemma \ref{lemma: rank-one cauchy residue} we have 
\begin{equation}\label{eq: several rank-one cauchy}
\begin{aligned}
	&\sum_{k,\ell=1}^{N}(-1)^{\ell+k} \det[C^{\ell,k}]\cdot \frac{1}{w_{\ell}+1} = \det[C]\cdot \left(1-\frac{B(-1)}{A(-1)}\right),\\
	&\sum_{k,\ell=1}^{N}(-1)^{\ell+k} \det[C^{\ell,k}]\cdot \frac{1}{w_{k}'+1} = \det[C]\cdot \left(\frac{A(-1)}{B(-1)}-1\right),\\
	&\sum_{k,\ell=1}^{N}(-1)^{\ell+k} \det[C^{\ell,k}]\cdot \frac{w_{k}'(1+pw_{\ell})}{w_\ell(1+w_\ell)(1+pw_{k}')} \\
	&= -\frac{1}{p}\det[C]\cdot\left[\left(1-\frac{A(-1/p)}{B(-1/p)}\right)\frac{B(0)}{A(0)}+ \left(p-1+\frac{A(-1/p)}{B(-1/p)}\right)\frac{B(-1)}{A(-1)}-p\right],\\
	&\sum_{k,\ell=1}^{N}(-1)^{\ell+k} \det[C^{\ell,k}]\cdot \frac{w_{k}'(1+pw_{\ell})}{w_\ell(1+w_\ell)(1+pw_{k}')} \\
	&= -\frac{1}{1-p}\det[C]\cdot\left[\left(\frac{B(0)}{A(0)}-1\right)\frac{A(-1/p)}{B(-1/p)}+ \left(p-\frac{B(0)}{A(0)}\right)\frac{A(-1)}{B(-1)}+1-p\right].
\end{aligned}
\end{equation}
Inserting \eqref{eq: several rank-one cauchy} into \eqref{eq: difference of hat C 1} and \eqref{eq: difference of hat C 2} and combine with \eqref{eq: final simplification of the sum}, after some tedious simplification we conclude that 
\begin{equation}
	\hat{S}_N = (1-\mu)^N \left(\prod_{j=1}^{N} (w_j+1)-\prod_{j=1}^{N}(w_j'+1)\right)\cdot \det\left[\frac{1}{w_i-w_{i'}'}\right]_{i,i'=1}^N.
\end{equation}
This completes the proof of Proposition \ref{prop: Cauchy identity}.
\medskip

\subsection{Perturbation formulas for Cauchy determinants}\label{sec: perturbation formulas}
In this section we collect all the elementary linear algebra facts needed in the proof of Proposition \ref{prop: Cauchy identity}. Some of them have already been discussed in \cite{baik2019multi}. First we state a general linear algebra lemma on rank-one perturbations: 

\begin{lemma} \label{lemma: rank one}
	Let $D=[D_{ij}]_{i,j=1}^n$ be an $n\times n$ matrix.  Then for any function $f,g: \mathbb{C}\to \mathbb{C}$ and complex numbers $x_1,\cdots,x_n$, $y_1,\cdots,y_n$ we have 
\begin{equation} \label{eq: rank-one cauchy 1}
\begin{aligned}
\det\left[D_{ij}+f(x_i)g(y_j)\right]_{i,j=1}^n =  \det[D]+\sum_{k,\ell=1}^{n}(-1)^{\ell+k}\det[D^{\ell,k}]g(y_k)f(x_\ell),
\end{aligned}
\end{equation}
where $D^{\ell,k}$ is obtained by removing row $\ell$ and column $k$ from $D$.
\end{lemma}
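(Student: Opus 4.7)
The plan is to establish this identity by multilinearity of the determinant in the rows. Writing row $i$ of the perturbed matrix as the sum of the two row vectors $(D_{i1},\ldots,D_{in})$ and $f(x_i)(g(y_1),\ldots,g(y_n))$, multilinearity expands $\det[D_{ij}+f(x_i)g(y_j)]$ into $2^n$ terms indexed by the subset $S\subseteq\{1,\ldots,n\}$ of rows on which we select the rank-one piece.

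Terms with $|S|\ge 2$ vanish: the rows in $S$ are all scalar multiples of the common row vector $(g(y_1),\ldots,g(y_n))$, hence are linearly dependent, and the determinant is zero. The term with $S=\emptyset$ is precisely $\det[D]$. For each singleton $S=\{\ell\}$, we factor $f(x_\ell)$ out of row $\ell$ and apply Laplace expansion along that row, which contributes
\begin{equation*}
f(x_\ell)\sum_{k=1}^n (-1)^{\ell+k}g(y_k)\det[D^{\ell,k}].
\end{equation*}
Summing over $\ell\in\{1,\ldots,n\}$ yields the stated identity.

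As a quick alternative, one may view the perturbed matrix as the rank-one update $D+uv^\top$ where $u=(f(x_i))_{i=1}^n$ and $v=(g(y_j))_{j=1}^n$, and invoke the matrix determinant lemma $\det(D+uv^\top)=\det D + v^\top\mathrm{adj}(D)\,u$; unpacking the adjugate entries $\mathrm{adj}(D)_{ij}=(-1)^{i+j}\det[D^{j,i}]$ recovers the formula. There is no genuine obstacle here; the only thing requiring care is the bookkeeping of signs in the cofactor expansion.
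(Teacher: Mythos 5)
Your primary argument — multilinearity in the rows, discarding all terms with two or more rank-one rows because they are proportional, and Laplace expansion of each singleton term — is correct and complete. The paper instead proves the lemma via the matrix determinant lemma for invertible $D$ (i.e.\ $\det(D+uv^\top)=\det(D)\bigl(1+v^\top D^{-1}u\bigr)$ combined with Cramer's rule for $(D^{-1})_{k,\ell}$), and then removes the invertibility assumption by perturbing $D\mapsto D+\epsilon_k I$ and letting $\epsilon_k\to0$. Your route is genuinely different and arguably cleaner: multilinearity applies uniformly to all matrices, so no separate limiting step is needed for singular $D$, and it avoids invoking the adjugate/inverse machinery. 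The paper's approach buys brevity when one is willing to cite the matrix determinant lemma as known, but pays for it with the extra approximation argument. Your second paragraph, mentioning $\det(D+uv^\top)=\det D + v^\top\operatorname{adj}(D)\,u$, is essentially the paper's argument restated in terms of the adjugate rather than the inverse; that phrasing has the virtue of also being valid for singular $D$ without any limit, since the adjugate is defined polynomially in the entries.
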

\begin{proof}
	For $D$ invertible, by the rank-one property and the Cramer's rule we have 
	\begin{equation*}
	\begin{aligned}
	 \det\left[D_{ij}+f(x_i)g(y_j)\right]_{i,j=1}^n &= \det[D]\left(1+\sum_{k,\ell=1}^{n}g(y_k)(D^{-1})_{k,\ell}f(x_\ell)\right)\\
	 &=  \det[D]+\sum_{k,\ell=1}^{n}(-1)^{\ell+k}\det[D^{\ell,k}]g(y_k)f(x_\ell).
	\end{aligned}
	\end{equation*}
	For general matrix $D$ we pick $\{\epsilon_k\}_{k=1}^\infty$ such that $\epsilon_k\to 0$ as $k\to \infty$ and $D+\epsilon_k I_n$ are invertible for all $\epsilon_k$. Now apply the above argument for $D+\epsilon_k I_n$ and let $k\to \infty$.
\end{proof}
Next we specialize to the case of $C$ being a Cauchy matrix when the minors can be explicitly calculated:
\begin{lemma}\label{Lemma: rank-one cauchy}
	Assume further that the matrix $C$ is a Cauchy matrix with $(i,j)$-th entry $\frac{1}{x_i-y_j}$ for distinct complex numbers $x_1,\cdots, x_n$ and $y_1,\cdots,y_n$. Then we further have 
	\begin{equation}\label{eq: rank-one cauchy 2}
	\det\left[C_{ij}+f(x_i)g(y_j)\right]_{i,j=1}^n = \det[C]\cdot\left(1-\sum_{k,\ell=1}^{n}\frac{f(x_\ell)B(x_\ell)A(y_k)g(y_k)}{(x_\ell-y_k)A'(x_\ell)B'(y_k)}\right).
	\end{equation}
	Here $A(z):= \prod_{i=1}^{n}(z-x_i)$ and $B(z):= \prod_{i=1}^{n}(z-y_i)$ are monic polynomials with roots at $x_i$'s and $y_i$'s.
\end{lemma}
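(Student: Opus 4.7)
The plan is to deduce \eqref{eq: rank-one cauchy 2} from the general rank-one perturbation formula \eqref{eq: rank-one cauchy 1} of Lemma~\ref{lemma: rank one} by computing the minors of a Cauchy matrix explicitly. First, I would apply Lemma~\ref{lemma: rank one} with $D = C$ to obtain
\[
\det\left[C_{ij}+f(x_i)g(y_j)\right]_{i,j=1}^n = \det[C]\left(1+\sum_{k,\ell=1}^n \frac{(-1)^{k+\ell}\det[C^{\ell,k}]}{\det[C]}\,g(y_k)f(x_\ell)\right),
\]
assuming $\det[C]\neq 0$ (which holds since the $x_i$'s and $y_j$'s are distinct). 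Matching with the target formula, the lemma reduces to the single algebraic identity
\[
(-1)^{k+\ell}\,\frac{\det[C^{\ell,k}]}{\det[C]} \;=\; -\,\frac{B(x_\ell)\,A(y_k)}{(x_\ell-y_k)\,A'(x_\ell)\,B'(y_k)}.
\]

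The key observation is that the minor $C^{\ell,k}$ is itself a Cauchy matrix on the reduced node sets $\{x_i:i\neq \ell\}$ and $\{y_j:j\neq k\}$. I would apply the classical product formula
\[
\det\left[\tfrac{1}{x_i-y_j}\right]_{i,j=1}^n \;=\; \frac{\prod_{i<j}(x_j-x_i)(y_i-y_j)}{\prod_{i,j}(x_i-y_j)}
\]
to both $C$ and $C^{\ell,k}$ and take the ratio. The Vandermonde-type factors involving $x_\ell$ collapse using
\[
\prod_{j>\ell}(x_j-x_\ell)\prod_{i<\ell}(x_\ell-x_i)\;=\;(-1)^{n-\ell}A'(x_\ell),
\]
and analogously for $y_k$, giving $(-1)^{k-1}B'(y_k)$. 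For the denominator factors, removing row $\ell$ and column $k$ eliminates all pairs $(\ell,j)$ and $(i,k)$, so the ratio of denominators is
\[
\frac{\prod_j(x_\ell-y_j)\cdot\prod_i(x_i-y_k)}{x_\ell-y_k} \;=\; \frac{B(x_\ell)\cdot(-1)^n A(y_k)}{x_\ell-y_k},
\]
where the extra division by $x_\ell-y_k$ restores the pair $(\ell,k)$ that was removed twice.

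Assembling these three pieces gives $\det[C^{\ell,k}]/\det[C] = (-1)^{\ell-k+1}\,B(x_\ell)A(y_k)/\bigl[(x_\ell-y_k)A'(x_\ell)B'(y_k)\bigr]$, and multiplying by $(-1)^{k+\ell}$ turns the sign prefactor into $(-1)^{2\ell+1}=-1$, yielding exactly the identity needed. The only real obstacle is the careful bookkeeping of signs from the three separate sources (the two Vandermonde reductions and the $(-1)^n$ from flipping $x_i-y_k$ to $y_k-x_i$); once these are aligned, the proof is immediate. No analytic or combinatorial difficulty is expected beyond this.
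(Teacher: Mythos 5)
Your proposal is correct and follows essentially the same route as the paper: apply the general rank-one perturbation formula of Lemma~\ref{lemma: rank one} and then evaluate the ratio $\det[C^{\ell,k}]/\det[C]$ via the classical Cauchy determinant product formula, which is exactly the identity \eqref{eq: minor of cauchy} in the paper's proof. Your sign bookkeeping lands on the same expression $(-1)^{\ell+k+1}B(x_\ell)A(y_k)/\bigl[(x_\ell-y_k)A'(x_\ell)B'(y_k)\bigr]$, so nothing further is needed.
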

\begin{proof}
	For Cauchy matrix $C$ we have 
	\begin{equation*}
		\det[C]= \frac{\prod_{1\leq i<j\leq n}(x_i-x_j)(y_i-y_j)}{\prod_{1\leq i,j\leq n}(x_i-y_j)}.
	\end{equation*}
	Note that $C^{\ell,k}$ is also a Cauchy matrix so we have 
	\begin{equation*}
	\det[C^{\ell,k}]= \frac{\prod_{\substack{1\leq i<j\leq n\\ i\neq \ell, j\neq k}}(x_i-x_j)(y_i-y_j)}{\prod_{\substack{1\leq i,j\leq n\\ i\neq \ell, j\neq k}}(x_i-y_j)}.
	\end{equation*}
	Hence 
	\begin{equation}\label{eq: minor of cauchy}
		\frac{\det[C^{\ell,k}]}{\det[C]} = (-1)^{\ell+k+1} \frac{B(x_\ell)A(y_k)}{(x_\ell-y_k)A'(x_\ell)B'(y_k)}.
	\end{equation}
	Now \eqref{eq: rank-one cauchy 1} and \eqref{eq: minor of cauchy} imply \eqref{eq: rank-one cauchy 2}.
\end{proof}
For special choices of $f$ and $g$, \eqref{eq: rank-one cauchy 2} can be further simplified using the residue theorem. We list here all the situations encountered in the proof of Proposition \ref{prop: Cauchy identity}.
\begin{lemma}\label{lemma: rank-one cauchy residue}
	 Given distinct complex numbers $x_1,\cdots,x_n$ and $y_1,\cdots,y_n$. Let $C$ be the Cauchy matrix with $(i,j)$-th entry $\frac{1}{x_i-y_j}$ and $A(z)= \prod_{i=1}^{n}(z-x_i)$ and $B(z)= \prod_{i=1}^{n}(z-y_i)$. Then 
	\begin{enumerate}
		\item For $f(x) = \frac{x}{(1+x)(1+px)}$ and $g(y)=\frac{1+py}{y}$ we have $\sum_{k,\ell=1}^{n}\frac{f(x_\ell)B(x_\ell)A(y_k)g(y_k)}{(x_\ell-y_k)A'(x_\ell)B'(y_k)}$ equals
		\begin{equation}
			 -\frac{1}{1-p}\cdot\left[\left(\frac{A(0)}{B(0)}-1\right)\frac{B(-1/p)}{A(-1/p)}+ \left(p-\frac{A(0)}{B(0)}\right)\frac{B(-1)}{A(-1)}+1-p\right].
		\end{equation}
		\item For $f(x) = \frac{x}{1+px}$ and $g(y)=\frac{1+py}{y(1+y)}$ we have $\sum_{k,\ell=1}^{n}\frac{f(x_\ell)B(x_\ell)A(y_k)g(y_k)}{(x_\ell-y_k)A'(x_\ell)B'(y_k)}$ equals
		\begin{equation}
         -\frac{1}{p}\cdot\left[\left(1-\frac{B(-1/p)}{A(-1/p)}\right)\frac{A(0)}{B(0)}+ \left(p-1+\frac{B(-1/p)}{A(-1/p)}\right)\frac{A(-1)}{B(-1)}-p\right].
		\end{equation}
		\item For $f(x) = \frac{1}{1+x}$ and $g(y)= 1$ we have
		\begin{equation}
			\sum_{k,\ell=1}^{n}\frac{f(x_\ell)B(x_\ell)A(y_k)g(y_k)}{(x_\ell-y_k)A'(x_\ell)B'(y_k)} = \frac{B(-1)}{A(-1)}-1.
		\end{equation} 
	\end{enumerate}
\end{lemma}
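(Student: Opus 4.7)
The strategy is iterated application of the residue theorem on the Riemann sphere. The key observation is that for any rational function $\phi$ whose poles are disjoint from $\{x_1,\dots,x_n\}$,
\begin{equation*}
\sum_{\ell=1}^{n} \frac{\phi(x_\ell) B(x_\ell)}{A'(x_\ell)} = \sum_{\ell=1}^n \operatorname{Res}_{z=x_\ell} \frac{\phi(z) B(z)}{A(z)} = -\sum_{p\notin\{x_\ell\}} \operatorname{Res}_{z=p} \frac{\phi(z) B(z)}{A(z)},
\end{equation*}
where the right-hand sum ranges over the remaining finite poles (coming from poles of $\phi$) together with the point at infinity. An analogous identity holds with $A,B,x_\ell$ replaced by $B,A,y_k$. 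Both sides of the target identity in each case are therefore a finite combination of values of $A(z)/B(z)$ and $B(z)/A(z)$ at $z\in\{0,-1,-1/p\}$, together with the residues at $\infty$.

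The plan is, for each of the three cases, to write
\begin{equation*}
S := \sum_{k,\ell=1}^n \frac{f(x_\ell)B(x_\ell)A(y_k)g(y_k)}{(x_\ell-y_k)A'(x_\ell)B'(y_k)} \;=\; \sum_{\ell=1}^n \frac{f(x_\ell)B(x_\ell)}{A'(x_\ell)} \, T_\ell, \qquad T_\ell := \sum_{k=1}^n \frac{A(y_k) g(y_k)}{(x_\ell - y_k) B'(y_k)},
\end{equation*}
and first evaluate $T_\ell$ by observing that $T_\ell$ equals the sum of residues at $w=y_k$ of the function $H_\ell(w) = \frac{A(w)g(w)}{(x_\ell-w)B(w)}$. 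By the residue theorem,
\begin{equation*}
T_\ell = -\operatorname{Res}_{w=x_\ell} H_\ell - \sum_{\text{poles of }g} \operatorname{Res}_w H_\ell - \operatorname{Res}_{w=\infty} H_\ell.
\end{equation*}
The first term vanishes because $A(x_\ell)=0$; the middle terms contribute only at the finitely many poles of $g$ (namely $w=0$ in case~(1); $w=0,-1$ in case~(2); none in case~(3)); the residue at $\infty$ is read off from the leading asymptotic of $H_\ell(w)$ as $w\to\infty$. This produces an explicit rational function $T(z)$ in the parameter $z=x_\ell$, and the outer sum becomes $S=\sum_\ell \frac{f(x_\ell)T(x_\ell)B(x_\ell)}{A'(x_\ell)}$.

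The final step is a second application of the residue theorem in the $z$-variable to evaluate this last sum. The relevant meromorphic function is $z\mapsto \frac{f(z)T(z)B(z)}{A(z)}$; its remaining residues come from the poles of $f$ (at $-1,-1/p$, depending on the case), from any poles of $T$ at points where $f$ is regular, and from $z=\infty$. After collecting these explicit residues one rearranges into the combination $\frac{A(0)}{B(0)}$, $\frac{A(-1)}{B(-1)}$, $\frac{A(-1/p)}{B(-1/p)}$ (and reciprocals) prescribed in the lemma; case~(3) is immediate because $g\equiv 1$ has no finite poles and only the residue at infinity contributes to $T_\ell$, leaving only the pole of $f$ at $z=-1$ and the behavior at $\infty$ for the outer sum.

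The main obstacle I anticipate is bookkeeping: in cases~(1) and~(2) the rational function $T(z)$ introduces new simple poles (at $z=0$, and sometimes at $z=-1$) that were not present in $f$, so the outer residue calculation has to account for three or four distinct finite poles plus the pole at infinity, with signs determined by the expansion $H_\ell(w)\sim c/w$ near $\infty$. Once the residue data is tabulated correctly, the asserted identities follow by direct algebraic simplification; there is no serious combinatorial difficulty beyond this.
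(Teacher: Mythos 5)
Your plan is correct and, when executed, it does reproduce all three stated identities; it is essentially the paper's own argument, which likewise reduces the double sum to residues of rational functions built from $A$ and $B$ at the fixed poles $0$, $-1$, $-1/p$ and at infinity. The only difference is organizational: the paper packages the computation as a vanishing double contour integral with the coupled kernel $1/(z-\xi)$ (plus auxiliary single contour integrals evaluated via the residue at infinity), whereas you perform two successive single-variable residue evaluations, first in the $y$-variable for fixed $x_\ell$ and then in the $x$-variable.
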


\begin{proof}
     We will only prove part $(1)$ since the arguments for the other parts are similar. For $f(z) = \frac{z}{(1+z)(1+pz)}$ and $g(\xi)=\frac{1+p\xi}{\xi}$ we consider the following double contour integral: 
     \begin{equation*}
     \oint_{|z|=R}\oint_{|\xi|=r}\frac{\dd\xi}{2\pi \ii}\frac{\dd z}{2\pi \ii} f(z)g(\xi)\frac{B(z)A(\xi)}{(z-\xi)A(z)B(\xi)}.
     \end{equation*}
     Where $R>r$ are both large enough so that all the possible poles of the  integrand are inside the integral contours. Now since for fixed $r$ the integrand is of order $O(R^{-2})$, the double integral goes to $0$ as $R\to \infty$. Thus for all $R$ large enough the double contour integral equals $0$. On the other hand by the residue theorem we have 
     \begin{equation}\label{eq: residues for double contour integral}
     \begin{aligned}
     	0&=\oint_{|z|=R}\oint_{|\xi|=r}\frac{\dd\xi}{2\pi \ii}\frac{\dd z}{2\pi \ii} f(z)g(\xi)\frac{B(z)A(\xi)}{(z-\xi)A(z)B(\xi)}\\
     	& = \oint_{|z|=R} \frac{\dd z}{2\pi \ii} \frac{f(z)}{z}\frac{B(z)A(0)}{A(z)B(0)}+ \sum_{k=1}^{n} \oint_{|z|=R}\frac{\dd z}{2\pi \ii} f(z)g(y_k) \frac{B(z)A(y_k)}{(z-y_k)A(z)B'(y_k)}\\
     	&= \frac{1}{1-p}\sum_{k=1}^{n}\frac{A(y_k)}{B'(y_k)}\left(\frac{B(-1)}{A(-1)}\frac{1+py_k}{y_k(1+y_k)}-\frac{B(-1/p)}{A(-1/p)}\frac{1}{y_k}\right)\\
     	&+ \sum_{k,\ell=1}^{n} f(x_{\ell})g(y_k)\frac{B(x_\ell)A(y_k)}{(x_\ell-y_k)A'(x_\ell)B'(y_k)}.
     \end{aligned}
     \end{equation}
     Where in the first equality the first contour integral is $O(R^{-2})$ hence $0$ for $R$ large enough.  The single sum over $1\leq k\leq n$ can be obtained as the residue terms for the following single contour integral:
     \begin{equation*}
     	\oint_{|\xi|=r} \frac{\dd\xi}{2\pi \ii}\frac{1+p\xi}{\xi(1+\xi)}\frac{A(\xi)}{B(\xi)} = \frac{A(0)}{B(0)}-(1-p)\frac{A(-1)}{B(-1)}+\sum_{k=1}^{n}\frac{A(y_k)}{B'(y_k)}\frac{1+py_k}{y_k(1+y_k)}.
     \end{equation*}
     Here $r$ is large enough so that $|\xi|=r$ contains all possible poles of the integrand inside. On the other hand by considering the residue at $\infty$ we have $\oint_{|\xi|=r} \frac{d\xi}{2\pi i}\frac{1+p\xi}{\xi(1+\xi)}\frac{A(\xi)}{B(\xi)} = p$. Hence 
     \begin{equation}\label{eq: residues for single contour integral 1}
     \sum_{k=1}^{n}\frac{A(y_k)}{B'(y_k)}\frac{1+py_k}{y_k(1+y_k)} = p-\frac{A(0)}{B(0)}+(1-p)\frac{A(-1)}{B(-1)}.
     \end{equation}
     Similar residue analysis on the contour integral $\oint_{|\xi|=r} \frac{\dd\xi}{2\pi \ii}\frac{1}{\xi}\frac{A(\xi)}{B(\xi)}$ gives
     \begin{equation}\label{eq: residues for single contour integral 2}
     	\sum_{k=1}^{n}\frac{A(y_k)}{B'(y_k)}\frac{1}{y_k} = 1-\frac{A(0)}{B(0)}.
     \end{equation}
     Inserting \eqref{eq: residues for single contour integral 1} and \eqref{eq: residues for single contour integral 2} into \eqref{eq: residues for double contour integral} we obtain 
     \begin{equation}
     \begin{aligned}
     	&\sum_{k,\ell=1}^{n} f(x_{\ell})g(y_k)\frac{B(x_\ell)A(y_k)}{(x_\ell-y_k)A'(x_\ell)B'(y_k)} \\
     	&= -\frac{1}{1-p}\cdot \left[\left(p-\frac{A(0)}{B(0)}\right)\frac{B(-1)}{A(-1)}+1-p -\left(1-\frac{A(0)}{B(0)}\right)\frac{B(-1/p)}{A(-1/p)}\right].
     \end{aligned}
     \end{equation}
     This completes the proof of part (1).
\end{proof}
\smallskip
\subsection{Proof of Corollary \ref{cor: single sum arbitrary index} and \ref{cor: sum over two arbitrary index}}\label{sec: proof cor}
Finally we discuss how Corollary \ref{cor: single sum arbitrary index} and \ref{cor: sum over two arbitrary index} follows from the Propositions using periodicity. For given $\vec x=(x_1,\cdots,x_N)\in \conf_{N}^{(L)}$ and $0\leq k\leq N-1$, set $\vec x':=(x_1',\cdots,x_N')=(x_{N-k+1}+L,\cdots, x_N+L,x_1,\cdots,x_{N-k})$. Then the condition $\vec x\in \conf_{N}^{(L)}\cap \{x_{N-k}\geq a\}$ is the same as $\vec x'\in \conf_{N}^{(L)}\cap \{x_N'\geq a\}$. Now consider 
\begin{equation*}
	\Psi_{\vec x}^r(\vec w) = \det\left[\left(\frac{w_i}{1+pw_i}\right)^{1-j}(1+w_i)^{-x_{N-j+1}+j-1}\right]_{i,j=1}^N.
\end{equation*}
We move the first $k$ columns of the matrix to the end. The resulting determinant equals $(-1)^{k(N-1)}$ times the determinant of the matrix whose $(i,j)$-th entry has the form $\left(\frac{w_i}{1+pw_i}\right)^{1-j-k}(1+w_i)^{-x_{N-j+1}'+j+k-1}$ for $1\leq j\leq N-k$ and the form $\left(\frac{w_i}{1+pw_i}\right)^{1-j-k+N}(1+w_i)^{-x_{N-j+1}'+j+k-1+L-N}$ for $N-k+1\leq j\leq N$. But since $\left(\frac{w_i}{1+pw_i}\right)^N(1+w_i)^{L-N}=z^L$ for all $1\leq i\leq N$, we have 
$$\left(\frac{w_i}{1+pw_i}\right)^{1-j-k+N}(1+w_i)^{-x_{N-j+1}'+j+k-1+L-N} = z^L\cdot \left(\frac{w_i}{1+pw_i}\right)^{1-j-k}(1+w_i)^{-x_{N-j+1}'+j+k-1}.$$
After factoring out the common factor $\left(\frac{w_i}{1+pw_i}\right)^{-k}(1+w_i)^{k}$ from each row and $z^L$ from the last $k$ columns we conclude that 
\begin{equation}
	\Psi_{\vec x}^r(\vec w) = (-1)^{k(N-1)}z^{kL}\prod_{i=1}^{N}\left[\left(\frac{1+pw_i}{w_i}\right)^k\cdot(1+w_i)^{k}\right]\Psi_{\vec x'}^r(\vec w).
\end{equation}
Hence 
\begin{equation}
	\begin{aligned}
		\sum_{\vec x\in \conf_{N}^{(L)}\cap \{x_{N-k}\geq a\}}\Psi_{\vec x}^r(\vec w)& = (-1)^{k(N-1)}z^{kL}\prod_{i=1}^{N}\left[\left(\frac{1+pw_i}{w_i}\right)^k\cdot(1+w_i)^{k}\right]\cdot\sum_{\vec x'\in \Omega_{L,N}\cap \{x_{N}'\geq a\}}\Psi_{\vec x'}^r(\vec w)\\
		&=(-1)^{k(N-1)}z^{kL}\prod_{i=1}^{N}\left[\left(\frac{1+pw_i}{w_i}\right)^k\cdot(1+w_i)^{-a+k+1}\right]\cdot \det[w_{i}^{-j}]_{i,j=1}^N.
	\end{aligned}
\end{equation}
Corollary \ref{cor: sum over two arbitrary index} follows from Proposition \ref{prop: Cauchy identity} in a similar way.
\medskip

\section{Large-time asymptotics under relaxation time scale}
\label{sec: relaxation time limit}
In this section we discuss the large time limit of the multi-point distribution of dpTASEP($L,N,\vec y$) under the relaxation time scale $t=O(L^{3/2})$. In Theorem~\ref{thm:main} we state the limit theorem for general initial condition satisfying certain assumptions. Below are the precise assumptions on the initial conditions we need:
\subsection{Assumptions on the initial condition}
We now state the assumptions on the sequence of the initial conditions $\vec y(L)$ under which we prove the limit theorem. 
The conditions are in terms of the global energy function and the characteristic function defined in Definition~\ref{def: globalenergy}.

Recall that the finite-time formula \eqref{eq: multipoint general} involves a $m$-fold contour integral with respect to $\vec z=(z_1,\cdots,z_m)$ for $0<|z_m|<\cdots<|z_1|<\mathbbm{r}_c$ where the critical radius $\mathbbm{r}_c$ is defined in \eqref{eq: critical radius}. It turns out that we need to rescale the parameters $z_i$ to be close to the critical value $\mathbbm{r}_c$ in order to make the large time limits converge.

\begin{notation}
	For given complex parameter $z$ with $0<|z|<\mathbbm{r}_c$, we introduce the rescaled parameter $\z$ defined by
	\begin{equation}\label{eq: rescaled z}
		z^L = (-1)^N \mathbbm{r}_c^L \z.
	\end{equation}
    The constraint $0<|z|<\mathbbm{r}_c$ then translates to $0<|\z|<1$. We introduce a similar rescaling for the parameters $\vec z=(z_1,\cdots,z_m)$. Then for $0<|z_m|<\cdots<|z_1|<\mathbbm{r}_c$, the rescaled parameters satisfy $0<|\z_m|<\cdots<|\z_1|<1$.  Throughout the rest of the paper we will always use $z_i$ to represent the unscaled parameters and $\z_i$ to represent the rescaled ones satisfying \eqref{eq: rescaled z}.
\end{notation}
\begin{assum} \label{def:asympstab} 
	We assume that the sequence of the initial profiles $\vec y=\vec y(L)$ satisfies the following three conditions as $L\to \infty$.

	\begin{enumerate}[(A)]
		\item (Convergence of global energy) 
		There exist a constant $r\in (0,1)$ and a non-zero function $E_{\mathrm{ic}}(\mathrm{z})$ such that for every $0<\epsilon<1/2$, 
		\begin{equation*} 
			\mathcal{E}_{\vec y} (z) = E_{\mathrm{ic}}(\mathrm{z}) \left( 1+ O(L^{\epsilon -1/2})\right),
		\end{equation*}
		uniformly for $ |\mathrm{z}|<  {r}$  as $L\to \infty$.  
		
		\item (Convergence of characteristic function) 
		There exist constants $0<r_1<r_2<1$ and a function $\cchi_{\mathrm{ic}}(\eta,\xi;\mathrm{z})$ 
		such that for every $0<\epsilon<1/8$,  
		\begin{equation*} 
			\chi_{\vec y}(v,u; z) = \cchi_{\mathrm{ic}}(\eta,\xi;\mathrm{z}) + O(L^{4\epsilon - 1/2}),
		\end{equation*}
		uniformly for $ r_1<|\mathrm{z}|<  r_2$, $u\in \mathcal{L}_z^{(\epsilon)}$ and $v \in \mathcal{R}_z^{(\epsilon)}$ as $L\to \infty$
		where
		\begin{equation*}
			\xi = \mathcal{M}_{L,\mathrm{left}} (u)\in \mathrm{L}_{\mathrm{z}} 
			\quad \text{and}\quad 
			\eta = \mathcal{M}_{L,\mathrm{right}}(v) \in \mathrm{R}_{\mathrm{z}}
		\end{equation*}
		are the images under the maps defined in Lemma \ref{lemma: convergenceofroots}.

		\item (Tail estimates of characteristic function) 
		Let $r_1$ and $r_2$ be same as in (B). 
		There are constants $\epsilon'', C'>0$ such that 
		\begin{equation}\label{eq:easier_tail_estimates}
			|\chi_{\vec y}(v,u; z)| \le C' L^{\epsilon''},
		\end{equation}
		for all $(v,u)\in\mathcal{R}_z\times\mathcal{L}_z$ for all $r_1<|\mathrm{z}|<r_2$. 
	\end{enumerate}
\end{assum}

\medskip
\subsection{Step and flat initial conditions} It turns out that Assumption~\ref{def:asympstab} is not easy to check in general. Nevertheless we are able to verify them for at least the classical step and flat initial conditions. The following proposition combined with Theorem~\ref{thm:main} gives the corresponding limit theorems for dpTASEP starting with step and flat initial conditions.
\begin{proposition}\label{thm:special_IC}
	\begin{enumerate}[(i)]
		\item For the step initial condition $\vec y_{\mathrm{step}}=(-1,-2,\cdots-N)$, Assumption~\ref{def:asympstab} holds with 
		\begin{equation}
			E_{\mathrm{step}}(\mathrm{z}) = 1
			\quad \text{and}\quad
			\cchi_{\mathrm{step}} (\eta,\xi;\mathrm{z}) = 1.
		\end{equation}
		\item For the flat initial condition $\vec y_{\mathrm{flat}}=(-d,\cdots,-Nd)$, where we assume $d=L/N\in \mathbb{N}$, 
		Assumption~\ref{def:asympstab} holds with 
		\begin{equation}\label{eq: characteristicflat}
			E_{\mathrm{flat}}(\mathrm{z}) = (1-\mathrm{z})^{-1/4}  e^{-B(\mathrm{z})}
			\quad \text{and}\quad
			\cchi_{\mathrm{flat}} (\eta,\xi;\mathrm{z}) =  e^{-\mathrm{h}(\xi,\mathrm{z})-\mathrm{h}(\eta,\mathrm{z})}\eta(\eta-\xi)\mathbf{1}_{\xi=-\eta}
		\end{equation}
		for $0<|\mathrm{z}|<1$, where $B(\mathrm{z})$ is defined in~\eqref{eq:def_Bz}, $\mathrm{h}(\zeta,\mathrm{z})$ is defined in~\eqref{eq:def_h} and~\eqref{eq:2019_10_18_02}.
	\end{enumerate}
\end{proposition}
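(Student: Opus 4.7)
The plan has two distinct parts. The step case is essentially a direct restatement of Remark~\ref{rmk: step}: for $\vec y_{\mathrm{step}}=(-1,\ldots,-N)$ the partition $\lambda(\vec y_{\mathrm{step}})$ vanishes identically, so $(w+1)^{\lambda_i}\equiv 1$, and row operations using the binomial expansion of $(pw+1)^{i-1}$ transform the numerator determinant in Definition~\ref{def: symmetricfunction} row-by-row into the denominator determinant $\det[w_j^{N-i}]$. Hence $\mathcal{F}_{\lambda(\vec y_{\mathrm{step}})}\equiv 1$, which immediately gives $\mathcal{E}_{\vec y_{\mathrm{step}}}\equiv 1$ and $\chi_{\vec y_{\mathrm{step}}}\equiv 1$, so all three parts of Assumption~\ref{def:asympstab} are trivially satisfied with $E_{\mathrm{step}}(\mathrm{z})=1$, $\cchi_{\mathrm{step}}(\eta,\xi;\mathrm{z})=1$, zero error, and any $\epsilon''>0$.

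For the flat initial condition with $d=L/N\in\mathbb{N}$ one has $\lambda_i=i(1-d)$, and the key algebraic observation is the factorisation
\begin{equation*}
w^{N-i}(pw+1)^{i-1}(w+1)^{i(1-d)}\;=\;\frac{w^N}{pw+1}\,\gamma(w)^i,\qquad \gamma(w):=\frac{(pw+1)(w+1)^{1-d}}{w}.
\end{equation*}
A direct calculation using the Bethe equation $w^N(w+1)^{L-N}=z^L(pw+1)^N$ together with $L-N=N(d-1)$ yields $\gamma(w)^N=z^{-L}$ for every Bethe root $w$, so $\gamma$ is $d$-to-$1$ from the $L$ Bethe roots onto the set of $N$ distinct $N$-th roots of $z^{-L}$. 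A further short computation using the definition~\eqref{eq: critical point} of $w_c$ shows that $\gamma'(w_c)=0$, which reflects the self-intersection of the Cassini oval at $w_c$. For $z$ outside a discrete exceptional set, $\gamma$ restricts to a bijection between $\mathcal{R}_z$ and the target roots, and the numerator determinant of $\mathcal{F}_{\lambda(\vec y_{\mathrm{flat}})}(\mathcal{R}_z;p)$ factorises as a Vandermonde in $\gamma$, yielding
\begin{equation*}
\mathcal{E}_{\vec y_{\mathrm{flat}}}(z)\;=\;\pm\,\biggl(\prod_{v\in\mathcal{R}_z}\frac{v^N\gamma(v)}{pv+1}\biggr)\,\frac{V\!\bigl(\gamma(\mathcal{R}_z)\bigr)}{V(\mathcal{R}_z)},
\end{equation*}
with an analogous formula for $\chi_{\vec y_{\mathrm{flat}}}(v,u;z)$ obtained by replacing $v\in\mathcal{R}_z$ by $u\in\mathcal{L}_z$. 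Because $\gamma(u)$ necessarily lies in the same finite set of $N$-th roots of $z^{-L}$, the Vandermonde $V(\gamma(\mathcal{R}_z\cup\{u\}\setminus\{v\}))$ vanishes unless $\gamma(u)=\gamma(v)$; this discrete dichotomy is exactly what will produce the indicator $\mathbf{1}_{\xi=-\eta}$ in~\eqref{eq: characteristicflat}.

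The remaining work is asymptotic analysis as $L\to\infty$. Using Lemma~\ref{lemma: convergenceofroots} the right Bethe roots accumulate on the right branch of the Cassini arc with a square-root singularity at $w_c$, and each sum $\sum_{v\in\mathcal{R}_z}f(v)$ may be written as a contour integral of $f\cdot q_{z,\RR}'/q_{z,\RR}$ that admits steepest-descent expansion around $w_c$. Applying this to $\prod v^N\gamma(v)/(pv+1)$ and to $V(\mathcal{R}_z)$, together with the explicit evaluation of $V(\gamma(\mathcal{R}_z))$ as a Vandermonde of the $N$-th roots of $\mathrm{z}^{-L}$, extracts the limiting factor $(1-\mathrm{z})^{-1/4}e^{-B(\mathrm{z})}$ with the $-1/4$ exponent arising from the square-root branching of $q_{z,\RR}$ at $w_c$, in direct parallel with the continuous-time flat analysis in~\cite{baik2019general}. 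For the characteristic function, the quadratic expansion $\gamma(w)-\gamma(w_c)\sim\tfrac12\gamma''(w_c)(w-w_c)^2$ pairs left and right Bethe roots near $w_c$ bijectively via the reflection $w\mapsto 2w_c-w$, which in the rescaled variables $\xi=\mathcal{M}_{L,\mathrm{left}}(u)$ and $\eta=\mathcal{M}_{L,\mathrm{right}}(v)$ is exactly $\xi\leftrightarrow -\eta$; Taylor expansion of the remaining smooth prefactor $u^N\gamma(u)(pv+1)/[v^N\gamma(v)(pu+1)]\cdot\prod_{v'\in\mathcal{R}_z\setminus\{v\}}(u-v')/(v-v')$ then produces $e^{-\mathrm{h}(\xi,\mathrm{z})-\mathrm{h}(\eta,\mathrm{z})}\eta(\eta-\xi)$. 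The tail bound (C) is obtained from a Hadamard estimate on the determinantal ratio combined with uniform lower bounds on $|\gamma(u)-\gamma(v')|$ for $v'\in\mathcal{R}_z\setminus\{v\}$ in the regime specified by the assumption.

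The principal obstacle will be obtaining the quantitative remainders $O(L^{\epsilon-1/2})$ in (A) and $O(L^{4\epsilon-1/2})$ in (B) uniformly in $\mathrm{z}$ on the specified annulus and uniformly in $u\in\mathcal{L}_z^{(\epsilon)}$, $v\in\mathcal{R}_z^{(\epsilon)}$, which requires quantitative rather than merely pointwise control of the Bethe-root localisation of Lemma~\ref{lemma: convergenceofroots}. A secondary technical difficulty is the careful identification, near $w_c$, of which left and right Bethe roots correspond to the same $N$-th root of $\mathrm{z}^{-L}$ under the $d$-to-$1$ map $\gamma$, so that the pairing $\xi=-\eta$ emerges unambiguously from the discrete algebraic structure rather than being imposed by hand.
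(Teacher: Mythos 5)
Your proposal follows essentially the same route as the paper's proof: the step case is exactly Remark~\ref{rmk: step}, and for the flat case the paper likewise uses the $d$-to-one covering relation $\frac{u(u+1)^{d-1}}{1+pu}=\frac{v(v+1)^{d-1}}{1+pv}$ (your $\gamma(u)=\gamma(v)$) to factor $\mathcal{F}_{\lambda}$ into Vandermondes, yielding the product formulas of Lemma~\ref{lemma: pre-limit energy and char} with the same indicator dichotomy, and then obtains $(1-\z)^{-1/4}e^{-B(\z)}$ and $e^{-\mathrm{h}(\xi,\z)-\mathrm{h}(\eta,\z)}\eta(\eta-\xi)\mathbf{1}_{\xi=-\eta}$ from the already-quantitative Lemmas~\ref{lemma: convergenceofroots} and~\ref{lemma: asymptoticsofproducts}, with the tail bound (C) read off directly from the explicit product since only $1/J(v)=O(L)$ is large. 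The only blemish is a harmless inversion in your prefactor $\prod_{v'\neq v}(u-v')/(v-v')$, which should be its reciprocal, i.e.\ $q_{z,\mathrm{R}}'(v)(u-v)/q_{z,\mathrm{R}}(u)$ up to the smooth factors, consistent with the paper's formula via $q_z'(v)=q_{z,\mathrm{L}}(v)q_{z,\mathrm{R}}'(v)=v^N(v+1)^{L-N}/J(v)$.
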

The step case is trivial since by the discussion in Remark~\ref{rmk: step} we have $\mathcal{E}_{\mathrm{step}}(z)=\chi_{\mathrm{step}}(v,u;z)\equiv 1$. The calculation for the flat case is a bit more involved and we postpone the proof to Section~\ref{sec: verifyIC}.

\subsection{Formula for the limiting distribution}\label{sec: limitingdistribution}
The following formula for the relaxation-time limiting distribution was first obtained in \cite{baik2019multi} for the step initial condition and \cite{baik2019general} for more general initial conditions. 
The formula involves $\mathrm{C_{\mathrm{ic}}^{\mathrm{per}}}(\vec \z)$ which are limits of $\mathscr{C}_{\vec y}^{(L)}(\vec z)$
and operators $\mathrm{K}_{1}^{\mathrm{per}}$ and $\mathrm{K}_{\mathrm{ic}}^{\mathrm{per}}$ which are limits of $\mathscr{K}_{1}^{(L)}$ and $\mathscr{K}_{\vec y}^{(L)}$. 
The operators $\mathrm{K}_{1}^{\mathrm{per}}$ and $\mathrm{K}_{\mathrm{ic}}^{\mathrm{per}}$ are defined on the sets 
\begin{equation} \label{eq:limSo}
	\mathrm{S}_1:= \mathrm{L}_{\mathrm{z}_1}\cup \mathrm{R}_{\mathrm{z}_2} \cup \mathrm{L}_{\mathrm{z}_3} \cup \cdots 
	\cup
	\begin{dcases}
		\mathrm{R}_{\mathrm{z}_m},& \text{if $m$ is even},\\
		\mathrm{L}_{\mathrm{z}_m}, & \text{if $m$ is odd},
	\end{dcases} 
\end{equation}
and
\begin{equation} \label{eq:limSt}
	\mathrm{S}_2:=\mathrm{R}_{\mathrm{z}_1}\cup \mathrm{L}_{\mathrm{z}_2} \cup \mathrm{R}_{\mathrm{z}_3} \cup \cdots 
	\cup
	\begin{dcases}
		\mathrm{L}_{\mathrm{z}_m},& \text{if $m$ is even},\\
		\mathrm{R}_{\mathrm{z}_m}, & \text{if $m$ is odd},
	\end{dcases}
\end{equation}
where $\mathrm{L}_{\mathrm{z}}$ and $\mathrm{R}_{\mathrm{z}}$  are the sets defined in Definition~\ref{def: limitingroots}.
We express the limiting distribution function $\Fic^{\mathrm{per}}$ in terms of the above terms. 

\begin{definition}\label{def: limitingroots}
	Given $0<|\z|<1$, we define the discrete sets $\mathrm{S}_{\z}:= \mathrm{L}_{\z}\cup\mathrm{R}_{\z}$ where
	\begin{equation}
		\mathrm{L}_{\z}:= \{\xi\in \mathbb{C}: e^{-\xi^2/2}=\z\}\cap\{\text{Re}(\xi)<0\},\quad \mathrm{R}_{\z}:= \{\eta\in \mathbb{C}: e^{-\eta^2/2}=\z\}\cap\{\text{Re}(\eta)>0\}.
	\end{equation}
\end{definition}

\begin{figure}[h]
	\centering
	\includegraphics[width=0.35\textwidth]{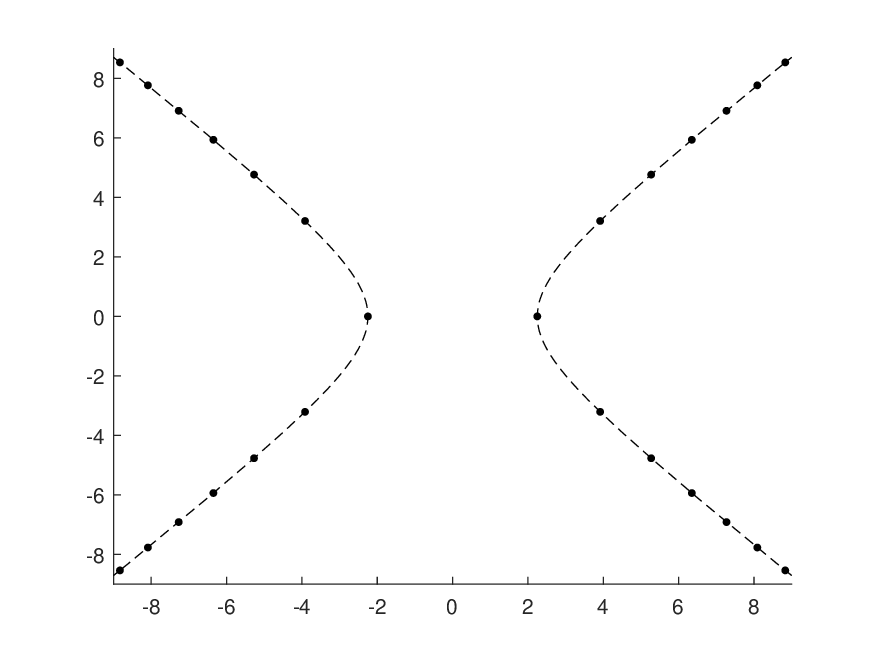}
	\caption{The roots for the equation $e^{-\zeta^2/2}=\z$ with $\z=0.08$, the dashed lines are the corresponding level curve $|e^{-\zeta^2/2}|=|\z|$ for the same $\z$.}
	\label{fig: limitingroots}
\end{figure}

\begin{definition}[Limiting function] \label{defn:Fic}
	Let $\mathbf{x} = (\mathrm{x}_1,\cdots,\mathrm{x_m})$, $\boldsymbol{\gamma} = (\gamma_1,\cdots,\gamma_m)$, and $\boldsymbol{\tau} = (\tau_1,\cdots,\tau_m)$ be points in $\mathrm{R}^m$ such that $\mathrm{p}_j = (\gamma_j,\tau_j) \in [0,1]\times \mathbb{R}_{>0}$. 
	Assume that 
	\begin{equation*}
		0< \tau_1\le \cdots\le \tau_m
	\end{equation*}
	and that $\mathrm{x}_i<\mathrm{x}_{i+1}$ when $\tau_i=\tau_{i+1}$ for $i=1,\cdots,m-1$.
	Define 
	\begin{equation}
		\label{eq:def_Fic}
		\Fic^{\mathrm{per}}(\mathrm{x}_1,\cdots, \mathrm{x}_m; \mathrm{p}_1,\cdots,\mathrm{p}_m) := \oint\cdots\oint \mathrm{C}_{\mathrm{ic}}^{\mathrm{per}}(\vec \z) \mathrm{D}_{\mathrm{ic}}^{\mathrm{per}} (\vec \z) \ddbar{\mathrm{z}_m}{\mathrm{z}_{m}} \cdots \ddbar{\mathrm{z}_1}{\mathrm{z}_{1}}, 
	\end{equation}
	where $\vec \z = (\mathrm{z}_1,\cdots,\mathrm{z}_m)$ and the contours are nested circles satisfying $0<|\mathrm{z}_m|<\cdots<|\mathrm{z}_1|<1$ and also, $r_1<|\mathrm{z}_1|<r_2$ with $r_1,r_2$ being the constants in Assumption~\ref{def:asympstab} (B).
	The first function in the integrand is given by 
	\begin{equation}\label{eq:def_rmC}
		\mathrm{C}_{\mathrm{ic}}^{\mathrm{per}} (\vec \z) := E_{\mathrm{ic}} (\mathrm{z}_1) \mathrm{C}_\mathrm{step}^{\mathrm{per}} (\vec \z).
	\end{equation}
	The second function is given by
	\begin{equation} \label{eq:def_rmD}
		\mathrm{D}_{\mathrm{ic}}^{\mathrm{per}} (\vec \z) := \det (I-  \mathrm{K}_{1}^{\mathrm{per}} \mathrm{K}_{\mathrm{ic}}^{\mathrm{per}}), 
	\end{equation}
	where $\mathrm{K}_{1}^{\mathrm{per}}: \ell^2(\mathrm{S}_2) \to \ell^2(\mathrm{S}_1)$ and 
	$\mathrm{K}_{\mathrm{ic}}^{\mathrm{per}}: \ell^2(\mathrm{S}_1) \to \ell^2(\mathrm{S}_2)$ are given by $\mathrm{K}_{1}^{\mathrm{per}}:= \mathrm{K}_{\mathrm{step},1}^{\mathrm{per}}$ and 
	\begin{equation*}
		\mathrm{K}_{\mathrm{ic}}^{\mathrm{per}} (\zeta,\zeta'):= 
		\begin{dcases}
			\cchi_{\mathrm{ic}} (\zeta,\zeta';\mathrm{z}_1) \mathrm{K}_{\mathrm{step},2}^{\mathrm{per}}(\zeta,\zeta'), \quad  & \text{if $\zeta\in\mathrm{R}_{\mathrm{z}_1} \text{ and } \zeta'\in\mathrm{L}_{\mathrm{z}_1}$,}\\
			\mathrm{K}_{\mathrm{step},2}^{\mathrm{per}} (\zeta,\zeta'), &\text{otherwise.}
		\end{dcases}
	\end{equation*}
\end{definition}
The function  $\mathrm{C}_{\mathrm{step}}^{\mathrm{per}}(\vec \z)$ and kernels $\mathrm{K}_{\mathrm{step},1}^{\mathrm{per}}$ and $\mathrm{K}_{\mathrm{step},2}^{\mathrm{per}}$ are first obtained in \cite{baik2019multi} and the definitions will be recalled in the next two sections for completeness.

\subsection{The factor $\mathrm{C}_{\mathrm{step}}^{\mathrm{per}} (\vec \z)$} \label{sec:Cforsteplimit}


Let $\polylog_s(\mathrm{z})$ be the polylogarithm function which is defined by
\begin{equation}\label{eq: polylogf}
	\polylog_s(\mathrm{z}) := \sum_{k=1}^\infty \frac{\mathrm{z}^k}{ k^s} = \frac{\z}{\Gamma(s)}\int_{0}^{\infty} \frac{t^{s-1}}{e^{t}-\z}dt,
\end{equation}
for $|\mathrm{z}|<1$ and $s\in\mathbb{C}$. 
Set
\begin{equation}\label{eq: polylog}
	A_1(\mathrm{z}):= -\frac{1}{\sqrt{2\pi}} \polylog_{3/2}(\mathrm{z}) \quad \text{and} \quad  A_2(z):=-\frac{1}{\sqrt{2\pi}} \polylog_{5/2}(\mathrm{z}).
\end{equation}
Let $\log \mathrm{z}$ denote the principal branch of the logarithm function with branch cut $\mathbb{R}_{\le 0}$. 
Set
\begin{equation} \label{eq:deofBz}
	B(\mathrm{z},\mathrm{z}') := \frac{\mathrm{z} \mathrm{z}'}{2} 
	\int\int \frac{\eta\xi \log(-\xi+\eta)}{(e^{-\xi^2/2} - \mathrm{z}) (e^{-\eta^2/2} - \mathrm{z}')}
	\ddbar{\xi}{} \ddbar{\eta}{}
	= \frac{1}{4\pi} \sum_{k,k'\ge 1} \frac{\mathrm{z}^k (\mathrm{z}')^{k'}}{(k+k')\sqrt{kk'}}
\end{equation}
for $0<|\mathrm{z}|, |\mathrm{z}'|<1$
where the integral contours are the vertical lines $\Re(\xi) = \mathrm{a}$ and $\Re (\eta) = \mathrm{b}$ with  constants $\mathrm{a}$ and $\mathrm{b}$ satisfying
$-\sqrt{-\log|\mathrm{z}|} <\mathrm{a} <0 <\mathrm{b} <\sqrt{-\log|\mathrm{z}'|}$. We also set $B(\mathrm{z}):= B(\mathrm{z},\mathrm{z})$. 
One can check that 
\begin{equation}\label{eq:def_Bz}
	B(\mathrm{z})= B(\mathrm{z},\mathrm{z}) =\frac{1}{4\pi} \int_0^{\mathrm{z}} \frac{(\polylog_{1/2}(\mathrm{y}))^2}{\mathrm{y}} \dd \mathrm{y}.
\end{equation}

\begin{definition}
	\label{def:limSstep}
	For $\vec \z = (\mathrm{z}_1,\cdots,\mathrm{z}_m)$ satisfying $0<|\mathrm{z}_j|<1$ and $\mathrm{z}_j \ne \mathrm{z}_{j+1}$ for all $j$, we define
	\begin{equation*}	
		\mathrm{C}_{\mathrm{step}}^{\mathrm{per}}(\vec \z)
		:= \left[ \prod_{\ell=1}^{m} \frac{\mathrm{z}_\ell}{\mathrm{z}_\ell -\mathrm{z}_{\ell+1}} \right]
		\left[ \prod_{\ell=1}^{m} \frac{e^{\mathrm{x}_\ell A_1(\mathrm{z}_\ell) +\tau_\ell A_2(\mathrm{z}_\ell)}} {e^{\mathrm{x}_\ell A_1(\mathrm{z}_{\ell+1}) +\tau_\ell A_2(\mathrm{z}_{\ell+1})}}
		e^{2B(\mathrm{z}_\ell) - 2 B(\mathrm{z}_{\ell+1}, \mathrm{z}_\ell)}
		\right],
	\end{equation*}
	where we set $\mathrm{z}_{m+1}=0$. 
\end{definition}

Note that $\mathrm{C}_{\mathrm{step}}^{\mathrm{per}}(\vec \z)$, and hence $\mathrm{C}_{\mathrm{ic}}^{\mathrm{per}}(\vec \z)$, depend on $\mathrm{x}_i$ and $\tau_i$, but not the spatial parameters $\gamma_i$.

\subsection{The operators $\mathrm{K}_{\mathrm{step},1}^{\mathrm{per}}$ and $\mathrm{K}_{\mathrm{step},2}^{\mathrm{per}}$} \label{sec:Dforsteplimit}

Set 
\begin{equation} \label{eq:def_h}
	\mathrm{h} (\zeta,\mathrm{z}) := \frac{\mathrm{z}}{2\pi \mathrm{i}} 
	\int_{\mathrm{i} \mathbb{R}} \frac{\mathrm{w} \log (\mathrm{w}-\zeta)}{e^{-\mathrm{w}^2/2}-\mathrm{z}} \dd \mathrm{w}
	\qquad \text{for $\Re(\zeta)<0$ and $|\mathrm{z}|<1$} 
\end{equation}
and define
\begin{equation}
\label{eq:2019_10_18_02}
\mathrm{h} (\zeta,\mathrm{z}) := \mathrm{h} (-\zeta,\mathrm{z}) \qquad \text{for $\Re(\zeta)>0$ and $|\mathrm{z}|<1$.} 
\end{equation}
For each $i$, define 
\begin{equation}\label{eq: limitingf}
	\mathrm{f}_i(\zeta) := \begin{dcases}
		e^{-\frac{1}{3}(\tau_i-\tau_{i-1})\zeta^3 +\frac{1}{2} (\gamma_i-\gamma_{i-1}) \zeta^2 +(\mathrm{x}_i -\mathrm{x}_{i-1})\zeta}& \text{for $\Re(\zeta)<0$,}\\
		e^{\frac{1}{3}(\tau_i-\tau_{i-1})\zeta^3 -\frac{1}{2} (\gamma_i-\gamma_{i-1}) \zeta^2 -(\mathrm{x}_i -\mathrm{x}_{i-1})\zeta}& \text{for $\Re(\zeta)>0$,}
	\end{dcases}
\end{equation}
where we set $\tau_0=\gamma_0=\mathrm{x}_0=0$. 
We also define 
\begin{equation*}
	\mathrm{Q}_1(j) := 1 -\frac{\mathrm{z}_{j-(-1)^j}}{\mathrm{z}_j} \quad \text{and} \quad
	\mathrm{Q}_2(j) := 1 -\frac{\mathrm{z}_{j+(-1)^j}}{\mathrm{z}_j},
\end{equation*}
where we set $\mathrm{z_0} =\mathrm{z}_{m+1}=0$.

\begin{definition} \label{def:mathrmK}
	Let $\mathrm{S}_1$ and $\mathrm{S}_2$ be the discrete sets defined in \eqref{eq:limSo} and \eqref{eq:limSt}. 
	Let
	\begin{equation*}
		\mathrm{K}_{\mathrm{step},1}^{\mathrm{per}}  : \ell^2(\mathrm{S}_2) \to \ell^2(\mathrm{S}_1)
		\quad \text{and} \quad  
		\mathrm{K}_{\mathrm{step},2}^{\mathrm{per}}  : \ell^2(\mathrm{S}_1) \to \ell^2(\mathrm{S}_2)
	\end{equation*}
	denote the operators defined by their kernels 
	\begin{equation*}
		\mathrm{K}_{\mathrm{step},1}^{\mathrm{per}}  (\zeta,\zeta') 
		:= (\delta_i(j) + \delta_i(j + (-1)^i)) 	\frac{\mathrm{f}_i(\zeta) e^{2\mathrm{h}(\zeta,\mathrm{z}_i)
				-\mathrm{h}(\zeta,\mathrm{z}_{i-(-1)^i}) - \mathrm{h}(\zeta',z_{j-(-1)^j})}}{\zeta(\zeta-\zeta')} \mathrm{Q}_1(j)
	\end{equation*}
	and
	\begin{equation*}
		\mathrm{K}_{\mathrm{step},2}^{\mathrm{per}}  (\zeta',\zeta) 
		:= (\delta_j(i) + \delta_j(i - (-1)^j)) 
		\frac{\mathrm{f}_j(\zeta') e^{2\mathrm{h}(\zeta',\mathrm{z}_j) -\mathrm{h}(\zeta',\mathrm{z}_{j+(-1)^j}) - \mathrm{h}(\zeta,z_{i+(-1)^i}) }}{\zeta'(\zeta'-\zeta)}
		\mathrm{Q}_2(i)
	\end{equation*}
	for
	\begin{equation*}
		\zeta \in (\mathrm{L}_{\mathrm{z}_i}\cup \mathrm{R}_{\mathrm{z}_i}) \cap \mathrm{S}_1 \quad \text{and} \quad
		\zeta' \in (\mathrm{L}_{\mathrm{z}_j}\cup\mathrm{R}_{\mathrm{z}_j}) \cap \mathrm{S}_2
	\end{equation*}
	with $1\le i,j\le m$. Here $\mathrm{L}_{\z_i}$ and $\mathrm{R}_{\z_i}$ are again defined in Definition~\ref{def: limitingroots}.
\end{definition}

\bigskip

\section{Proof of Theorem \ref{thm:main}}\label{sec: proof_limittheorem}
In this section we discuss the proof of Theorem~\ref{thm:main}. The ideas are similar to the one in \cite{baik2019multi,baik2019general} so we omit some technical details. Clearly the theorem follows immediately from the following two lemmas, dealing with the asymptotics of $\mathscr{C}_{\vec y}^{(L)}(z)$ and $\mathscr{D}_{\vec y}^{(L)}(z)$ appearing in the finite-time formula \eqref{eq: multipoint general}, respectively.

\begin{lemma}[Asymptotics of $\mathscr{C}_{\vec y}^{(L)}(\vec z)$]\label{lemma: C(z)}
	Under the same assumption as in Theorem~\ref{thm:main}, we have for fixed $0<\epsilon<1/2$
	\begin{equation}
		\mathscr{C}_{\vec y}^{(L)}(\vec z) = \mathrm{C}_{\mathrm{ic}}^{\mathrm{per}}(\vec \z)\left(1+O(L^{\epsilon-1/2})\right),\quad \mathrm{as}\  L\to \infty.
	\end{equation}
The functions $\mathscr{C}_{\vec y}^{(L)}(z)$ and $\mathrm{C}_{\mathrm{ic}}^{\mathrm{per}}(\z)$ are defined in \eqref{eq: C(z) general} and \eqref{eq:def_rmC} respectively, with $z_i^L=(-1)^N\mathbbm{r}_c^L\z_i$, for $1\leq i\leq m$.
\end{lemma}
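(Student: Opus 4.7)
The natural first move is to separate the initial-condition dependence from the step-type factor by writing
\begin{equation*}
\mathscr{C}_{\vec y}^{(L)}(\vec z) = \mathcal{E}_{\vec y}(z_1)\cdot \mathscr{C}_{\text{step}}^{(L)}(\vec z),
\qquad
\mathrm{C}_{\mathrm{ic}}^{\mathrm{per}}(\vec \z) = E_{\mathrm{ic}}(\z_1)\cdot \mathrm{C}_{\mathrm{step}}^{\mathrm{per}}(\vec \z),
\end{equation*}
so that the claimed asymptotics reduces to two independent statements: $\mathcal{E}_{\vec y}(z_1) = E_{\mathrm{ic}}(\z_1)(1+O(L^{\epsilon-1/2}))$ and $\mathscr{C}_{\text{step}}^{(L)}(\vec z)=\mathrm{C}_{\mathrm{step}}^{\mathrm{per}}(\vec\z)(1+O(L^{\epsilon-1/2}))$. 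The first is immediate from Assumption~\ref{def:asympstab}(A) under the identification $z_1^L=(-1)^N\mathbbm{r}_c^L\z_1$ (which places $|\z_1|<1$ in the regime where the hypothesis applies). So the entire content is the asymptotic analysis of $\mathscr{C}_{\text{step}}^{(L)}(\vec z)$.

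For the step part I would follow the strategy developed for continuous-time periodic TASEP in Sections 6--7 of \cite{baik2019multi}, adapted to the extra parameter $p$. The key input is the convergence of the Bethe roots after rescaling: under the map $\mathcal{M}_{L,\mathrm{left}}(u)=\xi$ and $\mathcal{M}_{L,\mathrm{right}}(v)=\eta$ (Lemma~\ref{lemma: convergenceofroots}), left/right Bethe roots cluster near the critical point $w_c$ at scale $L^{-1/2}$, and the exponents in $E_\ell(z)$ organize according to this scale via the choice \eqref{eq: scaling constants} of $c_1,\dots,c_5$. Accordingly I would (i) Taylor-expand $\log(-u)$, $\log(v+1)$, $\log(pv+1)$ around $w_c$ to the needed order in $L^{-1/2}$, matching the Taylor coefficients against the definitions of $c_i$ so that the leading $L^{3/2}$ and $L$ contributions from $t_\ell,a_\ell,k_\ell$ in $E_\ell(z)$ cancel and only bounded terms survive, and (ii) convert each product $\prod_{u\in\mathcal{L}_z}\varphi(u)$, $\prod_{v\in\mathcal{R}_z}\varphi(v)$ over Bethe roots into a Riemann sum over $\mathrm{L}_\z$, $\mathrm{R}_\z$. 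The upshot is that the ratio $E_\ell(z_\ell)/E_{\ell-1}(z_\ell)$ converges to $e^{\mathrm{x}_\ell A_1(\z_\ell)+\tau_\ell A_2(\z_\ell)}/e^{\mathrm{x}_\ell A_1(\z_{\ell+1})+\tau_\ell A_2(\z_{\ell+1})}$, with $A_1,A_2$ arising from the polylogarithm identity \eqref{eq: polylog} applied to sums of the form $L^{-1/2}\sum_{\xi\in\mathrm{L}_\z}\xi^s$.

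The remaining combinatorial factors in $\mathscr{C}_{\text{step}}^{(L)}$ are the products $\prod_{u\in\mathcal{L}_{z_\ell}}(-u)^N\prod_{v\in\mathcal{R}_{z_\ell}}(v+1)^{L-N}$ divided by $\Delta(\mathcal{R}_{z_\ell};\mathcal{L}_{z_\ell})$, together with the telescoping pairs indexed by $\ell=2,\dots,m$. Using the factorization $q_z(w)=q_{z,\LL}(w)q_{z,\RR}(w)$ and evaluating at $w=0$ and $w=-1$ one gets closed forms for these products that differ from their limits only by the subleading exponential contributions captured by $B(\z)$ and $B(\z',\z)$. Concretely, $\log\Delta(\mathcal{R}_{z};\mathcal{L}_{z})$ under the Bethe-root scaling becomes a double Riemann sum that converges to the double contour integral representation \eqref{eq:deofBz} of $2B(\z)$; similarly the telescoped pairs produce $-2B(\z_{\ell+1},\z_\ell)$. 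Combined with the leading prefactor $\prod_\ell \z_\ell/(\z_\ell-\z_{\ell+1})$ coming from the $z_{\ell-1}^L/(z_{\ell-1}^L-z_\ell^L)$ ratio under $z_\ell^L=(-1)^N\mathbbm{r}_c^L\z_\ell$, this matches $\mathrm{C}_{\mathrm{step}}^{\mathrm{per}}(\vec\z)$ exactly.

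The main technical obstacle is the error control rather than the leading-order bookkeeping: the convergence of $\log\prod(v+1)^{L-N}$ and $\log\Delta(\mathcal{R}_z;\mathcal{L}_z)$ requires an expansion of each logarithm to three orders past the critical saddle (so that the $L^{3/2}$, $L$, and $L^{1/2}$ contributions all cancel and an $O(L^{\epsilon-1/2})$ remainder survives), plus a Euler--Maclaurin-type estimate comparing the Bethe-root sum to its limiting integral. Both are tractable because $w_c$ is a simple (non-degenerate in the appropriate direction) critical point of $\log(-w)^\rho(1+w)^{1-\rho}(1+pw)^{-\rho}$ and the Bethe roots satisfy sharp quantitative localization bounds on the deformed Cassini oval; these are exactly the continuous-time TASEP estimates of \cite{baik2019multi} with $1+pw$ replacing $1$, so they transfer verbatim after recording the new constants $c_i$ in \eqref{eq: scaling constants}.
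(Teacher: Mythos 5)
Your overall architecture coincides with the paper's: factor out $\mathcal{E}_{\vec y}(z_1)$ and dispose of it with Assumption~\ref{def:asympstab}(A); observe that $\prod_{\ell\ge 2} z_{\ell-1}^L/(z_{\ell-1}^L-z_\ell^L)$ equals $\prod_{\ell\ge2}\z_{\ell-1}/(\z_{\ell-1}-\z_\ell)$ exactly; obtain the $e^{2B(\z_\ell)}$ and $e^{-2B(\z_{\ell+1},\z_\ell)}$ factors from the $p$-deformed product asymptotics (Lemma~\ref{lemma: asymptoticsofproducts}); and reduce everything else to $E_\ell(z)=e^{\mathrm{x}_\ell A_1(\z)+\tau_\ell A_2(\z)}\bigl(1+O(L^{\epsilon-1/2})\bigr)$, with the constants \eqref{eq: scaling constants} producing the cubic $-\mathrm{x}\zeta-\tfrac{\gamma}{2}\zeta^2+\tfrac{\tau}{3}\zeta^3$ (the paper's Lemma~\ref{lemma: asymptoticsE}). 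The gap is in the mechanism you propose for that last step. Taylor-expanding $\log(-u)$, $\log(v+1)$, $\log(1+pv)$ about $w_c$ and comparing Bethe-root sums to Riemann sums over $\mathrm{L}_{\z},\mathrm{R}_{\z}$ with Euler--Maclaurin errors only accounts for the $O(L^{\epsilon})$ roots in the window $|w-w_c|\lesssim L^{-1/2+\epsilon}$. The remaining roots --- a positive fraction of all $L$ roots, at distance $\Theta(1)$ from $w_c$, where the Taylor expansion is invalid and which are not represented by $\mathrm{L}_{\z},\mathrm{R}_{\z}$ at all --- each contribute terms of size up to $O(L^{3/2})$ to $\log E_\ell$, because they are weighted by $t_\ell-k_\ell$, $a_\ell+k_\ell$, $k_\ell$; the choice of $c_1,\dots,c_5$ produces no per-root cancellation for them. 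So ``only bounded terms survive'' is precisely the statement requiring proof, and it is not an Euler--Maclaurin remainder.

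The paper's device for this is Lemma~\ref{lemma: integral formula roots}: $\sum_{v\in\mathcal{R}_z}\phi(v)=N\phi(0)+\oint_{\Sigma_{\RR}}\frac{z^L(1+pw)^N}{q_z(w)}\frac{\phi(w)}{J(w)}\frac{\dd w}{2\pi\ii}$ and its left analogue, whose boundary terms vanish for the relevant $\phi$, together with $\oint\frac{z^L(1+pw)^N}{q_z(w)J(w)}\dd w=0$, which allows recentering by $G(w_c)$. After deforming to the vertical line through $w_c$, the kernel $z^L(1+pw)^N/q_z(w)$ is exponentially small outside the critical window (Lemma~\ref{lemma: asymptoticsofproducts}(ii)), and it is this aggregate, contour-level cancellation --- not localization of roots near $w_c$ --- that kills the far-root contributions despite the $L^{3/2}$-sized exponents, leaving $\int \frac{\z}{e^{-\zeta^2/2}-\z}$ against the cubic and hence $A_1,A_2$ via \eqref{eq: polylogf}. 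Your appeal to the continuous-time estimates transferring with $1+pw$ is fine for Lemma~\ref{lemma: asymptoticsofproducts}, but the $E_\ell$ computation with the new $p$-dependent $w_c$, $\mathbbm{r}_c$ and $c_i$ is exactly the new content, and your sketch must include this residue-representation-plus-steepest-descent step (or an equivalent explicit aggregate cancellation for the far roots). A minor bookkeeping point: $E_\ell(z_\ell)/E_{\ell-1}(z_\ell)$ converges to $e^{(\mathrm{x}_\ell-\mathrm{x}_{\ell-1})A_1(\z_\ell)+(\tau_\ell-\tau_{\ell-1})A_2(\z_\ell)}$, not to the ratio you wrote; the two agree only after taking the product over $\ell$ and Abel summation, using $A_1(0)=A_2(0)=0$.
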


\begin{lemma}[Asymptotics of $\mathscr{D}_{\vec y}^{(L)}(\vec z)$]\label{lemma: D(z)}
	Under the same assumption as in Theorem~\ref{thm:main}, we have the convergence 
	\begin{equation}
		\lim_{L\to \infty}\mathscr{D}_{\vec y}^{(L)}(\vec z) = \mathrm{D}_{\mathrm{ic}}^{\mathrm{per}}(\vec \z),
	\end{equation}
where the Fredholm determinants $\mathscr{D}_{\vec y}^{(L)}(\vec z)$ and $\mathrm{D}_{\mathrm{ic}}^{\mathrm{per}}(\vec \z)$ are defined in Definition~\ref{def:D general} and \eqref{eq:def_rmD}, respectively,  and the convergence is locally uniform in $\vec \z$. Here again $z_i$ and $\z_i$ are related by the equation $z_i^L=(-1)^N\mathbbm{r}_c^L\z_i$.
\end{lemma}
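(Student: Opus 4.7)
The plan is to follow the steepest-descent strategy developed in \cite{baik2019multi,baik2019general} for continuous-time periodic TASEP, adapted to the extra $p$-dependence in the Bethe polynomial $q_z(w)=w^N(1+w)^{L-N}-z^L(1+pw)^N$. The starting point is a conformal change of variables $w=w_c+\zeta/(c_5 L^{1/2})$ near the critical point together with the substitution $z_i^L=(-1)^N\mathbbm{r}_c^L\z_i$. A Taylor expansion of $\log\bigl[w^\rho(1+pw)^{-\rho}(1+w)^{1-\rho}\bigr]$ at $w=w_c$ gives, after a computation using the definitions \eqref{eq: critical radius}--\eqref{eq: critical point} of $w_c$ and $\mathbbm{r}_c$, a cubic vanishing that produces the equation $e^{-\zeta^2/2}=\z_i$ defining the limiting discrete sets $\mathrm{L}_{\z_i}, \mathrm{R}_{\z_i}$. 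First I would establish a lemma (analogous to Lemma 8.1 of \cite{baik2019multi}) stating that under this rescaling, the left/right Bethe roots $u\in\mathcal{L}_{z_i}$ and $v\in\mathcal{R}_{z_i}$ that lie within an $L^{\epsilon-1/2}$ neighborhood of $w_c$ are in bijective correspondence with the elements of $\mathrm{L}_{\z_i}$ and $\mathrm{R}_{\z_i}$, with the map $\mathcal{M}_{L,\mathrm{left}},\mathcal{M}_{L,\mathrm{right}}$ converging locally uniformly.

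Next I would carry out the pointwise asymptotics of each factor appearing in the kernels of Definition~\ref{def:D general}. The prefactor $J(w)$ satisfies $J(w_c+\zeta/(c_5L^{1/2}))=O(L^{-1/2})\cdot\zeta^{-1}\bigl(1+O(L^{\epsilon-1/2})\bigr)$, which gives the factor $1/\zeta$ appearing in the limit kernels. The exponential factor $f_i(w)$, under the scaling \eqref{eq:parameters} of $(a_i,k_i,t_i)$, produces $\mathrm{f}_i(\zeta)$ in \eqref{eq: limitingf} via the same cubic expansion, with $c_1,\dots,c_5$ chosen precisely so the $\zeta^3,\zeta^2,\zeta$ coefficients match $\tau_i,\gamma_i,\mathrm{x}_i$. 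The ratios $H_{z_i}(w)/H_{z_{i\pm(-1)^i}}(w)$ become the exponential $e^{2\mathrm{h}(\zeta,\mathrm{z}_i)-\mathrm{h}(\zeta,\mathrm{z}_{i-(-1)^i})-\cdots}$ through the Riemann-sum identity that expresses the product over Bethe roots as $\exp\bigl[\sum_{w'\in\mathcal{S}_z}\log(w-w')\bigr]$, whose leading asymptotics is $\mathrm{h}(\zeta,\z)$ as defined in \eqref{eq:def_h}. Finally the factors $Q_1(j),Q_2(j)$ become $\mathrm{Q}_1(j),\mathrm{Q}_2(j)$ trivially from $z_i^L=(-1)^N\mathbbm{r}_c^L\z_i$, and $\Lambda(1,w,w')=\chi_{\vec y}(w,w';z_1)$ converges to $\cchi_{\mathrm{ic}}(\eta,\xi;\z_1)$ by Assumption \ref{def:asympstab}(B).

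To upgrade pointwise convergence of kernels to convergence of the Fredholm determinant I would prove two tail estimates: (i) a deterministic Gaussian-type bound showing that the contribution from Bethe roots outside the $L^{\epsilon-1/2}$ window near $w_c$ decays super-polynomially, using that $|e^{-\zeta^2/2}|<|\z_i|<1$ forces $\Re(\zeta^2)$ large when $|\zeta|$ is, and (ii) a trace-norm bound on $\mathscr{K}_1^{(L)}\mathscr{K}_{\vec y}^{(L)}$ uniform in $L$. For (ii) the sums over $\mathscr{S}_1,\mathscr{S}_2$ can be controlled by the $L^{-1/2}$ spacing of Bethe roots (Jacobian of $\mathcal{M}_{L,\cdot}$) combined with the factor $J(w)=O(L^{-1/2})$; the only new ingredient relative to $p=0$ is absorbing the $|\chi_{\vec y}(v,u;z_1)|\le C'L^{\epsilon''}$ tail from Assumption~\ref{def:asympstab}(C), which is harmless since the product of all other factors produces faster-than-polynomial decay. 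The main obstacle will be the uniform trace-norm bound when the characteristic function is only controlled by a polynomial factor $L^{\epsilon''}$: one must verify that the decay coming from $(H_{z_i}(w))^2/H_{z_{i\pm(-1)^i}}(w)$ and the cubic exponential in $f_i$ dominates this growth on the non-negligible Bethe roots. Once (i) and (ii) are established, Hadamard's inequality and dominated convergence for Fredholm series give $\det(I-\mathscr{K}_1^{(L)}\mathscr{K}_{\vec y}^{(L)})\to\det(I-\mathrm{K}_1^{\mathrm{per}}\mathrm{K}_{\mathrm{ic}}^{\mathrm{per}})$ locally uniformly in $\vec z$, as claimed.
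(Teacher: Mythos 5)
Your plan follows essentially the same route as the paper: a root-convergence lemma under the rescaling $w=w_c+c_0\zeta L^{-1/2}$, pointwise asymptotics of $J(w)$, $H_z(w)$ (giving $e^{\mathrm{h}(\zeta,\z)}$), the $Q$-factors and $\Lambda$ via Assumption~\ref{def:asympstab}, exponential tail bounds off the critical window, and then convergence of the Fredholm series by term-by-term bounds of the form $C^{|\vec n|}$ plus dominated convergence. So the architecture is right.

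There is, however, one concrete step you assert that is false as stated and that the paper has to repair before any of the pointwise or uniform estimates go through: the raw factor $f_i(w)=F_i(w)/F_{i-1}(w)$ (or its reciprocal on the right roots) does \emph{not} converge to $\mathrm{f}_i(\zeta)$ under the scaling \eqref{eq:parameters}, because the Taylor expansion of $\log F_i(w)-\log F_i(w_c)$ only controls the $\zeta$, $\zeta^2$, $\zeta^3$ terms; the constant $F_i(w_c)/F_{i-1}(w_c)$ is of size $e^{O(L^{3/2})}$ and does not cancel within a single kernel. Consequently neither $\mathscr{K}_1^{(L)}$ nor $\mathscr{K}_{\vec y}^{(L)}$ converges pointwise, and no uniform trace-norm or Hadamard bound can hold for them individually. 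The paper's fix is to conjugate by a diagonal operator, replacing $f_i(w)$ in each kernel by $\sqrt{\tilde f_i(w)}\sqrt{\tilde f_j(w')}$ with $\tilde f_i$ normalized by the value at $w_c$ (see \eqref{eq: conjugated kernel}); this leaves $\det(I-\mathscr{K}_1^{(L)}\mathscr{K}_{\vec y}^{(L)})$ unchanged, makes the huge constants cancel, distributes the decay of the cubic exponential symmetrically over both kernels (which is what lets the polynomially growing $|\chi_{\vec y}|\le C'L^{\epsilon''}$ from Assumption~\ref{def:asympstab}(C) be absorbed), and only then do the analogues of your pointwise statements (Lemma~\ref{lem:K1Kw2estmfo} together with Lemma~\ref{lemma: asymptoticskey}) hold. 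You should add this conjugation/normalization explicitly, and note the minor bookkeeping point that the square roots are only well defined up to sign but appear twice in every product of determinants, so the expansion terms are continuous. With that inserted, the rest of your outline matches the paper's proof.
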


The rest of the section is devoted to proving Lemma~\ref{lemma: C(z)} and \ref{lemma: D(z)}. We start with a discussion on the asymptotic behaviors of the roots of the Bethe polynomial $q_z(w)= w^N(1+w)^{L-N}-z^L(1+pw)^N$ under the critical re-scaling in Section~\ref{sec: rootasymptotics}. Then in Section~\ref{sec: productasymptotics} we list a few lemmas discussing the asymptotics of several products involving these roots under the critical re-scaling. With these preparations we prove Lemma \ref{lemma: C(z)} and Lemma \ref{lemma: D(z)} in Section~\ref{sec: proofC} and Section~\ref{sec: proofD}, respectively. Finally in Section~\ref{sec: verifyIC} we verify the Assumption \ref{def:asympstab} for the classical step and flat initial conditions.

\subsection{Asympotics of the Bethe roots}\label{sec: rootasymptotics}
We assume that the particle density $\rho:=N/L$ stays within a compact subset of $(0,1)$ for all $L$. From the discussion in Section \ref{sec: bethe roots} we know the level set $\{w\in \mathbb{C}: |w^N(1+w)^{L-N}|=|z^L(1+pw)^N|\}$ consists of two disjoint closed contour for $|z|<\mathbbm{r}_c$  so we can define:
\begin{definition}
	Given $|z|<\mathbbm{r}_c$, we define two closed contours $\Lambda_L$ and $\Lambda_R$ by 
	\begin{equation}
		\begin{aligned}
			\Lambda_L&:= \{w\in \mathbb{C}: |w^N(1+w)^{L-N}|=|z^L(1+pw)^N|\}\cap \{\mathrm{Re}(w)<w_c\},\\
			\Lambda_R&:= \{w\in \mathbb{C}: |w^N(1+w)^{L-N}|=|z^L(1+pw)^N|\}\cap \{\mathrm{Re}(w)>w_c\}.
		\end{aligned}
	\end{equation}
\end{definition}
A formal Taylor expansion at $w=w_c$ indicates that as $L\to \infty$, the Bethe equation $w^N(1+w)^{L-N}=z^L(1+pw)^N$ converges to the equation 
\begin{equation}\label{eq: LimitBethe}
	e^{-\zeta^2/2}=\z,
\end{equation}
  where $z^L = (-1)^{N}\mathbbm{r}_c^L \z$ and 
\begin{equation}\label{eq: rescaling w}
	w = w_c +\frac{1+\nu-2\rho}{1+\nu}\sqrt{\frac{\rho}{(1-\rho)\nu}}\zeta L^{-1/2}:= w_c +c_0\zeta L^{-1/2},
\end{equation}
where $\nu:= \sqrt{1-4p\cdot \rho(1-\rho)}$ and $c_0:=\frac{1+\nu-2\rho}{1+\nu}\sqrt{\frac{\rho}{(1-\rho)\nu}} $. The solution of equation \eqref{eq: LimitBethe}  is a discrete set given by $\{\pm\sqrt{-2\log \z+4k\pi i}: k\in \mathbb{Z}\}$ for an arbitrary choice of branch of the logarithm and square root, see Figure \ref{fig: limitingroots}. Lemma \ref{lemma: convergenceofroots} below precisely quantifies the convergence of the Bethe roots near $w=w_c$ to the corresponding roots of the limiting equation.

\begin{lemma}\label{lemma: convergenceofroots}
	For any $0<\epsilon<1/8$ and $|z|<\mathbbm{r}_c$ fixed, we define
	\begin{equation}
		\mathcal{L}_{z}^{(\epsilon)}:= \mathcal{L}_{z} \cap \mathbb{D}(w_c, c_0 L^{-1/2+\epsilon}),
	\end{equation}
	where $\mathbb{D}(a, r)$ is a disc centered at $a$ with radius $r$ and $c_0$ is defined in \eqref{eq: rescaling w}. Then for the re-scaled parameter $\z= (-1)^N z^L\mathbbm{r}_c^{-L}$ we have an injective map $\mathcal{M}_{L,\text{left}}: \mathcal{L}_{z}^{(\epsilon)}\to \mathrm{L}_{\z}$ satisfying
	\begin{equation}
		\left|\mathcal{M}_{L,\text{left}}(u)- L^{1/2}c_0^{-1}(u-w_c)\right|\leq L^{-1/2+3\epsilon}\log L,
	\end{equation}
	for all $ u\in \mathcal{L}_{z}^{(\epsilon)}$ and $L$ large enough. Furthermore, the map satisfies
	\begin{equation}
		\mathrm{L}_{\z}\cap \mathbb{D}(0,L^{\epsilon}-1)\subset \mathcal{M}_{L,\text{left}}(\mathcal{L}_{z}^{(\epsilon)})\subset \mathrm{L}_{\z}\cap \mathbb{D}(0,L^{\epsilon}+1).
	\end{equation} 
	Similar results hold if we replace $\mathcal{L}_{z}$ and $\mathrm{L}_{\z}$ by $\mathcal{R}_{z}$ and $\mathrm{R}_{\z}$.
\end{lemma}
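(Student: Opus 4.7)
The plan is to proceed by a careful Taylor expansion of the logarithm of the Bethe equation around $w=w_c$, followed by a Rouché-type argument to establish the bijection between finite Bethe roots and solutions of the limiting equation. First, I would introduce the function
\begin{equation*}
  G(w) := N\log w + (L-N)\log(1+w) - N\log(1+pw) - L\log z
\end{equation*}
(on a suitable branch cut), so that the Bethe polynomial equation $q_z(w)=0$ on the left component becomes $G(w) \in 2\pi\ii\mathbb{Z}$. The critical point $w_c$ is designed precisely so that $G'(w_c) = 0$: a direct calculation gives $G'(w) = \frac{N}{w} + \frac{L-N}{1+w} - \frac{Np}{1+pw}$, and plugging in $w_c = -\frac{2\rho}{1+\nu}$ together with the relation $\nu^2 = 1-4p\rho(1-\rho)$ shows this vanishes. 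Moreover the normalization $\mathbbm{r}_c = (-w_c/(1+pw_c))^{\rho}(1+w_c)^{1-\rho}$ ensures $\Re G(w_c) = L\log(\mathbbm{r}_c/|z|)$, so that under $z^L = (-1)^N \mathbbm{r}_c^L \z$ we obtain $G(w_c) = -\log\z + (\text{integer multiple of }2\pi\ii)$ modulo the choice of branch.

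Next I would substitute $w = w_c + c_0 \zeta L^{-1/2}$ and Taylor expand. Computing $G''(w_c)$ and using the identity $\frac{1}{2}G''(w_c)\,c_0^2 L^{-1} = -\frac{1}{2}\zeta^2 L^{-1} \cdot L = -\frac{\zeta^2}{2}$ (the precise value of $c_0$ in \eqref{eq: rescaling w} was chosen exactly to normalize the leading coefficient to $-1/2$), the expansion yields
\begin{equation*}
  G(w_c + c_0\zeta L^{-1/2}) = -\log\z - \tfrac{1}{2}\zeta^2 + O(\zeta^3 L^{-1/2})
\end{equation*}
uniformly for $|\zeta| \le L^{\epsilon}$. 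Hence on the left component the Bethe equation becomes $e^{-\zeta^2/2}(1+O(\zeta^3 L^{-1/2})) = \z$, which converges to the limiting equation $e^{-\zeta^2/2} = \z$ defining $\mathrm{L}_{\z}$.

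The final step is to make the above qualitative convergence quantitative via Rouché's theorem. For each limiting root $\xi \in \mathrm{L}_{\z}$ with $|\xi| \le L^{\epsilon}$, I would draw a small circle of radius $\delta_L = L^{-1/2+3\epsilon}\log L / c_0$ around $w_c + c_0 \xi L^{-1/2}$ and compare the analytic function $q_z(w)$ with its leading two-term approximation on that circle; since $G''(w_c)\neq 0$ the limiting equation has simple roots at distance $\gtrsim 1/|\xi|$ from each other, so for $\epsilon<1/8$ these small circles are disjoint and lie inside the disc of radius $c_0 L^{-1/2+\epsilon}$. The Rouché estimate gives exactly one Bethe root per circle, defining the injection $\mathcal{M}_{L,\mathrm{left}}$ and establishing the claimed error bound $|\mathcal{M}_{L,\mathrm{left}}(u) - L^{1/2}c_0^{-1}(u-w_c)| \le L^{-1/2+3\epsilon}\log L$. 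A matching upper/lower count of limiting roots inside discs of radius $L^{\epsilon}\pm 1$, together with the fact that both $\mathcal{L}_z^{(\epsilon)}$ and $\mathrm{L}_\z$ have the same asymptotic density, yields the inclusions $\mathrm{L}_{\z}\cap \mathbb{D}(0,L^{\epsilon}-1)\subset \mathcal{M}_{L,\mathrm{left}}(\mathcal{L}_{z}^{(\epsilon)})\subset \mathrm{L}_{\z}\cap \mathbb{D}(0,L^{\epsilon}+1)$.

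The main obstacle I anticipate is controlling the Taylor remainder uniformly in the growing domain $|\zeta| \le L^{\epsilon}$: one must check that the cubic and higher terms in the expansion of $G$ remain negligible compared to the spacing between consecutive limiting roots, which is what forces the restriction $\epsilon<1/8$. In particular, the cubic term contributes $O(\zeta^3 L^{-1/2}) = O(L^{3\epsilon-1/2})$, and this must be much smaller than the minimal gap between nearby roots $\xi, \xi' \in \mathrm{L}_\z$ on the scale $|\xi| \sim L^\epsilon$, which by the relation $\xi^2 = -2\log\z + 4\pi\ii k$ behaves like $1/|\xi| \sim L^{-\epsilon}$. The constraint $3\epsilon - 1/2 < -\epsilon$, i.e., $\epsilon < 1/8$, is what makes Rouché applicable; the analogous statement for $\mathcal{R}_z$ follows by the symmetry $w \mapsto -w/(1+pw)$ (or equivalently by repeating the expansion on the right component of the level set).
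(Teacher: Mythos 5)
Your outline is sound, and it supplies what the paper itself omits: the paper's ``proof'' of this lemma is a one-line pointer to Lemma 8.1 of \cite{baik2018relax}, asserting the argument there survives the extra parameter $p$. Your computation reproduces exactly the structural facts that argument rests on: with $G(w)=N\log w+(L-N)\log(1+w)-N\log(1+pw)-L\log z$ one has $G'(w)=\bigl(N+Lw+p(L-N)w^2\bigr)/\bigl(w(1+w)(1+pw)\bigr)=1/J(w)$, so $w_c$ (the larger root of $p(L-N)w^2+Lw+N=0$) is the critical point; the choice of $\mathbbm{r}_c$ together with $z^L=(-1)^N\mathbbm{r}_c^L\z$ gives $e^{G(w_c)}=1/\z$; and $c_0$ is precisely the normalization making $\tfrac12 G''(w_c)\,c_0^2L^{-1}=-\tfrac12$, so the rescaled equation reads $e^{-\zeta^2/2+O(\zeta^3L^{-1/2})}=\z$ for $|\zeta|\le L^{\epsilon}$. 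Pairing roots by Rouch\'e around each limiting root, with $\epsilon<1/8$ forced by comparing the pairing radius $L^{3\epsilon-1/2}\log L$ with the spacing $\sim L^{-\epsilon}$ of the limiting roots at height $|\xi|\sim L^{\epsilon}$, is a legitimate implementation of the quantitative matching; the cited reference organizes the same comparison through the phase/branch index of the roots rather than Rouch\'e, but the two mechanisms are interchangeable and give the same bound.

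Three small repairs. First, your Rouch\'e radius is stated in the wrong variable: around $w_c+c_0\xi L^{-1/2}$ the circle should have radius of order $c_0L^{-1+3\epsilon}\log L$ in the $w$-plane (equivalently $L^{-1/2+3\epsilon}\log L$ in the $\zeta$-plane); as written, $L^{-1/2+3\epsilon}\log L/c_0$ exceeds the radius of the whole disc $\mathbb{D}(w_c,c_0L^{-1/2+\epsilon})$. Second, $w\mapsto -w/(1+pw)$ is \emph{not} a symmetry of $q_z$: it sends $\bigl(w/(1+pw)\bigr)^N(1+w)^{L-N}$ to $(-w)^N\bigl(1-(1-p)w\bigr)^{L-N}(1+pw)^{-(L-N)}$, i.e.\ it exchanges the roles of $N$ and $L-N$ and alters the $p$-dependence, so it does not map $\mathcal{R}_z$ to $\mathcal{L}_z$ for the same equation; your parenthetical fallback (repeat the expansion on the right component, where the only change is the sign of $\Re\zeta$, the limiting roots being bounded away from $\ii\mathbb{R}$ since $(\Re\xi)^2-(\Im\xi)^2=-2\log|\z|>0$) is the correct route and should be the stated one. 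Third, Rouch\'e naturally produces a map from limiting roots to Bethe roots; to obtain $\mathcal{M}_{L,\mathrm{left}}$ defined on all of $\mathcal{L}_z^{(\epsilon)}$, and the inclusions with the $\pm1$ slack, you still need a root count (e.g.\ the argument principle for $q_z$ applied near the boundary region $L^{\epsilon}-1\le|\zeta|\le L^{\epsilon}+1$) guaranteeing that no Bethe root in $\mathbb{D}(w_c,c_0L^{-1/2+\epsilon})$ escapes all the circles; the phrase ``same asymptotic density'' should be replaced by that count.
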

\begin{proof}
	This lemma is a minor generalization of Lemma 8.1 of \cite{baik2018relax} by allowing one extra parameter $p$ in the Bethe equation. The proof is almost identical to the one in \cite{baik2018relax} so we omit the details.
\end{proof}
\subsection{Asymptotics of various products over Bethe roots}\label{sec: productasymptotics}
In this section we collect all the results involving limits of  products of the Bethe roots appearing in the finite-time formula that are independent of the parameters $\vec y$, $a_i$, $t_i$ and $k_i$. The starting point is the following simple integral formula for the sums of functions evaluated at the left or right Bethe roots. 
\begin{lemma}\label{lemma: integral formula roots}
	Let $\phi(w)$ be a function analytic in the interior and a neighborhood of $\Lambda_R$. Then
	\begin{equation}\label{eq: residue_right}
		\sum_{v\in \mathcal{R}_z} \phi(v) = N\phi(0)+\oint_{\Sigma_{\RR}}\frac{dw}{2\pi i}\frac{z^L(1+pw)^N}{q_z(w)}\frac{\phi(w)}{J(w)},
	\end{equation}
	where we recall that $J(w)=\frac{w(w+1)(1+pw)}{N+Lw+p(L-N)w^2}$. Similarly if $\phi(w)$ be a function analytic in the interior and a neighborhood of $\Lambda_L$, then
	\begin{equation}\label{eq: residue_left}
		\sum_{u\in \mathcal{L}_z} \phi(u) = (L-N)\phi(-1)+\oint_{\Sigma_{\LL}}\frac{\dd w}{2\pi \ii}\frac{z^L(1+pw)^N}{q_z(w)}\frac{\phi(w)}{J(w)}.
	\end{equation}
	Here $\Sigma_{\LL}$ and $\Sigma_{\RR}$ are simple closed contours lie in the half-plane $\{w:\mathrm{Re}(w)<w_c\}$ (respectively $\{w:\mathrm{Re}(w)>w_c\}$) with $\Lambda_{\mathrm{L}}$ (respectively $\Lambda_{\mathrm{R}}$) inside. Taking the function $\phi$ to be the constant function $1$ implies in particular that 
	\begin{equation}
		\oint_{\Sigma_{\LL}}\frac{dw}{2\pi i}\frac{z^L(1+pw)^N}{q_z(w)}\frac{1}{J(w)}=\oint_{\Sigma_{\RR}}\frac{\dd w}{2\pi \ii}\frac{z^L(1+pw)^N}{q_z(w)}\frac{1}{J(w)}=0.
	\end{equation}
\end{lemma}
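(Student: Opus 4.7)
The strategy is straightforward residue calculus. First I would establish the key algebraic identity that at every Bethe root $v\in \mathcal{S}_z$,
\begin{equation*}
q_z'(v) = \frac{z^L (1+pv)^N}{J(v)}.
\end{equation*}
This follows by differentiating $q_z(w)=w^N(1+w)^{L-N}-z^L(1+pw)^N$, using the Bethe equation $w^N(1+w)^{L-N}=z^L(1+pw)^N$ to replace the first term in $q_z'$ at a root, and then simplifying the bracket $N(1+v)(1+pv)+(L-N)v(1+pv)-Npv(1+v)=N+Lv+p(L-N)v^2$ which is precisely the numerator appearing in $1/J(v)$. A consequence is that the roots of $q_z$ are simple and the residue of $\frac{z^L(1+pw)^N}{q_z(w)}$ at each $v\in\mathcal{S}_z$ equals $J(v)$.

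Next, I would apply the residue theorem to the meromorphic function
\begin{equation*}
G(w):= \frac{z^L(1+pw)^N}{q_z(w)}\cdot \frac{\phi(w)}{J(w)}
\end{equation*}
on the region enclosed by $\Sigma_\RR$. Inside this region the only poles are the right Bethe roots $\mathcal{R}_z$ together with the pole of $1/J$ at $w=0$ (the other two poles of $1/J$, namely $w=-1$ and $w=-1/p$, lie in the left half $\Re(w)<w_c$). At each $v\in\mathcal{R}_z$ the residue of $G$ equals $\phi(v)$ by the identity above, while at $w=0$ one has $q_z(0)=-z^L$ and $1/J(w)\sim N/w$, so the residue of $G$ at $0$ equals $-N\phi(0)$. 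Summing these residues yields \eqref{eq: residue_right}.

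For the left formula \eqref{eq: residue_left} I would repeat the argument on the region enclosed by $\Sigma_\LL$. Inside that region the poles of $G$ are the left Bethe roots (each contributing $\phi(u)$ as before), the pole of $1/J$ at $w=-1$, and potentially the pole at $w=-1/p$. A direct computation gives $\mathrm{Res}_{w=-1}G=-(L-N)\phi(-1)$ using $q_z(-1)=-z^L(1-p)^N$ and $1/J(w)\sim -(L-N)/(w+1)$ near $w=-1$. The point $w=-1/p$ is removable because the factor $(1+pw)^N$ vanishes to order $N$ there whereas $1/(q_z(w)J(w))$ has only a simple pole, so it contributes nothing.

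The only mildly delicate point is the algebraic simplification in the first step, but it is a short computation and presents no real obstacle. Taking $\phi\equiv 1$ in either identity and using $1=N\phi(0)+\text{integral}$ (respectively $1=(L-N)\phi(-1)+\text{integral}$) after noting $\sum_{v\in\mathcal{R}_z}1=N$ and $\sum_{u\in\mathcal{L}_z}1=L-N$ gives the stated vanishing of the two contour integrals.
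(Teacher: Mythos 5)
Your proof is correct and is essentially the paper's own argument: the paper verifies the global identity $q_z'(w)=q_z(w)\left(\frac{N}{w}+\frac{L-N}{w+1}\right)+\frac{z^L(1+pw)^N}{J(w)}$ and splits the root-counting integral $\oint_{\Sigma_{\RR}}\frac{q_z'(w)}{q_z(w)}\phi(w)\,\frac{\dd w}{2\pi\ii}$, which is exactly the residue computation you carry out directly on the integrand $\frac{z^L(1+pw)^N}{q_z(w)J(w)}\phi(w)$ (your identity $q_z'(v)=\frac{z^L(1+pv)^N}{J(v)}$ at the roots is the same identity evaluated at $q_z(v)=0$), with the poles at $w=0$, $w=-1$ and the removable point $w=-1/p$ handled as in the paper. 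One small slip: near $w=-1$ one has $\frac{1}{J(w)}\sim \frac{L-N}{w+1}$ (residue $+(L-N)$, not $-(L-N)$), but combined with $q_z(-1)=-z^L(1-p)^N$ this still gives your stated residue $-(L-N)\phi(-1)$ for the full integrand, so the conclusion is unaffected.
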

\begin{proof}
	A direct differentiation shows
	\begin{equation*}
		\begin{aligned}
			q_z'(w) = q_z(w)\cdot \left(\frac{N}{w}+\frac{L-N}{w+1}\right)+z^L(1+pw)^N\cdot \frac{1}{J(w)}.
		\end{aligned}
	\end{equation*}
	Hence by the residue theorem we know 
	\begin{equation*}
		\begin{aligned}
			\sum_{v\in \mathcal{R}_z} \phi(u) = \oint_{\Sigma_R}\frac{\dd w}{2\pi \ii }\frac{q_z'(w)}{q_z(w)}\phi(w)
			= N\phi(0)+\oint_{\Sigma_R}\frac{\dd w}{2\pi \ii }\frac{z^L(1+pw)^N}{q_z(w)}\frac{1}{J(w)}\phi(w).
		\end{aligned}
	\end{equation*}
	The proof for \eqref{eq: residue_left} is similar.
\end{proof}
As taking logarithm transforms products into sums, the following lemma is a direct consequence of Lemma \ref{lemma: integral formula roots} and the method of steepest descent.
\begin{lemma}\label{lemma: asymptoticsofproducts}
	Given $|z|<\mathbbm{r}_c$ and $\z=(-1)^N z^L \mathbbm{r}_c^{-L}$. Suppose $\rho=N/L$ stays in a compact subset of $(0,1)$, then for every $0<\epsilon<1/2$ the following holds for all large enough $L$. 
	\begin{enumerate}[(i)]
		\item For $w=w_c + c_0\zeta L^{-1/2}$ with $|\zeta|\leq  L^{\epsilon/4}$, where  $c_0=\frac{1+\nu-2\rho}{1+\nu}\sqrt{\frac{\rho}{(1-\rho)\nu}}$ and $\nu=\sqrt{1-4p\cdot\rho(1-\rho)}$,
		\begin{equation}
			\frac{z^L(1+pw)^N}{q_z(w)} = \frac{\z}{e^{-\zeta^2/2}-\z}\cdot \left(1+O(L^{-1/2+\epsilon})\right).
		\end{equation}
		\item On the other hand if $|w-w_c|\geq C\cdot L^{\epsilon-1/2}$ for some $C>0$, we have for some $c>0$ and $\alpha>0$ ,
		\begin{equation}
			\left|\frac{z^L(1+pw)^N}{q_z(w)}\right|\leq e^{-cL^{\alpha}}.
		\end{equation}
		\item For $w$ of $O(1)$ distance away from $0,-1$ and $-1/p$ we have $\left|\frac{1}{J(w)}\right| \leq C\cdot L$ for some constant $C>0$. Furthermore if $w=w_c + c_0\zeta L^{-1/2}$ with $|\zeta|\leq C\cdot L^{\epsilon}$ we have 
		\begin{equation}\label{eq: J(w) taylor}
			\frac{1}{J(w)} = -c_0^{-1}\zeta L^{1/2}\cdot \left(1+O(L^{\epsilon-1/2})\right).
		\end{equation}
		\item 
		For $w= w_c +c_0\zeta L^{-1/2}$ with $|\zeta|\leq L^{\epsilon/4}$, we have
		\begin{equation}
			\begin{split}
				\prod_{u\in \mathcal{L}_z} \sqrt{w-u} &= (\sqrt{w+1})^{L-N} e^{\frac12 \mathrm{h} (\zeta,\mathrm{z}) } \left( 1+ O(L^{\epsilon-1/2}\log L)\right) \quad \text{if $\Re(\zeta) \ge 0$},\\
				\prod_{v\in \mathcal{R}_z} \sqrt{v-w} &= (\sqrt{-w})^N e^{\frac12 \mathrm{h} (\zeta,\mathrm{z}) } \left( 1+ O(L^{\epsilon-1/2}\log L)\right) \qquad \text{if $\Re(\zeta) \le 0$},
			\end{split} 
		\end{equation}
		where $\mathrm{h} (\zeta,\mathrm{z})$ is the function defined in \eqref{eq:def_h}. When $\Re(\zeta)=0$, $\mathrm{h}(\zeta, \mathrm{z})$ is the limit of $\mathrm{h}(\eta, \mathrm{z})$ as $\eta\to \zeta$ from $\Re(\eta)>0$ for the first case and from $\Re(\eta)<0$ for the second case. 
		\item For $w$ of a finite distance $O(1)$ away from the trajectory $\Lambda_{\mathrm{L}}\cup\Lambda_{\mathrm{R}}$, 
		\begin{equation} \begin{aligned}
				\prod_{u\in \mathcal{L}_z} \sqrt{w-u} &= (\sqrt{w+1})^{L-N} \left( 1+ O(L^{\epsilon-1/2})\right) \quad \text{if\  $\Re(w)>w_c$}, \\
				\prod_{v\in \mathcal{R}_z} \sqrt{v-w} &= (\sqrt{-w})^N \left( 1+ O(L^{\epsilon-1/2})\right) \qquad \text{if\  $\Re(w)<w_c$.}
			\end{aligned} 
		\end{equation}
		
		\item 
		There is a constant $C>0$ such that for every $w$ satisfying $|w-w_c|\geq L^{\epsilon-1/2}$, 
		\begin{equation*}
			e^{-CL^{-\epsilon}}\le \left|\frac{q_{z,\mathrm{L}}(w)}{(w +1)^{L-N}} \right| \le e^{CL^{-\epsilon}} 
			\quad \text{if $\Re(w)>w_c$},
		\end{equation*}
		and
		\begin{equation*}
			e^{-CL^{-\epsilon}}\le \left|\frac{q_{z,\mathrm{R}}(w)}{w^N} \right| \le e^{CL^{-\epsilon}}
			\qquad \text{if $\Re(w)< w_c$.}
		\end{equation*}

		\item 
		We have 
		\begin{equation}
			\frac{\prod_{v\in\mathcal{R}_z} \prod_{u\in\mathcal{L}_{z'}} \sqrt{v-u}}
			{\prod_{u\in\mathcal{L}_{z'}} \left(\sqrt{-u}\right)^N \prod_{v\in\mathcal{R}_z} \left( \sqrt{v+1} \right)^{L-N}}
			= 
			e^{-B(\mathrm{\z,\z'})} \left(1+O(L^{\epsilon-1/2})\right),
		\end{equation}
		where $B(\mathrm{\z,\z'})$ is the function defined in \eqref{eq:deofBz}. 
	\end{enumerate}
\end{lemma}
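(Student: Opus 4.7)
My approach is to treat the seven parts in three groups, each relying on a different technique: direct Taylor expansion for (i)--(iii), Lemma \ref{lemma: integral formula roots} combined with steepest descent for (iv)--(vii), and standard residue estimates for (vi). Throughout, the key computational input is the expansion
\begin{equation*}
(-1)^N\mathbbm{r}_c^{-L}\,w^N(1+w)^{L-N}(1+pw)^{-N}
= \exp\!\Bigl(-\tfrac{L}{2}c_0^{-2}(w-w_c)^2 + O\bigl(L(w-w_c)^3\bigr)\Bigr),
\end{equation*}
obtained by Taylor expanding the three logarithms at $w_c$ and using the identity $\tfrac{N}{w_c^2}+\tfrac{L-N}{(1+w_c)^2}-\tfrac{Np^2}{(1+pw_c)^2}=L c_0^{-2}$, which follows from $w_c$ being the unique critical point in $(-1,0)$ of the exponent. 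Substituting $w=w_c+c_0\zeta L^{-1/2}$ with $|\zeta|\le L^{\epsilon/4}$ yields $e^{-\zeta^2/2}(1+O(L^{-1/2+\epsilon}))$, from which (i) follows immediately by subtracting $\z$. For (ii), one uses that the level set of $|w^N(1+w)^{L-N}(1+pw)^{-N}|=\mathbbm{r}_c^L|\z|$ consists of the disjoint contours $\Lambda_{\mathrm L}\cup\Lambda_{\mathrm R}$ for $|\z|<1$, so moving $w$ a distance $L^{\epsilon-1/2}$ away from $w_c$ forces the modulus to differ from $\mathbbm{r}_c^L|\z|$ by a factor $\exp(\pm cL^{2\epsilon})$, giving the bound. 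For (iii), the factorization $N+Lw+p(L-N)w^2=p(L-N)(w-w_c)(w-w_c^*)$ with $w_c^*=-2\rho/(\nu-1)$ gives the leading behavior $-c_0^{-1}\zeta L^{1/2}$ near $w_c$ directly.

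For (iv), I would apply Lemma \ref{lemma: integral formula roots} to the analytic test function $\phi(w)=\tfrac{1}{2}\log(w-u)$, resp.\ $\tfrac{1}{2}\log(v-w)$, getting
\begin{equation*}
\sum_{u\in\mathcal{L}_z}\tfrac{1}{2}\log(w-u)=\tfrac{L-N}{2}\log(w+1)+\oint_{\Sigma_{\mathrm L}}\frac{\dd w'}{2\pi\ii}\frac{z^L(1+pw')^N}{q_z(w')}\frac{\log(w-w')}{2J(w')},
\end{equation*}
and similarly for the right roots. Deform $\Sigma_{\mathrm L}$ to pass through $w_c$ along the steepest descent direction. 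Part (i)--(iii) localize the integral to a window of size $L^{-1/2+\epsilon/4}$ around $w_c$; after the change of variables $w'=w_c+c_0\eta L^{-1/2}$, the integrand converges uniformly to $\frac{\z}{e^{-\eta^2/2}-\z}\cdot\frac{\eta\log(\zeta-\eta)}{2}$ (up to an additive constant removable by the dimensional factor $\log(c_0 L^{-1/2})$ contributing $0$ by the last assertion of Lemma~\ref{lemma: integral formula roots}). Taking $\eta$ along the imaginary axis one recovers $\tfrac{1}{2}\mathrm{h}(\zeta,\z)$ via its definition in \eqref{eq:def_h}, with error $O(L^{-1/2+\epsilon}\log L)$ from the Taylor error and the tail contribution $O(e^{-cL^\alpha})$. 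Exponentiating gives (iv). Part (v) is easier: when $w$ is at $O(1)$ distance from $\Lambda_{\mathrm L}\cup\Lambda_{\mathrm R}$, the steepest descent window contributes only $O(L^{-1/2+\epsilon})$ after exponentiation because $\log(w-w')$ is bounded and analytic, and the identity $\sum_{u\in\mathcal{L}_z}\log(w-u)=(L-N)\log(w+1)+\text{(integral)}$ reduces to the leading term.

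For (vi), I would write $\log|q_{z,\mathrm L}(w)/(w+1)^{L-N}|=\Re\sum_{u\in\mathcal{L}_z}\log\!\bigl(\tfrac{w-u}{w+1}\bigr)$ and again apply Lemma~\ref{lemma: integral formula roots}, this time only needing a uniform $O(L^{-\epsilon})$ bound rather than a precise limit; the key point is that the integral is uniformly bounded because $\log((w-w')/(w+1))$ is analytic and bounded on $\Sigma_{\mathrm L}$ once $|w-w_c|\ge L^{\epsilon-1/2}$ keeps the logarithm's branch cut away. Finally for (vii), I would take $\phi(w)=\sum_{v\in\mathcal{R}_z}\tfrac{1}{2}\log(v-w)$ in \eqref{eq: residue_left} applied to $z'$ (so $u$ ranges over $\mathcal{L}_{z'}$), producing a double integral over $\Sigma_{\mathrm L}\times\Sigma_{\mathrm R}$ that after the $L^{-1/2}$ rescaling converges to the double integral formula \eqref{eq:deofBz} for $-B(\z,\z')$; the normalizations $(-u)^N$ and $(v+1)^{L-N}$ cancel the ``boundary'' terms from $\phi(0)$ and $\phi(-1)$.

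\textbf{Main obstacle.} The substantive technical work lies in (iv) and (vii): one must justify the steepest-descent deformation of $\Sigma_{\mathrm L}$ and $\Sigma_{\mathrm R}$ through $w_c$ without crossing any Bethe root, control the resulting contour in a neighborhood of $w_c$ where $|q_z(w)|$ is no longer uniformly bounded away from zero, and show that the branch of $\log(w-w')$ chosen on the deformed contour matches the one used in the definition of $\mathrm{h}(\zeta,\z)$ across $\Re(\zeta)=0$ (where $\mathrm{h}$ has a jump, cf.\ the $\pm$ limits). This is the $p$-dependent analogue of Lemmas 8.1--8.4 of \cite{baik2018relax} and the required adjustments, while routine in spirit, must carefully track the extra factor $(1+pw)^N$ in $q_z(w)$, in particular verifying that its zero at $w=-1/p$ stays outside $\Sigma_{\mathrm L}\cup\Sigma_{\mathrm R}$ throughout the deformation (which holds because $-1/p<-1<w_c$ and $|1+pw_c|>0$ on the compact parameter range).
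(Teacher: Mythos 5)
Your proposal follows essentially the same route as the paper: the paper's (omitted) proof is precisely a steepest-descent analysis at the critical point $w_c$ applied to the integral representations of Lemma~\ref{lemma: integral formula roots}, generalizing Lemmas 8.2 and 8.4 of \cite{baik2018relax} to the $p$-dependent Bethe polynomial, which is exactly what you outline, including the key identity $\tfrac{N}{w_c^2}+\tfrac{L-N}{(1+w_c)^2}-\tfrac{Np^2}{(1+pw_c)^2}=Lc_0^{-2}$ and the cancellation of additive constants in $\log(w-w')$ via the vanishing of $\oint \frac{z^L(1+pw)^N}{q_z(w)J(w)}\,\dd w$. Two harmless slips: the second root of $p(L-N)w^2+Lw+N$ is $w_c^*=\frac{2\rho}{\nu-1}$ (not $-\frac{2\rho}{\nu-1}$), and in (ii) the exponential smallness requires $w$ to lie on the steep-descent contour (or at least outside the regions enclosed by $\Lambda_{\mathrm L}\cup\Lambda_{\mathrm R}$), an implicit restriction already present in the paper's own statement.
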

\begin{proof}
	These estimates are again one-parameter generalizations of Lemma 8.2 and Lemma 8.4 of \cite{baik2018relax}. The main difference is due to the extra parameter $p$ in the Bethe polynomial $q_z(w)=w^N(1+w)^{L-N}-z^L(1+pw)^N$, the proper critical point for steepest descent analysis now is at $w_c=-\frac{2\rho}{1+\sqrt{1-4p\cdot\rho(1-\rho)}}$, which comes from the larger root of the quadratic equation $p(L-N)w^2+Lw+N=0$, as opposed to $w=-\rho$ for the $p=0$ degeneration discussed in \cite{baik2018relax}. A standard steepest descent analysis using integral representations obtained in Lemma \ref{lemma: integral formula roots} with critical point $w_c$ yields all the estimates. We omit the details.  
\end{proof}
\medskip
\subsection{Asymptotics of \texorpdfstring{$\mathscr{C}_{\vec y}^{(L)}(\vec z)$}{Lg}}\label{sec: proofC} Now we are ready to prove Lemma~\ref{lemma: C(z)}. Recall that (see Definition~\ref{def: C general}), $\mathscr{C}_{\vec y}^{(L)}(\vec z)= \mathcal{E}_{\vec y}(z_1)\mathscr{C}_{\mathrm{step}}^{(L)}(\vec z)$ where  $\mathcal{E}_{\vec y}(z_1)= E_{\mathrm{ic}}(\z_1)\left(1+O(L^{\epsilon-1/2})\right)$ as $L\to \infty$ due to Assumption~\ref{def:asympstab}. Hence it suffices to prove 
\begin{equation}\label{eq: asymptoticsofCstep}
	\mathscr{C}_{\mathrm{step}}^{(L)}(\vec z) = \mathrm{C}_{\mathrm{step}}^{\mathrm{per}}(\vec \z)\left(1+O(L^{\epsilon-1/2})\right),\quad  \mathrm{as}\  L\to \infty,
\end{equation}
 where $\mathrm{C}_{\mathrm{step}}^{\mathrm{per}}(\vec \z)$ is defined in Definition~\ref{def:limSstep} and $\mathscr{C}_{\mathrm{step}}^{(L)}(\vec z)$ is defined as follows (see also Definition~\ref{def: C general}): 
\begin{equation*}
	\begin{aligned}
		\mathscr{C}_{\text{step}}^{(L)}(\vec z) &:= \left[\prod_{\ell=1}^{m}\frac{E_{\ell}(z_{\ell})}{E_{\ell-1}(z_{\ell})}\right]\left[\prod_{\ell=1}^{m}\frac{\prod_{u\in \mathcal{L}_{z_\ell}}(-u)^N\prod_{v\in \mathcal{R}_{z_{\ell}}}(v+1)^{L-N}}{\Delta(\mathcal{R}_{z_{\ell}};\mathcal{L}_{z_{\ell}})}\right]\\
		&\cdot \left[\prod_{\ell=2}^{m}\frac{z_{\ell-1}^L}{z_{\ell-1}^L-z_{\ell}^L}\right]\left[\prod_{\ell=2}^{m}\frac{\Delta(\mathcal{R}_{z_{\ell}};\mathcal{L}_{z_{\ell-1}})}{\prod_{u\in \mathcal{L}_{z_{\ell-1}}}(-u)^N\prod_{v\in \mathcal{R}_{z_{\ell}}}(v+1)^{L-N}}\right].
	\end{aligned}
\end{equation*}
Here $E_{\ell}(z):= \prod_{u\in \mathcal{L}_z} (-u)^{-k_\ell}\prod_{v\in \mathcal{R}_z}(v+1)^{-a_\ell-k_\ell}(pv+1)^{t_\ell-k_\ell}$ with $E_0(z):=1$. Under the re-scaling $z_\ell^L= (-1)^N \mathbbm{r}_c^L \z_\ell$ we clearly have $\prod_{\ell=2}^{m}\frac{z_{\ell-1}^L}{z_{\ell-1}^L-z_{\ell}^L}= \prod_{\ell=2}^{N}\frac{\z_{\ell-1}}{\z_{\ell-1}-\z_{\ell}}$. On the other hand by Lemma~\ref{lemma: asymptoticsofproducts} (iv) 
\begin{equation*}
\begin{aligned}
	&\frac{\prod_{u\in\mathcal{L}_{z_\ell}}(-u)^N\prod_{v\in \mathcal{R}_{z_{\ell}}}(v+1)^{L-N}}{\Delta(\mathcal{R}_{z_{\ell}};\mathcal{L}_{z_{\ell}})} = e^{B(\z_\ell)}\left(1+O(L^{\epsilon-1/2})\right),\\
    &\frac{\Delta(\mathcal{R}_{z_{\ell}};\mathcal{L}_{z_{\ell-1}})}{\prod_{u\in \mathcal{L}_{z_{\ell-1}}}(-u)^N\prod_{v\in \mathcal{R}_{z_{\ell}}}(v+1)^{L-N}}=e^{-B(\z_\ell,\z_{\ell-1})}\left(1+O(L^{\epsilon-1/2})\right).
\end{aligned}
\end{equation*}
Hence \eqref{eq: asymptoticsofCstep} follows immediately once we establish the following lemma on the asymptotics of $E_{\ell}(z)$.
\begin{lemma}\label{lemma: asymptoticsE}
	Let $E(z)= E(z;a,k,t):= \prod_{u\in \mathcal{L}_z} (-u)^{-k}\prod_{v\in \mathcal{R}_z}(v+1)^{-a-k}(pv+1)^{t-k}$ where $a,k\in \mathbb{Z}$ and $t\in \mathbb{N}$ are given parameters. Then for $z^L= (-1)^N\mathbbm{r}_c^L\z$ with $0<|\z|<1$ and the parameters satisfying 
	\begin{equation}\label{eq: rescaling parameters}
		t = c_1\tau L^{3/2}+O(1),\quad a= c_2 t+\gamma L+O(1),\quad k=c_3 t+c_4\gamma L+c_5\mathrm{x} L^{1/2}+O(1),
	\end{equation}
we have for $L$ large enough and fixed $0<\epsilon<1/2$
\begin{equation}\label{eq: asymptoticsofE}
	E(z) = e^{\mathrm{x} A_1(\z)+\tau A_2(\z)}\cdot \left(1+O(L^{\epsilon-1/2})\right).
\end{equation}
where $A_1(\z)$ and $A_2(\z)$ are scaled polylogarithm functions as in \eqref{eq: polylog}. The constants $c_i$ are the same as in \eqref{eq: scaling constants} and the re-scaled parameters are chosen such that $\tau>0$, $\gamma\in[0,1]$ and $\mathrm{x}\in \mathbb{R}$. 
\end{lemma}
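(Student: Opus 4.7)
The strategy is to take logarithms, convert each product over Bethe roots into a contour integral via Lemma~\ref{lemma: integral formula roots}, apply steepest descent around $w_c$, and carefully track the cancellations of the divergent orders $L^{3/2}, L, L^{1/2}$ that arise from the scaling \eqref{eq: rescaling parameters}. This mirrors the analogous computation in \cite{baik2018relax} for $p=0$, with the role of the critical point $-\rho$ there replaced by our $w_c$.

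First, write
\begin{equation*}
\log E(z) = -k\, S_L(z) -(a+k)\, S_R^{(1)}(z) + (t-k)\, S_R^{(2)}(z),
\end{equation*}
where $S_L(z)=\sum_{u\in\mathcal{L}_z}\log(-u)$, $S_R^{(1)}(z)=\sum_{v\in\mathcal{R}_z}\log(v+1)$, and $S_R^{(2)}(z)=\sum_{v\in\mathcal{R}_z}\log(pv+1)$. Lemma~\ref{lemma: integral formula roots} gives for each of these sums a representation of the form $N\phi(0)+\oint\tfrac{z^L(1+pw)^N}{q_z(w)}\tfrac{\phi(w)}{J(w)}\tfrac{dw}{2\pi \mathrm{i}}$, and the boundary contributions $N\phi(0)$ or $(L-N)\phi(-1)$ all vanish because $\log(-w)|_{w=-1}=\log(w+1)|_{w=0}=\log(pw+1)|_{w=0}=0$ (with the standard branch).

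Next, deform each contour $\Sigma_{\mathrm{L/R}}$ through the critical point $w_c$ along steepest descent. By part~(ii) of Lemma~\ref{lemma: asymptoticsofproducts}, the factor $z^L(1+pw)^N/q_z(w)$ is bounded by $e^{-cL^\alpha}$ once $|w-w_c|\ge L^{\epsilon-1/2}$, so each integral localizes onto a microscopic neighborhood of $w_c$. On this neighborhood apply the substitution $w=w_c+c_0\zeta L^{-1/2}$ and invoke parts~(i) and~(iii) of Lemma~\ref{lemma: asymptoticsofproducts}:
\begin{equation*}
\frac{z^L(1+pw)^N}{q_z(w)}\cdot \frac{dw}{J(w)}= -\frac{\mathrm{z}\,\zeta}{e^{-\zeta^2/2}-\mathrm{z}}\,d\zeta\cdot\bigl(1+O(L^{\epsilon-1/2})\bigr).
\end{equation*}
For the three integrands $\phi\in\{\log(-w),\log(w+1),\log(pw+1)\}$, Taylor-expand around $w_c$ up to third order,
\begin{equation*}
\phi(w_c+c_0\zeta L^{-1/2})=\sum_{j=0}^{3}\frac{\phi^{(j)}(w_c)}{j!}(c_0\zeta)^{j}L^{-j/2}+O(L^{-2+4\epsilon}),
\end{equation*}
so that each $S_\bullet$ becomes a sum of four contributions of orders $L^{0}, L^{-1/2}, L^{-1}, L^{-3/2}$, with the $j$-th contribution a multiple of the moment $M_j(\mathrm{z}):=-\oint\tfrac{d\zeta}{2\pi \mathrm{i}}\tfrac{\mathrm{z}\,\zeta^{j+1}}{e^{-\zeta^2/2}-\mathrm{z}}$.

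The main obstacle, and the heart of the proof, is the bookkeeping in the fourth step: after multiplying by the prefactors $-k,-(a+k),(t-k)$ of size $L^{3/2}$ and inserting the Taylor expansions, $\log E(z)$ splits into contributions at orders $L^{3/2},L,L^{1/2},L^0$, and only the last is expected to be finite. The $O(L^{3/2})$ coefficient vanishes because $w_c$ is the saddle point of $\tilde{g}(w):=\rho\log(w/(1+pw))+(1-\rho)\log(1+w)$, namely $\tilde{g}'(w_c)=0$, which combined with the parallel identities $\phi^{(1)}(w_c)$ for $\phi=\log(-w),\log(1+w),\log(1+pw)$ fixes the coefficient $c_2$ in \eqref{eq: scaling constants} uniquely. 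Similarly, the $O(L)$ cancellation uses $\tilde{g}''(w_c)=-c_0^{-2}$ together with the definitions of $c_3, c_4$, and the $O(L^{1/2})$ cancellation is determined by $c_5$ together with the third-derivative identity. One then checks, by a direct (but tedious) algebraic computation, that the constants given in \eqref{eq: scaling constants} are precisely the ones producing these three successive cancellations. What remains at finite order is $\mathrm{x}\cdot M_2(\mathrm{z})\cdot(\text{constant})+\tau\cdot M_3(\mathrm{z})\cdot(\text{constant})$, and by the Mellin-type representation \eqref{eq: polylogf} of the polylogarithm one identifies these normalized moments with $A_1(\mathrm{z})=-\mathrm{Li}_{3/2}(\mathrm{z})/\sqrt{2\pi}$ and $A_2(\mathrm{z})=-\mathrm{Li}_{5/2}(\mathrm{z})/\sqrt{2\pi}$ respectively. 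Combining everything yields \eqref{eq: asymptoticsofE} with the error bound $1+O(L^{\epsilon-1/2})$ coming uniformly from the relative errors in Lemma~\ref{lemma: asymptoticsofproducts} and from the truncation of the Taylor expansion.
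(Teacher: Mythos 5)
Your strategy — converting each sum over Bethe roots to a contour integral via Lemma~\ref{lemma: integral formula roots}, localizing near $w_c$ using the decay estimates of Lemma~\ref{lemma: asymptoticsofproducts}, Taylor expanding the logarithms and tracking the cancellations at orders $L^{3/2},L,L^{1/2}$, and identifying the surviving moments with polylogarithms — is exactly the paper's approach. (The paper streamlines the opening step by first subtracting the constant $G(w_c)$ and combining the $\Sigma_{\LL}$ and $\Sigma_{\RR}$ contributions into one integral over the vertical line $\mathrm{Re}(w)=w_c$, but this is the same computation.)

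However, the final moment identification contains a genuine error. You write the finite-order remainder as $\mathrm{x}\cdot M_2(\mathrm{z})\cdot(\text{const})+\tau\cdot M_3(\mathrm{z})\cdot(\text{const})$ and identify $M_2$ with $A_1(\mathrm{z})$. But with your own definition $M_j(\mathrm{z})=-\oint\frac{d\zeta}{2\pi\mathrm{i}}\frac{\mathrm{z}\,\zeta^{j+1}}{e^{-\zeta^2/2}-\mathrm{z}}$, one has $M_2\equiv 0$: on the contour $\mathrm{Re}(\zeta)=0$ set $\zeta=\mathrm{i}y$; then $\zeta^3$ is odd in $y$ while $e^{-\zeta^2/2}-\mathrm{z}=e^{y^2/2}-\mathrm{z}$ is even, so the integral vanishes by parity. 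As written, the $\mathrm{x}$-term therefore drops out, contradicting \eqref{eq: asymptoticsofE}. The $\mathrm{x}$-contribution actually arises from the $j=1$ Taylor coefficient — the $c_5\mathrm{x}L^{1/2}$ piece of $k$ meeting $\phi'(w_c)(w-w_c)\sim L^{-1/2}$ — and is governed by $M_1(\mathrm{z})=-\oint\frac{d\zeta}{2\pi\mathrm{i}}\frac{\mathrm{z}\,\zeta^2}{e^{-\zeta^2/2}-\mathrm{z}}=-A_1(\mathrm{z})$. The parity vanishing of $M_2$ is indeed used, but its role is to make the $\gamma$-dependent piece (which does sit in the $j=2$ slot, as the paper's $-\tfrac{\gamma}{2}\zeta^2$ term shows) drop out of the final formula. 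Relatedly, your attribution of the cancellations is shifted: the leading $L^{3/2}$ term (from $j=0$) vanishes because the measure $\frac{z^L(1+pw)^N}{q_z(w)}\frac{dw}{J(w)}$ integrates constants to zero over each $\Sigma_{\LL}$, $\Sigma_{\RR}$ (the last assertion of Lemma~\ref{lemma: integral formula roots}), not by the saddle identity $\tilde g'(w_c)=0$; that identity instead enters in fixing $c_4=-\rho$ so the $\gamma L$-part of $G'(w_c)$ cancels; and $c_5$ does not cancel a divergence at all — it is the normalization making the surviving $j=1$ contribution come out to $\mathrm{x}A_1(\mathrm{z})$.
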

\begin{proof}
	Note that $\log E(z) = \sum_{u\in \mathcal{L}_z}(-k)\log(-u)+\sum_{v\in \mathcal{R}_z}\left[(-a-k)\log(v+1)+(t-k)\log(1+pv)\right]$. Apply Lemma~\ref{lemma: integral formula roots} to the two sums over left and right Bethe roots and deform both contours $\Sigma_{\LL}$ and $\Sigma_{\RR}$ to the vertical line with real part $w_c= -\frac{2\rho}{1+\sqrt{1-4p\rho(1-\rho)}}$ we have 
	\begin{equation}\label{eq: integralrepE}
		\log E(z) = \int_{w_c-\ii\infty}^{w_c+\ii\infty} \frac{\dd w}{2\pi \ii} \frac{z^L(1+pw)^N}{q_z(w)}\frac{1}{J(w)}\left(G(w)-G(w_c)\right),
	\end{equation}
where $G(w)= (-k)\log(-w)+(a+k)\log(w+1)+(-t+k)\log(1+pw)$. Note that in \eqref{eq: integralrepE} we have used the fact discussed in Lemma~\ref{lemma: integral formula roots} that
\begin{equation*}
	\oint_{\Sigma_{\LL}} \frac{\dd w}{2\pi \ii} \frac{z^L(1+pw)^N}{q_z(w)}\frac{1}{J(w)}G(w_c)=\oint_{\Sigma_{\RR}} \frac{\dd w}{2\pi \ii} \frac{z^L(1+pw)^N}{q_z(w)}\frac{1}{J(w)}G(w_c)=0.
\end{equation*}
Now a  Taylor expansion at $w=w_c$ shows
\begin{equation*}
	G(w)-G(w_c)=G'(w_c)(w-w_c)+\frac{G''(w_c)}{2}(w-w_c)^2+\frac{G'''(w_c)}{6}(w-w_c)^3+ O\left(G^{(4)}(w_c)\cdot (w-w_c)^4\right).
\end{equation*} 
Set $w=w_c+c_0\zeta L^{-1/2}$ where $c_0$ is the same as in \eqref{eq: rescaling w} and assume the parameters are re-scaled as in \eqref{eq: rescaling parameters}. After a tedious but straightforward calculation we see that for $|\zeta|\leq L^{\epsilon/4}$ with $0<\epsilon<1/2$,
\begin{equation}
	G(w)-G(w_c) = -\mathrm{x}\zeta-\frac{\gamma}{2}\zeta^2+\frac{\tau}{3}\zeta^3+O(L^{\epsilon-1/2}).
\end{equation}
Splitting the integral representation for $\log E(\z)$ into two parts with $|\zeta|\leq L^{\epsilon/4}$ and $|\zeta|> L^{\epsilon/4}$ and using  the estimates for $\frac{z^L(1+pw)^N}{q_z(w)}$ and $\frac{1}{J(w)}$ obtained in Lemma~\ref{lemma: asymptoticsofproducts}  we see 
\begin{equation}
\begin{aligned}
	\log E(z) = \int_{-\ii\infty}^{\ii\infty}\frac{\dd\zeta}{2\pi \ii}\frac{\z}{e^{-\zeta^2/2}-\z}\cdot \left(\mathrm{x}\zeta^2+\frac{\gamma}{2}\zeta^3-\frac{\tau}{3}\zeta^4\right)\cdot \left(1+O(L^{\epsilon-1/2})\right) +O(e^{-cL^{\alpha}}),
\end{aligned}
\end{equation}
for some constants $c,\alpha>0$. Now \eqref{eq: asymptoticsofE} follows from integral representations of polylogarithm  \eqref{eq: polylogf}.
\end{proof}
\medskip

\subsection{Asymptotics of \texorpdfstring{$\mathscr{D}_{\vec y}^{(L)}(\vec z)$}{Lg}}\label{sec: proofD} 
Next we discuss the asymptotics of the Fredholm determinant part $\mathscr{D}_{\vec y}(z)$. Note first that by a standard series expansion of Fredholm determinants we have 
\begin{equation}
	\mathscr{D}_{\vec y}^{(L)}(\z) = \sum_{\vec n\in (\mathbb{Z}_{\geq 0})^m} \frac{1}{(n_1!\cdots n_m!)^2} \mathscr{D}_{\vec y,\vec n}^{(L)}(\vec z),
\end{equation}
where $\vec n=(n_1,\cdots,n_m)$ and 
\begin{equation}
	\mathscr{D}_{\vec y,\vec n}^{(L)}(\vec z)= (-1)^{|\vec n|} \sum_{\substack{U^{(\ell)\in (\mathcal{L}_{z_\ell})^{n_\ell}}\\V^{(\ell)\in (\mathcal{R}_{z_\ell})^{n_\ell}}\\ \ell=1,\cdots,m}} \det[\mathscr{K}_{1}^{(L)}(w_i,w_j')]_{i,j=1}^{|\vec n|}\det[\mathscr{K}_{\vec y}^{(L)}(w_i',w_j)]_{i,j=1}^{|\vec n|},
\end{equation}
where $U^{(\ell)}= (u^{(\ell)}_{1},\cdots u^{(\ell)}_{n_\ell})$ and $V^{(\ell)}= (v^{(\ell)}_{1},\cdots v^{(\ell)}_{n_\ell})$ and 
\begin{equation}
	w_i = \begin{cases}
		u^{(\ell)}_k\quad \mathrm{if}\ k=n_1+\cdots+n_{\ell-1}+k\quad \text{for integer $k\leq n_{\ell}$ with $\ell$ odd },\\
		v^{(\ell)}_k\quad \mathrm{if}\ k=n_1+\cdots+n_{\ell-1}+k\quad \text{for integer $k\leq n_{\ell}$ with $\ell$ even },
	\end{cases}
\end{equation}
and 
\begin{equation}
	w_i' = \begin{cases}
		v^{(\ell)}_k\quad \mathrm{if}\ k=n_1+\cdots+n_{\ell-1}+k\quad \text{for integer $k\leq n_{\ell}$ with $\ell$ odd },\\
		u^{(\ell)}_k\quad \mathrm{if}\ k=n_1+\cdots+n_{\ell-1}+k\quad \text{for integer $k\leq n_{\ell}$ with $\ell$ even }.
	\end{cases}
\end{equation}
A similar series expansion holds for the limiting Fredholm determinant $\mathrm{D}_{\mathrm{ic}}^{\mathrm{per}}(\vec \z)$ with $\mathcal{L}_{z_{\ell}}$ and $\mathcal{R}_{z_{\ell}}$ replaced by the limiting roots $\mathrm{L}_{\z_{\ell}}$ and $\mathrm{R}_{\z_{\ell}}$ and the kernels replaced by the limiting kernels $\mathrm{K}_{1}^{\mathrm{per}}$ and $\mathrm{K}_{\mathrm{ic}}^{\mathrm{per}}$ defined in Definition~\ref{def:mathrmK}.
\begin{equation}
	\mathrm{D}_{\mathrm{ic}}^{\mathrm{per}}(\vec \z) = \sum_{\vec n\in (\mathbb{Z}_{\geq 0})^m} \frac{1}{(n_1!\cdots n_m!)^2} \mathrm{D}_{\mathrm{ic},\vec n}^{\mathrm{per}}(\vec \z).
\end{equation}
We will prove the convergence of each of these $\mathscr{D}_{\vec y,\vec n}^{(L)}(\vec z)$ as well as some exponential bounds. 
\begin{lemma}\label{lemma: seriesconvergence}
	Under the same assumption in Theorem \ref{thm:main}, for every fixed $\vec n\in (\mathbb{Z}_{\geq 0})^m$, we have 
	\begin{enumerate}[(i)]
	\item $\mathscr{D}_{\vec y,\vec n}^{(L)}(\vec z)\to \mathrm{D}_{\mathrm{ic},\vec n}^{\mathrm{per}}(\z)$ as $L\to \infty$.
	\item There exists constant $C>0$ such that $|\mathscr{D}_{\vec y,\vec n}^{(L)}(\vec z)|\leq C^{|\vec n|}$ for all $L$ large enough.
	\end{enumerate}
\end{lemma}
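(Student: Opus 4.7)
The plan is to prove both statements together by establishing pointwise convergence of each summand (after identifying Bethe roots near $w_c$ with limiting roots via Lemma~\ref{lemma: convergenceofroots}) and an exponential suppression of configurations containing Bethe roots far from $w_c$. Together with Hadamard's inequality, these will yield both the uniform bound $C^{|\vec n|}$ in (ii) and the termwise convergence in (i).

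For part (i), I would first perform the change of variables $w = w_c + c_0\zeta L^{-1/2}$, under which a Bethe root $u\in\mathcal{L}_{z_\ell}^{(\epsilon)}$ (respectively $v\in\mathcal{R}_{z_\ell}^{(\epsilon)}$) corresponds to a limiting root $\xi\in\mathrm{L}_{\z_\ell}$ (respectively $\eta\in\mathrm{R}_{\z_\ell}$) with error $O(L^{-1/2+3\epsilon}\log L)$. Lemma~\ref{lemma: asymptoticsofproducts}(iii) then gives $J(w) = -c_0\zeta^{-1}L^{-1/2}(1+o(1))$, and the denominator $w-w'$ rescales to $c_0(\zeta-\zeta')L^{-1/2}$, so the $L^{-1/2}$ powers cancel in the ratio $J(w)/(w-w')$. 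Lemma~\ref{lemma: asymptoticsofproducts}(iv)--(v) converts the ratios of products defining $H_z(w)$ into $e^{\mathrm{h}(\zeta,\z)}$-factors from \eqref{eq:def_h}, while Lemma~\ref{lemma: asymptoticsE} applied to each ratio $F_\ell/F_{\ell-1}$ yields $f_i(w)\to \mathrm{f}_i(\zeta)$. The factor $Q_1(j) = 1 - (z_{j-(-1)^j}/z_j)^L$ equals $\mathrm{Q}_1(j)$ exactly by the choice $z_\ell^L = (-1)^N \mathbbm{r}_c^L \z_\ell$, and similarly for $Q_2$; the initial-data factor $\Lambda(1,w,w') = \chi_{\vec y}(w,w';z_1)$ converges to $\cchi_{\mathrm{ic}}(\eta,\xi;\z_1)$ by Assumption~(B), and $\Lambda(i,w,w')=1$ for $i\geq 2$. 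For configurations involving any Bethe root outside the critical region, the ratios appearing in $f_i(w)$ are constructed so that $w_c$ is a steepest-descent point along the contour $\Lambda_{\mathrm{L}}\cup\Lambda_{\mathrm{R}}$, producing exponential suppression of order $e^{-cL^{\alpha}}$; combined with the tail bounds in Lemma~\ref{lemma: asymptoticsofproducts}(ii),(vi), these contributions vanish in the limit.

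For part (ii), the strategy is to apply Hadamard's inequality to each of the two determinants in the summand and estimate the resulting multi-sum via the Hilbert--Schmidt norms of $\mathscr{K}_{1}^{(L)}$ and $\mathscr{K}_{\vec y}^{(L)}$. Each such norm decomposes into a critical-region piece, which is uniformly bounded by a discrete Riemann-sum approximation to the Gaussian-decaying sum $\sum_{\zeta\in\mathrm{L}_\z\cup\mathrm{R}_\z}|\cdot|^2$, plus an exponentially small tail. Assumption~(C) enters only through the single appearance of $\chi_{\vec y}(v,u;z_1)$, which contributes at most a polynomial $L^{\epsilon''}$ factor that is easily absorbed by the Gaussian decay of the limiting Bethe roots. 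This yields the desired bound $|\mathscr{D}_{\vec y,\vec n}^{(L)}(\vec z)|\le C^{|\vec n|}$ with $C$ independent of $L$.

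The main obstacle is obtaining a genuine $C^{|\vec n|}$ bound without additional factorial factors, since the number of Bethe roots in the critical region grows like $L^{\epsilon/2}$ and the Cauchy-type denominators $(w-w')^{-1}$ could in principle produce large sums. This is resolved by using Hadamard in the form that bounds a determinant by the product of $\ell^2$ row-norms: after the rescaling $w\mapsto\zeta$, the row sums are controlled by the (finite) Hilbert--Schmidt norms of the limiting kernels $\mathrm{K}_1^{\mathrm{per}}$ and $\mathrm{K}_{\mathrm{ic}}^{\mathrm{per}}$, whose finiteness follows from the Gaussian decay inherent in $|e^{-\zeta^2/2}|=|\z|<1$ on $\mathrm{L}_\z\cup\mathrm{R}_\z$. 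Once Lemma~\ref{lemma: seriesconvergence} is in hand, dominated convergence on the Fredholm series recovers Lemma~\ref{lemma: D(z)}.
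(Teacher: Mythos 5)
Your high-level plan is correct in outline, but there is a genuine gap in part~(ii) that the paper closes with a specific device you do not mention: the kernels $\mathscr{K}_{1}^{(L)}$ and $\mathscr{K}_{\vec y}^{(L)}$ (and their limits $\mathrm{K}_{1}^{\mathrm{per}}$, $\mathrm{K}_{\mathrm{ic}}^{\mathrm{per}}$) must be \emph{conjugated} before a Hadamard/Hilbert--Schmidt bound can work. The $f_i$-factor in $\mathscr{K}_1^{(L)}(w,w')$ depends only on the first variable $w$; the $w'$-dependence is only through the bounded $H$-factors and the $1/(w-w')$ Cauchy factor. In the limit, the $\zeta'$-dependence of $\mathrm{K}_{\mathrm{step},1}^{\mathrm{per}}(\zeta,\zeta')$ is only through the bounded $e^{-\mathrm{h}(\zeta',\cdot)}$ and $1/(\zeta-\zeta')$. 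Since the limiting roots $\zeta'_k\approx\sqrt{4\pi k}\,e^{\pm i\pi/4}$ have $|\zeta'_k|^2\sim 4\pi k$, the sum $\sum_{\zeta'}1/|\zeta-\zeta'|^2$ diverges logarithmically, so $\mathrm{K}_1^{\mathrm{per}}$ is \emph{not} Hilbert--Schmidt, contrary to your claim. The condition $|e^{-\zeta^2/2}|=|\z|<1$ that you cite is the level-set equation \emph{defining} the roots; it holds identically along the discrete set and therefore supplies no decay in $|\zeta'|$.

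The paper's resolution is twofold: it first replaces $f_i$ by the normalized $\tilde f_i(w)=F_i(w)F_{i-1}(w_c)/(F_{i-1}(w)F_i(w_c))$ (suppressing the exponentially large factor $F_i(w_c)/F_{i-1}(w_c)\sim e^{O(L^{3/2})}$ that would otherwise make $\det[\mathscr{K}_1]$ alone blow up), and then distributes the cubic-exponential decay symmetrically by passing to the conjugated kernels $\tilde{\mathscr{K}}_{1}^{(L)}$, $\tilde{\mathscr{K}}_{\vec y}^{(L)}$ carrying $\sqrt{\tilde f_i(w)}\sqrt{\tilde f_j(w')}$; these conjugations leave the product $\det[\mathscr{K}_1]\det[\mathscr{K}_{\vec y}]$ unchanged because each $z_\ell$ contributes equally many $\mathcal{L}_{z_\ell}$- and $\mathcal{R}_{z_\ell}$-roots. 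After conjugation both kernels decay in \emph{both} variables, the Hilbert--Schmidt norms are uniformly bounded in $L$, and Hadamard produces the desired $C^{|\vec n|}$ bound. Your part~(i) reasoning (pointwise factor-by-factor convergence inside the critical region plus the $e^{-cL^\alpha}$ tail bounds) is essentially the content of the paper's Lemma on the conjugated kernels and is fine, but part~(ii) as written does not close without the conjugation and the $F_i(w_c)$ normalization.
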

It is clear that Lemma~\ref{lemma: D(z)} follows immediately from Lemma~\ref{lemma: seriesconvergence} by dominated convergence theorem. To prove Lemma~\ref{lemma: seriesconvergence} we will prove the convergence of the kernels after proper conjugation for points inside the critical region as well as exponential decay estimates for the kernel at points outside the critical region. The conjugation is as follows: we replace $\mathscr{K}_{1}^{(L)}$ and $\mathscr{K}_{\vec y}^{(L)}$ defined in Definition~\ref{def:D general} by $\tilde{\mathscr{K}}_{1}^{(L)}$ and $\tilde{\mathscr{K}}_{\vec y}^{(L)}$ where 
\begin{equation}\label{eq: conjugated kernel}
	\begin{aligned}
	&\tilde{\mathscr{K}}_{1}^{(L)}(w,w'):= -\left( \delta_i(j) +\delta_i( j + (-1)^i )\right) 
	\frac{ J(w) \sqrt{\tilde{f}_i(w)}\sqrt{\tilde{f}_j(w')} (H_{z_i}(w))^2 }
	{ H_{z_{i-(-1)^i}}(w) H_{z_{j-(-1)^j}}(w') (w-w')} Q_1(j), \\
	&\tilde{\mathscr{K}}_{\vec y}^{(L)}(w,w'):=\left( \delta_j(i) +\delta_j( i - (-1)^j )\right)  \frac{ J(w') \sqrt{\tilde{f}_j(w')}\sqrt{\tilde{f}_i(w)} (H_{z_j}(w'))^2 }
	{ H_{z_{j + (-1)^j}}(w') H_{z_{i + (-1)^i}}(w) (w'-w)} Q_2(i)\Lambda(i,w,w'),
	\end{aligned}
\end{equation}
for $w \in (\mathcal{L}_{z_i} \cup \mathcal{R}_{z_i}) \cap \mathscr{S}_1$ and $w' \in (\mathcal{L}_{z_j} \cup \mathcal{R}_{z_j}) \cap \mathscr{S}_2$. Here 
\begin{equation}
	\tilde{f}_{i}(w):= \begin{cases}
		&\frac{F_i(w)F_{i-1}(w_c)}{F_{i-1}(w)F_{i}(w_c)}\quad \text{for } \mathrm{Re}(w)<w_c,\\
		&\frac{F_i(w)F_{i-1}(w_c)}{F_{i-1}(w)F_{i}(w_c)}\quad \text{for } \mathrm{Re}(w)<w_c.
	\end{cases}
\end{equation}
We define the square root to be $\sqrt{w}= r^{1/2}e^{i\theta/2}$ for $w=re^{i\theta}$ with $-\pi<\theta\leq \pi$. Note that the product of determinants will always be continuous even though the square root function is not since every $(\tilde{f}_i)^{1/2}$ is multiplied twice. We change the limiting kernels  in a similar way by replacing $\mathrm{f_i}$ or $\mathrm{f}_j$ in the kernels with $\sqrt{\mathrm{f_i}}\sqrt{\mathrm{f_j}}$ and denote them as $\tilde{\mathrm{K}}_{1}^{\mathrm{per}}$ and  $\tilde{\mathrm{K}}_{\mathrm{ic}}^{\mathrm{per}}$. Then we have the following asymptotics for the conjugated kernels, which easily implies Lemma~\ref{lemma: seriesconvergence}.
\begin{lemma} \label{lem:K1Kw2estmfo}
	Fix $0<\epsilon<1/(1+2m)$. 
	Let
	\begin{equation}
	\Omega=\Omega_L:= \bigg\{ w\in \mathbb{C}: |w-w_c| \le c_0^{-1}L^{-1/2+\epsilon/4}\bigg \}
	\end{equation}
	be a disk centered at $w_c$.
	Under the same assumption in Theorem~\ref{thm:main} we have 
	\begin{enumerate}[(i)]
		\item As $L\to \infty$, uniformly for $w\in \mathscr{S}_1\cap \Omega$ and $w'\in \mathscr{S}_2\cap \Omega$
		\begin{equation}
		\label{eq:aux_2017_07_01_01}
		|\tilde{\mathscr{K}}_{1}^{(L)}(w,w')| = |\tilde{\mathrm{K}}_{1}^{\mathrm{per}}(\zeta, \zeta')| +O(L^{\epsilon-1/2}\log L), 
		\quad 
		|\tilde{\mathscr{K}}_{\vec y}^{(L)}(w,w')| = |\tilde{\mathrm{K}}_{\mathrm{ic}}^{\mathrm{per}}(\zeta, \zeta')| +O(L^{\epsilon-1/2}\log L),
		\end{equation}
		where $\zeta\in\mathrm{S}_1, \zeta'\in\mathrm{S}_2$ are the image of $w, w'$ under either the map $\mathcal{M}_{L,\text{left}}$ or $\mathcal{M}_{L,\text{right}}$ in Lemma~\ref{lemma: convergenceofroots}. 
		\item 
		As $L\to \infty$, for $w_i\in \mathscr{S}_1\cap \Omega$ and $w_i'\in \mathscr{S}_2\cap \Omega$, 
		\begin{equation*} 
		\det\left[\tilde{\mathscr{K}}_{1}^{(L)}(w_i, w'_j)\right]_{i,j=1}^{|\vec n|} \to \det\left[\tilde{\mathrm{K}}_{1}^{\mathrm{per}}(\zeta_i, \zeta'_j)\right]_{i,j=1}^{|\vec n|}, 
		\quad
		\det\left[\tilde{\mathscr{K}}_{\vec y}^{(L)}(w_i, w'_j)\right]_{i,j=1}^{|\vec n|} \to \det\left[\tilde{\mathrm{K}}_{\mathrm{ic}}^{\mathrm{per}}(\zeta_i, \zeta'_j)\right]_{i,j=1}^{|\vec n|}, 
		\end{equation*}
	    for each $\vec n\in (\mathbb{Z}_{\geq 0})^m$, where $\zeta\in\mathrm{S}_1, \zeta'\in\mathrm{S}_2$ are the image of $w, w'$ under either the map $\mathcal{M}_{L,\text{left}}$ or $\mathcal{M}_{L,\text{right}}$ in Lemma~\ref{lemma: convergenceofroots}.
		\item 
		There are positive constants $c$ and $\alpha$ such that 
		\begin{equation}
		|\tilde{\mathscr{K}}_{1}^{(L)}(w,w')|= O(e^{-cL^{\alpha }} ), 
		\qquad
		|\tilde{\mathscr{K}}_{\vec y}^{(L)}(w',w)|= O(e^{-cL^{\alpha }} )
		\end{equation}
		as $L\to\infty$, uniformly for $w\in \mathscr{S}_1\cap \Omega^c$ and $w'\in \mathscr{S}_2$, and also for $w'\in \mathscr{S}_2\cap \Omega^c$ and $w\in \mathscr{S}_1$. 
	\end{enumerate}
\end{lemma}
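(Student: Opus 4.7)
The plan is to analyze each factor in the conjugated kernels \eqref{eq: conjugated kernel} separately under two regimes: inside the critical disk $\Omega$, where the Bethe roots cluster near $w_c$ and the rescaling $w=w_c+c_0\zeta L^{-1/2}$ is valid, and outside $\Omega$, where one must extract exponential decay from the combination $\sqrt{\tilde{f}_i(w)}\sqrt{\tilde{f}_j(w')}\cdot(H_{z_i}(w))^2/H_{z_{i-(-1)^i}}(w)\cdots$. The symmetric conjugation via square roots is precisely chosen so that each factor is individually controllable via the steepest-descent profile associated with the saddle point $w_c$.

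Inside $\Omega$ (which gives (i) and (ii)), I would combine the following asymptotic inputs factor by factor. First, Lemma~\ref{lemma: asymptoticsofproducts}(iii) yields $J(w)=-c_0 L^{-1/2}\zeta^{-1}(1+O(L^{\epsilon-1/2}))$, and together with the trivial expansion $w-w'=c_0(\zeta-\zeta')L^{-1/2}$ the two powers of $L^{-1/2}$ cancel the appropriate factor in the final expression. Second, a Taylor expansion of $\log F_i-\log F_{i-1}$ at $w_c$, done exactly as in the proof of Lemma~\ref{lemma: asymptoticsE} but with parameters indexed by differences $(\gamma_i-\gamma_{i-1},\tau_i-\tau_{i-1},\mathrm{x}_i-\mathrm{x}_{i-1})$, gives $\sqrt{\tilde{f}_i(w)}=\sqrt{\mathrm{f}_i(\zeta)}(1+O(L^{\epsilon-1/2}))$, with the two branches of the square root matching the piecewise definition in \eqref{eq: limitingf}. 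Third, Lemma~\ref{lemma: asymptoticsofproducts}(iv) together with the definition of $H_z(w)$ converts every product $q_{z,\LL}(w)/(w+1)^{L-N}$ and $q_{z,\RR}(w)/w^N$ into $e^{\mathrm{h}(\zeta,\mathrm{z})}$, producing exactly the factors $e^{2\mathrm{h}(\zeta,\mathrm{z}_i)-\mathrm{h}(\zeta,\mathrm{z}_{i-(-1)^i})-\mathrm{h}(\zeta',\mathrm{z}_{j-(-1)^j})}$ appearing in Definition~\ref{def:mathrmK}. Fourth, the identity $z_\ell^L=(-1)^N\mathbbm{r}_c^L\mathrm{z}_\ell$ implies $Q_1(j)=\mathrm{Q}_1(j)$ and $Q_2(i)=\mathrm{Q}_2(i)$ exactly, not just asymptotically. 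Finally, Assumption~\ref{def:asympstab}(B) gives $\chi_{\vec y}(v,u;z_1)=\cchi_{\mathrm{ic}}(\eta,\xi;\mathrm{z}_1)+O(L^{4\epsilon-1/2})$ on the critical region, while Lemma~\ref{lemma: convergenceofroots} identifies the discrete arguments with the limit points in $\mathrm{L}_{\mathrm{z}},\mathrm{R}_{\mathrm{z}}$. Multiplying all approximations together yields the entrywise convergence in (i), and the determinantal convergence (ii) follows from a Hadamard bound on the minors since all entries are uniformly bounded on $\Omega\cap\mathscr{S}_1$, $\Omega\cap\mathscr{S}_2$.

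Outside $\Omega$ (which gives (iii)), the strategy is to exhibit a factor decaying like $e^{-cL^{\alpha}}$ that overwhelms every other factor, which can only grow polynomially or sub-exponentially in $L$. For $w\in\mathscr{S}_1\cap\Omega^c$, Lemma~\ref{lemma: asymptoticsofproducts}(vi) shows that the relevant $H_{z}$ ratios are of size $e^{O(L^{-\epsilon})}$, so bounded; $J(w)$ satisfies $|J(w)^{-1}|\le CL$; $Q_1(j),Q_2(j)$ are $O(1)$; and the denominator $|w-w'|\geq L^{-1/2+\epsilon/4}$. The heart of the argument is bounding $|\sqrt{\tilde{f}_i(w)}|$: with the sign convention of the square root chosen to match the two branches in \eqref{eq: limitingf}, the Taylor expansion at $w_c$ shows that along the Cassini oval direction the leading cubic term in $\log|\sqrt{\tilde{f}_i(w)}|$ has definite negative sign, yielding $|\sqrt{\tilde{f}_i(w)}|\le e^{-cL^{\alpha}}$ for some $c,\alpha>0$ whenever $|w-w_c|\geq L^{-1/2+\epsilon/4}$. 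The same argument applies to $\sqrt{\tilde{f}_j(w')}$ if $w'\in\Omega^c$, and if only one of $w,w'$ is outside $\Omega$, a single such factor suffices.

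The main obstacle, as I see it, is handling the characteristic function $\Lambda(1,w,w')=\chi_{\vec y}(w,w';z_1)$ in the regime where $w$ or $w'$ lies outside the critical disk $\Omega$. Assumption~\ref{def:asympstab}(B) provides convergence only on the critical region, so for the exponentially decaying tail one must instead invoke Assumption~\ref{def:asympstab}(C), which gives only the polynomial bound $|\chi_{\vec y}(w,w';z_1)|\le C'L^{\epsilon''}$. One then verifies that this bound is absorbed by the exponentially small factor $e^{-cL^{\alpha}}$ produced by $\sqrt{\tilde{f}_1(w)}$, which requires tracking the dependence of constants carefully. Once (iii) is in place, the dominated convergence argument sketched in Lemma~\ref{lemma: seriesconvergence}, splitting each multiple sum over Bethe roots into an inside-$\Omega$ piece (controlled via (ii)) and an outside-$\Omega$ piece (controlled via (iii) and the cardinality bound $|\mathcal{L}_z|+|\mathcal{R}_z|=L$), concludes the proof.
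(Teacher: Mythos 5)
Your proposal is correct and follows essentially the same route as the paper: the paper reduces the lemma to the asymptotics and tail estimates of the individual factors $J(w)$, $H_{z}(w)$ (via Lemma~\ref{lemma: asymptoticsofproducts}(iii),(iv),(vi)) and $\tilde f_i(w)$ (the content of Lemma~\ref{lemma: asymptoticskey}, proved by the same Taylor expansion at $w_c$ as in Lemma~\ref{lemma: asymptoticsE}), exactly as you do. Your explicit treatment of the characteristic function $\chi_{\vec y}$ via Assumption~\ref{def:asympstab}(B) inside $\Omega$ and the polynomial bound of Assumption~\ref{def:asympstab}(C) absorbed by the exponential decay outside $\Omega$ is a detail the paper leaves implicit, but it is consistent with its argument.
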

\begin{proof}
  Due to the structure of the kernel \eqref{eq: conjugated kernel}, the lemma is proved once we establish the corresponding asymptotics and tail estimates for the functions $J(w)$, $H_{z_i}(w)$ and $\tilde{f}_{i}(w)$. For $J(w)$ and $H_z(w)$ these have already been discussed in Lemma~\ref{lemma: asymptoticsofproducts} (iii),(iv) and (vi). The needed estimates for $\tilde{f}_i(w)$ is summarized in the following Lemma~\ref{lemma: asymptoticskey}, the proof of which is similar to that of Lemma~\ref{lemma: asymptoticsE} so we omit the details.
\end{proof}

\begin{lemma}\label{lemma: asymptoticskey} Under the same assumption as in Theorem~\ref{thm:main} for the parameters $a_i,k_i,t_i$, for $w=w_c+c_0\zeta L^{-1/2}$ with $w\in \mathcal{L}_z\cup \mathcal{R}_z$ we have for fixed $0<\epsilon<1/2$
	\begin{equation}
		\tilde{f}_j(w) = \begin{cases}
			\mathrm{f}_j(\zeta)\left(1+O(L^{\epsilon-1/2})\right)\quad &\text{if }\  |\zeta|\leq L^{\epsilon/4},\\
			O(e^{-cL^{3\epsilon/4}})\quad &\text{if }\  |\zeta|\geq L^{\epsilon/4},
		\end{cases}
	\end{equation}
	for $1\leq j\leq m$, where $\mathrm{f}_j(\zeta)$ is defined in \eqref{eq: limitingf}.
\end{lemma}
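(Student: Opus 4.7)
The plan is to mimic the Taylor-expansion argument from the proof of Lemma~\ref{lemma: asymptoticsE}, adapted so that we are comparing $F_i(w)/F_{i-1}(w)$ to its value at the critical point $w_c$ rather than summing over the Bethe roots. First I would take logarithms and write
\begin{equation*}
\log \tilde{f}_j(w) \;=\; \bigl[G_j(w)-G_{j-1}(w)\bigr] - \bigl[G_j(w_c)-G_{j-1}(w_c)\bigr],
\end{equation*}
where $G_\ell(w):=k_\ell\log(-w)+(-a_\ell-k_\ell)\log(1+w)+(t_\ell-k_\ell)\log(1+pw)$ (choosing branch cuts compatible with the sign conventions in \eqref{eq: f_i}), so that the zeroth-order Taylor coefficient at $w_c$ vanishes automatically.

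Next I would Taylor expand $[G_j-G_{j-1}](w)$ around $w_c$. Substituting the critical scaling $w=w_c+c_0\zeta L^{-1/2}$ and the parameter scaling \eqref{eq:parameters} into
$[G_j-G_{j-1}]^{(n)}(w_c)$, one sees that each derivative splits into pieces of magnitudes governed respectively by $\Delta k := k_j-k_{j-1} = O(L^{3/2})$, $\Delta a := a_j-a_{j-1} = O(L^{3/2})$, $\Delta t := t_j-t_{j-1} = O(L^{3/2})$. The key point is that the constants $c_1,\dots,c_5$ in \eqref{eq: scaling constants} are exactly those making
\begin{equation*}
[G_j-G_{j-1}]'(w_c)\cdot c_0 L^{-1/2} \sim \mp(\mathrm{x}_j-\mathrm{x}_{j-1}),\quad
[G_j-G_{j-1}]''(w_c)\cdot c_0^2 L^{-1}\sim \pm(\gamma_j-\gamma_{j-1}),\quad
[G_j-G_{j-1}]'''(w_c)\cdot c_0^3 L^{-3/2}\sim \mp 2(\tau_j-\tau_{j-1}),
\end{equation*}
with the signs determined by whether $\Re(w)<w_c$ or $\Re(w)>w_c$ (this is dictated by the two cases in \eqref{eq: f_i}). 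This is most conveniently verified by using the relation $p(L-N)w_c^2+Lw_c+N=0$, which is precisely the vanishing of $J(w_c)^{-1}$ and reduces many terms. Truncating the expansion after the cubic term introduces an error $O(\Delta t\cdot (w-w_c)^4) = O(L^{-1/2+\epsilon})$ in the region $|\zeta|\leq L^{\epsilon/4}$, yielding the first case of the lemma.

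For the tail estimate $|\zeta|\ge L^{\epsilon/4}$, I would combine two observations. First, $w\in\mathcal{L}_z\cup\mathcal{R}_z$ lies on the deformed Cassini oval, which near $w_c$ is well approximated (via Lemma~\ref{lemma: convergenceofroots} and its extension to larger $|\zeta|$) by the level set $|e^{-\zeta^2/2}|=|\mathrm{z}|$; along this contour $\Re(\zeta^2)$ is bounded, so $\Re(\zeta^3)\to\mp\infty$ as $|\zeta|\to\infty$ with the correct sign so that $\Re\bigl(\mp\tfrac{1}{3}(\tau_j-\tau_{j-1})\zeta^3\bigr) \le -c|\zeta|^3$ for some $c>0$ (using $\tau_j>\tau_{j-1}$ from Theorem~\ref{thm:main}). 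Second, the cubic Taylor term is the dominant one as $|\zeta|$ grows, and a quantitative Taylor remainder shows that this control persists through the full level curve as long as $|w-w_c|=O(1)$. For $|\zeta|\ge L^{\epsilon/4}$ this produces an exponent at most $-c L^{3\epsilon/4}$, giving the second case.

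The main obstacle will be the bookkeeping in verifying that the specific choices of $c_1,\dots,c_5$ in \eqref{eq: scaling constants} really do produce the cubic polynomial appearing in $\mathrm{f}_j(\zeta)$ with the correct signs on both $\Re(\zeta)<0$ and $\Re(\zeta)>0$; the quadratic identity satisfied by $w_c$ is essential to collapse several $O(L^{1/2})$ contributions that would otherwise appear to diverge. Once the scaling constants are verified to do their job, the error analysis is routine and follows the template of Lemma~\ref{lemma: asymptoticsE}.
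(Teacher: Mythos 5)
Your overall approach — take logarithms, Taylor-expand $G_j-G_{j-1}$ around $w_c$, match the scaling constants to produce the cubic, and control the tail via the contour geometry — is the same route the paper takes, as the paper's proof simply refers to the Taylor-expansion argument of Lemma~\ref{lemma: asymptoticsE} and leaves out the details. Your treatment of the local regime $|\zeta|\le L^{\epsilon/4}$, including the error estimate $O(\Delta t\,(w-w_c)^4)=O(L^{\epsilon-1/2})$, is correct, as is the observation that $c_4=-\rho$ causes the $\Delta\gamma_j L$ contributions to cancel. Verifying that $c_1,\dots,c_5$ produce exactly the claimed cubic is the computational heart of the lemma, and it is fine to defer it, though in an actual write-up that identity (a close cousin of the expansion recorded inside the proof of Lemma~\ref{lemma: asymptoticsE}) has to be exhibited.

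The genuine gap is in your second case. Your tail bound rests entirely on (i) approximating the Bethe contour near $w_c$ by the level set $\lvert e^{-\zeta^2/2}\rvert=\lvert\mathrm{z}\rvert$, where $\Re(\zeta^2)$ is constant, and (ii) a Taylor remainder estimate, which you assert ``persists through the full level curve as long as $|w-w_c|=O(1)$.'' Neither holds once $|w-w_c|$ is of constant order. The local parametrization of Lemma~\ref{lemma: convergenceofroots} is only valid for $|w-w_c|=O(L^{-1/2+\epsilon})$, and the quartic-over-cubic ratio in the Taylor series is of size $|w-w_c|$, so the cubic only dominates when $|w-w_c|=o(1)$. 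However, the Bethe roots $\mathcal{L}_z\cup\mathcal{R}_z$ wrap all the way around $-1$ and $0$, so many roots lie at distance comparable to $|1+w_c|$ or $|w_c|$ from $w_c$, i.e.\ $|\zeta|\asymp L^{1/2}$. For those roots you need a global steepest-descent estimate: show directly that $\Re\log\tilde f_j(w)$ stays bounded above by $-cL^{3\epsilon/4}$ along the far part of the contour. The natural route is to eliminate one of the three logarithms using the contour constraint $\rho\log|w|+(1-\rho)\log|1+w|-\rho\log|1+pw|=\log|z|$, after which $\log|F_j/F_{j-1}|(w)-\log|F_j/F_{j-1}|(w_c)$ becomes a linear combination, with $O(L^{3/2})$ coefficients, of $\log|1+w|-\log|1+w_c|$ and $\log|1+pw|-\log|1+pw_c|$, and one must then check the sign of this combination all along $\Lambda_{\mathrm{L}}\cup\Lambda_{\mathrm{R}}$. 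This is the analogue of Lemma 8.2/8.4 of \cite{baik2018relax}, which the paper implicitly imports when it declares the proof ``similar to Lemma~\ref{lemma: asymptoticsE}.'' Your proposal does not contain this global step, and the claim that a Taylor remainder handles it should be removed or replaced.
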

\smallskip
\subsection{Proof of Theorem~\ref{thm:special_IC}}\label{sec: verifyIC} In this section we verify that the flat initial condition satisfies Assumption~\ref{def:asympstab} with the limiting functions $E_{\mathrm{flat}}$ and $\cchi_{\mathrm{flat}}$ given by \eqref{eq: characteristicflat}. We start with a product formula for the pre-limit functions $\mathcal{E}_{\mathrm{flat}}(z)$ and $\chi_{\mathrm{flat}}(v,u;z)$.  
\begin{lemma}\label{lemma: pre-limit energy and char}
	Recall the global energy function $\mathcal{E}_{\vec y}(z)$ and characteristic function $\chi_{\vec y}(v,u;z)$ defined in Definition~\ref{def: globalenergy}. For the flat initial condition $\vec y=(-d,-2d,\cdots,-Nd)$ with $d=L/N\in \mathbb{N}$ we have 
	\begin{enumerate}[(i)]
	\item  With the standard square root function $\sqrt{w}$ with branch cut $\mathbb{R}_{\leq 0}$,
	\begin{equation}
		\mathcal{E}_{\mathrm{flat}}(z) = \frac{\prod_{v\in \mathcal{R}_z}\left(\sqrt{v+1}\right)^{2-d}}{\prod_{v\in \mathcal{R}_z}\sqrt{p(d-1)v^2+dv+1}}\left[\frac{\prod_{v\in \mathcal{R}_z}\prod_{u\in \mathcal{L}_z}\sqrt{v-u}}{\prod_{u\in \mathcal{L}_z}\left(\sqrt{-u}\right)^N\prod_{v\in \mathcal{R}_z}\left(\sqrt{v+1}\right)^{L-N}}\right].
	\end{equation}
   \item For $v\in \mathcal{R}_z$ and $u\in \mathcal{L}_z$,
   	\begin{equation}
   		\chi_{\mathrm{flat}}(v,u;z) =
   		\begin{cases}
   		 \frac{1}{J(v)}\frac{(v+1)^{L-N}u^N}{q_{z,\mathrm{L}}(v)q_{z,\mathrm{R}}(u)}\frac{1+pv}{1+pu}(u-v)\quad &\text{if }\ \frac{u(u+1)^{d-1}}{1+pu}=\frac{v(v+1)^{d-1}}{1+pv},\\
   		 0 &\text{otherwise},
   		 \end{cases}
   	\end{equation}
   where $J(w)=\frac{w(w+1)(1+pw)}{p(L-N)w^2+Lw+N}$, $q_{z,\mathrm{L}}(w):=\prod_{u\in \mathcal{L}_z}(w-u)$ and $q_{z,\mathrm{R}}(w):=\prod_{v\in \mathcal{R}_z}(w-v)$.
	\end{enumerate}
\end{lemma}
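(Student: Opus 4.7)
The plan is to prove parts (i) and (ii) by direct computation from Definition~\ref{def: symmetricfunction}, systematically exploiting the Bethe equation in the form $\alpha(v)^N=z^{-L}$ for every $v\in\mathcal{S}_z$, where I introduce the rational function
\begin{equation*}
\alpha(w):=\frac{(pw+1)(w+1)^{1-d}}{w},
\end{equation*}
so that $\alpha(w)^N = w^{-N}(1+pw)^N (w+1)^{-(L-N)}$ and the Bethe polynomial factors as $q_z(w)=-z^{-L}(w+1)^{L-N}w^{N}\,[\alpha(w)^{N}-z^{-L}]$ up to sign. Note the chain-rule identity
\begin{equation*}
\alpha'(w) = -\frac{p(d-1)w^{2}+dw+1}{w^{2}(w+1)^{d}},
\end{equation*}
which already explains the quadratic $p(d-1)v^2+dv+1$ appearing in the statement.

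For part (i), I compute $\lambda(\vec y_{\mathrm{flat}})_i = y_i+i = i(1-d)$, and then in the determinant $\det[v_j^{N-i}(pv_j+1)^{i-1}(v_j+1)^{i(1-d)}]_{i,j=1}^N$ I factor $v_j^{N-1}(v_j+1)^{1-d}$ out of column $j$; the $(i,j)$ entry becomes $\alpha(v_j)^{i-1}$ and the determinant collapses to the Vandermonde $\prod_{i<j}(\alpha_j-\alpha_i)$. The denominator $\det[v_j^{N-i}]$ is itself a Vandermonde. Because the Bethe equation forces the $N$ numbers $\alpha_j=\alpha(v_j)$, $v_j\in\mathcal{R}_z$, to be exactly the $N$ $N$-th roots of $z^{-L}$, I have $\prod_j(X-\alpha_j)=X^N-z^{-L}$, so both $\prod_{i<j}(\alpha_j-\alpha_i)^2$ and $\prod_j\alpha_j$ are explicit. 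For the Vandermonde in the denominator I use the discriminant identity $\prod_{i<j}(v_i-v_j)^2 = \pm\prod_j q_{z,\mathrm{R}}'(v_j)$ together with
\begin{equation*}
q_z'(v_j) = \frac{z^{L}(1+pv_j)^{N}}{J(v_j)},\qquad q_{z,\mathrm{R}}'(v_j)=\frac{q_z'(v_j)}{q_{z,\mathrm{L}}(v_j)},
\end{equation*}
the first of which is obtained by direct differentiation of $q_z$ and then simplifying with the Bethe equation. Multiplying these pieces out yields the stated formula for $\mathcal{E}_{\mathrm{flat}}(z)^{2}$; taking square roots and pinning down the sign by checking the small-$|z|$ limit (where $\mathcal{R}_z\to\{0\}$, $\mathcal{L}_z\to\{-1\}$, and $\mathcal{E}_{\mathrm{flat}}(z)\to 1$) gives part (i). The key simplification that brings in the quadratic $p(d-1)v^2+dv+1$ is the relation $\alpha'(v_j) = -\alpha_j/(NJ(v_j))$, obtained by differentiating $\alpha^{N}-z^{-L}=-z^{-L}q_z/[v^{N}(v+1)^{L-N}]$ at $v=v_j$.

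For part (ii), I apply the same column factorization to the numerator $\mathcal{F}_{\lambda(\vec y)}(\mathcal{R}_z\cup\{u\}\setminus\{v\};p)$. The resulting Vandermonde is in the $N$ values $\{\alpha(v'):v'\in\mathcal{R}_z\setminus\{v\}\}\cup\{\alpha(u)\}$. Since $u\in\mathcal{L}_z$ also satisfies $\alpha(u)^{N}=z^{-L}$, this value must coincide with some $\alpha(v')$ for $v'\in\mathcal{R}_z$; if that $v'\neq v$, the Vandermonde vanishes, which forces the indicator $\mathbf{1}_{\alpha(u)=\alpha(v)}$ in the statement (and $\alpha(u)=\alpha(v)$ is exactly the condition $u(u+1)^{d-1}/(1+pu)=v(v+1)^{d-1}/(1+pv)$). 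When $\alpha(u)=\alpha(v)$, the numerator and denominator of $\chi_{\mathrm{flat}}(v,u;z)$ share all Vandermonde factors except the column factor of the replaced variable and the factors $\prod_{v'\neq v}(v'-v)$ vs.\ $\prod_{v'\neq v}(v'-u)$; the former is $q_{z,\mathrm{R}}'(v)$, while the latter equals $q_{z,\mathrm{R}}(u)/(u-v)\cdot (u-v)/(v\text{ removed})$ and telescopes against the column factor ratio $u^{N-1}(u+1)^{1-d}/[v^{N-1}(v+1)^{1-d}]$, leaving precisely
\begin{equation*}
\chi_{\mathrm{flat}}(v,u;z)=\frac{(u-v)\, q_{z,\mathrm{R}}(u)}{q_{z,\mathrm{R}}'(v)}\cdot\frac{u^{N-1}(u+1)^{1-d}}{v^{N-1}(v+1)^{1-d}}
\end{equation*}
up to routine algebra; substituting $q_{z,\mathrm{R}}'(v)=z^{L}(1+pv)^{N}/[J(v)q_{z,\mathrm{L}}(v)]$, using $v^{N}(v+1)^{L-N}=z^{L}(1+pv)^{N}$, and using $\alpha(u)=\alpha(v)$ to trade a $(u+1)^{1-d}/(v+1)^{1-d}$ factor for a $v(1+pu)/[u(1+pv)]$ factor, one arrives at the claimed expression.

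The main obstacle is the bookkeeping around square roots and signs in part (i): both sides are only determined up to a global sign $\pm 1$ by the squared identity, and identifying the correct sheet of the various square-root factors uniformly in $z\in\{|z|<\mathbbm{r}_c\}$ requires care. I will resolve this by checking the sign at the reference point $z\to 0$, where $\mathcal{E}_{\mathrm{flat}}(z)\to 1$ by Remark~\ref{rmk: step}-type analysis (the right Bethe roots collapse to $0$, left Bethe roots to $-1$), and then using continuity in $z$ on the simply connected disk $\{|z|<\mathbbm{r}_c\}$ to propagate. A secondary technical point is handling the degenerate configurations (collision of Bethe roots, vanishing of discriminants) on a measure-zero set, which is harmless because both sides are meromorphic in $z$.
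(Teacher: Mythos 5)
Your proof follows essentially the same route as the paper's: factor $v_j^{N-1}(v_j+1)^{1-d}$ out of column $j$ so the determinant defining $\mathcal{F}_\lambda$ collapses to a Vandermonde in $\alpha(v_j)$ with $\alpha(w)=(pw+1)(w+1)^{1-d}/w$, and then evaluate the resulting discriminants using the Bethe structure. However, the proposal contains a genuine gap at exactly the point the paper singles out as ``the key observation.'' You assert, as though it were automatic, that ``the Bethe equation forces the $N$ numbers $\alpha_j=\alpha(v_j)$, $v_j\in\mathcal{R}_z$, to be exactly the $N$ $N$-th roots of $z^{-L}$,'' and from there conclude $\prod_j(X-\alpha_j)=X^N-z^{-L}$. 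The Bethe equation only says that each $\alpha(v_j)$ is \emph{some} $N$-th root of $z^{-L}$; it does not tell you $\alpha$ is injective on $\mathcal{R}_z$. The equation $\alpha(w)=\eta$ has $d$ roots for generic $\eta$, so a priori more than one of them could lie in $\mathcal{R}_z$, in which case the factorization $\prod_j(X-\alpha_j)=X^N-z^{-L}$, the resulting discriminant evaluation, and therefore all of part~(i) collapse; the same issue underlies the indicator $\mathbf{1}_{\alpha(u)=\alpha(v)}$ in part~(ii). This is precisely the content of the $(d-1)$-to-$1$ map $\mathcal{M}:\mathcal{L}_z\to\mathcal{R}_z$ that the paper flags and it requires a real argument. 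One way to supply it: $1/\alpha(w)=w(1+w)^{d-1}/(1+pw)$ is analytic on the closed region bounded by $\Lambda_{\mathrm{R}}$ (neither $-1$ nor $-1/p$ lies there, since $\Lambda_{\mathrm{R}}$ and $\Lambda_{\mathrm{L}}$ enclose disjoint regions around $0$ and $-1$ respectively), has a unique simple zero there at $w=0$, and satisfies $|1/\alpha|=|z|^{L/N}$ on $\Lambda_{\mathrm{R}}$; the argument principle then shows $1/\alpha$ maps $\Lambda_{\mathrm{R}}$ with degree one onto the circle $\{|\zeta|=|z|^{L/N}\}$, which is exactly the needed injectivity of $\alpha|_{\mathcal{R}_z}$.

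A minor transcription slip in part~(ii): the displayed intermediate formula should read $\chi_{\mathrm{flat}}=\frac{(u-v)q_{z,\mathrm{R}}'(v)}{q_{z,\mathrm{R}}(u)}\cdot\frac{u^{N-1}(u+1)^{1-d}}{v^{N-1}(v+1)^{1-d}}$, with the $q_{z,\mathrm{R}}$ fraction the other way round from what you wrote; your prose computation of the Vandermonde ratio is in fact correct, and this corrected expression, combined with $q_{z,\mathrm{R}}'(v)=z^L(1+pv)^N/[J(v)q_{z,\mathrm{L}}(v)]$, the Bethe equation for $v$, and the relation $\alpha(u)=\alpha(v)$, does give the stated formula.
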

\begin{proof}
	The proof is similar to the proof of Lemma 10.2 and Lemma 10.3 in \cite{baik2019general}. The key observation is the existence of a $d-1$ to $1$ map $\mathcal{M}$ from $\mathcal{L}_z$ to $\mathcal{R}_z$ satisfying that if $v=\mathcal{M}(u)$ for $u\in \mathcal{L}_z$, then $\frac{u(u+1)^{d-1}}{1+pu}=\frac{v(v+1)^{d-1}}{1+pv}$. Using this relation we can express the global energy and characteristic functions in terms of products over the Bethe roots. We omit the details.
\end{proof}
Combining Lemma~\ref{lemma: pre-limit energy and char} with the asymptotics obtained in Lemma~\ref{lemma: asymptoticsofproducts} we can now prove Theorem~\ref{thm:special_IC}.
\begin{proof}[Proof of Theorem~\ref{thm:special_IC}]
	For fixed $0<\epsilon<1/2$ by Lemma~\ref{lemma: asymptoticsofproducts} (viii) we have 
	\begin{equation} \label{eq: asymptoticsprod}
		\frac{\prod_{v\in\mathcal{R}_z} \prod_{u\in\mathcal{L}_{z'}} \sqrt{v-u}}
		{\prod_{u\in\mathcal{L}_{z'}} \left(\sqrt{-u}\right)^N \prod_{v\in\mathcal{R}_z} \left( \sqrt{v+1} \right)^{L-N}}
		= 
		e^{-B(\mathrm{\z,\z'})} \left(1+O(L^{\epsilon-1/2})\right).
	\end{equation}
Similarly by Lemma~\ref{lemma: asymptoticsofproducts} (v) we have 
\begin{equation} \label{eq: asymptoticsO1}
	\prod_{v\in \mathcal{R}_z}\left(\sqrt{v+1}\right)^{2-d} = 1+O(L^{\epsilon-1/2}).
\end{equation}
On the other hand we have $p(d-1)v^2+dv+1=p(d-1)(v-w_c)(w-w_c^{*})$ where
\begin{equation*}
	 w_c=\frac{-2\rho}{1+\sqrt{1-4p\rho(1-\rho)}}\quad \text{and}\quad  w_c^{*}=\frac{-2\rho}{1-\sqrt{1-4p\rho(1-\rho)}}.
\end{equation*}
 Hence by Lemma~\ref{lemma: asymptoticsofproducts} (iv) and (v) we have 
 \begin{equation}\label{eq: asymptoticsquadratic}
 	\begin{aligned}
 		\prod_{v\in \mathcal{R}_z}\sqrt{p(d-1)v^2+dv+1}&= \prod_{v\in \mathcal{R}_z} \sqrt{v-w_c}\cdot \prod_{v\in \mathcal{R}_z}\sqrt{p(d-1)(w-w_c^{*})}\\
 		&=\left(\sqrt{-w_c}\right)^{N}e^{\frac{1}{2}h(0,\z)}\cdot \left(\sqrt{p(d-1)}\cdot \sqrt{-w_c^{*}}\right)^{N}\cdot \left(1+O(L^{\epsilon-1/2})\right)\\
 		&= (1-\z)^{1/4}\left(1+O(L^{\epsilon-1/2}\right),
 	\end{aligned}
 \end{equation}
where we used the fact that $w_c\cdot w_{c}^{*}=\frac{1}{p(d-1)}$ and also $e^{h(0,\z)}=(1-\z)^{1/2}$ which follows from \eqref{eq:def_h}. Combining \eqref{eq: asymptoticsprod}, \eqref{eq: asymptoticsO1} and \eqref{eq: asymptoticsquadratic} we conclude that $\mathcal{E}_{\mathrm{flat}}(z)= E_{\mathrm{flat}}(\z)(1+O(L^{\epsilon-1/2}))$ as $L\to \infty$. 

The argument for the characteristic function part is quite similar. To verify part (B) of Assumption~\ref{def:asympstab} we note that  given $0<\epsilon<1/8$, for $u\in \mathcal{L}_z^{(\epsilon)}$ and $v\in \mathcal{R}_z^{(\epsilon)}$ as defined in Lemma~\ref{lemma: convergenceofroots} we have 
\begin{equation}\label{eq: products}
 \frac{q_{z,\mathrm{L}}(v)}{(1+v)^{L-N}} = e^{h(\eta,\z)}\cdot \left(1+O(L^{4\epsilon-1/2})\right),\quad \frac{q_{z,\mathrm{R}}(u)}{u^{N}} = e^{h(\xi,\z)}\cdot \left(1+O(L^{4\epsilon-1/2})\right),
\end{equation}
where $\xi=\mathcal{M}_{L,\mathrm{left}}(u)$ and $\eta=\mathcal{M}_{L,\mathrm{right}}(v)$ with the injective maps $\mathcal{M}_{L,\mathrm{left}}$ and $\mathcal{M}_{L,\mathrm{right}}$ defined in Lemma~\ref{lemma: convergenceofroots} satisfying 
$\left|\xi-c_0^{-1}L^{1/2}(u-w_c)\right|\leq L^{-1/2+3\epsilon}\log L$, $\quad \left|\eta-c_0^{-1}L^{1/2}(v-w_c)\right|\leq L^{-1/2+3\epsilon}\log L$.
This then implies  that 
\begin{equation}\label{eq: difference}
	u-v = c_0L^{-1/2}(\xi-\eta)\cdot \left(1+O(L^{3\epsilon-1/2}\log L)\right).
\end{equation} 
A straightforward Taylor expansion combined with the injectivity of $\mathcal{M}_{L,\mathrm{left}}$ and $\mathcal{M}_{L,\mathrm{right}}$ shows 
\begin{equation}\label{eq: indicator}
	\mathbf{1}_{\frac{u(u+1)^{d-1}}{1+pu}=\frac{v(v+1)^{d-1}}{1+pv}} = \mathbf{1}_{\xi^2=\eta^2}=\mathbf{1}_{\xi=-\eta}.
\end{equation}
Finally by Lemma~\ref{lemma: asymptoticsofproducts} (iii) we have
\begin{equation}\label{eq: J(v)}
	\frac{1}{J(v)}= -c_0^{-1}\eta L^{1/2}\cdot (1+O(L^{\epsilon-1/2})).
\end{equation} 
Combining \eqref{eq: products}, \eqref{eq: difference}, \eqref{eq: indicator} and \eqref{eq: J(v)} we conclude that 
\begin{equation}
	\chi_{\mathrm{flat}}(v,u;z) = \cchi_{\mathrm{flat}}(\eta,\xi;\z)+ O(L^{4\epsilon-1/2}),\quad \text{as}\ L\to \infty.
\end{equation}
Finally part (C) in Assumption~\ref{def:asympstab} is clearly true since by Lemma~\ref{lemma: asymptoticsofproducts} every factor in for $\chi_{\mathrm{flat}}(v,u;z)$ is $O(1)$ except $\frac{1}{J(v)}$ which is $O(L)$. Thus $|\chi_{\mathrm{flat}}(v,u;z)|\leq C\cdot L$ and part (C) of Assumption~\ref{def:asympstab} is satisfied.
\end{proof}
\bigskip
\section{Multi-time distribution of discrete time parallel TASEP on \texorpdfstring{$\mathbb{Z}$}{Lg}}
\label{sec: infinite}
So far we have been focusing on the so-called relaxation time scale when the period $L$ is comparable to $t^{2/3}$ and the height fluctuation is critically affected by the boundary conditions. In this section we discuss another important regime, the sub-relaxation time scale $L\gg t^{2/3}$ when the particles are not affected by the boundary and are expected to behave the same as when the underlying space has infinite-volume. 
\subsection{Discrete time parallel TASEP with large period}
Instead of taking the re-scaled time parameter $\tau\to 0$ in the relaxation-time limiting distribution formula, we will start with the pre-limit joint distribution formula \eqref{eq: multipoint general}. The following elementary proposition confirms the  intuition that for fixed spatial and time parameters $a_i, k_i$ and $t_i$, when $L$ is large enough, the joint distribution $\mathbb{P}_{\vec y}^{(L)}(\cap_{i=1}^m\{x_{k_i}(t_i)\geq a_i\})$ agrees with the one when the underlying space is the whole integer lattice $\mathbb{Z}$, hence should be independent of $L$.

\begin{proposition} Given $\vec y=(y_1,\cdots,y_N)\in \conf_{N}$. Let $m$ be a positive integer and $k_i$ be integers in $\{1,\cdots,N\}$, for $1\leq i\leq m$. Then for any integers $a_i$, $1\leq i\leq m$, as long as $L$ is large enough such that 
	\begin{equation}\label{eq: constraint large}
		L>y_1-y_N\quad \text{and }\ L>\max\{a_1+k_1,\cdots,a_m+k_m\}-y_N,
	\end{equation}
we have 
	\begin{equation}\label{eq: large period}
				\mathbb{P}_{\vec y}^{(L)} \left( 
			\bigcap_{\ell=1}^m \left\{ x_{k_\ell} (t_\ell) \ge a_\ell 
			\right\}
			\right)=\mathbb{P}_{\vec y}^{(\infty)} \left( 
		\bigcap_{\ell=1}^m \left\{ x_{k_\ell} (t_\ell) \ge a_\ell 
		\right\}
		\right), 
	\end{equation}
for any positive integers $t_1,\cdots,t_m$. Here $\mathbb{P}_{\vec y}^{(L)}$ is the probability with respect to discrete time parallel TASEP on a periodic domain with period $L$ and $\mathbb{P}_{\vec y}^{(\infty)}$ is the probability with respect to an independent discrete time parallel TASEP on the infinite lattice $\mathbb{Z}$, both starting with the initial condition satisfying $x_i(0)=y_i$ for $1\leq i\leq N$.
\end{proposition}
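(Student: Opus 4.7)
The natural approach is a coupling argument. Place both processes on a common probability space driven by shared uniform random variables $\{U_{i,s}\}_{1\le i\le N,\, s\ge 1}$: particle $i$ in either process attempts a right jump at step $s$ iff $U_{i,s}<p$, and the jump is executed iff the target site is unoccupied in that process (periodicity forces all images of particle $i$ in the periodic case to share $U_{i,s}$). Under this coupling, the periodic process has, for particle $1$ only, an additional blocker — the periodic image $x_N^{(L)}(s)+L$ of the leftmost particle — absent in the infinite process. The standard attractiveness of discrete-time parallel TASEP then yields the pathwise monotonicity $x_i^{(L)}(s)\le x_i^{(\infty)}(s)$ for all $1\le i\le N$ and $s\ge 0$, and hence the one-sided bound
\begin{equation*}
\mathbb{P}_{\vec y}^{(L)}\Bigl(\bigcap_\ell\{x_{k_\ell}(t_\ell)\ge a_\ell\}\Bigr) \le \mathbb{P}_{\vec y}^{(\infty)}\Bigl(\bigcap_\ell\{x_{k_\ell}(t_\ell)\ge a_\ell\}\Bigr).
\end{equation*}

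The reverse inequality should come from upgrading this coupling to pathwise equality of the trajectories of particles $1,\ldots,N$ up to time $t_m$ on the joint event, which in turn amounts to showing that particle $1$ in the periodic process is never blocked by its image $x_N^{(L)}+L$ over $[0,t_m]$. The condition $L>y_1-y_N$ just ensures $\vec y\in\conf_N^{(L)}$; the role of the stronger condition $L>\max_\ell(a_\ell+k_\ell)-y_N$ is most transparently understood through the TASEP–LPP correspondence. For step initial condition the remark following Theorem~\ref{thm:main1} identifies the joint event with $\bigcap_\ell\{G(k_\ell,a_\ell+k_\ell)\le t_\ell\}$ in geometric LPP, which depends only on the finite box of weights $[1,\max_\ell k_\ell]\times[1,\max_\ell(a_\ell+k_\ell)]$. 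For general $\vec y$ the analogous causal past still lies in a box of vertical extent $\max_\ell(a_\ell+k_\ell)-y_N$, while the periodic process corresponds to LPP on a cylinder of circumference $L$ obtained by identifying points at vertical distance $L$. The hypothesis forces the causal-past box to fit strictly inside one fundamental period, so the cylindrical and planar LPPs see the same weights on every site the event can access, forcing equality of the two joint-event probabilities regardless of $t_m$.

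The chief technical obstacle is formalizing this causal-past argument for general, non-step initial conditions: the TASEP–LPP bijection must be set up with a staircase-shaped source boundary determined by $\vec y$ and then transferred to the cylinder. A self-contained alternative, staying within the TASEP framework, is to proceed inductively on $m$: by Proposition~\ref{properties of the transition probability}~(iii), the periodic transition kernel $P_{t_\ell-t_{\ell-1}}^{(L)}(\vec x^{(\ell-1)}\to\vec x^{(\ell)})$ collapses to its infinite counterpart whenever $L\ge x_1^{(\ell)}-x_N^{(\ell-1)}+2$, so the remaining task is to match the supports of the periodic and infinite multi-step sums on the joint event, using the hypothesis of the proposition to guarantee that only configurations satisfying the per-step bound contribute nontrivially.
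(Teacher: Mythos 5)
Your coupling construction and the one-sided inequality $\mathbb{P}_{\vec y}^{(L)}(A)\le\mathbb{P}_{\vec y}^{(\infty)}(A)$ are correct, and a coupling is indeed the right framework (the paper merely cites Lemma 8.1 of Baik--Liu, whose argument is also along these lines). But both of your proposed routes to the \emph{reverse} inequality contain genuine gaps.

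Your first route asks to show ``particle $1$ in the periodic process is never blocked by its image over $[0,t_m]$'' on the joint event. That is false and cannot be what the argument rests on: $t_m$ is arbitrary, so with positive probability particle $1$ does reach $x_N^{(L)}+L-1$ and \emph{is} blocked before time $t_m$; the joint event $\bigcap_\ell\{x_{k_\ell}(t_\ell)\ge a_\ell\}$ is a one-sided \emph{lower} bound on positions and does nothing to prevent particle $1$ from running far ahead. The correct observation is subtler: rewrite the event in terms of the passage times $G(k,a):=\min\{t:x_k(t)\ge a\}$ and show $G^{(L)}(k_\ell,a_\ell)=G^{(\infty)}(k_\ell,a_\ell)$ almost surely. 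In the coupling, if particle $j$ first diverges at step $\sigma_j$, then particle $j-1$ must have diverged earlier, and tracing the chain $\sigma_1<\sigma_2<\cdots<\sigma_j$ of blocking events one finds $x_j^{(L)}(\sigma_j-1)\ge x_1^{(L)}(\sigma_1-1)-(j-1)\ge y_N+L-j$. Under the hypothesis $L>a+k-y_N$ this exceeds $a$, so particle $k$ has already cleared level $a$ in \emph{both} processes before any discrepancy can reach it; hence $G^{(L)}(k,a)=G^{(\infty)}(k,a)$. This ``by the time the discrepancy arrives, the passage time has already been decided'' step is exactly what your sketch is missing.

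Your alternative via Proposition~\ref{properties of the transition probability}~(iii) also does not go through as stated. That item gives $P_t^{(L)}(\vec y\to\vec x)=P_t^{(\infty)}(\vec y\to\vec x)$ only when $L\ge x_1-y_N+2$, but in the multi-step sum $\sum_{\vec x^{(1)},\ldots,\vec x^{(m)}}\prod_\ell P_{t_\ell-t_{\ell-1}}(\vec x^{(\ell-1)}\to\vec x^{(\ell)})$ the intermediate configurations are constrained only from below (by $x^{(\ell)}_{k_\ell}\ge a_\ell$), so $x_1^{(\ell)}$ can be arbitrarily large for large $t$ and the per-step hypothesis fails on part of the support. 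Passing to the complement event helps bound $x_1$ via the periodicity constraint $x_1<x_N+L$, but the bound one obtains, $x_1\le a+k-N+L-2$, needs $a+k\le y_N+N$ to meet the threshold $x_1\le L+y_N-2$ of (iii), which is not implied by the hypothesis $L>a+k-y_N$. So you cannot simply ``match the supports''; the light-cone/passage-time analysis above (or a careful cylindrical-LPP box argument) is genuinely needed.
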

\begin{proof}
	The proof is almost identical to the similar statement for continuous time TASEP as in Lemma 8.1 of \cite{baik2019multi} where the particle ordering is reversed. We omit the details. 
\end{proof}

In principle for every $L$ satisfying \eqref{eq: constraint large}, the left-hand side of \eqref{eq: large period} which has an expression as contour integrals of Fredholm determinant given by \eqref{eq: multipoint general} gives a formula for the multi-point joint distribution of discrete time parallel TASEP on $\mathbb{Z}$ for fixed parameters $a_i,k_i$ and $t_i$. The main obstacle here is the dependence on the extra parameter $L$ for the left-hand side of \eqref{eq: large period}. The key idea to get rid of the dependence on $L$ was already illustrated in Proposition~\ref{properties of the transition probability} (iii) and can be summarized as follows:
\begin{enumerate}
	\item By rewriting the sum over the Bethe roots as contour integrals using the residue theorem we get an analytic continuation of the integrand (for the outer integral with respect to  $z$) to $\{|z|>0\}$.
	\item Under proper assumption on the parameter $L$ (in fact exactly \eqref{eq: constraint large}), one can further show the integrand is analytic at $z=0$, thus the outer integral with respect to $z$ equals the evaluation of the integrand at $z=0$, which turns out to be independent of $L$.
\end{enumerate} 
Now as in the above discussion, we would like to send $z_1, \cdots, z_m$ all to $0$ in \eqref{eq: multipoint general} while still keep the nested relation $0<|z_m|< \cdots< |z_1|<\mathbbm{r}_c$. For this we change the integral variables  as follows: for given $L$ satisfying \eqref{eq: constraint large}, we set 
\begin{equation}
	\theta_{i}:= \frac{z_{i+1}^L}{z_{i}^L},\quad \text{for } 0\leq i\leq m-1,
\end{equation}
where $z_0:=1$ and $0<|z_m|< \cdots< |z_1|<\mathbbm{r}_c$. Then the corresponding relations between $\theta_i$'s are precisely $0<|\theta_0|<\mathbbm{r}_c^{L}:= \mathbf{r}_c$ and $0<|\theta_i|<1$ for $1\leq i\leq m-1$. After this change of variable, the finite-time multi-point joint distribution formula \eqref{eq: multipoint general} can be rewritten as 
\begin{equation}
	\mathbb{P}_{\mathrm{step}}^{(L)} \left( 
	\bigcap_{\ell=1}^m \left\{ x_{k_\ell} (t_\ell) \ge a_\ell 
	\right\}
	\right) = \oint\cdots\oint \hat{\mathscr{C}}_{\mathrm{step}}^{(L)}(\theta_0,\cdots,\theta_{m-1})\hat{\mathscr{D}}_{\mathrm{step}}^{(L)}(\theta_0,\cdots,\theta_{m-1}) \frac{d\theta_0}{2\pi \ii\theta_0}\cdots \frac{d\theta_{m-1}}{2\pi \ii\theta_{m-1}},
\end{equation}
where the integral contours are circle centered at origin with radius smaller than $\mathbf{r}_c$ for $\theta_0$ and circles centered at origin with radii less than $1$ for $\theta_i$, $1\leq i\leq m-1$. Here the integrand is defined such that
\begin{equation}
	\hat{\mathscr{C}}_{\mathrm{step}}^{(L)}(\theta_0,\cdots,\theta_{m-1}) := 	\mathscr{C}_{\mathrm{step}}^{(L)}(z_1,\cdots,z_m),
	\quad \hat{\mathscr{D}}_{\mathrm{step}}^{(L)}(\theta_0,\cdots,\theta_{m-1}) := 	\mathscr{D}_{\mathrm{step}}^{(L)}(z_1,\cdots,z_m),
\end{equation}
for any $z_1,\cdots,z_m$ satisfying the relations $z_i^L=\prod_{j=0}^{i-1}\theta_j$ for $1\leq i\leq m$. Note that by definition (see Section~\ref{sec: def of C and D}), $\mathscr{C}_{\mathrm{step}}^{(L)}(z_1,\cdots,z_m)$ and $\mathscr{D}_{\mathrm{step}}^{(L)}(z_1,\cdots,z_m)$ only depend on $z_1^L,\cdots,z_m^L$ so $\hat{\mathscr{C}}^{(L)}$ and $\hat{\mathscr{D}}^{(L)}$ are well-defined functions analytic in $\mathbb{D}_0(\mathbf{r}_c)\times (\mathbb{D}_0(1))^{m-1}$, where $\mathbb{D}_0(r)$ is the disk centered at $0$ with radius $r$ with the origin removed.
\smallskip
\subsection{Proof of Theorem~\ref{thm:main1}} As discussed above, Theorem~\ref{thm:main1} follows immediately once we establish the analyticity of $\hat{\mathscr{C}}_{\vec y}^{(L)}(\theta_0,\cdots,\theta_{m-1})$ and $\hat{\mathscr{D}}_{\vec y}^{(L)}(\theta_0,\cdots,\theta_{m-1})$ at $\theta_0=0$ so that we can get rid of the integral with respect to $\theta_0$ whose value equals the integrand evaluated at $\theta_0=0$. The precise statement needed are summarized in the following two lemmas which will be proved in the next two subsections. 
\begin{lemma}\label{lemma: Chat0}
	The function $\hat{\mathscr{C}}_{\mathrm{step}}^{(L)}(\theta_0,\cdots,\theta_{m-1})$ is in fact analytic in $\mathbb{D}(\mathbf{r}_c)\times (\mathbb{D}(1))^{m-1}$, where $\mathbb{D}(r)$ is a disk centered at $0$ with radius $r$. Moreover we have 
	\begin{equation}
		\hat{\mathscr{C}}_{\mathrm{step}}^{(L)}(0,\theta_1,\cdots,\theta_{m-1}) = \prod_{\ell=1}^{m-1}\frac{1}{1-\theta_{\ell}},
	\end{equation}
\end{lemma}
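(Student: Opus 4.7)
Write the four bracketed factors in Definition \ref{def: C general}, in order, as $\Pi_1, \Pi_2, \Pi_3, \Pi_4$, so that $\mathscr{C}_{\mathrm{step}}^{(L)} = \Pi_1 \Pi_2 \Pi_3 \Pi_4$. Under the substitution $z_\ell^L/z_{\ell-1}^L = \theta_{\ell-1}$, the third factor collapses exactly:
\begin{equation*}
\Pi_3 = \prod_{\ell=2}^m \frac{z_{\ell-1}^L}{z_{\ell-1}^L-z_\ell^L} = \prod_{\ell=2}^m \frac{1}{1-\theta_{\ell-1}} = \prod_{\ell=1}^{m-1}\frac{1}{1-\theta_\ell},
\end{equation*}
which is already independent of $\theta_0$ and equal to the target value. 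The task reduces to proving that $\Pi_1\Pi_2\Pi_4$ extends holomorphically to $\theta_0=0$ with value $1$.

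Every factor appearing in $\Pi_1,\Pi_2,\Pi_4$ is a rational combination of the polynomials $q_{z_\ell,\LL}$, $q_{z_\ell,\RR}$ of Definition \ref{def: left and right bethe} evaluated at fixed points or at roots of the complementary polynomial. Explicitly, $\prod_{u\in\mathcal{L}_z}(-u) = q_{z,\LL}(0)$, $\prod_{v\in\mathcal{R}_z}(v+1) = (-1)^N q_{z,\RR}(-1)$, $\prod_{v\in\mathcal{R}_z}(pv+1) = p^N q_{z,\RR}(-1/p)$, and $\Delta(\mathcal{R}_z;\mathcal{L}_{z'}) = (-1)^{N(L-N)}\prod_{u\in\mathcal{L}_{z'}} q_{z,\RR}(u)$, the last being a symmetric polynomial in $\mathcal{L}_{z'}$ whose coefficients are polynomial in those of $q_{z,\RR}$. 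At $z^L=0$ the Bethe polynomial $q_z(w) = w^N(1+w)^{L-N} - z^L(1+pw)^N$ reduces to $w^N(1+w)^{L-N}$, whose zeros $\{0, -1\}$ are well separated. A Rouch\'e argument on small disks around $0$ and $-1$, combined with a contour-integral formula for the coefficients of $q_{z,\LL}$ and $q_{z,\RR}$ paired against $q_z'/q_z$, shows that both factor polynomials have coefficients holomorphic in $z^L$ on a neighborhood of $z^L=0$, with limits $q_{0,\RR}(w)=w^N$ and $q_{0,\LL}(w)=(1+w)^{L-N}$. All the evaluations above are therefore holomorphic in each $\theta_i$ near $0$, and since every quantity being inverted by a negative integer power is nonvanishing at $\theta_0=0$, the factors $E_\ell(z_\ell)/E_{\ell-1}(z_\ell)$ also extend holomorphically. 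Combined with the analyticity for $\theta_0 \ne 0$ already established via Theorem \ref{thm: finite-time}, this yields holomorphicity on the full polydisc $\mathbb{D}(\mathbf{r}_c) \times \mathbb{D}(1)^{m-1}$.

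To finish, substitute $\theta_0 = 0$, so that all $z_\ell^L$ vanish: then $q_{z_\ell,\LL}(0) = 1$, $(-1)^N q_{z_\ell,\RR}(-1) = 1$, and $p^N q_{z_\ell,\RR}(-1/p) = 1$, giving $E_\ell(z_\ell) = 1$ and hence $\Pi_1 \to 1$. A direct check using the explicit formulas above shows that each factor of $\Pi_2$ and $\Pi_4$ likewise collapses to $1$ at $\theta_0 = 0$ (both the Vandermonde-type numerators $\Delta$ and the monomial denominators evaluate to $1$), so the identity follows. The subtlest point in the whole argument is the holomorphic factorization $q_z = q_{z,\LL}\,q_{z,\RR}$ at $z^L = 0$: Definition \ref{def: left and right bethe} separates roots by the vertical line $\mathrm{Re}(w) = w_c$, and one must verify that near $z = 0$ this geometric splitting agrees with the natural disk splitting around $0$ and $-1$. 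This follows immediately from $w_c \in (-1, 0)$; the remainder of the argument is careful bookkeeping.
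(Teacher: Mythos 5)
Your proposal follows essentially the same route as the paper's proof: pull out $\Pi_3 = \prod_{\ell=1}^{m-1}\frac{1}{1-\theta_\ell}$, observe that as $\theta_0\to 0$ all $z_\ell^L\to 0$ so the left Bethe roots collapse to $-1$ and the right Bethe roots to $0$, and conclude each remaining bracketed factor tends to $1$. The one point of genuine added value is that you make explicit the Rouch\'e-type argument showing that the factorization $q_z = q_{z,\LL}\,q_{z,\RR}$ has coefficients holomorphic in $z^L$ near $0$ (and that the $\Re(w)=w_c$ splitting matches the disk splitting around $0$ and $-1$), which the paper dispatches with ``it is then easy to see''; a small slip is the identity $\prod_{v\in\mathcal{R}_z}(pv+1) = p^N q_{z,\RR}(-1/p)$, which should carry an extra $(-1)^N$, but this has no effect on the argument.
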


\begin{lemma}\label{lemma: Dhat0}
	For large enough integer $L$ satisfying \eqref{eq: constraint large}, the function $\hat{\mathscr{D}}_{\vec y}^{(L)}(\theta_0,\cdots,\theta_{m-1})$ is analytic in $\mathbb{D}(\mathbf{r}_c)\times (\mathbb{D}_0(1))^{m-1}$. Moreover we have for any $(\theta_1,\cdots,\theta_{m-1})\in (\mathbb{D}_0(1))^{m-1}$
	\begin{equation}
		\hat{\mathscr{D}}_{\mathrm{step}}^{(L)}(0,\theta_1,\cdots,\theta_{m-1}) = \mathcal{D}_{\mathrm{step}}^{(\infty)}(\theta_1,\cdots,\theta_{m-1}),
	\end{equation}
    where $\mathcal{D}_{\mathrm{step}}^{(\infty)}(\theta_1,\cdots,\theta_{m-1})$ is defined in Section~\ref{sec:Fredholm_representation}.
\end{lemma}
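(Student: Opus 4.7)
The proof follows the strategy used in Proposition~8.4 of \cite{baik2019multi} for the continuous-time analogue, with additional bookkeeping required by the extra parameter $p$ and the modified Bethe polynomial $q_z(w)=w^N(1+w)^{L-N}-z^L(1+pw)^N$.

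\textbf{Step 1 (From discrete sums to contour integrals).} First I would expand $\hat{\mathscr{D}}_{\mathrm{step}}^{(L)}$ as a Fredholm series and convert each discrete sum over Bethe roots into a contour integral using the residue identity
\begin{equation*}
\sum_{w\in\mathcal{R}_z}J(w)\phi(w)=\oint_{\Sigma_R}\frac{\dd w}{2\pi\ii}\,\frac{z^L(1+pw)^N}{q_z(w)}\phi(w)
\end{equation*}
from Lemma~\ref{lemma: integral formula roots}, together with its left-Bethe analogue on $\Sigma_L$. This is applicable because each column of $\mathscr{K}_{1}^{(L)}$ carries a $J(w)$ factor and each row of $\mathscr{K}_{\vec y}^{(L)}$ carries a $J(w')$ factor, so after pulling the $J$'s out of the determinants via $\det[J(w_i)A_{ij}]=\prod_iJ(w_i)\det[A_{ij}]$ every variable can be summed out. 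For the step initial condition $\Lambda(i,w,w')\equiv 1$, and the $J$-less part of each kernel is rational in $w,w'$ with poles only at $0,-1,-1/p$.

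\textbf{Step 2 (Deformation to fixed contours).} Next I would deform $\Sigma_{i,L}$ and $\Sigma_{i,R}$ from the tight contours around the curves $\Lambda_L,\Lambda_R$ to fixed small circles around $-1$ and $0$ independent of $z_i$, with a nested structure matching the infinite-case contours of Section~\ref{sec:spaces_operators}. Under the assumption~\eqref{eq: constraint large}, each power of $w,(1+w),(1+pw)$ appearing in $f_i(w)$, $H_{z_i}(w)$ for step data is compatible with this deformation, so no spurious residues are crossed. The outcome is a Fredholm determinant on $L^2$-spaces over these fixed contours whose kernel depends on $\vec z$ only through $z_i^L=\prod_{k=0}^{i-1}\theta_k$, hence through $\vec \theta$.

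\textbf{Step 3 (Analyticity at $\theta_0=0$).} The only potentially singular piece of the contour-integral kernel at $\theta_0=0$ is the factor $Q_2(1)=1-\theta_0^{-1}$ (and, through the parity bookkeeping, any companion inverse powers of $\theta_0$ that might appear). These are multiplied by the weight $\mathcal{M}_{z_1}(w)=z_1^L(1+pw)^N/q_{z_1}(w)$ produced by the sum-to-integral step, whose numerator $z_1^L=\theta_0$ cancels the pole. From this one deduces that each term of the Fredholm series, and hence $\hat{\mathscr{D}}_{\mathrm{step}}^{(L)}$ itself, extends analytically in $\theta_0$ across the puncture to all of $\mathbb{D}(\mathbf{r}_c)$, uniformly in $(\theta_1,\cdots,\theta_{m-1})$ on compact subsets of $(\mathbb{D}_0(1))^{m-1}$.

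\textbf{Step 4 (Identification of the limit).} Finally I would evaluate at $\theta_0=0$. There every $z_i\to 0$, so $q_{z,\RR}(w)\to w^N$, $q_{z,\LL}(w)\to(w+1)^{L-N}$, and consequently $H_{z_i}(w)\to 1$. The surviving factors, namely $f_i(w)$, $1/(w-w')$, and the appropriate limits of the $Q$-weights combined with the $\mathcal{M}_{z_\bullet}$ weights, must then be matched with the kernels $\mathcal{K}_{1}^{(\infty)}$ and $\mathcal{K}_{\mathrm{step}}^{(\infty)}$ of Definition~\ref{def:operators_K1Y}. The main step is to recognise that a single contour at each level in the periodic description is equivalent, after partial-fraction decomposition of the surviving $Q_1(j)Q_2(i)\mathcal{M}_{z_{\bullet}}$ combinations, to the doubled nested contours $\Sigma_{j,\LL}^\pm\cup\Sigma_{j,\RR}^\pm$ weighted by the $P_j$ of Definition~\ref{def:operators_K1Y}.

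The main obstacle is Step~4: the periodic formula naturally produces a single contour per level, whereas the infinite formula uses doubled nested contours together with non-trivial weights $P_j$ and the parity-dependent $Q_1^{(\infty)},Q_2^{(\infty)}$. Translating between the two descriptions requires a careful partial-fraction and residue analysis of the limits of the $Q_1(j),Q_2(j)$ together with $\mathcal{M}_{z_i}$, and it is here that the combinatorics governing the alternating $\pm$ structure of the infinite contours and the precise form of $P_j$ must be read off.
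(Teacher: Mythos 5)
Your proposal follows essentially the same approach as the paper's proof, which reduces the claim to Lemma~5.2 and Proposition~4.4 of \cite{liu2019} applied with $q(w)=w^{N}(w+1)^{L-N}(1+pw)^{-N}$. Your Steps 1--3 correctly set up the term-by-term comparison of the two Fredholm series (using Lemma~\ref{lemma: integral formula roots} to convert root-sums to contour integrals, noting that $J(0)=0$ removes the boundary term, and observing that $z_1^L=\theta_0$ cancels the $1-\theta_0^{-1}$ pole in $Q_2(1)$), and Step~4 --- the passage from a single contour per level to the doubled nested contours with the $P_j$ weights, which you honestly flag as the main obstacle --- is precisely the content of the cited Proposition~4.4 of \cite{liu2019}.
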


Combing Lemma~\ref{lemma: Chat0} and Lemma~\ref{lemma: Dhat0} we conclude that 
\begin{equation*}
\begin{aligned}
		\mathbb{P}_{\mathrm{step}}^{(L)} \left( 
	\bigcap_{\ell=1}^m \left\{ x_{k_\ell} (t_\ell) \ge a_\ell 
	\right\}
	\right) &= \oint\cdots\oint \hat{\mathscr{C}}_{\mathrm{step}}(0,\theta_1,\cdots,\theta_{m-1})\hat{\mathscr{D}}_{\mathrm{step}}(0,\theta_1,\cdots,\theta_{m-1}) \frac{d\theta_1}{2\pi \ii\theta_1}\cdots \frac{d\theta_{m-1}}{2\pi \ii\theta_{m-1}}\\
	&=\oint\cdots\oint \left[\prod_{\ell=1}^{m-1}\frac{1}{1-\theta_{\ell}}\right]\mathcal{D}_{\mathrm{step}}^{(\infty)}(\theta_1,\cdots,\theta_{m-1}) \frac{d\theta_1}{2\pi \ii\theta_1}\cdots \frac{d\theta_{m-1}}{2\pi \ii\theta_{m-1}}.
\end{aligned}
\end{equation*}
Finally using \eqref{eq: large period} we complete the proof of Theorem~\ref{thm:main1}.
\medskip
\medskip
\subsection{Proof of Lemma~\ref{lemma: Chat0}} By the definition of the function $\mathscr{C}_{\mathrm{step}}^{(L)}$ (see Definition~\ref{def: C general}) we know 
\begin{equation}
\begin{aligned}
	\hat{\mathscr{C}}_{\mathrm{step}}^{(L)}(\theta_0,\theta_1,\cdots,\theta_{m-1}) &= \left[\prod_{\ell=1}^{m-1}\frac{1}{1-\theta_{\ell}}\right] \left[\prod_{\ell=1}^{m}\frac{\prod_{u\in \mathcal{L}_{z_\ell}}(-u)^N\prod_{v\in \mathcal{R}_{z_{\ell}}}(v+1)^{L-N}}{\Delta(\mathcal{R}_{z_{\ell}};\mathcal{L}_{z_{\ell}})}\right]\\
	&\cdot \left[\prod_{\ell=1}^{m}\frac{E_{\ell}(z_{\ell})}{E_{\ell-1}(z_{\ell})}\right]\left[\prod_{\ell=2}^{m}\frac{\Delta(\mathcal{R}_{z_{\ell}};\mathcal{L}_{z_{\ell-1}})}{\prod_{u\in \mathcal{L}_{z_{\ell-1}}}(-u)^N\prod_{v\in \mathcal{R}_{z_{\ell}}}(v+1)^{L-N}}\right]\\
	&:=\left[\prod_{\ell=1}^{m-1}\frac{1}{1-\theta_{\ell}}\right] \mathscr{A}_1(z_1,\cdots,z_m)\mathscr{A}_2(z_1,\cdots,z_m)\mathscr{A}_3(z_1,\cdots,z_m).
\end{aligned}
\end{equation}
where $E_{\ell}(z)= \prod_{u\in \mathcal{L}_z} (-u)^{-k_\ell}\prod_{v\in \mathcal{R}_z}(v+1)^{-a_\ell-k_\ell}(pv+1)^{t_\ell-k_\ell}$ and the functions $\mathscr{A}_i$ represents the products inside each brackets. The parameters $\theta_i$ and $z_i$ are related by the equations $z_i^L=\prod_{j=0}^{i-1}\theta_j$ for $1\leq i\leq m$. Now if we send $\theta_0\to 0$, then all the $z_i$'s will converge to $0$. As a result, all the roots in $\mathcal{L}_{z_i}$  converge to $-1$ while all the roots in $\mathcal{R}_{z_i}$ converge to $0$, for all $1\leq i\leq m$. It is then easy to see that for $j=1,2,3$
\begin{equation}
	\lim_{\substack{z_i\to 0\\i=1,\cdots,m}} \mathscr{A}_j(z_1,\cdots,z_m) = 1
\end{equation}
is well-defined and hence $\hat{\mathscr{C}}_{\mathrm{step}}^{(L)}(\theta_0,\theta_1,\cdots,\theta_{m-1})$ can be analytically continued to the point $\theta_0=0$ with 
\begin{equation}
	\hat{\mathscr{C}}_{\mathrm{step}}^{(L)}(0,\theta_1,\cdots,\theta_{m-1}) = \prod_{\ell=1}^{m-1} \frac{1}{1-\theta_{\ell}}.
\end{equation}
\medskip

\subsection{Proof of Lemma~\ref{lemma: Dhat0}} This Lemma is an analogue of Lemma 5.2 of \cite{liu2019} and the proof is similar so we omit most of the details. What we want to prove is an identity between two Fredholm determinants, with the first one acting on discrete sets consisting of roots of certain polynomials and the other one acting on continuous contours. By looking at the standard series expansions of both Fredholm determinants, it suffices to show that every term in the two series expansions matches. These type of identities between multiple sums over roots of certain algebraic equations and multiple contour integrals can be understood as a highly nontrivial consequence of the residue theorem, which is stated and proved with enough generality for our purpose in Proposition 4.4 of \cite{liu2019}. We take $q(w)=w^{N}(w+1)^{L-N}(1+pw)^{-N}$ in the proposition and follow the same argument as in Lemma 5.2 of \cite{liu2019}.

\medskip
\subsection{KPZ scaling limit} In this final section we briefly discuss the large-time asymptotics for the multi-time distribution of discrete time parallel TASEP on $\mathbb{Z}$ under the $1:2:3$ KPZ scaling. For completeness we first recall the definition of the limiting distribution function $\mathbb{F}_{\mathrm{step}}^{\mathrm{kpz}}$ which essentially agrees with the function $F_{\mathrm{step}}$ appearing in Theorem 2.20 of \cite{liu2019} with a slight different choices of the parameters. The structure of $\mathbb{F}_{\mathrm{step}}^{\mathrm{kpz}}$ is similar to the finite-time distribution as contour integrals of a Fredholm determinant $\mathrm{D}_{\mathrm{step}}^{\mathrm{kpz}}(\theta_{1},\cdots,\theta_{m-1})$.

\subsubsection{Spaces of the operators}
Given integer $m\geq 1$. We first define the contour in the complex plane
\begin{equation*}
     \Gamma_{1,\RR} :=\{w = m+re^{\frac{\pi \ii}{3}}: r\geq 0\}\cup \{w = m+re^{\frac{-\pi \ii}{3}}: r\geq 0\}, 
\end{equation*}
oriented from $e^{-\frac{\pi\ii}{3}}\infty$ to $e^{\frac{\pi\ii}{3}}\infty$. Then for $2\leq j\leq m$, we define the contours 
\begin{equation*}
	\Gamma_{j,\RR}^{\pm}:= \Gamma_{1,\RR}\pm (j-1)=\{w\in \mathbb{C}: w= \tilde{w}\pm (j-1) \text{ for some } \tilde{w}\in \Gamma_{1,\RR}\}.
\end{equation*}
The left contours $\Gamma_{j,\LL}^{\pm}$, $j=2,\cdots,m$ and $\Gamma_{1,\LL}$ are defined simply as the reflections of the corresponding right contours about the $y$-axis, with orientation from $e^{-\frac{2\pi \ii}{3}}\infty$ to $e^{\frac{2\pi \ii}{3}}\infty$. We further denote 
\begin{equation*}
	\Gamma_{j,\LL}:=\Sigma_{j,\LL}^+\cup \Gamma_{\ell,\LL}^-, \qquad \Gamma_{j,\RR}:=\Gamma_{j,\RR}^+\cup \Gamma_{j,\RR}^-,\qquad \Gamma_{j}= \Gamma_{j,\LL}\cup \Gamma_{j,\RR},\qquad j=1,\cdots,m,
\end{equation*}
and
\begin{equation*}
	\mathbb{S}_1:= \Gamma_{1,\LL} \cup \Gamma_{2,\RR} \cup \cdots \cup \begin{dcases}
		\Gamma_{m,\LL}, & \text{ if $m$ is odd},\\
		\Gamma_{m,\RR}, & \text{ if $m$ is even},
	\end{dcases}
\end{equation*}
and 
\begin{equation*}
	\mathbb{S}_2:= \Gamma_{1,\RR} \cup \Gamma_{2,\LL} \cup \cdots \cup \begin{dcases}
		\Gamma_{m,\RR}, & \text{ if $m$ is odd},\\
		\Gamma_{m,\LL}, & \text{ if $m$ is even}.
	\end{dcases}
\end{equation*}

\subsubsection{Operators $\mathrm{K}_1^{\mathrm{kpz}}$ and $\mathrm{K}_{\mathrm{step}}^{\mathrm{kpz}}$}

Now we introduce the operators $\mathrm{K}_1^{\mathrm{kpz}}$ and $\mathrm{K}_{\mathrm{step}}^{\mathrm{kpz}}$ to define $	\mathrm{D}_{\mathrm{step}}^{\mathrm{kpz}}$ in Theorem~\ref{thm:main2}.

\begin{definition}\label{def: limiting infinite}
	We define
	\begin{equation*}
		\mathrm{D}_{\mathrm{step}}^{\mathrm{kpz}}(\theta_1,\cdots,\theta_{m-1})=\det\left( I - \mathrm{K}_1^{\mathrm{kpz}} \mathrm{K}_{\mathrm{step}}^{\mathrm{kpz}} \right),
	\end{equation*}
	where the two operators
	\begin{equation*}
		\mathrm{K}_1^{\mathrm{kpz}}: L^2(\mathbb{S}_2) \to L^2(\mathbb{S}_1),\qquad \mathrm{K}_{\mathrm{step}}^{\mathrm{kpz}}: L^2(\mathbb{S}_1)\to L^2(\mathbb{S}_2)
	\end{equation*}
	are defined by the kernels
	\begin{equation}
		\label{eq:K1}
		\mathrm{K}_1^{\mathrm{kpz}}(\zeta,\zeta'):= \left(\delta_i(j) + \delta_i( j+ (-1)^i)\right) \frac{ \mathrm{f}_i(\zeta) }{\zeta-\zeta'} Q_1^{(\infty)}(j)P_j(\zeta')\quad \text{for }\zeta'\in \Gamma_j\cap\mathbb{S}_2,\ \zeta\in \Gamma_i\cap\mathbb{S}_1,
	\end{equation}
	and
	\begin{equation}
		\label{eq:KY}
		\mathrm{K}_{\mathrm{step}}^{\mathrm{kpz}}(\zeta',\zeta):= 
		\left(\delta_j (i) + \delta_j(i - (-1)^j)\right) \frac{ \mathrm{f}_j(\zeta') }{\zeta'-\zeta} Q_2^{(\infty)}(i)P_i(\zeta)\quad \text{for }\zeta\in \Gamma_i\cap\mathbb{S}_1,\ \zeta'\in \Gamma_j\cap\mathbb{S}_2,
	\end{equation}
	where $1\leq i,j\leq m$. Here the function $\mathrm{f}_i(\zeta)$ is the same as the one appearing in the kernels for periodic case defined in \eqref{eq: limitingf} and $Q_1^{(\infty)}$ and $Q_2^{(\infty)}$ are defined in \eqref{eq: Q1} and \eqref{eq: Q2}. We also set
	\begin{equation}
		P_j(\zeta):= \begin{cases}
			\frac{1}{1-\theta_{j-1}},\quad & \zeta\in \Gamma_{j,\LL}^{+}\cup\Gamma_{j,\RR}^{+}, \quad j=2,\cdots,m\\
			\frac{-\theta_{j-1}}{1-\theta_{j-1}},\quad & w\in \Gamma_{j,\LL}^{-}\cup\Gamma_{j,\RR}^{-}, \quad j=2,\cdots,m\\
			1,\quad & w\in \Gamma_{1,\LL}\cup\Gamma_{1,\RR}. 
		\end{cases}
	\end{equation}
\end{definition}

\begin{proof}[Proof of Theorem~\ref{thm:main2}]
	The proof is a standard steepest descent analysis and most of the key ingredients are similar to  Lemma~\ref{lemma: seriesconvergence} so we only provide the essential calculations.  First it is not difficult to see that for any nonzero constants $c_1,\cdots,c_m$, the Fredholm determinant part $\mathcal{D}_{\mathrm{step}}^{(\infty)}$ is invariant if we replace the functions $F_i$ appearing in the kernels by $\tilde{F}_i:= c_i F_i$ for $1\leq i\leq m$. For our purpose we set $c_i=\frac{1}{F_i(w_c)}$ where $w_c = -\frac{1}{1+\sqrt{q}}$. We point out that this agrees with the critical point for periodic cases defined in \eqref{eq: critical point} when $\rho=1/2$ so the notation is consistent.  To ensure the kernels have sufficiently fast decay on each variable we make the same conjugation as in \eqref{eq: conjugated kernel} by setting 
	\begin{align*}
		&\tilde{\mathcal{K}}_{1}^{(\infty)}(w,w'):= \left( \delta_i(j) +\delta_i( j + (-1)^i )\right) 
		\frac{ \sqrt{\tilde{f}_i(w)}\sqrt{\tilde{f}_j(w')} }
		{w-w'} Q_1^{(\infty)}(j)P_j(w'),\\
		&\tilde{\mathcal{K}}_{\mathrm{step}}^{(\infty)}(w,w'):= \left( \delta_j(i) +\delta_j( i + (-1)^j )\right)
		\frac{\sqrt{\tilde{f}_j(w')}\sqrt{\tilde{f}_i(w)} }
		{w-w'} Q_1^{(\infty)}(j)P_i(w)		,
	\end{align*}
	for $w'\in \Sigma_j\cap\mathcal{S}_2,\ w\in \Sigma_i\cap\mathcal{S}_1$. Here $\tilde{f}_i$ is obtained by replacing $F_i$ with $\tilde{F}_i$ in \eqref{eq: f_i}. We also conjugate the limiting kernels in a similar way:
	\begin{align*}
		&\tilde{\mathrm{K}}_{1}^{\mathrm{kpz}}(\zeta,\zeta'):= \left( \delta_i(j) +\delta_i( j + (-1)^i )\right) 
		\frac{ \sqrt{\mathrm{f}_i(\zeta)}\sqrt{\mathrm{f}_j(\zeta')} }
		{  \zeta-\zeta'} Q_1^{(\infty)}(j)P_j(\zeta'),\\
		&\tilde{\mathrm{K}}_{\mathrm{step}}^{\mathrm{kpz}}(\zeta,\zeta'):= \left(\delta_j(i) +\delta_j( i + (-1)^j )\right) 
		\frac{ \sqrt{\mathrm{f}_j(\zeta')}\sqrt{\mathrm{f}_i(\zeta)} }
		{  \zeta-\zeta'} Q_1^{(\infty)}(j)P_i(\zeta),
	\end{align*}
    for  $\zeta'\in \Gamma_j\cap\mathbb{S}_2,\ \zeta\in \Gamma_i\cap\mathbb{S}_1$. These conjugations do not change the Fredholm determinants. 
	 
	Now a straightforward Taylor expansion shows that for $w=w_c+ \frac{q^{1/4}}{1+\sqrt{q}}\zeta T^{-1/3}$ with $|\zeta|\leq T^{\epsilon/4}$ for $0<\epsilon<1/3$ we have 
	\begin{equation}
	\begin{aligned}
		\frac{F_i(w)}{F_i(w_c)} &=\exp\left(k_i \log\left(\frac{w}{w_c}\right)-(a_i+k_i)\log\left(\frac{w+1}{w_c+1}\right)+(t_i-k_i)\log\left(\frac{pw+1}{pw_c+1}\right)\right)\\
		& = \exp\left(-\frac{1}{3}\tau_i\zeta^3 + \frac{1}{2}\gamma_i\zeta^2+ \mathrm{x}_i\zeta + O(T^{\epsilon-\frac{1}{3}})\right),
	\end{aligned}
	\end{equation}
where the parameters $a_i, k_i$ and $t_i$ satisfies \eqref{eq: rescale infinite}. On the other for $w$ far away from $w_c$, $\frac{F_i(w)}{F_i(w_c)}$ decays or grows exponentially fast. Hence it is straightforward to show that for $w= -\frac{1}{1+\sqrt{q}}+ \frac{q^{1/4}}{1+\sqrt{q}}\zeta T^{-1/3}$ and $0<\epsilon<1/3$ we have 
\begin{equation}\label{eq: estimates for f_i}
	\tilde{f}_{j}(w) = \begin{cases}
		\mathrm{f}_j(\zeta)(1+O(T^{\epsilon-1/3}))\quad &\text{if }|\zeta|\leq T^{\epsilon/4},\\
		O(e^{-cT^{\epsilon/2}})\quad &\text{if }|\zeta|\geq T^{\epsilon/4}.
	\end{cases}
\end{equation}
Now we deform the contours $\Sigma_{j}$ to be sufficiently close to the critical points $w_c=-\frac{1}{1+\sqrt{q}}$ such that locally they look like the limiting contours $\Gamma_j$. More concretely we deform $\Sigma_{j,\RR}^{+}$ such that
\begin{align*}
 \Sigma_{j,\RR}^{+}\cap \{w\in \mathbb{C}: |w-w_c|\leq \frac{1+\sqrt{q}}{q^{1/4}}T^{\epsilon/4-1/3}\}&= \{w_c+T^{-1/3}(m+j-1+re^{\frac{\pi \ii}{3}}): 0\leq r\leq T^{\epsilon/4}\}\\
 &\cup \{w_c+T^{-1/3}(m+j-1+re^{-\frac{\pi \ii}{3}}): 0\leq r\leq T^{\epsilon/4}\},
 \end{align*}
and similar for other contours. Then by \eqref{eq: estimates for f_i} it is straightforward to show that 
\begin{enumerate}
\item For each $n\in \mathbb{N}$ and $0<|\theta_i|<1$ we have 
\begin{equation*}
	\lim_{T\to \infty} \mathrm{Tr}\left(\tilde{\mathcal{K}}_{1}^{(\infty)}\tilde{\mathcal{K}}_{\mathrm{step}}^{(\infty)}\right)^n = \mathrm{Tr}\left(\tilde{\mathrm{K}}_{1}^{\mathrm{kpz}}\tilde{\mathrm{K}}_{\mathrm{step}}^{\mathrm{kpz}}\right)^n.
\end{equation*}
\item There exists constant $C>0$ independent of $T$ and $n$ such that for all $n\in \mathbb{N}$
\begin{equation*}
	\left|\oint_{\mathcal{S}_1}\cdots\oint_{\mathcal{S}_1}\det\left[\left(\tilde{\mathcal{K}}_{1}^{(\infty)}\tilde{\mathcal{K}}_{\mathrm{step}}^{(\infty)}\right)(w_i,w_j)\right]_{i,j=1}^n \frac{\dd w_1}{2\pi \ii}\cdots \frac{\dd w_n}{2\pi \ii}\right|\leq C^n.
\end{equation*}
\end{enumerate}
Now (1) and (2) immediately implies $\mathcal{D}_{\mathrm{step}}^{(\infty)}(\theta_1,\cdots,\theta_{m-1}) \text{ converges to } \mathrm{D}_{\mathrm{step}}^{\mathrm{kpz}}(\theta_1,\cdots,\theta_{m-1})$ locally uniformly for $0<|\theta_i|<1$ as $T\to \infty$, thus proving Theorem~\ref{thm:main2}.
\end{proof}
\bibliographystyle{alpha}

\begin{thebibliography}{}
	\bibitem{baikbarraquand2018}
	J.~Baik, G.~Barraquand, I.~Corwin and T.~Suidan.
	\newblock Pfaffian {S}chur processes and last passage percolation in a
	half-quadrant.
	\newblock {\em Ann. Probab.}, 46(6):3015--3089, 2018.
	\bibitem{baik1999}
	J.~Baik, P.~Deift and K.~ Johansson.
	\newblock On the distribution of the length of the longest increasing
	subsequence of random permutations.
	\newblock {\em J. Amer. Math. Soc.}, 12(4):1119--1178, 1999.
	
	\bibitem{baik2018relax}
	J.~Baik and Z.~Liu.
	\newblock Fluctuations of {TASEP} on a ring in relaxation time scale.
	\newblock {\em Comm. Pure Appl. Math.}, 71(4):747--813, 2018.
	
	\bibitem{baik2019multi}
	J.~Baik and Z.~Liu.
	\newblock Multipoint distribution of periodic {TASEP}.
	\newblock {\em J. Amer. Math. Soc.}, 32(3):609--674, 2019.
	
	\bibitem{baik2019general}
	J.~Baik and Z.~Liu.
	\newblock Periodic {TASEP} with general initial conditions, 2019.
	\newblock {\em Probab. Theory Related Fields},  179(3-4):1047--1144, 2021.
	
	\bibitem{barraquand2018}
	G.~Barraquand, A.~Borodin, I.~Corwin and M.~Wheeler.
	\newblock Stochastic six-vertex model in a half-quadrant and half-line open
	asymmetric simple exclusion process.
	\newblock {\em Duke Math. J.}, 167(13):2457--2529, 2018.
	
	\bibitem{barraquand2020}
	G.~Barraquand, A.~Borodin and I.~Corwin.
	\newblock Half-space {M}acdonald processes.
	\newblock {\em Forum Math. Pi}, 8:e11, 2020
	
	\bibitem{borodin2007}
	A.~Borodin, P.L.~Ferrari, M.~Pr\"{a}hofer and T.~Sasamoto.
	\newblock Fluctuation properties of the {TASEP} with periodic initial
	configuration.
	\newblock {\em J. Stat. Phys.}, 129(5-6):1055--1080, 2007
	
	\bibitem{borodin2008parallel}
	A.~Borodin, P.L.~Ferrari and T.~Sasamoto.
	\newblock Large time asymptotics of growth models on space-like paths.
	{II}. {PNG} and parallel {TASEP}.
	\newblock {\em Comm. Math. Phys.}, 283(2):417--449, 2008
	
	\bibitem{dauvergne2018}
	D.~Dauvergne, J.~Ortmann and B.~Vir\'ag.
	\newblock The directed landscape.
	\newblock arXiv:1812.00309, 2018.
	
	\bibitem{johansson2000}
	K.~Johansson.
	\newblock Shape fluctuations and random matrices.
	\newblock {\em Comm. Math. Phys.}, 209(2):437--476, 2000.
	
	\bibitem{johansson2019two}
	K.~Johansson.
	\newblock The two-time distribution in geometric last-passage
	percolation.
	\newblock {\em Probab. Theory Related Fields}, 175(3-4):849--895, 2019.
	
	
	\bibitem{johansson2019multi}
	K.~Johansson and M.~Rahman.
	\newblock Multi-time distribution in discrete polynuclear growth.
	\newblock arXiv:1906.01053, 2019.
	
	
	\bibitem{liu2018}
	Z.~Liu.
	\newblock Height fluctuations of stationary {TASEP} on a ring in
	relaxation time scale
	\newblock {\em Ann. Inst. Henri Poincar\'{e} Probab. Stat.}, 54(2):1031--1057, 2018.
	
	\bibitem{liu2019}
	Z.~Liu.
	\newblock Multi-point distribution of TASEP.
	\newblock arXiv:1907.09876, 2019.
	
	\bibitem{matetski2017}
	K.~Matetski, J.~Quastel and D.~Remenik.
	\newblock {The KPZ fixed point}.
	\newblock arXiv:1701.00018, 2017.
	
	\bibitem{motegi2013grothendieck}
	K.~Motegi and K.~Sakai.
	\newblock Vertex models, {TASEP} and {G}rothendieck polynomials.
	\newblock {\em J. Phys. A}, 46(35):3555201, 2013.
	
	
	\bibitem{povolotsky2007}
	A.M.~Povolotsky and V.B.~Priezzhev.
	\newblock Determinant solution for the totally asymmetric exclusion
	process with parallel update. {II}. {R}ing geometry
	\newblock {\em J. Stat. Mech. Theory Exp.}, (8):P08018, 27 pp. (electronic), 2007.
	
	
	\bibitem{prolhac2016}
	S.~Prolhac.
	\newblock Finite-time fluctuations for the totally asymmetric exclusion process. 
	\newblock {\em Phys. Rev. Lett.}, 116:090601, 2016.
	
	
	\bibitem{tw08}
	C.~A. Tracy and H. Widom.
	\newblock Integral formulas for the asymmetric simple exclusion process.
	\newblock {\em Comm. Math. Phys.}, 279(3):815--844, 2008.
	
	\bibitem{tw09}
	C.~A. Tracy and H. Widom.
	\newblock Asymptotics in {ASEP} with step initial condition.
	\newblock {\em Comm. Math. Phys.}, 290(1):129--154, 2009.
	
	
	
	
	
	
	
	
	
	
	
	
	
	
	
	
	
	
	
	
	
\end{thebibliography}

\medskip

\textsc{Yuchen Liao, Department of Mathematics, University of Warwick, Coventry CV4 7AL, UK}

Email address: \texttt{Yuchen.Liao@warwick.ac.uk}

\end{document}